\definecolor{blau}{rgb}{0.05,0.2,0.7}
\definecolor{auchblau}{rgb}{0.03,0.3,0.7}
\renewcommand{\Re}{\operatorname{Re}}
\renewcommand{\Im}{\operatorname{Im}}
\DeclareMathOperator{\supp}{supp}
\newcommand{\der}{\mathrm{d}}
\newcommand{\rmi}{\mathrm{i}}
\newcommand{\ee}{\mathrm{e} }
\newcommand{\sphere}{{\mathbb{S}^{d-1}}}
\newcommand{\calO}{\mathcal{O}}
\newcommand{\tr}{\mathrm{tr}}
\newcommand{\Tr}{\mathrm{Tr}}
\newcommand{\Complex}{\mathbb{C}}
\newcommand{\Reell}{\mathbb{R}}
\newcommand{\R}{\mathbb{R}}
\newcommand{\C}{\mathbb{C}}
\newcommand{\delbar}{\overline \partial}
\newcommand{\comp}{0}
\newcommand{\compp}{\mathrm{comp}}
\newcommand{\CnI}{C^\infty_{0}}
\newcommand{\loc}{\mathrm{loc}}
\newtheorem{theorem}{Theorem}[section]
\newtheorem{definition}[theorem]{Definition}
\newtheorem{example}[theorem]{Example}
\newtheorem{lemma}[theorem]{Lemma}
\newtheorem{corollary}[theorem]{Corollary}
\newtheorem{proposition}[theorem]{Proposition}
\newtheorem{rem}[theorem]{Remark}
\title[Geometric and Obstacle Scattering]{Geometric and Obstacle Scattering at Low Energy}
\author[A. Strohmaier]{Alexander Strohmaier}
\address{School of Mathematics,  University of Leeds,  Leeds , Yorkshire, LS2 9JT,
UK} \email{a.strohmaier@leeds.ac.uk}
\thanks{Supported by Leverhulme grant RPG-2017-329}
\author[A. Waters]{Alden Waters}
\address{ University of Groningen, Bernoulli Institute,
Nijenborgh 9,
9747 AG Groningen,
The Netherlands} \email{a.m.s.waters@rug.nl}
\begin{document}

\begin{abstract}
 We consider scattering theory of the Laplace Beltrami operator on differential forms on a Riemannian manifold that is Euclidean at infinity. The manifold may have several boundary components caused by obstacles at which relative boundary conditions are imposed. Scattering takes place because of the presence of these obstacles and possible non-trivial topology and geometry. Unlike in the case of functions eigenvalues generally exist at the bottom of the continuous spectrum and the corresponding eigenforms represent cohomology classes. We show that these eigenforms appear in the expansion of the resolvent, the scattering matrix, and the spectral measure in terms of the spectral parameter $\lambda$ near zero, and we determine the first terms in this expansion explicitly. In dimension two an additional cohomology class appears as a resonant state in the presence of an obstacle.
In even dimensions the expansion is in terms of
$\lambda$ and $\log \lambda$. The theory of Hahn holomorphic functions is used to describe these expansions effectively. We also give a Birman-Krein formula in this context.
The case of one forms with relative boundary conditions has direct applications in physics as it describes the scattering of electromagnetic waves.
  \end{abstract}


\maketitle

{\small
\tableofcontents}

\section{Introduction and Setting}

The analysis of the spectrum and the spectral decomposition of geometric operators on manifolds is important in both physics and mathematics. A full spectral decomposition allows one to solve linear equations such as the wave equation, Schr\"odinger's equation, and the heat equation. The long term behaviour of the latter is determined by the bottom of the spectrum. 
For the Laplace operator on $p$-forms on a closed Riemannian manifold the Hodge isomorphism (\cite{hodge1989theory}) identifies the harmonic forms with the de-Rham cohomology groups. Such connections between the bottom of the spectrum of geometric operators on closed manifolds give rise to an extremely rich interplay between topology and the analysis of partial differential equations. One of the important examples is the Atiyah-Singer index theorem (\cite{michael1968atiyah}) that relates the index of an elliptic operator to the $K$-theory class determined by its principal symbol. On non-compact manifolds the situation is slightly more complicated due to the presence of an essential spectrum. One approach to analyse the topology of the space using Hodge theory is to study $L^2$-cohomology which in many examples can be identified with the space of $L^2$-harmonic forms, i.e. the zero eigenspace of the Laplace operator. Well-studied examples are manifolds with cylindrical ends (\cite{MR0397797,melrose1993atiyah}), cusp-ends and variants of these (\cite{melrose1993atiyah,Vaillant2001IndexAS,MR2057017}), as well as manifolds with conical singularities (\cite{cheeger1980hodge}) and conical ends
(\cite{melrose1994spectral}). To illustrate this we briefly explain the situation for manifolds with cylindrical ends.
Here there may be a finite dimensional space of $L^2$-eigenfunctions at zero, but in general zero is also contained in the absolutely continuous spectrum.
The $L^2$-harmonic forms on the manifold describe the image of cohomology with compact support in the cohomology of the space. A complement of this image can be described by the values of the generalised eigenfunctions at zero. This relation between cohomology and the low lying values of the continuous part of the spectral decomposition was somewhat anticipated by the work of Atiyah Patodi and Singer (\cite{MR0397797}) on the index theorem for manifolds with boundary. The relationship between these concepts was further clarified by Melrose (\cite{melrose1993atiyah}) and M\"uller (\cite{muller19882}). A detailed analysis of the bottom of the continuous spectrum for manifolds with cylindrical ends can be found in (\cite{muller2010scattering}).

Another class of important examples are manifolds with conical ends and the subclass of manifolds with one Euclidean end. Similarly to the case of cylindrical ends
the $L^2$-cohomology groups can be identified with the zero eigenspace of the Laplace operator on forms. These groups can be computed here within a very general framework and related to de-Rham cohomology groups c.f. also (\cite{melrose1994spectral,carron2003l2,MR2057017}).

The goal of this paper is to clarify the role of the continuous spectrum in this context. 
Namely, we analyse the spectral decomposition of the Laplace-Beltrami operator $\Delta$ acting on $p$-forms on oriented manifolds that are asymptotically Euclidean at infinity and with possible compact boundary on which relative boundary conditions are imposed. The boundary components are thought of as obstacles and scattering takes place because of these obstacles, and possibly because of a non-trivial geometry and topology.

\begin{figure}[h]
 \begin{center}
 \includegraphics[trim=0 8cm 0 8cm,clip, width=7cm]{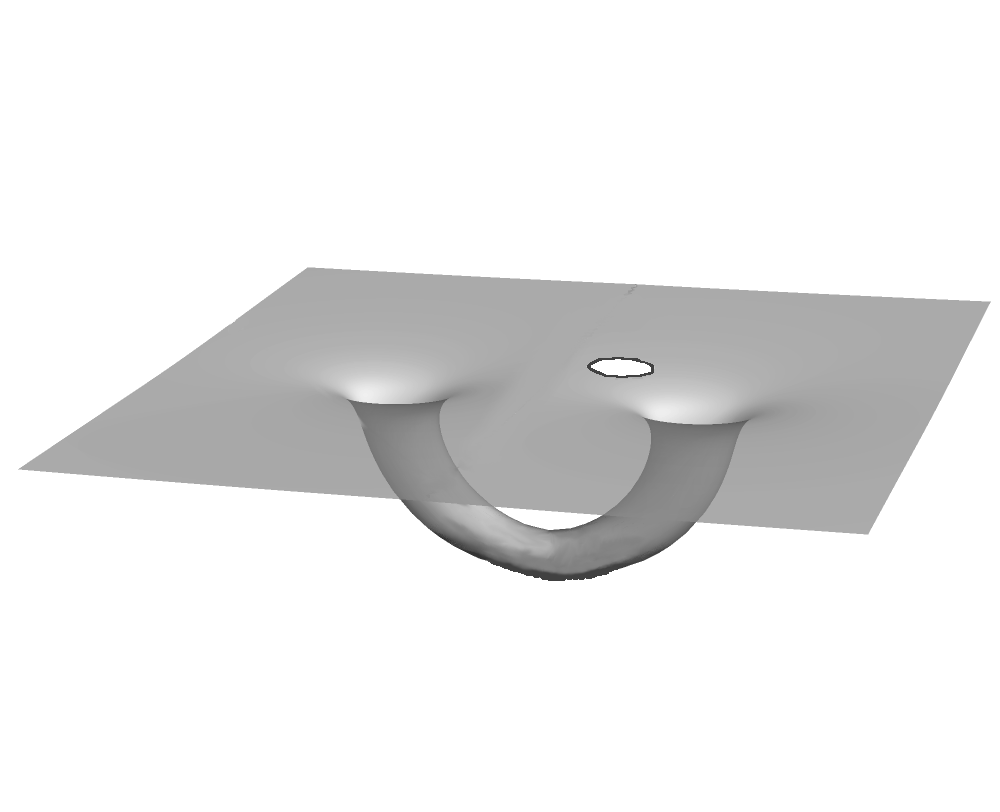} 
 \end{center}
 \caption{Surface with one boundary component and non-trivial topology that is Euclidean outside a compact set.}
\end{figure}

The spectrum of $\Delta$ lies on the positive real line. It consists of an absolutely continuous part, described by generalised eigenfunctions, and possibly a zero eigenvalue of finite multiplicity given by the $L^2$-Betti numbers.  In fact the space of $L^2$-harmonic forms has a finer filtration that we describe in this paper
that encodes how fast the corresponding eigenfunctions decay at infinity. In dimensions $d \geq 3$ the structure of the singularities of the resolvent, the spectral measure and the scattering matrix, can be completely characterised in terms of the $L^2$-eigenfunctions and their decay properties. Some information about the cohomology of the manifold is therefore retained in the continuous spectrum. In the case $d=2$ when the boundary is non-empty there is a non-trivial cohomology class in relative cohomology that is not represented by an $L^2$-harmonic form but rather by a zero resonant state. We completely clarify the singularity structure of the resolvent near zero and its meromorphic continuation. We also give the leading term in the expansion of the scattering amplitude. In even dimensions, the resolvent, the scattering matrix, and the generalised eigenfunctions, are not holomorphic at zero but have convergent generalised expansions into power series containing both powers of $\lambda$ and $-\log \lambda$. The theory of these functions was developed in \cite{muller2014theory} and this paper makes extensive use of this theory. Our approach is similar to that of Vainberg \cite{vainberg1975short} in its treatment of logarithmic terms. 

The low energy behaviour of Schr\"odinger operators has been studied by Kato and Jensen (\cite{jensen1979spectral}) who also computed expansion coefficients for the resolvent in various dimensions (see for example \cite{jensen2001unified}). Murata (\cite{murata1982asymptotic}), using also the method of Vainberg (\cite{vainberg1975short}) analysed the low energy behaviour of constant coefficient operators with potentials. The two dimensional case is quite complicated and was analysed for potential scattering in great detail in \cite{bolle1988threshold}. 

Resolvent expansions in the more general setting of conical ends were given by Wang (\cite{wang2006asymptotic, wang2004threshold}). Perhaps closest to our results are expansions obtained in the works by Guillarmou and Hassell (\cite{guillarmou2008resolvent,guillarmou2009resolvent}) where various resolvent expansions for the Laplace operator on functions are proved in the setting of conical manifolds. In \cite{guillarmou2009resolvent} the authors compute one of the expansion coefficients in the resolvent in the case of functions and reproduce the formula of Jensen and Kato in this more general setting. Expansions for differential forms were used 
in \cite{guillarmou2014low}) to show boundedness of the Riesz transform on $L^p$-spaces. Other recent work discussing the low energy behaviour of the resolvent is by Bony and H\"afner \cite{bony2009low} for second order operators in divergence form, and by Rodnianski and Tao \cite{rodnianski2015effective}  who  also consider potentials and general asymptotically  conic manifolds.
We would also like to mention the very recent work of Vasy on the low energy resolvent on asymptotically conic spaces (\cite{vasy2018resolvent}, \cite{vasy2019limiting}, \cite{vasy2019resolvent}) and the fact that the long time behaviour of solutions of the wave equation on differential forms also plays a role in stability questions in general relativity (\cite{hintz2018asymptotics, hafner2019linear}).

A relation between the topology of manifolds with Euclidean ends and the continuous spectrum has been noticed by Carron who gives expansions of the determinant of the scattering matrix in terms of $L^2$-Betti numbers and resonant states (\cite{carron2004saut}), which shows in particular that the jump of the spectral shift function at zero is of topological significance. The significance of the spectral shift function in this context was also seen by Borisov Mueller and Schrader in their proof of the Chern-Gauss-Bonnet formula for asymptotically Euclidean manifolds (\cite{borisov1988relative}).
The detailed structure of the resolvent on non-compact manifold is also important in quantum field theory in the quantisation of the electromagnetic field as poles of the resolvent manifest themselves as ``infrared problems''. As an example the Gupta-Bleuler quantisation of the electromagnetic field as constructed rigorously in \cite{finster2015gupta,finster2018correction}  requires the absence of a zero resonance state for the Laplace operator on one forms.

\subsection{Precise setup and notations} \label{sec:1.1}

Let $(X,g)$ be an oriented complete connected Riemannian manifold of dimension $d \geq 2$ which is Euclidean at infinity, i.e.
there exists compact subsets $K \subset X$ and $\tilde K \subset \Reell^d$ such that $X \backslash K$ is isometric to $\Reell^d \backslash \tilde K$.
Let $\mathcal{O}$ be an open subset in $X$ with compact closure and smooth boundary. The (finitely many) connected components will be denoted
by $\mathcal{O}_i$ with some index $i$. We will think of these as obstacles placed in $X$. Removing these obstacles from $X$ results in a Riemannian
manifold $M = X \backslash \mathcal{O}$ with smooth boundary $\partial \mathcal{O}$. We will assume throughout that $M$ is connected, that the 
$\calO \subset K$ so that the obstacles are contained in $K$. We will also fix the isometry to $\Reell^d \backslash \tilde K$ so that we have a natural coordinate system on $X \setminus K$. 

Let as usual $\der: \CnI(M;\Lambda T^* M) \to \CnI(M;\Lambda T^* M)$ be the differential on smooth forms 
and $\delta: \CnI(M;\Lambda T^* M) \to \CnI(M; \Lambda T^* M)$
its formal adjoint.
The Laplace-Beltrami operator $\Delta$ on differential forms is defined as $\Delta= \der \delta + \delta \der$.  We denote the restriction to forms
of degree $p$ by $\Delta_p$. 
There are natural boundary conditions that can be imposed on $\Delta_p$ to make this into an essentially self-adjoint operator which we now describe.
For a differential form $\omega \in \CnI(M; \Lambda^p T^* M)$ 
we denote its restriction to $\partial \mathcal{O}$ by $\omega|_{\partial \mathcal{O}}$. If $\iota: \partial \mathcal{O} \to M$ is the natural
inclusion map the restriction $\omega|_{\partial \mathcal{O}}$ is therefore a section in the pull back bundle $\iota^* (\Lambda^p T^* M)$.
This bundle is canonically isomorphic to $\Lambda^p T^* (\partial \mathcal{O}) \oplus \Lambda^{p-1} T^* (\partial \mathcal{O})$, the induced splitting being the split of $\omega_{\partial \mathcal{O}}$ 
into tangential and normal components $\omega|_{\partial \mathcal{O}} = \omega|_{\partial \mathcal{O},\mathrm{tan}} + \omega|_{\partial \mathcal{O}, \mathrm{nor}}$. The tangential component is the same as the
pull-back $\iota^* \omega$ of the differential form $\omega$ to $\partial \mathcal{O}$. There are several distinguished boundary conditions for the Laplace operator that lead to self-adjoint
extensions of the Laplace operator on compactly supported smooth forms.
{\sl Relative boundary conditions}
for the Laplace operator are defined as
\begin{gather*}
 \omega |_{\partial \mathcal{O},\mathrm{tan}}=0, \quad (\delta \omega)|_{\partial \mathcal{O},\mathrm{tan}}=0.
\end{gather*}
 {\sl Absolute boundary conditions} are defined to be
\begin{gather*}
 (\omega)|_{\partial \mathcal{O}, \mathrm{nor}}=0, \quad (\der \omega)|_{\partial \mathcal{O}, \mathrm{nor}}=0.
\end{gather*}
Note that if $\omega$ satisfies relative boundary conditions, then $* \omega$ satisfies absolute boundary conditions. Here $*$ is the Hodge star operator.

We will denote by $\Delta_{p,\mathrm{rel}}$ and $\Delta_{p,\mathrm{abs}}$ the self-adjoint extensions of unbounded operators in
$L^2(M,\Lambda^p T^* M)$ of $\Delta_p$ resulting from the respective boundary conditions. Since $* \Delta_{p,\mathrm{rel}} = \Delta_{n-p,\mathrm{abs}} *$
the Hodge star operator allows us to pass from relative to absolute boundary conditions.
The relative Laplacian $\Delta_{\mathrm{rel}}$ acting on differential forms can be written as the square of a self-adjoint operator $Q_{\mathrm{rel}} = \overline \delta + \overline d_0$ (see for example \cite{conner1954green,gaffney1954heat,borisov1988relative}).
Here $\overline \delta$ is the closure of the operator $\delta : \CnI(M,\Lambda T^* M) \to \CnI(M,\Lambda T^* M)$, and  $\overline d_0$
is the closure of the restriction of $d|_{\CnI(M_{\mathrm{int}} ,\Lambda T^* M_{\mathrm{int}}) } : \CnI(M_{\mathrm{int}},\Lambda T^* M_{\mathrm{int}}) \to \CnI(M_{\mathrm{int}},\Lambda T^* M_{\mathrm{int}})$. Here $M_{\mathrm{int}}= M \backslash \partial \mathcal{O}$ is the interior of $M$.

If $p=0$ the relative boundary conditions correspond to Dirichlet boundary conditions imposed on $\partial \mathcal{O}$, and absolute boundary conditions correspond to Neumann boundary conditions.

The Hilbert space $L^2(M,\Lambda^p T^* M)$ decomposes orthogonally into three invariant subspaces for $\Delta_{p,\mathrm{rel}}$ as follows (see \cite{conner1954green} and also \cite{gaffney1954heat})
$$
 L^2(M;\Lambda^p T^* M) = \overline{\delta  \CnI(M;\Lambda^{p+1} T^* M)} \oplus \overline{\der \CnI(M_{\mathrm{int}};\Lambda^{p-1} T^* M_{\mathrm{int}})} \oplus \mathcal{H}^p_{\mathrm{rel}}(M),
$$
where $\mathcal{H}^p_{\mathrm{rel}} = \ker{\Delta_{p,\mathrm{rel}}}$ is the space of relative $L^2$-harmonic $p$-forms, i.e. the space of square integrable forms that are closed, co-closed
and satisfy relative boundary conditions. 

The case $p=1$ is of particular interest in scattering theory of the electromagnetic field. Here the physics of the electromagnetic field in radiation gauge in the absence of charges and currents with the obstacles being perfect conductors is described by the operator $\Delta_{1,\mathrm{rel}}$ on co-closed forms. To be more precise, the electromagnetic vector potential
of a scattering wave in the frequency domain will satisfy relative boundary conditions and will be co-closed. It will also be a generalized eigenfunction of $\Delta_{\mathrm{rel}}$
as expressed by the Helmholtz equation $(\Delta_{\mathrm{rel}}-\lambda^2) A = 0$. The detailed spectral resolution and the scattering theory of $\Delta_{\mathrm{rel}}$ therefore describes scattering of electromagnetic waves in geometric backgrounds with perfectly conducting obstacles.

The spaces $\mathcal{H}^p_{\mathrm{rel}}$ are finite dimensional and directly related to the singular relative cohomology groups with compact support and coefficients in $\Reell$ as follows. If $d \geq 3$ then we have natural isomorphisms
\begin{gather*}
 \mathcal{H}^d_{\mathrm{rel}}(M) = \{ 0 \},\\
 \mathcal{H}^p_{\mathrm{rel}}(M) \cong H^p_0(M, \partial \calO) \cong H^p_0(X \backslash \calO), \quad \textrm{if } {p \not= d.}
\end{gather*}
Similarly, for the absolute boundary conditions one obtains for $d \geq 3$
\begin{gather*}
 \mathcal{H}^0_{\mathrm{abs}}(M) = \{ 0 \},\\
 \mathcal{H}^p_{\mathrm{abs}}(M) \cong H^p(M), \quad \textrm{if } {p \not= 0.}
\end{gather*}
These statements follow from a more general theorem by Melrose for scattering manifolds (as a consequence of Theorem 4 in case $\calO = \emptyset$ in \cite{melrose1994spectral})  
 and Carron who analysed the asymptotically flat case in great detail.
In particular, the statement above can be inferred using the exact sequence of Theorem 4.4 combined with Lemma 5.4 in \cite{carron2003l2}.
In dimension $d=2$ we have 
\begin{gather*}
 \mathcal{H}^0_{\mathrm{rel}}(M) =  \mathcal{H}^2_{\mathrm{rel}}(M) = \{ 0 \},\\
 \mathcal{H}^1_{\mathrm{rel}}(M) \cong  \mathrm{Im} \left( H^1_0(M, \partial \calO) \to H^1(M, \partial \calO)\right) \cong H^1(M, \partial \calO),
\end{gather*}
which follows from Proposition 5.5 in \cite{carron2003l2}. Moreover, the dual statement is 
\begin{gather*}
 \mathcal{H}^0_{\mathrm{abs}}(M) =  \mathcal{H}^2_{\mathrm{abs}}(M) = \{ 0 \},\\
 \mathcal{H}^1_{\mathrm{abs}}(M) \cong H^1_0(M).
\end{gather*}
The dimensions of these spaces, the $L^2$-Betti numbers, are therefore computable using the Mayer-Vietoris sequence. Note that it follows from the long exact sequence in cohomology that for manifolds Euclidean at infinity we always have $H^p_0(M, \partial \calO) \cong H^p(M, \partial \calO)$ if $1 < p < d$. For a more detailed description of the above natural isomorphisms see for example \cite{carron2003l2}.

\begin{example}
 If $X=\Reell^d$ and $\calO$ consists of $N$ non-intersecting balls, one obtains for $d>2$ that $\mathcal{H}^1_{\mathrm{rel}}(M) \cong \Reell^N$. These are the only non-trivial spaces of harmonic forms satisfying relative boundary conditions. In the case $d=2$, $N>0$ one has $\mathcal{H}^1_{\mathrm{rel}}(M) \cong \Reell^{N-1}$. 
 \end{example}
\begin{example} \label{worm}
A  wormhole $X$  in $\Reell^3$ is obtained by removing two non-intersecting balls and gluing the resulting spheres. In this case one obtains $\mathcal{H}^1_{\mathrm{rel}}(M) \cong \Reell$ and $\mathcal{H}^2_{\mathrm{rel}}(M) \cong \Reell$ as the only non-trivial spaces of square integrable harmonic forms.
\end{example}
\begin{example} \label{fulltorus}
Another interesting example is when $\calO$ is a full torus. In this case we also have $\mathcal{H}^1_{\mathrm{rel}}(M) \cong \Reell$ and 
$\mathcal{H}^2_{\mathrm{rel}}(M) \cong \Reell$ as the only non-trivial spaces of relative harmonic forms. 
\end{example}

In terms of $L^2$-Betti numbers the examples \ref{worm} and \ref{fulltorus} cannot be distinguished. 
We will see later that a certain refinement taking into account the decay properties of the harmonic forms distinguishes these spaces.

Choose an orthonormal basis $(u_j)_{j=1,\ldots,N}$ in $\ker_{L^2}(\Delta_{p,\mathrm{rel}})$ consisting of eigensections. If $P$ is the orthogonal projection onto $\ker_{L^2}(\Delta_{p,\mathrm{rel}})$ we have
$$
P = \sum_{j=1}^N \langle \cdot, u_j \rangle u_j.
$$
Each eigenfunction $u_j$ admits a multipole expansion 
$$
 u_j = \sum_{\nu} a_{\nu,j} \frac{1}{r^{\ell_{\nu}+d-2}} \Phi_\nu,
$$
if $(\Phi_\nu)$ is an orthonormal basis consisting of spherical harmonics of degree $\ell_\nu$, c.f. Appendix \ref{Amulti}.
For $\Phi \in L^2(\sphere;\Lambda^p \R^d)$ define
$$
 a_j(\Phi) := \sum_{\nu} \overline{a_{\nu,j}} \langle \Phi, \Phi_\nu \rangle,
$$
whenever the sum converges absolutely. In particular the sum is finite when $\Phi$ is a finite linear combination of spherical harmonics.
For each $\ell$ we can also define the matrices
$$
 a_{kj}^{(\ell)} =  \sum_{\nu, \ell_\nu=\ell} a_k(\Phi_\nu) \overline{a_j(\Phi_\nu)}. 
$$
The $a_{kj}^{(\ell)}$ do not depend on the choice of orthonormal basis $(\Phi_\nu)$ but they depend on the choice of orthonormal basis in $\ker_{L^2}(\Delta_{p,\mathrm{rel}})$.
However, the maps 
\begin{align}\label{Pl}
P^{(\ell)} = \sum\limits_{j,k=1}^N a_{kj}^{(\ell)} \langle \cdot, u_j \rangle  u_k: L^2(M;\Lambda^p T^*M) \to L^2(M;\Lambda^p T^*M)
\end{align}
are invariantly defined and self-adjoint.\\
Suppose $u$ is a harmonic form with a multipole expansion on $X \setminus K$ of the form
$$
  u(r \theta) = \sum_\nu \left( a_\nu \frac{1}{r^{d-2+\ell_\nu}} \Phi_\nu(\theta) + b_\nu r^{\ell_\nu} \Phi_\nu(\theta) \right),
 $$
in case $d=3$ or 
 $$
  u(r \theta) =\sum_{\nu,\ell_\nu=0} \left( a_\nu \log(r)  \Phi_\nu(\theta)+  b_\nu  \Phi_\nu(\theta) \right) +  \sum_{\nu,\ell_\nu>0} \left( a_\nu \frac{1}{r^{d-2+\ell_\nu}} \Phi_\nu(\theta) + b_\nu r^{\ell_\nu} \Phi_\nu(\theta) \right),
 $$
 in case $d=2$. We then define
$$
 a_u(\Phi) := \sum_{\nu} \overline{a_\nu} \langle \Phi, \Phi_\nu \rangle.
$$
Note that $a_j(\Phi) = a_{u_j}(\Phi)$.

Whereas $(u_j)$ gives the discrete part of the spectrum, the continuous part of the spectrum is described by the generalised eigenfunction
$E_\lambda(\Phi)$ that are indexed by $\Phi \in L^2(\sphere;\Lambda^p \R^d)$. For fixed $\lambda>0$ these generalised eigenfunctions are completely determined by their asymptotic behaviour
\begin{gather} 
 E_\lambda(\Phi) = \frac{\ee^{-\rmi \lambda r} \ee^{\frac{i\pi(d-1)}{4}} }{r^{\frac{d-1}{2}}} \Phi + \frac{\ee^{\rmi \lambda r} \ee^{-\frac{i\pi(d-1)}{4}} }{r^{\frac{d-1}{2}}} \Psi_\lambda(\Phi) + O\left(\frac{1}{r^{\frac{d+1}{2}}} \right),  \quad \textrm{for} \,\,\,r \to \infty,
\end{gather}
where $\Psi_\lambda(\Phi) = \tau S_\lambda \Phi$ and $\tau: L^2(\sphere;\Lambda^p \R^d) \to L^2(\sphere;\Lambda^p \R^d) $ is the pull-back of the antipodal map. The map $S_\lambda : L^2(\sphere) \to L^2(\sphere)$ is called the scattering matrix,
and $ A_\lambda= S_\lambda - \mathrm{id}$ is called the scattering amplitude.

\subsection{Statement of the main theorems} \label{theostate}

Suppose that $f,g:  Z \to W$ are functions that take values in a locally convex topological vector space and $h: Z \to \R$ . As usual we write
$f = g + O_W(h)$ if for every continuous semi-norm $p$ on $W$ there is a constant $C_p$ such that 
$p(f(\lambda)-g(\lambda)) \leq C_p | h(\lambda)|$ for all $\lambda \in Z$.  

\begin{theorem} \label{maintheorem1}
Let  $C_{d,\ell}$ be defined by 
$$
C_{d,\ell} = (-\rmi)^{\ell}\sqrt{2 \pi} \frac{1}{2^{\ell + \frac{d}{2}-1}} \frac{1}{\Gamma(\ell + \frac{d}{2})},
$$
and suppose that $\Phi \in C^\infty(\sphere; \Lambda^p \R^d)$ is a spherical harmonic of degree $\ell$, then the generalised eigenfunctions have for small $|\lambda|$
and bounded $|\arg \lambda|$ the following expansions
\begin{itemize}
 \item For $d=3$,
 \begin{gather*}
 E_\lambda(\Phi) =- (d-2+2\ell) C_{d,\ell} \lambda^{\ell +\frac{d-5}{2}} \sum_{j=1}^N a_j(\Phi) u_j 
 \\+ \rmi (d-2+2\ell) C_{d,\ell} \lambda^{\ell +\frac{d-3}{2}} \sum_{j,k=1}^N 
 a_{kj}^{(1)}a_j(\Phi) u_k + O_{C^\infty(M)}( \lambda^{\ell +\frac{d-1}{2}} ).
\end{gather*}
\item For $d=4$,
  \begin{gather*}
 E_\lambda(\Phi) = -(d-2+2\ell) C_{d,\ell} \lambda^{\ell +\frac{d-5}{2}} \sum_{j=1}^N a_j(\Phi) u_j 
 \\+ \frac{1}{4}(d-2+2\ell) C_{d,\ell} \lambda^{\ell +\frac{d-1}{2}} (-\log \lambda) \sum_{j,k=1}^N 
 a_{kj}^{(1)}a_j(\Phi)  u_k + O_{C^\infty(M)}( \lambda^{\ell +\frac{d-1}{2}} ).
\end{gather*}
\item For  $d \geq 5$,
  $$
   E_\lambda(\Phi) = -(d-2+2\ell) C_{d,\ell} \lambda^{\ell +\frac{d-5}{2}} \sum_{j=1}^N a_j(\Phi) u_j + O_{C^\infty(M)}( \lambda^{\ell +\frac{d-1}{2}} ).
  $$
\end{itemize}
In any dimension, if $\partial \calO = \emptyset$ or $p \not=1$, then $P^{(1)}=0$ and therefore $a_{kj}^{(1)}=0$ in the previous expansions.
\end{theorem}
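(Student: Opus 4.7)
The plan is to reduce Theorem~\ref{maintheorem1} to the Hahn-holomorphic low-energy expansion of the resolvent $R_M(\lambda)=(\Delta_{p,\mathrm{rel}}-\lambda^2)^{-1}$ established in the preceding sections, combined with the explicit small-$\lambda$ asymptotics of a free reference eigenfunction on $\R^d$. Fix a cutoff $\chi\in C^\infty(M)$ equal to $1$ outside a large ball $B_R$ containing $K\cup\overline{\calO}$ and vanishing in a neighbourhood of $\calO$. On the Euclidean end, let $E^{(0)}_\lambda(\Phi)$ be the free generalised eigenfunction on $\R^d$ associated to the spherical harmonic $\Phi$, written as an explicit linear combination of the two Hankel-type radial solutions times $\Phi(\theta)$ and normalised so that its $r\to\infty$ behaviour reproduces the asymptotic formula defining $E_\lambda(\Phi)$. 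Define on $M$
\begin{equation*}
 E_\lambda(\Phi):=\chi\,E^{(0)}_\lambda(\Phi)\;-\;R_M(\lambda)\bigl([\Delta,\chi]\,E^{(0)}_\lambda(\Phi)\bigr).
\end{equation*}
The source $[\Delta,\chi]\,E^{(0)}_\lambda(\Phi)$ is supported in the annulus $\{\nabla\chi\neq 0\}$, and $R_M(\lambda)f$ has the expected purely-outgoing behaviour for compactly supported $f$; consequently this ansatz is a generalised eigenfunction of $\Delta_{p,\mathrm{rel}}$ with the correct incoming/outgoing asymptotics.

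The small-$\lambda$ behaviour is extracted by combining two expansions. On $\mathrm{supp}\,\nabla\chi$ the classical Bessel small-argument expansions $J_\nu(z)=(z/2)^\nu/\Gamma(\nu+1)+O(z^{\nu+2})$ together with the analogous Neumann/Hankel expansions (with logarithmic corrections in even dimensions) give
\begin{equation*}
 E^{(0)}_\lambda(\Phi)\;=\;C_{d,\ell}\,\lambda^{\ell+(d-2)/2}\,r^\ell\,\Phi(\theta)\;+\;c_{d,\ell}\,\lambda^{-\ell-(d-2)/2}\,r^{-(d-2+\ell)}\,\Phi(\theta)\;+\;O(\lambda^{\ell+(d+2)/2})
\end{equation*}
for a matching universal constant $c_{d,\ell}$, so that the leading coefficient of the regular harmonic-polynomial part is precisely $C_{d,\ell}$. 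Apply the Hahn-holomorphic resolvent expansion
\begin{equation*}
 R_M(\lambda)\;=\;-\lambda^{-2}\,P\;+\;R_0(\log\lambda)\;+\;\lambda\,R_1(\log\lambda)\;+\;\cdots
\end{equation*}
to the compactly supported source $f_\lambda:=[\Delta,\chi]\,E^{(0)}_\lambda(\Phi)$, where $P$ is the orthogonal projection onto $\ker_{L^2}(\Delta_{p,\mathrm{rel}})$ and the $R_j$ acquire polynomial $\log\lambda$-dependence in even dimensions. The leading contribution comes from the pole $-\lambda^{-2}P$, and Green's identity for the Laplace--Beltrami operator on forms (using $\Delta u_j=0$ and that both $\chi E^{(0)}_\lambda(\Phi)$ and $u_j$ satisfy the relative boundary conditions on $\partial\calO$) reduces the pairing $\langle f_\lambda,u_j\rangle$ to a surface integral on any sphere $\partial B_{R'}$ inside $\{\chi=1\}$, involving only the multipole coefficients of $u_j$ and the radial factors of the Bessel expansion. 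Spherical-harmonic orthogonality on $\mathbb{S}^{d-1}$ isolates the matching degrees $\ell_\nu=\ell$, and the Wronskian identity for the harmonic pair $(r^\ell,\,r^{-(d-2+\ell)})$ produces the factor $-(d-2+2\ell)$. Summing over $\nu$ with weights $\overline{a_{\nu,j}}\langle\Phi,\Phi_\nu\rangle$ reproduces $a_j(\Phi)$ and gives the stated leading term.

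The sub-leading term in $d\in\{3,4\}$ arises from a second pairing: either the first regular coefficient $R_0(\log\lambda)$ of the Hahn expansion applied to the leading part of $f_\lambda$, or the pole $-\lambda^{-2}P$ applied to the next order in $\lambda$ of $f_\lambda$. Both contributions are controlled by the non-analytic structure of the free Euclidean resolvent, whose Schwartz kernel contains the universal terms $\rmi\lambda/(4\pi)$ in $d=3$ and $-(2\pi)^{-2}\log(\lambda/2)$ in $d=4$. The same Green's-identity boundary computation, now with the next-to-leading Bessel coefficients inserted, selects precisely the $\ell=1$ multipole moments of the $u_j$ and yields a contribution proportional to $a^{(1)}_{kj}$ with the stated prefactor $\rmi$ (respectively $-\frac{1}{4}\log\lambda$). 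The main technical obstacle is the precise bookkeeping of universal constants through the parametrix: each factor in $C_{d,\ell}$ and in the sub-leading coefficients must be matched to the Bessel/Hankel small-argument constants, the Hahn coefficients of $R_M(\lambda)$, and the boundary integrations. Finally, the vanishing $P^{(1)}=0$ whenever $\partial\calO=\emptyset$ or $p\neq 1$ is verified directly from the multipole expansions of the $u_j$: the $\ell=1$ vector-spherical-harmonic modes on $\mathbb{S}^{d-1}$ pair naturally with $1$-forms on $\R^d\setminus\{0\}$ as gradients of linear harmonics, so for $p\neq 1$ the corresponding multipole coefficients vanish by the tangential/normal decomposition of forms; and for $\partial\calO=\emptyset$ the harmonic form extends to all of $X$, in which case integration by parts on an exhausting family of large balls forces the dipole moments to vanish.
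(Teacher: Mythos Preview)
Your overall strategy---writing $E_\lambda(\Phi)=\chi\,E^{(0)}_\lambda(\Phi)-R_M(\lambda)[\Delta,\chi]E^{(0)}_\lambda(\Phi)$, inserting the small-$\lambda$ expansion of the resolvent, and computing the pairings $\langle [\Delta,\chi]E^{(0)}_\lambda(\Phi),u_j\rangle$ via Green's identity on a large sphere---is exactly the paper's approach (Proposition~\ref{besseljefunction}, Lemma~\ref{superlemma}, and the resolvent expansions of Sections~\ref{doddanal}--\ref{devenanal}). The leading term is obtained just as you say.

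There are, however, two genuine gaps. First, your account of the sub-leading term is not right. The reference eigenfunction is $\tilde j_\lambda(\Phi)$, which is entire in $\lambda$ and has \emph{no} singular contribution $c_{d,\ell}\,\lambda^{-\ell-(d-2)/2}r^{-(d-2+\ell)}\Phi$; your displayed expansion of $E^{(0)}_\lambda$ is inconsistent (that term would dominate the error). More importantly, the second-order coefficient in $E_\lambda(\Phi)$ does \emph{not} come from ``the non-analytic structure of the free Euclidean resolvent'' or from ``next-to-leading Bessel coefficients'': it comes from the $B_{-1}$ term of the \emph{full} resolvent on $M$, and the identification $B_{-1}=P^{(1)}$ in $d=3$ (respectively $B_{-1}=\tfrac14 P^{(1)}$ in $d=4$) is a separate nontrivial fact. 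In the paper this is obtained by a bootstrap: one first shows $E_\lambda(\Phi)$ is regular at $0$, computes $E_0(\Phi_\nu)$ for $\ell_\nu\le 1$ via the same Green's-identity lemma, and then reads off $B_{-1}$ from Stone's formula (Propositions~\ref{prop37}, \ref{prop38}, Theorem~\ref{resolventexpdg4}). Without this step the $a^{(1)}_{kj}$ never appear; the pairing $\langle [\Delta,\chi]\tilde j_\lambda(\Phi),u_j\rangle$ always selects the degree-$\ell$ moment $a_j(\Phi)$, not the degree-$1$ moment.

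Second, your argument for $P^{(1)}=0$ is incorrect. Degree-$1$ spherical harmonics with values in $\Lambda^p\R^d$ exist for every $p$, so ``tangential/normal decomposition'' does not force the $\ell=1$ multipole coefficients of $u_j$ to vanish when $p\neq 1$; nor does a bare integration-by-parts on large balls give this when $\partial\calO=\emptyset$. The paper's proof is cohomological (Propositions~\ref{prop5.3} and~\ref{prop5.5}): one shows that an element of $\mathcal{H}^{p,1}_{\mathrm{rel}}/\mathcal{H}^{p,2}_{\mathrm{rel}}$ arises as $\der v_- - \delta v_+$ with $v_\pm=G(\Psi_\pm)$ built from limits of generalised eigenfunctions, and then uses injectivity of $H^p_0(M,\partial\calO)\to H^p(M,\partial\calO)$ for $2\le p<d$ (and of $H^{d-p}_0(M)\to H^{d-p}(M)$) from the long exact sequence~\eqref{sequ1} to force both pieces to vanish. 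The case $p=1$, $\partial\calO=\emptyset$ is handled by noting that the generator would be $\der G(1)$, which vanishes because the constant function already solves the relevant boundary problem. You will need this cohomological input; the pointwise structure of spherical harmonics alone does not suffice.
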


This shows that all $L^2$-eigenfunctions appear as expansion coefficients of generalised eigenfunctions. Note that in even dimensions the functions are defined on a logarithmic cover of the complex plane and the estimates are understood as functions in an arbitrary but fixed sector of this cover (see Section \ref{hahnapp}). Hence the need for the restriction to bounded $\arg \lambda$.

\begin{theorem} \label{maintheorem2}
If $d$ is odd and $d\geq 3$ the resolvent $(\Delta_{rel} - \lambda^2)^{-1}$ (as an operator
from $L^2_{comp}$ to $H^2_{loc}$) has  for small $| \lambda |$  an expansion of the form
\begin{gather}
 R_\lambda = - \frac{P}{\lambda^2} + \rmi \frac{B_{-1}}{\lambda} + B(\lambda),
\end{gather}
where $B(\lambda)$ is holomorphic near zero.
 If $d=3$ then $B_{-1} = P^{(1)}$, and in particular we have that $B_{-1}=0$ if $\partial \calO = \emptyset$.
 If $d$ is odd and $d\geq 5$ then $B_{-1}=0$. 
\end{theorem}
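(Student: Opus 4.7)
Plan. Since $P$ commutes with $R_\lambda$ and $\Delta_{\mathrm{rel}} P = 0$, we have $R_\lambda P = -P/\lambda^2$, so writing $R_\lambda = -P/\lambda^2 + R_\lambda^\perp$ with $R_\lambda^\perp := R_\lambda(1-P)$ reduces the problem to showing that $R_\lambda^\perp$ admits an expansion $\rmi B_{-1}/\lambda + B(\lambda)$ with $B$ holomorphic near zero, and to identifying $B_{-1}$. The meromorphic continuation of $R_\lambda$ across $\lambda = 0$ in odd dimensions $d \geq 3$ is provided by the parametrix construction developed earlier in the paper: the free Euclidean resolvent is entire in $\lambda$ for odd $d$, and gluing it with an interior parametrix gives an analytic Fredholm problem whose inverse is meromorphic; since $P$ absorbs the $L^2$-kernel, this bounds the pole order of $R_\lambda^\perp$ at zero by one.

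To compute $B_{-1}$ I would use the Stone-type identity
\[
R_\lambda - R_{-\lambda} \;=\; \frac{c_d}{\lambda}\, \sum_{\nu} E_\lambda(\Phi_\nu) \otimes \overline{E_\lambda(\Phi_\nu)},
\]
valid for $\lambda > 0$ from the spectral theorem together with the generalised eigenfunction decomposition of $\Delta_{\mathrm{rel}}$, and extended meromorphically. Here $\{\Phi_\nu\}$ is an orthonormal basis of $L^2(\sphere;\Lambda^p \R^d)$ of spherical harmonics and $c_d$ is an explicit dimension-dependent constant (one checks $c_3 = \rmi/2$ against the free-space kernel $\ee^{\rmi\lambda R}/(4\pi R)$). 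Since $R_\lambda - R_{-\lambda} = 2\rmi B_{-1}/\lambda + O(\lambda)$ from the Laurent form, $2\rmi B_{-1}$ must equal the $\lambda^{-1}$ coefficient of the right-hand side. Substituting Theorem~\ref{maintheorem1} and using $\sum_{\nu,\,\ell_\nu=\ell} a_j(\Phi_\nu)\overline{a_k(\Phi_\nu)} = a_{jk}^{(\ell)}$, the leading contribution to $\sum_\nu |E_\lambda(\Phi_\nu)|^2$ at a given $\ell$ is $(d-2+2\ell)^2\, |C_{d,\ell}|^2\, \lambda^{2\ell+d-5}\, P^{(\ell)}$; after the factor $c_d/\lambda$ this becomes $\lambda^{2\ell+d-6}$, so a $\lambda^{-1}$ residue requires $2\ell + d = 5$, i.e.\ $(d,\ell) \in \{(3,1),(5,0)\}$.

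For $d = 3$ this selects $\ell = 1$: any would-be $\ell = 0$ contribution would be proportional to $P^{(0)}$, but $L^2$-integrability $\int^\infty |u|^2\, r^2\,\der r < \infty$ in dimension three forbids a nonzero $r^{-1}$ monopole leading term in $u_j$, forcing $a_j(\Phi_\nu) = 0$ when $\ell_\nu = 0$. A concrete evaluation using $C_{3,1} = -2\rmi/3$, $(d-2+2)^2 = 9$ and $c_3 = \rmi/2$ pins the $\lambda^{-1}$ coefficient down to $2\rmi\, P^{(1)}$, yielding $B_{-1} = P^{(1)}$; the last sentence of Theorem~\ref{maintheorem1} then gives $B_{-1} = 0$ in the sub-cases $\partial\calO = \emptyset$ or $p \neq 1$. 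For $d = 5$ the constraint forces $\ell = 0$, with contribution proportional to $P^{(0)}$; however, the $\ell = 0$ multipole corresponds to a rotation-invariant $p$-form asymptotic on $S^{d-1}$, which only exists in degrees $p = 0$ and $p = d-1$, and in both these cases the relevant $L^2$-cohomology space vanishes in our setting (by $\mathcal{H}^0_{\mathrm{rel}} = 0$ for $p = 0$, and via Hodge duality plus the cohomological identifications of Section~\ref{sec:1.1} for $p = d - 1$), so $P^{(0)} = 0$. For $d \geq 7$ odd no nonnegative integer $\ell$ satisfies $2\ell + d = 5$, and the next explicit term in Theorem~\ref{maintheorem1} is $\lambda^2$ higher, too large to contribute. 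In summary, $B_{-1} = 0$ for every odd $d \geq 5$.

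The hard part is the precise constant-chase establishing the clean equality $B_{-1} = P^{(1)}$ in $d = 3$, together with the companion verification that $P^{(0)} = 0$ for all $p$-form degrees admitted by the setup of Section~\ref{sec:1.1}; the latter is what makes $B_{-1} = 0$ unconditional in $d = 5$.
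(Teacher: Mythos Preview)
Your overall strategy---meromorphic continuation via gluing, then reading off $B_{-1}$ from the Stone identity $R_\lambda-R_{-\lambda}=\frac{\rmi}{2\lambda}\sum_\nu E_\lambda(\Phi_\nu)\langle\cdot,E_{\overline\lambda}(\Phi_\nu)\rangle$---is exactly the paper's approach, and your constant chase in $d=3$ is correct. The real gap is the vanishing of the $\ell=0$ contributions, which you need in both $d=3$ and $d=5$ but do not actually establish.

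In $d=5$ your argument that $P^{(0)}=0$ is wrong: you claim the $\ell=0$ multipole ``only exists in degrees $p=0$ and $p=d-1$'', apparently confusing $\mathcal{H}^p_0(\sphere)$ (constant $\Lambda^p\R^d$-valued functions, of dimension $\binom{d}{p}$ for every $p$) with the de~Rham cohomology of the sphere. Your $L^2$-integrability argument from $d=3$ does not transfer either, since $r^{2-d}=r^{-3}$ \emph{is} square-integrable against $r^4\,\der r$. In $d=3$ your $L^2$ argument does give $a_j(\Phi_\nu)=0$ for the eigenforms $u_j$ when $\ell_\nu=0$, but you still need $E_0(\Phi_\nu)=0$ for $\ell_\nu=0$ to rule out an extra contribution to $B_{-1}$, and invoking the error term of Theorem~\ref{maintheorem1} for this is circular: that error term is derived \emph{from} the resolvent structure you are proving.

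The paper handles both cases with a single observation: any harmonic $p$-form on $M$ satisfying relative boundary conditions and with a purely decaying multipole expansion is automatically closed and co-closed (integration by parts, using the decay), and then a direct computation shows its $\ell=0$ multipole coefficient $\omega\in\Lambda^p\R^d$ satisfies $\der r\wedge\omega=0$ and $\der r\wedge{*}\omega=0$; invertibility of Clifford multiplication by $\der r$ forces $\omega=0$. Applied to the $u_j$ this gives $P^{(0)}=0$ in every $d\geq 3$, and applied to $E_0(\Phi_\nu)$ it closes the self-consistency loop in $d=3$.
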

The situation in even dimensions is different. In this case the resolvent $(\Delta_{rel} - \lambda^2)^{-1}$ (as an operator
from $L^2_{comp}$ to $H^2_{loc}$) is Hahn meromorphic at zero, i.e. it has a convergent expansion in terms of powers of $\lambda^2$ and $\-\log \lambda$
(see Appendix \ref{hahnapp} for the precise definition of this notion).
\begin{theorem} \label{maintheorem3}
If $d$ is even and $d \geq 4$ then the resolvent, as an operator
from $L^2_{comp}$ to $H^2_{loc}$, takes  for small $| \lambda |$ and bounded $|\arg \lambda|$ the form
$$
  R_\lambda=-\frac{P}{\lambda^2} + B_{-1} (-\log \lambda) + B(\lambda),
 $$
 where $B(\lambda)$ is Hahn-holomorphic and $B_{-1}= \frac{1}{4} P^{(1)}$ if $d=4$, and $B_{-1}=0$ if $d\geq 6$.
\end{theorem}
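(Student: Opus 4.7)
The plan is to adapt the parametrix-and-Fredholm argument underlying the odd-dimensional Theorem \ref{maintheorem2} to the Hahn-meromorphic framework of Appendix \ref{hahnapp}, and then identify the logarithmic coefficient by comparing with Theorem \ref{maintheorem1}. First I would build a parametrix $\tilde R_\lambda$ for $(\Delta_{rel}-\lambda^2)^{-1}$ out of the free Euclidean resolvent $R^0_\lambda$ and a compactly supported interior parametrix $Q$: with cutoffs $\chi_1,\chi_2$ equal to one near $K$, set $\tilde R_\lambda=\chi_1 R^0_\lambda\chi_2+(1-\chi_1')Q(1-\chi_2')$. For even $d$, the kernel of $R^0_\lambda$ is an explicit $\lambda$-family of Hankel-function expressions whose small-$\lambda$ expansion on any bounded region has the form
\begin{equation*}
 R^0_\lambda(x,y)=\sum_{k\geq 0}\bigl(a_k(x,y)+b_k(x,y)(-\log\lambda)\bigr)\lambda^{2k},
\end{equation*}
with the first nonzero $b_k$ arising at order $\lambda^{d-2}$. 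The remainder $K(\lambda)=(\Delta_{rel}-\lambda^2)\tilde R_\lambda-\Id$ is then compactly supported, compact on $L^2$, and Hahn-holomorphic near $\lambda=0$.

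Next, I would invert $\Id+K(\lambda)$ using the Hahn-holomorphic Fredholm theorem from Appendix \ref{hahnapp}, giving that $R_\lambda:L^2_{\compp}\to H^2_{\loc}$ is Hahn-meromorphic at $\lambda=0$. Exactly as in the odd case, the spectral theorem forces the singular part to be $-P/\lambda^2$, because $0$ is the only $L^2$-eigenvalue at the bottom of the spectrum with eigenprojection $P$. A Neumann series $R_\lambda=\tilde R_\lambda\sum_{n\geq 0}(-K(\lambda))^n$, taken in the Hahn-holomorphic sense, then expresses every remaining coefficient in the Hahn expansion of $R_\lambda$ in terms of the coefficients of $\tilde R_\lambda$ and $K$.

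The main task, and the main obstacle, is isolating the coefficient of $-\log\lambda$. I would do this by matching with Theorem \ref{maintheorem1}, whose expansion of $E_\lambda(\Phi)$ is proved independently by direct asymptotic analysis and is therefore available here. Pairing a compactly supported test section $\phi$ with $R_\lambda\phi$ and using the spectral resolution of $\Delta_{rel}$ to express $R_\lambda$ through the generalised eigenfunctions, one can read off the $-\log\lambda$ coefficient of $R_\lambda$ from the $-\log\lambda$ coefficients in the expansion of $E_\lambda$. For $d=4$ the only contribution at the relevant order comes from the $\ell=1$ term in Theorem \ref{maintheorem1}, whose prefactor $\tfrac14(d-2+2\ell)C_{d,\ell}$ at $d=4$, $\ell=1$, assembled through the definition \eqref{Pl} of $P^{(1)}$, yields $B_{-1}=\tfrac14 P^{(1)}$. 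For $d\geq 6$ every $-\log\lambda$ term in Theorem \ref{maintheorem1} sits at order at least $\lambda^{(d-3)/2}$ and cannot contribute at order $\lambda^0$, so $B_{-1}=0$. The bookkeeping of which Hahn branches can contribute at each order, while passing between the weighted-$L^2$ asymptotics of $E_\lambda$ at infinity and the $L^2_{\compp}\to H^2_{\loc}$ expansion of $R_\lambda$, is handled by fixing compactly supported test sections and working uniformly in a sector $|\arg\lambda|<\pi-\eps$ of the logarithmic cover on which all Hahn expansions converge.
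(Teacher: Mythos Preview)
There are two genuine gaps in your proposal.

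\textbf{First, the singular structure is not controlled by the spectral theorem alone.} Your claim that ``the spectral theorem forces the singular part to be $-P/\lambda^2$'' is too quick. The general Hahn-meromorphic form the gluing argument yields is
\[
 R_\lambda = -\frac{1}{\lambda^2}\sum_{k\geq 0} B_{-2,k}(-\log\lambda)^{-k} + \sum_{k=1}^{L} B_{-k}(-\log\lambda)^{k} + B(\lambda),
\]
and the self-adjoint resolvent bound $\|R_\lambda\|\leq |\Im(\lambda^2)|^{-1}$ does \emph{not} by itself kill the terms $B_{-2,k}$ for $k\geq 1$ (potential zero-resonance contributions) nor powers $(-\log\lambda)^k$ with $k\geq 2$. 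The paper rules these out by a bootstrapping argument: it passes back and forth between the resolvent and $E_\lambda$ via Stone's formula $(R_\lambda-R_{-\lambda})f=\frac{\rmi}{2\lambda}\sum_\nu E_\lambda(\Phi_\nu)\langle f,E_{\overline\lambda}(\Phi_\nu)\rangle$, and uses the structural input of Corollary~\ref{supercor} (closed/co-closed harmonic forms satisfying relative boundary conditions have no $\ell=0$ term in their multipole expansion) to show that any putative resonance coefficient $f_\nu$ would have to vanish. This is the substantive step in even dimensions $\geq 4$ and your proposal omits it entirely.

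\textbf{Second, invoking Theorem~\ref{maintheorem1} is circular.} In the paper the even-dimensional expansions of $E_\lambda(\Phi)$ in Theorem~\ref{maintheorem1} are derived \emph{after} and \emph{from} the resolvent expansion (Theorem~\ref{resolventexpdg4}), via $E_\lambda(\Phi)=\chi\tilde j_\lambda(\Phi)-R_\lambda[\Delta_p,\chi]\tilde j_\lambda(\Phi)$ and Lemma~\ref{superlemma}. So you cannot use Theorem~\ref{maintheorem1} as independent input to identify $B_{-1}$. The paper instead computes $B_{-1}$ directly from $R_\lambda-R_{-\lambda}=\rmi\pi B_{-1}+o(1)$ and the leading term $g_\nu=\lim_{\lambda\to 0}\lambda^{-1/2}E_\lambda(\Phi_\nu)$, which in $d=4$ is obtained from the already-established form of the resolvent and Lemma~\ref{superlemma}, not from Theorem~\ref{maintheorem1}.
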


We now summarise the results for the two dimensional case. 

\begin{theorem} \label{maintheorem22}
 Suppose that $d=2$ and either $p=0$ or $p=2$. Then the resolvent, as an operator
from $L^2_{comp}$ to $H^2_{loc}$, takes for small $| \lambda |$ and bounded $|\arg \lambda|$ the form
$$
   R_\lambda=B_{-1} (-\log \lambda) + B(\lambda),
 $$
 where $B(\lambda)$ is Hahn-holomorphic. If $p=0$ and $\partial \calO=\emptyset$ then $B_{-1} = \langle \cdot , 1 \rangle 1 $, where $1$ is a constant function one.
 If $\partial \calO \not=\emptyset$ then $B_{-1}=0$. 
 In case $p=2$ we have $B_{-1} = \langle \cdot , *1 \rangle *1$, where $*1$ is the volume form.
\end{theorem}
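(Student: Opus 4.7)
The plan is to reduce the $p=2$ case to a scalar problem by Hodge duality and then to analyze the scalar $d=2$ resolvent by gluing the free Euclidean resolvent to an interior parametrix. Since $* \Delta_{2,\mathrm{rel}} = \Delta_{0,\mathrm{abs}} *$ and $**1 = 1$, the Hodge star intertwines the resolvent of $\Delta_{2,\mathrm{rel}}$ with that of the scalar absolute-BC Laplacian $\Delta_{0,\mathrm{abs}}$, and under this intertwining the volume form $*1$ corresponds to the constant function $1$; the $p=2$ claim therefore follows from the analogous scalar Neumann-type statement. The cohomological identifications of Section~\ref{sec:1.1} give $\mathcal{H}^0_{\mathrm{rel}}(M) = \mathcal{H}^2_{\mathrm{rel}}(M) = \{0\}$ in $d=2$, so the projection $P$ is zero and no $\lambda^{-2}$ term can appear in the expansion; the most singular contribution to $R_\lambda$ can therefore come only from the logarithmic threshold singularity of the free two-dimensional resolvent.

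The core construction is a parametrix. Pick cutoff functions $\chi, \tilde\chi \in \CnI(X)$ with $\chi = 1$ on a neighbourhood of $K$ and $\tilde\chi \chi = \chi$, set $\chi_\infty = 1-\chi$, and let $R_0(\lambda^2)$ denote the free scalar resolvent on $\R^2$ with integral kernel $\tfrac{\rmi}{4} H_0^{(1)}(\lambda|x-y|)$. Let $Q_{\mathrm{int}}(\lambda)$ be an interior parametrix for $\Delta - \lambda^2$ carrying the chosen boundary condition, for instance obtained by closing the obstacle region into a smooth compact manifold. The glued parametrix
\begin{gather*}
 P(\lambda) = \tilde\chi\, Q_{\mathrm{int}}(\lambda)\, \chi + R_0(\lambda^2)\, \chi_\infty
\end{gather*}
satisfies $(\Delta_{\mathrm{rel}} - \lambda^2) P(\lambda) = \Id + K(\lambda)$ with $K(\lambda)$ compactly supported, compact on $L^2$, and Hahn-holomorphic in $\lambda$. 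The classical small-argument expansion
\begin{gather*}
 \tfrac{\rmi}{4} H_0^{(1)}(z) = -\tfrac{1}{2\pi} \log z + h(z), \qquad h \text{ holomorphic at } 0,
\end{gather*}
shows that as a map $L^2_{\mathrm{comp}} \to H^2_{\mathrm{loc}}$ the free resolvent has a Hahn expansion whose most singular term is the rank-one operator $\tfrac{1}{2\pi} \sce{\cdot}{1} 1$ multiplied by $-\log\lambda$; the same is therefore true for $P(\lambda)$.

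I would then invert $\Id + K(\lambda)$ using the Hahn-Fredholm theorem of Appendix~\ref{hahnapp} and write $R_\lambda = P(\lambda) (\Id + K(\lambda))^{-1}$ as a Hahn-meromorphic object. The coefficient $B_{-1}$ is controlled by the kernel and cokernel of $\Id + K(0)$ on suitable weighted spaces, which encode the presence or absence of a zero-energy resonance. When $p=0$ and $\partial\calO \neq \emptyset$ (Dirichlet), the constant function does not satisfy the boundary condition and there is no resonance; the obstruction created by the rank-one $-\log\lambda$ piece of $P(\lambda)$ is then cancelled by a compensating $-\log\lambda$ term produced by $(\Id + K(\lambda))^{-1}$, giving $B_{-1} = 0$. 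When $p = 0$ and $\partial\calO = \emptyset$, or in the $p=2$ case after Hodge dualisation, the constant (respectively $*1$) is a genuine zero-resonance, no such cancellation takes place, and the Schur-complement computation performed inside the Hahn-Fredholm inversion identifies the surviving rank-one operator as $\sce{\cdot}{1}1$ (respectively $\sce{\cdot}{*1}*1$).

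The principal obstacle is the precise coefficient bookkeeping in this last step: both $P(\lambda)$ and $(\Id + K(\lambda))^{-1}$ contribute $-\log\lambda$ pieces, and one must identify exactly which rank-one operator survives in their product. A Grushin/Feshbach-type reduction splitting off the one-dimensional obstruction (the constant function, or $*1$) explicitly is the cleanest route, and combined with the Hahn-holomorphic calculus of Appendix~\ref{hahnapp} it reduces the identification of $B_{-1}$ to a finite-dimensional algebraic computation, keeping the $d=2$ subtleties familiar from \cite{bolle1988threshold,jensen2001unified} under control.
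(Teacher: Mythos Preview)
Your approach is genuinely different from the paper's. The paper does \emph{not} carry out a Grushin/Schur-complement reduction of the glued parametrix. Instead it works entirely through the generalised eigenfunctions $E_\lambda(\Phi)$ and Stone's formula $(R_\lambda-R_{-\lambda})f=\frac{\rmi}{2\lambda}\sum_\nu E_\lambda(\Phi_\nu)\langle f,E_{\overline\lambda}(\Phi_\nu)\rangle$, and its key device is the inter-degree identity $\der E_\lambda(\Phi)=-\rmi\lambda E_\lambda(\der r\wedge\Phi)$ (Proposition~\ref{difftheo}). This lets the paper bootstrap: a hypothetical singular expansion coefficient $f$ in $E_\lambda(\Phi)$ for $p=0$ would force a stronger singularity in $E_\lambda(\der r\wedge\Phi)$ for $p=1$, which is forbidden by the universal $\lambda^{-2}$ bound on the $p=1$ resolvent; hence $\der f=0$, and then the Hankel multipole structure and the maximum principle finish the job. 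The identification of $B_{-1}$ with $\langle\cdot,1\rangle 1$ (or $*1$) then comes from comparing coefficients in the Stone formula rather than from a Schur complement. Your route via Jensen--Nenciu style threshold analysis is perfectly viable and closer to the potential-scattering literature, but it trades the paper's simultaneous control of $E_\lambda$ and $A_\lambda$ for a more self-contained resolvent argument.

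Two technical points need repair. First, your parametrix $P(\lambda)=\tilde\chi Q_{\mathrm{int}}\chi+R_0\chi_\infty$ is missing an outer cutoff on the free piece: $R_0(\lambda^2)\chi_\infty f$ is a function on all of $\R^2$ and cannot be transplanted to $M$ inside $K$ where the metric is non-Euclidean; you need $\psi R_0\chi_\infty$ with $\psi$ supported in the Euclidean region (compare equation~\eqref{gluing}). Second, once you insert that cutoff, the error $K(\lambda)$ picks up the commutator term $[\Delta,\psi]R_0\chi_\infty$, which inherits the $(-\log\lambda)$ singularity of $R_0$; so $K(\lambda)$ is Hahn-\emph{meromorphic}, not Hahn-holomorphic as you claim. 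This is not fatal---the Hahn-Fredholm theorem still applies and the singular part is rank one---but it means both factors in $R_\lambda=P(\lambda)(\Id+K(\lambda))^{-1}$ carry $\log\lambda$ terms, and you must also argue that no higher powers $(-\log\lambda)^k$, $k>1$, survive. In the paper this is Lemma~\ref{abovelemma3}, proved via the inter-degree bootstrap; in your framework it should fall out of the rank-one structure of the obstruction, but you have not said how.
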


The results in the case of one-forms in dimension two are rather complicated and require the definition of certain natural functions.
First note that if $\Psi$ is a linear function on $\R^2$ then $\Phi=d \Psi$ is a harmonic one form of degree zero. It turns out that there is a harmonic function
$u(\Psi) \in C^\infty(M)$ that satisfies relative boundary conditions such that
$$
 u(\Psi)(r,\theta) = \Psi(r,\theta) + O(1), \quad r \to \infty.
$$
By the maximum principle $u(\Psi)$ is uniquely determined up to a constant and therefore $\varphi(\Phi) = d u(\Psi)$ is well-defined. 
Note that $\varphi(\Phi) \in C^\infty(M;T^*M)$ is a one-form that satisfies relative boundary conditions and,  since the multipole expansion (see Appendix \ref{Amulti}) can be differentiated, we have
$$
 \varphi(\Phi) = \Phi + O\left(\frac{1}{r} \right).
$$
 Let $\Phi_0$ be the constant function $\Phi_0 = \frac{1}{2\pi}$.
In case there is a boundary, i.e. $\partial \calO \not= \emptyset$, there exists a unique harmonic function $g(\Phi_0)$ satisfying Dirichlet boundary conditions
 such that
 $$
  g(\Phi_0) =   \left(\log \frac{r}{2}\right) \Phi_0 + \beta \Phi_0 + O\left(\frac{1}{r}\right)
 $$ 
for $r$ sufficiently large. We then have $\psi(\Phi_0)=\der g(\Phi_0)$ is closed and co-closed, satisfies relative boundary conditions, and 
 $$
  \psi(\Phi_0) =  \frac{\der r}{r} \Phi_0 + O\left(\frac{1}{r^2}\right),
 $$
 where we have again used that the multipole expansion may be differentiated.

\begin{theorem} \label{maind2ge}
 Suppose that $p=1$ and $d=2$.  Let $\Phi$ be a spherical harmonic of degree $\ell$. Let $\psi = \psi(\Phi_0)$ in case $\partial \calO \not=\emptyset$ and define 
 $\psi=0$ otherwise.
 Then, for $| \lambda |$ small and bounded $|\arg \lambda|$ we have
 \begin{itemize}
  \item if $\ell=0$ we have $E_\lambda(\Phi)= \sqrt{2 \pi} \varphi(\Phi) \lambda^{\frac{1}{2}} + O_{C^\infty(M)}(\frac{\lambda^{1/2}}{-\log \lambda}).$
  \item if $\ell \geq 1$ we have
  \begin{gather*}
 E_\lambda(\Phi) = - 2\ell C_{2,\ell} \lambda^{\ell -\frac{3}{2}} \sum_{j=1}^N a_j(\Phi) u_j - 2\ell C_{2,\ell} \lambda^{\ell -\frac{3}{2}}  \frac{1}{-\log \lambda + \frac{\rmi \pi}{2} + \beta - \gamma} a_{\psi}(\Phi) \psi \\
+ 2 \ell C_{2,\ell} \lambda^{\ell +\frac{1}{2}} (-\log \lambda) \left( \frac{1}{4}\sum_{j,k=1}^N 
 a_{kj}^{(2)}a_j(\Phi)  u_k + \sum_{\ell_\nu=0} a_{\varphi(\Phi_\nu)}(\Phi) \varphi(\Phi_\nu)  \right) + O_{C^\infty(M)}( \lambda^{\ell +\frac{1}{2}}).
\end{gather*}
 \end{itemize}
Note that $a_j(\Phi)=0$ if $\ell=1$.
\end{theorem}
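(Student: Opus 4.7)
My plan is to construct $E_\lambda(\Phi)$ by a parametrix glued to a Hahn meromorphic expansion of the resolvent on one-forms. Let $E^{\mathrm{free}}_\lambda(\Phi)$ denote the free outgoing generalised eigenfunction on $\R^2$ determined by $\Phi$ (a linear combination of products of Bessel functions $J_\ell(\lambda r)$ with the $\Phi_\nu$), let $\chi$ be a smooth cutoff equal to one outside a large ball in $X\setminus K$ and zero near $\calO$, and write
$$
 E_\lambda(\Phi) = \chi E^{\mathrm{free}}_\lambda(\Phi) - R_\lambda\bigl[(\Delta-\lambda^2)(\chi E^{\mathrm{free}}_\lambda(\Phi))\bigr].
$$
The source $(\Delta-\lambda^2)(\chi E^{\mathrm{free}}_\lambda(\Phi))$ lies in $L^2_{\compp}$ and depends Hahn-holomorphically on $\lambda$, so its image under $R_\lambda$ can be analysed through the resolvent expansion. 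Uniqueness of the outgoing solution with the prescribed radiation pattern then identifies the right-hand side with $E_\lambda(\Phi)$.

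Next, the small-$\lambda$ expansion of $E^{\mathrm{free}}_\lambda(\Phi)$ comes from the Taylor series of $J_\ell$ together with the Hankel expansion
$$
\frac{i}{4}H^{(1)}_0(z) = -\frac{1}{2\pi}\bigl(\log(z/2)+\gamma\bigr)+\frac{i}{4}+O(z^2\log z).
$$
For $\ell\ge 1$ the leading part of $\chi E^{\mathrm{free}}_\lambda(\Phi)$ is a multiple of $\lambda^{\ell+1/2}$ times a cutoff of $r^\ell \Phi(\theta)$, and for $\ell=0$ it is a multiple of $\lambda^{1/2}$ times a cutoff of $\Phi$. The forms $\varphi(\Phi_\nu)$ and the resonant state $\psi$ defined just before the theorem are precisely the globalisations on $M$ of these polynomial and logarithmic tails, adjusted to satisfy relative boundary conditions; this identification immediately accounts for the $\sqrt{2\pi}\varphi(\Phi)\lambda^{1/2}$ leading behaviour in the $\ell=0$ case.

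The heavier ingredient is the Hahn meromorphic expansion of $R_\lambda$ on one-forms in $d=2$. Decomposing via the Hodge splitting reduces most of the analysis to the scalar resolvents on $0$- and $2$-forms treated in Theorem \ref{maintheorem22}. The $-P/\lambda^2$ pole supplies the leading $\lambda^{\ell-3/2}$ term once $a_j(\Phi)$ is recognised, through a Green's identity on a large sphere, as the pairing of the multipole coefficients of $u_j$ with the polynomial tail of $E^{\mathrm{free}}_\lambda$. When $\partial\calO\ne\emptyset$, the zero-energy resonance represented by $\psi$ forces the resolvent, restricted to the orthogonal complement of the $L^2$-kernel, to acquire a Hahn meromorphic term of the form $|\psi\rangle\langle\psi|/(-\log\lambda+\tfrac{i\pi}{2}+\beta-\gamma)$; the precise constant is read off by matching the asymptotics $\log(r/2)+\beta$ of $g(\Phi_0)$ against the free kernel expansion above. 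Substituting into the parametrix yields the resonant term of the statement, while a further step in the expansion (in the spirit of Theorem \ref{maintheorem3}) produces the $\lambda^{\ell+1/2}(-\log\lambda)$ correction involving $\tfrac14 P^{(2)}$ and the bounded harmonic forms $\varphi(\Phi_\nu)$ with $\ell_\nu=0$.

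The hardest step is the Grushin/Lyapunov--Schmidt reduction at the two-dimensional resonance: one must show that, modulo higher-order Hahn-holomorphic errors, inverting $(\Delta-\lambda^2)$ on the finite-dimensional almost-harmonic subspace spanned by $\{\psi\}\cup\{\varphi(\Phi_\nu)\}_{\ell_\nu=0}$ produces an effective matrix whose scalar log entry reproduces exactly $-\log\lambda+\tfrac{i\pi}{2}+\beta-\gamma$. Once this reduction is established, reading off the stated coefficients is an algebraic exercise with the pairings $a_j(\Phi)$, $a_\psi(\Phi)$ and $a_{\varphi(\Phi_\nu)}(\Phi)$; the vanishing $a_j(\Phi)=0$ at $\ell=1$ then follows from the fact that the multipole tail of an $L^2$ harmonic one-form in two dimensions cannot carry a $1/r$ component.
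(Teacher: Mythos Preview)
Your outline has the right skeleton: the parametrix representation
\[
 E_\lambda(\Phi)=\chi\tilde j_\lambda(\Phi)-R_\lambda(\Delta-\lambda^2)(\chi\tilde j_\lambda(\Phi))
\]
together with a Hahn-meromorphic expansion of $R_\lambda$ is exactly how the paper proceeds (this is Proposition~\ref{besseljefunction} combined with Corollaries~\ref{cor327} and~\ref{cor329}), and your reduction of the $\ell=0$ case to the scalar problem via $\der E_\lambda(\Psi)=-\rmi\lambda E_\lambda(\der r\wedge\Psi)$ mirrors Corollary~\ref{ellzeroterm} and Propositions~\ref{prop03}--\ref{propextra}. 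Your identification of why $a_j(\Phi)=0$ at $\ell=1$ is also correct.

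Where you diverge from the paper is in the derivation of the resonant piece of the one-form resolvent. You propose a Grushin/Lyapunov--Schmidt reduction on the finite-dimensional subspace spanned by $\psi$ and the $\varphi(\Phi_\nu)$, with the constant $-\log\lambda+\tfrac{\rmi\pi}{2}+\beta-\gamma$ read off from an effective matrix. The paper instead argues by \emph{bootstrap via Stone's formula}: it first feeds the $p=0$ expansion (Proposition~\ref{prop03}) through $\der$ to obtain the leading behaviour of $E_\lambda(\Phi_0\,\der r)$, then uses \eqref{diffres} to infer that the singular part of $R_\lambda$ beyond $-P/\lambda^2$ must be $\lambda^{-2}h\bigl(\tfrac{1}{-\log\lambda+\rmi\pi/2}\bigr)Q$ for some holomorphic $h$. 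Self-consistency of \eqref{diffres} then forces $h$ to satisfy a functional equation whose unique solution with $h(t)=t+O(t^2)$ is $h(t)=t/(1+\alpha t)$; the constant $\alpha$ is finally pinned down as $\beta-\gamma$ by comparing the Hankel expansion of $E_\lambda(\Phi_0)$ with the known asymptotics of $g(\Phi_0)$. This functional-equation argument is the content of Theorem~\ref{resexpd2p1}.

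Both routes should lead to the same answer; the paper's approach has the virtue of never leaving the spectral framework already built (Stone's formula and the Hankel representation of Proposition~\ref{EoutofHankel}), while your Grushin scheme is more in line with the Jensen--Nenciu or Boll\'e--Gesztesy--Danneels tradition for threshold analysis. Note, however, that you have not actually carried out the reduction: producing the exact effective inverse $(-\log\lambda+\tfrac{\rmi\pi}{2}+\beta-\gamma)^{-1}$ and the $\tfrac14 P^{(2)}+T$ coefficient of the $(-\log\lambda)$ term requires careful bookkeeping of the off-diagonal couplings between $\psi$, the $\varphi(\Phi_\nu)$, and the $L^2$-kernel, and this is precisely where the work lies. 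As written, your proposal is a credible plan rather than a proof.
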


\begin{theorem} \label{maintheorem5}
 If $d=2$ and $p=1$ the resolvent, as an operator
from $L^2_{comp}$ to $H^2_{loc}$, has an expansion  for small $| \lambda |$ and bounded $|\arg \lambda|$ of the form
 $$
   R_\lambda = -\frac{P}{\lambda^2} - \frac{1}{\lambda^2} \frac{1}{-\log \lambda + \frac{\rmi \pi}{2} + \beta - \gamma} Q + B_{-1} (-\log \lambda) + B(\lambda),
 $$
 where $B(\lambda)$ is Hahn holomorphic and $\gamma$ is the Euler-Mascheroni constant. 
 Here $Q=0$ in case $\partial \calO=\emptyset$, and $Q =  \langle \cdot, \psi(\Phi_0) \rangle \psi(\Phi_0)$ if $\partial \calO \not= \emptyset$. Moreover, we have 
 $$
  B_{-1}= \frac{1}{4} P^{(2)} +  \sum\limits_{\ell_\nu=0}  \langle \cdot, \varphi(\Phi_\nu) \rangle \varphi(\Phi_\nu).
 $$
\end{theorem}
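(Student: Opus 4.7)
The plan is to apply the analytic Fredholm theory for Hahn-meromorphic families, adapted to the $d=2$ logarithmic free resolvent and to the presence of both an $L^2$-kernel and a genuine resonant state. Concretely, I write
$R_\lambda = R_0(\lambda) - R_0(\lambda)\, V\, (I+V R_0(\lambda))^{-1}\, V\, R_0(\lambda),$
where $R_0(\lambda)$ is the free one-form resolvent on $\Reell^2$ and $V$ is a compactly supported glueing/comparison operator matching $M$ with $\Reell^2 \setminus \tilde K$. The small-$\lambda$ expansion of $R_0(\lambda)$ on one-forms in $d=2$ is Hahn-meromorphic with a leading logarithmic term proportional to $-\log \lambda + \gamma - \rmi\pi/2 - \log 2$ (via the small-argument expansion of the Hankel function), which is ultimately the source of the constants inside the denominator of the resonance term in the statement.

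I then invert $I+VR_0(\lambda)$ in the Hahn-meromorphic category using the finite-rank perturbation theory of \cite{muller2014theory}. The kernel of the limiting operator at $\lambda=0$ decomposes into two invariant subspaces: the $L^2$-kernel $\mathrm{range}(P)$, and, when $\partial \calO \neq \emptyset$, the one-dimensional span of the resonant state $\psi(\Phi_0)$ constructed above. Block-diagonalising against this splitting is essential, because on $\mathrm{range}(P)$ the limiting operator vanishes to order $\lambda^2$, producing the clean pole $-P/\lambda^2$, while along the resonance direction it vanishes only to logarithmic order, and its Hahn inverse is therefore not a pole but the slow function $(-\log\lambda+c)^{-1}$. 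The constant $c = \rmi\pi/2 + \beta - \gamma$ is forced by matching the asymptotic $\log(r/2)+\beta$ of $g(\Phi_0)$ against the logarithmic piece of $R_0(\lambda)$; the resulting rank-one coefficient is $Q = \langle\cdot,\psi(\Phi_0)\rangle \psi(\Phi_0)$.

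The coefficient $B_{-1}$ is then read off from the next order of the Hahn-Neumann expansion. The contribution $\tfrac{1}{4}P^{(2)}$ arises from pairing the $\ell=1$ multipole coefficients of the $L^2$-harmonic one-forms with the $\lambda^2 \log\lambda$ term of $R_0(\lambda)$, exactly analogous to how $\tfrac{1}{4}P^{(1)}$ appears in Theorem \ref{maintheorem3} in $d=4$, but with the angular index shifted because on one-forms in $d=2$ the relevant multipole mode is $\ell=1$ in the expansion of Appendix \ref{Amulti}. The remaining piece $\sum_{\ell_\nu=0}\langle\cdot,\varphi(\Phi_\nu)\rangle\varphi(\Phi_\nu)$ records the contribution of the $\ell=0$ bounded (non-$L^2$) harmonic one-forms $\varphi(\Phi_\nu)$, whose presence is already visible in the $\lambda^{\ell+1/2}(-\log\lambda)$ coefficient of Theorem \ref{maind2ge}. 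Either a direct expansion or, alternatively, inserting Theorem \ref{maind2ge} into Stone's formula $dE(\lambda^2) = \tfrac{1}{\pi}\Im R_\lambda$ provides an independent cross-check on all constants.

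The main obstacle is precisely the mixed singular structure: one must invert a finite-rank Hahn-meromorphic perturbation of the identity whose degeneracy at $\lambda=0$ has two different orders along two different invariant subspaces, one producing a genuine pole and the other producing the non-standard slow term $\lambda^{-2}(-\log\lambda+c)^{-1}$. Keeping track of the correct block structure, identifying the resonance direction cleanly, and pinning down the exact constant $\beta-\gamma+\rmi\pi/2$ in the denominator by asymptotic matching are the delicate steps; once these are executed, the Hahn-holomorphic remainder $B(\lambda)$ emerges from the convergent tail of the Neumann series.
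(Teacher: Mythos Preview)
Your proposal outlines a genuinely different route from the paper's. The paper does \emph{not} invert a Fredholm family directly on one-forms. Instead it bootstraps from the scalar case: it first establishes the resolvent structure for $p=0$ and $p=2$ (where the maximum principle gives sharp control, Propositions~\ref{prop02}--\ref{prop04}), then transfers this to $p=1$ via the identities $\der E_\lambda(\Phi)=-\rmi\lambda E_\lambda(\der r\wedge\Phi)$ and $\delta E_\lambda(\Phi)=\rmi\lambda E_\lambda(\iota_{\der r}\Phi)$ of Proposition~\ref{difftheo}. The main engine is Stone's formula \eqref{diffres}: knowing the expansion of $E_\lambda(\Phi)$ one reads off $R_\lambda-R_{-\lambda}$ term by term. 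The resonance coefficient is obtained not by a Grushin reduction but by positing a form $-\frac{1}{\lambda^2}h\bigl(\frac{1}{-\log\lambda+\rmi\pi/2}\bigr)Q$ and deriving from self-consistency of \eqref{diffres} a functional equation for $h$ which forces $h(t)=t/(1+\alpha t)$; the constant $\alpha=\beta-\gamma$ is then fixed by matching the Hankel expansion of $E_\lambda(\Phi_0)$ (Theorem~\ref{resexpd2p1}). The $B_{-1}$ term is likewise read off from the $\ell_\nu=0,1,2$ contributions to \eqref{diffres}.

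Your Fredholm--block-diagonalisation approach is viable in principle and closer in spirit to Jensen--Nenciu or Murata, but two points need repair. First, the identity $R_\lambda=R_0-R_0V(I+VR_0)^{-1}VR_0$ is the potential-scattering formula; in the present geometric/obstacle setting the correct parametrix is the gluing construction \eqref{gluing}, and the operator to be inverted is $1+Q_\lambda$ with $Q_\lambda$ built from commutators $[\Delta,\chi_i]$, not a multiplicative $V$. Second, your description of $\tfrac14 P^{(2)}$ as coming from ``$\ell=1$ multipole coefficients'' is off: $P^{(2)}$ is built from the $\ell=2$ multipole data $a_{kj}^{(2)}$, and the reason $P^{(1)}$ is absent here is that $P^{(1)}=0$ for $d=2,\,p=1$ (used explicitly in the paper's proof). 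What your approach would buy is a more self-contained argument that does not route through other form degrees; what the paper's approach buys is that the hard analytic work (absence of zero resonances, identification of constants) is done in the scalar case where the maximum principle is available, and the one-form result then follows by differentiation.
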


The form $\psi(\Phi_0)$ is, by construction, a cohomology class that generates the image of the map $H^0(\mathbb{S}^{d-1}) \to H^1(M,\partial \calO)$. This image is not detected by $L^2$-cohomology theory in the two dimensional case and the above shows that this cohomology class  appears as a zero-energy resonant state instead.

Each of the expansions of the generalised eigenfunctions can be used to derive an expansion of the scattering matrix and the scattering amplitude.
Section \ref{ScatterSectionExp} describes the detailed expansion depending on the dimension. The leading order behavior is independent of the dimension and can be summarised into the following theorem.

\begin{theorem} \label{scatterexp}
If $d \geq 3$ and $\Phi$ is a spherical harmonic of degree $\ell$, then
 \begin{gather*}
 \langle A_\lambda \Phi, \Phi_\nu \rangle = \left( -\frac{\rmi}{2} (d-2+2\ell) (d-2+2\ell_\nu)C_{d,\ell} \overline{C_{d,\ell_\nu} }\sum_{j=1}^N a_j(\Phi) \overline{a_j(\Phi_\nu)} \right) \lambda^{\ell +\ell_\nu + d-4} + r(\lambda),
  \end{gather*}
  where for small $| \lambda |$ and bounded $|\arg \lambda|$ we have
  \begin{itemize}
  \item  $r(\lambda)=O( \lambda^{\ell +\ell_\nu + d-3})$ if $d=3$, 
   \item $r(\lambda)=O( \lambda^{\ell +\ell_\nu + d-2}) (-\log \lambda)$ if $d=4$,
   \item $r(\lambda)=O( \lambda^{\ell +\ell_\nu + d-2})$ if $d>4$.
  \end{itemize}
  If $P^{(\ell)}=0$, then $ \langle A_\lambda \Phi, \Phi_\nu \rangle=O(\lambda^{\ell+\ell_\nu +d-2})$, in particular $\| A_\lambda \|_{L^2\to H^s} = O(\lambda^{d-2})$ for any $s \in \R$ and $| \lambda |$ small and bounded $|\arg \lambda|$.
\end{theorem}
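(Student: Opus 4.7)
The plan is to derive the expansion of $\langle A_\lambda \Phi, \Phi_\nu\rangle$ from Theorem \ref{maintheorem1} via a bilinear representation formula. Such a formula is obtained by applying Green's identity on a ball of large radius $R$ to the pair $E_\lambda(\Phi)$ and $\overline{E_\lambda^0(\Phi_\nu)}$, where $E_\lambda^0$ is the free Euclidean eigenfunction extended to $M$ via a cutoff $\chi$ that equals $1$ outside a large ball containing $K\cup\calO$ and $0$ on a neighbourhood of $\calO$. Substituting the defining oscillatory asymptotics into the surface integral at $\partial B_R$ extracts the $\ee^{\pm\rmi\lambda r}/r^{(d-1)/2}$ coefficients; the $\partial\calO$ contributions drop out because of the support properties of $\chi$; and in the limit $R\to\infty$ one obtains an identity of the form
\[
\langle A_\lambda \Phi, \Phi_\nu\rangle_{L^2(\sphere)} = K(\lambda)\, \langle [\Delta,\chi] E_\lambda(\Phi), \overline{E_\lambda^0(\Phi_\nu)}\rangle_{L^2(M)},
\]
where $K(\lambda)$ is an explicit scalar of order $\lambda^{-1}$ fixed by the phase conventions $\ee^{\pm\rmi\pi(d-1)/4}$.

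Next I substitute Theorem \ref{maintheorem1} together with the small-$\lambda$ Bessel expansion $E_\lambda^0(\Phi_\nu)(x) = C_{d,\ell_\nu}\lambda^{\ell_\nu+(d-1)/2} r^{\ell_\nu}\Phi_\nu(\theta) + O(\lambda^{\ell_\nu+(d+3)/2})$. The leading contribution to the right-hand side arises from pairing $-(d-2+2\ell)C_{d,\ell}\lambda^{\ell+(d-5)/2}\sum_j a_j(\Phi) u_j$ against $[\Delta,\chi]$ applied to $\overline{C_{d,\ell_\nu}}\lambda^{\ell_\nu+(d-1)/2}r^{\ell_\nu}\Phi_\nu$. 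Since $\Delta u_j = 0$ and $\Delta(r^{\ell_\nu}\Phi_\nu) = 0$, the integral $\langle [\Delta,\chi] u_j, r^{\ell_\nu}\Phi_\nu\rangle$ (finite because $[\Delta,\chi]$ is compactly supported) equals, by Green's identity, the boundary limit $\lim_{R'\to\infty}\int_{\partial B_{R'}}(u_j\partial_r(r^{\ell_\nu}\Phi_\nu) - (r^{\ell_\nu}\Phi_\nu)\partial_r u_j)\,dS$. Using the multipole expansion $u_j=\sum_\mu a_{\mu,j}r^{-(\ell_\mu+d-2)}\Phi_\mu$ together with $a_{\mu,j} = \overline{a_j(\Phi_\mu)}$, only the $\mu=\nu$ mode survives and the limit evaluates to $(d-2+2\ell_\nu)\overline{a_j(\Phi_\nu)}$. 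Assembling powers of $\lambda$ and constants produces exactly the stated leading term $-\frac{\rmi}{2}(d-2+2\ell)(d-2+2\ell_\nu)C_{d,\ell}\overline{C_{d,\ell_\nu}}\sum_j a_j(\Phi)\overline{a_j(\Phi_\nu)}\cdot\lambda^{\ell+\ell_\nu+d-4}$.

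The remainder $r(\lambda)$ comes from inserting the subleading terms of Theorem \ref{maintheorem1} into the same pairing. In $d=3$ the next correction is $O(\lambda^{\ell+(d-3)/2})$, giving $r(\lambda) = O(\lambda^{\ell+\ell_\nu+d-3})$; in $d=4$ it is $O(\lambda^{\ell+(d-1)/2})(-\log\lambda)$, reflecting the Hahn-meromorphic structure of the resolvent (cf.\ Theorem \ref{maintheorem3}), giving $r(\lambda) = O(\lambda^{\ell+\ell_\nu+d-2})(-\log\lambda)$; for $d\geq 5$ it is $O(\lambda^{\ell+(d-1)/2})$, giving $r(\lambda) = O(\lambda^{\ell+\ell_\nu+d-2})$. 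Finally, if $P^{(\ell)} = 0$, then the positivity identity $a^{(\ell)}_{jj} = \sum_{\nu:\ell_\nu=\ell}|a_j(\Phi_\nu)|^2 = 0$ forces $a_j(\Phi) = 0$ for every spherical harmonic $\Phi$ of degree $\ell$. Hence the leading term and the intermediate $\lambda^{\ell+(d-3)/2}$ or $\lambda^{\ell+(d-1)/2}(-\log\lambda)$ contributions (whose coefficients contain the factor $a^{(1)}_{kj}a_j(\Phi)$) all vanish, and only the smooth $O(\lambda^{\ell+(d-1)/2})$ correction contributes, giving $\langle A_\lambda\Phi, \Phi_\nu\rangle = O(\lambda^{\ell+\ell_\nu+d-2})$. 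The uniform operator bound $\|A_\lambda\|_{L^2\to H^s} = O(\lambda^{d-2})$ then follows by summing the componentwise bounds with the standard polynomial $H^s$-norm weighting on $\sphere$ by spherical-harmonic degree.

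The chief obstacle is deriving and justifying the bilinear representation formula in this geometric (non-Schr\"odinger) obstacle-scattering setting, and in particular tracking the normalising constant $K(\lambda)$ through the asymptotic phase conventions $\ee^{\pm\rmi\pi(d-1)/4}$ so as to recover the correct sign and $\lambda$-power in the leading coefficient. Once this formula is in place, the remainder of the argument is algebraic substitution of Theorem \ref{maintheorem1} and careful bookkeeping of error terms across the three dimensional regimes.
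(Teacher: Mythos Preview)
Your approach is correct in spirit but takes a genuinely different route from the paper's. The paper does not use a bilinear Green's-identity representation at this stage. Instead it proves Theorem~\ref{AEexpansion}: on a fixed sphere $\{r=R\}$ in the exterior region one has $E_\lambda(\Phi)=\tilde j_\lambda(\Phi)+\tilde h^{(1)}_\lambda(A_\lambda\Phi)$ (Prop.~\ref{EoutofHankel}); projecting onto $\Phi_\nu$ and dividing by the explicit factor $(-\rmi)^{\ell_\nu}\lambda^{(d-1)/2}h^{(1)}_{d,\ell_\nu}(\lambda R)$ isolates $\langle A_\lambda\Phi,\Phi_\nu\rangle$ directly. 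Substituting the expansion of $E_\lambda(\Phi)$ from Theorem~\ref{maintheorem1} and the small-argument expansion of $(h^{(1)}_{d,\ell_\nu})^{-1}$ then reads off the multipole coefficients $F^{\nu}_{\alpha,\beta}(\Phi)$ of the harmonic leading terms, which are precisely the $\overline{a_j(\Phi_\nu)}$. This bypasses your normalising constant $K(\lambda)$ entirely and avoids a second application of Green's identity. Your approach recovers the same information, since the boundary Wronskian of $\tilde h^{(1)}_\lambda$ against $\tilde j_\lambda$ is exactly what the paper's Hankel inversion encodes; the pairing $\langle[\Delta,\chi]u_j,\,r^{\ell_\nu}\Phi_\nu\rangle=(d-2+2\ell_\nu)\overline{a_j(\Phi_\nu)}$ you compute is Lemma~\ref{superlemma} with the commutator moved to the other slot via $[\Delta,\chi]^*=-[\Delta,\chi]$.

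One point to tighten: your last step, deducing $\|A_\lambda\|_{L^2\to H^s}=O(\lambda^{d-2})$ by ``summing the componentwise bounds,'' is not immediate, because you would need summable decay of the implied constants in $\ell_\mu,\ell_\nu$. The paper avoids this by invoking that $A_\lambda$ is Hahn-holomorphic \emph{as an operator} $L^2\to H^s$ (Corollary~\ref{Hahnmamplitude}); the matrix-element expansions then force every operator-valued Hahn coefficient of order below $\lambda^{d-2}$ to vanish (using $P^{(0)}=0$, Corollary~\ref{supercor2}, so the would-be $\lambda^{d-4}$ and $\lambda^{d-3}$ contributions have zero leading coefficient whenever $\ell=0$ or $\ell_\nu=0$), and the norm bound follows from the convergent operator expansion rather than from a mode sum.
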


The two dimensional case is more involved due to the existence of a zero resonant state when $\partial \calO \not=\emptyset$. In this case we have $\| A_\lambda \|_{L^2\to H^s} = O(\frac{1}{-\log \lambda})$ for any $s \in \R$. Precise expansions depend on the form degree and the presence of an obstacle. 

\begin{theorem}  \label{maintheorem6}
If $d =2$ and $\Phi$ is a spherical harmonic of degree $\ell$, then, for $| \lambda |$ small and bounded $|\arg \lambda|$, we have
\begin{itemize}
 \item if $p=0$ or $p=2$  then $\| A_\lambda \|_{L^2\to H^s} = O(\frac{\lambda}{-\log \lambda})$ and $$ \langle A_\lambda \Phi, \Phi_\nu \rangle_{L^2(\sphere)}=O(\lambda^{\ell + \ell_\nu}).$$
 \item if $p=1$, using the notation of Theorem \ref{maind2ge},
 \begin{align*}
\quad & \langle A_\lambda \Phi, \Phi_\nu \rangle_{L^2(\sphere)} \\&= 
 - 2 \rmi \ell \, \ell_\nu \, C_{2,\ell}\,  \overline{C_{2,\ell_\nu}} \lambda^{\ell +\ell_\nu -2}  \left(\left(\sum_{j=1}^N a_j(\Phi) \overline{a_j(\Phi_\nu)}\right)  +  \frac{1}{-\log \lambda + \frac{\rmi \pi}{2} + \beta - \gamma} a_{\psi}(\Phi) \overline{a_{\psi}(\Phi_\nu)} \right) \\&  + O(\lambda^{\ell + \ell_\nu}(-\log \lambda)).
  \end{align*}
\end{itemize}

\end{theorem}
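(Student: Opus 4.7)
The plan is to read off the scattering amplitude asymptotics directly from the generalised eigenfunction expansions of Theorem \ref{maind2ge}. The mechanism is a bilinear pairing representation for $\langle A_\lambda \Phi, \Phi_\nu \rangle$, obtained by applying Green's identity to $E_\lambda(\Phi)$ and a free outgoing reference solution on a large ball $B_R$, then letting $R \to \infty$ using the defining outgoing asymptotics $\Psi_\lambda(\Phi) = \tau S_\lambda \Phi$ together with the free Green's function (in dimension two, the Hankel function $H_0^{(1)}(\lambda r)$). After choosing a smooth cut-off $\chi$ equal to one near infinity, this yields a schematic identity
$$ \langle A_\lambda \Phi, \Phi_\nu \rangle_{L^2(\sphere)} = c_2(\lambda) \, \bigl\langle [\Delta,\chi] E_\lambda(\Phi), \overline{E_\lambda(\Phi_\nu)} \bigr\rangle_{L^2(M)}, $$
where $c_2(\lambda)$ is an explicit Hahn-meromorphic prefactor traceable to the small-argument Hankel asymptotics.

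For $p=0$ or $p=2$, we have $\mathcal{H}^p_{\mathrm{rel}}(M)=\{0\}$ and by Theorem \ref{maintheorem22} the resolvent is, up to the sole $-\log\lambda$ singularity that moreover vanishes as soon as $\partial\calO\neq\emptyset$, Hahn-holomorphic. Hence the expansion of $E_\lambda(\Phi)$ begins at order $\lambda^{1/2}/(-\log\lambda)$ for $\ell=0$ and at $\lambda^{\ell+1/2}$ for $\ell\geq 1$. Substituting into the bilinear pairing yields directly the bound $\langle A_\lambda \Phi, \Phi_\nu \rangle = O(\lambda^{\ell+\ell_\nu})$, and the operator estimate $\|A_\lambda\|_{L^2\to H^s}=O(\lambda/(-\log\lambda))$ follows by reassembling $A_\lambda$ from its matrix elements using the standard mapping properties of the scattering matrix (smoothness of its Schwartz kernel off the diagonal).

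For $p=1$, I substitute the full expansion from Theorem \ref{maind2ge} into both entries. The $L^2$-harmonic piece $\sum_j a_j(\Phi)u_j$ pairs with its counterpart for $\Phi_\nu$ to produce the leading term $\sum_j a_j(\Phi)\overline{a_j(\Phi_\nu)}$ at order $\lambda^{\ell+\ell_\nu-2}$. The resonance contribution built from $\psi=\psi(\Phi_0)$ pairs with itself to produce the logarithmic correction; its denominator $-\log\lambda+\tfrac{\rmi\pi}{2}+\beta-\gamma$ is inherited verbatim from Theorem \ref{maind2ge}. The cross terms between $u_j$ and $\psi$ drop out because $\psi$ is $L^2$-orthogonal to the $L^2$-harmonic forms (as a non-$L^2$ resonant state), while the $\varphi(\Phi_\mu)$ and $P^{(2)}$ contributions feed only into the $O(\lambda^{\ell+\ell_\nu}(-\log\lambda))$ remainder. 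Matching the prefactors requires pairing the Hankel asymptotics of the free outgoing wave with the multipole coefficients $a_j$ and $a_\psi$, and produces the combinatorial factor $-2\rmi\,\ell\,\ell_\nu\, C_{2,\ell}\overline{C_{2,\ell_\nu}}$.

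The hard part will be the bookkeeping of logarithmic and constant contributions. In dimension two the free Green's function has a $-\log\lambda$ behaviour at small $\lambda$, with constants $\rmi\pi/2$ and $-\gamma$ produced by the small-argument expansion of $H_0^{(1)}$; combining this with the Hahn-meromorphic structure of the resolvent in Theorem \ref{maintheorem5} is what yields the specific denominator $-\log\lambda+\tfrac{\rmi\pi}{2}+\beta-\gamma$ in the resonance term. Tracking these constants through the pairing, and verifying the vanishing of the unwanted cross terms that would otherwise contaminate the leading coefficient, is the most delicate step of the argument.
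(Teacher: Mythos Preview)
Your overall strategy---feed the generalised-eigenfunction expansions of Theorem~\ref{maind2ge} into a representation for the amplitude---is exactly what the paper does. But the mechanism you propose differs from the paper's and, as written, has a gap.

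The paper does not use a bilinear pairing of the form $\langle [\Delta,\chi] E_\lambda(\Phi), \overline{E_\lambda(\Phi_\nu)}\rangle$. Instead it uses the one-sided representation of Proposition~\ref{EoutofHankel}, $E_\lambda(\Phi)|_{M\setminus K} = \tilde j_\lambda(\Phi) + \tilde h^{(1)}_\lambda(A_\lambda\Phi)$, packaged as Theorem~\ref{AEexpansion}: one expands $E_\lambda(\Phi)$ in its Hahn series, reads off the $\Phi_\nu$-multipole coefficient $F_{\alpha,\beta}^\nu(\Phi)$ of each harmonic term, and inverts the small-argument asymptotics of $h^{(1)}_{\ell_\nu}(\lambda R)$ on a fixed sphere. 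The $\Phi_\nu$ dependence enters only through this spherical projection, so the $u_j$-piece contributes $\sum_j a_j(\Phi)\overline{a_j(\Phi_\nu)}$ and the $\psi$-piece contributes $a_\psi(\Phi)\overline{a_\psi(\Phi_\nu)}$ automatically, with no cross terms ever arising.

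Your bilinear route would generate cross terms pairing the $u_j$-part of $E_\lambda(\Phi)$ against the $\psi$-part of $E_\lambda(\Phi_\nu)$, and your justification that these vanish---``$\psi$ is $L^2$-orthogonal to the $L^2$-harmonic forms (as a non-$L^2$ resonant state)''---is not an argument: $\psi\notin L^2$, so no $L^2$-orthogonality is available, and the relevant pairing (whatever it is in your formula) involves a compactly supported commutator term, not a global inner product of $u_j$ with $\psi$. A genuine integration-by-parts computation would be needed here and is not obviously zero. The paper's one-sided approach sidesteps this entirely; I would recommend switching to it.
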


The expansions of the generalised eigenfunctions encode the finer structure on the space $\mathcal{H}^p_{rel}(M)$ given by the order of vanishing at infinity.
Indeed, the space $\mathcal{H}^p_{rel}(M)$ carries a natural filtration $$\mathcal{H}^{p,m+1}_{rel}(M) \subset \mathcal{H}^{p,m}_{rel}(M) \subset \mathcal{H}^p_{rel}(M),$$
where $\mathcal{H}^{p,m}_{rel}(M)$, defined for $m \geq 1$, is the space of $L^2$-harmonic forms satisfying relative boundary conditions whose multipole expansion only has nonzero terms of order $\ell \geq m$.
In parts this filtration has topological significance. 
\begin{theorem} \label{maincohomo}
 If $d \geq 3$ and $0<p \leq d$ then $\mathcal{H}^{p,1}_{rel}(M) / \mathcal{H}^{p,2}_{rel}(M)$
  isomorphic to the kernel of the map 
 $H_0^p(M,\partial \calO) \to H^p(M,\partial \calO)$. 
 In particular $\mathcal{H}^{p,1}_{rel}(M) = \mathcal{H}^{p,2}_{rel}(M)$ if $\partial \calO = \emptyset$ or $1<p \leq d$, and 
  $\dim \mathcal{H}^{p,1}_{rel}(M) / \mathcal{H}^{p,2}_{rel}(M) =1$ if  $\partial \calO \not= \emptyset$ and $p=1$.
\end{theorem}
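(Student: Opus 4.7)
The plan is to prove the isomorphism by combining the Hodge identification $\mathcal{H}^p_{\mathrm{rel}}(M) \cong H^p_0(M, \partial \calO)$ with the long exact sequence of the radial compactification, showing that the Hodge preimage $\tilde K \subseteq \mathcal{H}^p_{\mathrm{rel}}$ of $\ker(H^p_0 \to H^p)$ provides a direct-sum complement $\mathcal{H}^{p,1}_{\mathrm{rel}} = \tilde K \oplus \mathcal{H}^{p,2}_{\mathrm{rel}}$.

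First I would compute the right-hand side. The radial compactification $\bar M$ adds a sphere $S^{d-1}$ at infinity; combined with excision near $\partial \calO$, the long exact sequence
\[
H^{p-1}(M,\partial \calO) \to H^{p-1}(S^{d-1}) \xrightarrow{\partial} H^p_0(M,\partial \calO) \to H^p(M,\partial \calO) \to H^p(S^{d-1})
\]
yields $\ker(H^p_0 \to H^p) = \mathrm{Im}(\partial)$. For $1 < p < d$ one has $H^{p-1}(S^{d-1}) = 0$; for $p = d$, $\mathcal{H}^d_{\mathrm{rel}} = 0$ makes both sides trivially zero; for $p = 1$ with $\partial \calO = \emptyset$, $H^0(M) \to H^0(S^{d-1})$ is an isomorphism so $\partial = 0$; and for $p = 1$ with $\partial \calO \neq \emptyset$, the vanishing $H^0(M, \partial \calO) = 0$ makes $\partial$ injective with one-dimensional image. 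So the kernel is trivial except when $p = 1$ and $\partial \calO \neq \emptyset$, where it is $\R$.

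Next I would establish the decomposition $\mathcal{H}^{p,1}_{\mathrm{rel}} = \tilde K \oplus \mathcal{H}^{p,2}_{\mathrm{rel}}$. The inclusion $\tilde K \subseteq \mathcal{H}^{p,1}_{\mathrm{rel}}$ follows by direct computation: if $\omega = d\alpha$ with $\alpha$ satisfying relative boundary conditions, then the would-be $\ell = 0$ leading of $\omega$ vanishes (for $p = 1$, this is the observation that if $h|_{\partial \calO} = 0$ and $h$ approaches a constant at infinity then $dh = O(r^{-(d-1)})$, not $O(r^{-(d-2)})$). The trivial intersection $\tilde K \cap \mathcal{H}^{p,2}_{\mathrm{rel}} = 0$ is a Dirichlet-type uniqueness: if $\omega \in \mathcal{H}^{p,2}_{\mathrm{rel}}$ is exact with a primitive $\alpha$ satisfying relative boundary conditions, then the absence of the $\ell = 0$ leading coefficient of $\alpha$, combined with the boundary condition, forces $\alpha$ to be constant by the maximum principle (for $p = 1$, a harmonic $h$ with $h|_{\partial \calO} = 0$, $h \to c$ at infinity and no $\ell = 0$ decaying mode satisfies $h \equiv c$, hence $c = 0$). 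Finally, the surjection $\mathcal{H}^{p,1}_{\mathrm{rel}} = \tilde K + \mathcal{H}^{p,2}_{\mathrm{rel}}$ uses an explicit realization: in the nontrivial case $p = 1, \partial \calO \neq \emptyset$, take $h$ the unique bounded harmonic function with $h|_{\partial \calO} = 0$ and $h \to 1$ at infinity (via the Dirichlet problem on $M$ with Euclidean end), whose differential $dh \in \tilde K$ has leading multipole $\sim c\, r^{-(d-1)} dr$ with $c \neq 0$; any $\omega \in \mathcal{H}^{p,1}_{\mathrm{rel}}$ then differs from a multiple of $dh$ by an element of $\mathcal{H}^{p,2}_{\mathrm{rel}}$.

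The main technical obstacle will be establishing the surjection in the "vanishing" cases ($p > 1$ or $\partial \calO = \emptyset$), which amounts to proving $\mathcal{H}^{p,1}_{\mathrm{rel}} = \mathcal{H}^{p,2}_{\mathrm{rel}}$ directly. This requires excluding the existence of a nonzero $\ell = 1$ leading asymptotic for $L^2$-harmonic forms with relative boundary conditions in those cases. The argument proceeds via the explicit classification of $\ell = 1$ spherical-harmonic $p$-forms in Appendix~\ref{Amulti}: any such leading would be $d$ of a bounded "$\ell = 0$" $(p-1)$-form at infinity, whose extension to a global primitive with relative boundary conditions is obstructed by a class in $H^{p-1}(S^{d-1})/\mathrm{Im}(H^{p-1}(M, \partial \calO) \to H^{p-1}(S^{d-1})) \cong \ker(H^p_0 \to H^p)$; when this obstruction space is trivial, the $\ell = 1$ leading is forced to vanish, yielding $\mathcal{H}^{p,1}_{\mathrm{rel}} = \mathcal{H}^{p,2}_{\mathrm{rel}}$.
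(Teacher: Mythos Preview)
Your approach is genuinely different from the paper's: you try to argue purely Hodge-theoretically, whereas the paper (Propositions~\ref{prop5.3} and~\ref{prop5.5}) parametrises $\mathcal{K}^{p,1}_{\mathrm{rel}}$ via scattering theory. Concretely, the paper uses Corollary~\ref{Aldensfavourite} to write any $v \in \mathcal{K}^{p,1}_{\mathrm{rel}}$ as $F_1(\Phi) = \lim_{\lambda\to0}\lambda^{(3-d)/2}E_\lambda(\Phi)$, then decomposes $\Phi = \der r\wedge\Phi_- + \iota_{\der r}\Phi_+$, constructs bounded harmonic primitives $v_\pm = G(\Psi_\pm)$ via Proposition~\ref{superprop}, and obtains $v = \der v_- - \delta v_+$ with each summand in $\mathcal{H}^p_{\mathrm{rel}}$; the cohomological vanishing then kills each piece separately (using Hodge duality for the $\delta v_+$ part).

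Your long exact sequence computation and the injectivity direction $\tilde K \hookrightarrow \mathcal{K}^{p,1}_{\mathrm{rel}}$ are fine (though the ``maximum principle'' step is really a Green's identity computation: if $h$ is harmonic with Dirichlet data and $h = c + O(r^{1-d})$, then $\int_M |\der h|^2$ is a boundary term that vanishes, forcing $\der h=0$). The genuine gap is the reverse inequality $\dim\mathcal{K}^{p,1}_{\mathrm{rel}} \leq \dim\ker(H^p_0\to H^p)$. For $p=1$, $\partial\calO\neq\emptyset$, your surjectivity step ``any $\omega$ differs from a multiple of $\der h$ by an element of $\mathcal{H}^{p,2}_{\mathrm{rel}}$'' presupposes that $\sigma_1(\omega)$ lies in the one-dimensional span of $\sigma_1(\der h)$ inside the $d^2$-dimensional space $\mathcal{H}_1^1(\sphere)$. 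This is true, but it is a nontrivial consequence of closedness: writing $\sigma_1(\omega) = \sum_{j,k}A_{jk}\theta_k\,\der x^j$, the condition $\der\omega=0$ at leading order forces $A=\lambda I$, i.e.\ $\sigma_1(\omega)\propto\der r$. You do not carry out this computation.

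For the ``vanishing'' cases ($p>1$ or $\partial\calO=\emptyset$) the gap is more serious. Your last paragraph claims the $\ell=1$ leading is ``$\der$ of a bounded $\ell=0$ $(p-1)$-form at infinity''; but closedness and co-closedness together only give a decomposition $\omega_1 = \der\alpha_0 - \delta\beta_0$ locally at infinity, and the $\delta$-part needs its own treatment (the paper handles it by Hodge-dualising to absolute boundary conditions and invoking $\ker(H^{d-p}_0(M)\to H^{d-p}(M))=0$). Moreover, even once you have a local primitive at infinity, your obstruction argument does not explain why vanishing of the obstruction forces $\sigma_1(\omega)=0$: extending the primitive globally gives $\omega - \der\alpha$ with smaller leading, but this is no longer harmonic, so you cannot iterate within $\mathcal{H}^p_{\mathrm{rel}}$. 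The paper sidesteps this by producing \emph{harmonic} global primitives $v_\pm$ directly from the low-energy limit of $E_\lambda$, which is exactly the input your approach is missing.
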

By the long exact sequence in cohomology the kernel of $H_0^p(M,\partial \calO) \to H^p(M,\partial \calO)$ is the same as the image of the map
$H^p(\sphere) \to H_0^p(M,\partial \calO)$. This map is given by the limit of a the generalised eigenfunction (see Section \ref{cohoscatter}).
The fact that these spaces are isomorphic can probably also be inferred from the general framework \cite{melrose1993atiyah}.
As explained before this is not true in dimension two where this image shows up as a zero resonance state.

Finally, we give a short proof of the relative Birman-Krein formula in our setting with particular emphasis on the low energy behaviour. The expansions of the scattering amplitudes therefore directly translate into the asymptotic properties of the spectral shift function $\xi \in L^1_\loc(\R)$ (see Section \ref{BMformsec} for a definition) which is expressed as
$$
 \xi(\mu) = \left \{ \begin{matrix} 0 & \mu < 0,\\ \left(\beta_p + \beta_{res} \right) + \eta(\mu) & \mu \geq 0, \end{matrix} \right.
$$
where 
$$
 \eta(\mu) =  \frac{1}{2 \pi \rmi }  \int\limits_0^{\sqrt{\mu}}\tr\left( S^*(\lambda) S'(\lambda)\right) \der \lambda,
$$
the integer $\beta_p$ is the $L^2$-Betti number, and $\beta_{res}$ equals one in case $d=2, p=1, \partial \calO \not= \emptyset$ and zero otherwise.
The jump of the spectral shift function at zero was also computed by Carron in \cite{carron2004saut} using a different method.
The expansions of $A_\lambda$ can be used to prove refined expansions for the spectral shift function at zero. An example is the following theorem.
\begin{theorem}  \label{maintheorem7}
Suppose $d \geq 3$. Then we have for $0\leq \mu \leq 1$ the estimate
$$
 \xi(\mu) = \beta_p + \alpha_p \mu^{\frac{d-2}{2}} + \left\{ \begin{matrix}  O(\mu^{\frac{d-1}{2}})  & d \textrm{ odd} \\ O(\frac{1}{-\log \mu}) & d \textrm{ even} \end{matrix} \right.,
$$
where in case $p=1$ we have  $\alpha_1 =  -\frac{2^{1-d} d^2}{\Gamma \left(\frac{d-2}{2}\right)^2} \tr\left(P^{(1)} \right)$, in case $p>1$ we have 
$\alpha_p=0$, and finally $a_0 = a_1$.
\end{theorem}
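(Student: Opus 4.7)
The plan is to insert the scattering amplitude expansion from Theorem~\ref{scatterexp} into the Birman--Krein formula
$$ \eta(\mu)= \frac{1}{2\pi\rmi}\int_0^{\sqrt\mu}\tr(S^*(\lambda) S'(\lambda))\,\der\lambda $$
and integrate. Writing $S_\lambda=\mathrm{id}+A_\lambda$ gives $\tr(S^*_\lambda S'_\lambda)=\tr A'_\lambda + \tr(A^*_\lambda A'_\lambda)$; the second summand is quadratic in the small quantity $A_\lambda$ (whose norm, by the last assertion of Theorem~\ref{scatterexp}, is $O(\lambda^{d-2})$ on the relevant spaces) and its contribution to the integral is absorbed in the error. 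Since $A_\lambda\to 0$ as $\lambda\to 0^+$ for $d\geq 3$, the fundamental theorem of calculus then reduces the task to
$$ \eta(\mu)=\frac{1}{2\pi\rmi}\tr A_{\sqrt\mu} + (\textrm{error}). $$

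Next I would compute $\tr A_\lambda=\sum_\nu \langle A_\lambda\Phi_\nu,\Phi_\nu\rangle$ by summing the diagonal of Theorem~\ref{scatterexp}. Taking $\Phi=\Phi_\nu$ with $\ell=\ell_\nu$ and grouping by $\ell$, the identity $\tr P^{(\ell)}=\sum_{\nu:\ell_\nu=\ell}\sum_j|a_j(\Phi_\nu)|^2$ yields
$$ \tr A_\lambda = -\tfrac{\rmi}{2}\sum_{\ell\geq 0}(d-2+2\ell)^2|C_{d,\ell}|^2\,\tr P^{(\ell)}\,\lambda^{2\ell+d-4} + r(\lambda). $$
Now identify the leading $\ell$ by form degree. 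For $p=0$ the space $\mathcal{H}^0_{\mathrm{rel}}(M)$ vanishes by Section~\ref{sec:1.1}, so $P=0$ and $\alpha_0=0$. For $p\geq 2$, the cohomological identification of $\mathcal{H}^p_{\mathrm{rel}}(M)$ in Section~\ref{sec:1.1} rules out an $\ell=0$ multipole of order $r^{2-d}\Phi_0$ for $L^2$-harmonic forms satisfying relative boundary conditions, so $P^{(0)}=0$, and Theorem~\ref{maintheorem1} gives $P^{(1)}=0$; hence the leading contribution to $\tr A_\lambda$ is $O(\lambda^d)$, whence $\alpha_p=0$. For $p=1$ the leading term is at $\ell=1$,
$$ \tr A_\lambda = -\tfrac{\rmi d^2}{2}|C_{d,1}|^2\,\tr P^{(1)}\,\lambda^{d-2}+r(\lambda). $$
Inserting this into $\eta(\mu)$, substituting $|C_{d,1}|^2 = 2\pi/(2^d\Gamma(1+\tfrac{d}{2})^2)$, and simplifying via the Gamma recursion $\Gamma(1+\tfrac{d}{2})=\tfrac{d(d-2)}{4}\Gamma(\tfrac{d-2}{2})$ yields the stated constant $\alpha_1$.

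For the remainder, in odd dimensions the expansions of Theorem~\ref{scatterexp} are holomorphic beyond the leading term and give $r(\lambda)=O(\lambda^{d-1})$, hence after integration an error of $O(\mu^{(d-1)/2})$. In even dimensions the expansions are only Hahn--holomorphic and carry factors of $-\log\lambda$, so the best uniform bound on $\eta(\mu)-\beta_p-\alpha_p\mu^{(d-2)/2}$ degrades to the announced $O(1/(-\log\mu))$. The main technical hurdle is that the expansion in Theorem~\ref{scatterexp} is stated pointwise in the spherical mode $\Phi$, whereas we need to sum it over $\nu$ and compute a trace; producing a trace-level remainder of the correct order requires an estimate that is uniform in the spherical-harmonic degree $\ell$. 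This uniformity can be extracted from the smoothing character of $A_\lambda$ (which maps $L^2(\sphere)$ into $H^s(\sphere)$ for every $s\in\R$) combined with the Weyl estimate on the number of spherical harmonics on $\sphere$ of degree at most $L$, which ensures the tail of the summation is trace class with arbitrary polynomial decay.
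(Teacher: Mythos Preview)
Your overall strategy---reducing to $\tr A_\lambda$ via Birman--Krein and $\log\det(1+A_\lambda)=\tr A_\lambda+O(\|A_\lambda\|_1^2)$, then reading off the diagonal from Theorem~\ref{scatterexp}---is exactly what the paper does. The substantive gap is your treatment of the $\ell_\nu=0$ block of the diagonal, and it causes an actual error in the $p=0$ case.

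For $p=0$ your conclusion $\alpha_0=0$ contradicts the theorem, which asserts $\alpha_0=\alpha_1$ (and $\alpha_1\neq 0$ whenever $\partial\mathcal O\neq\emptyset$, by Proposition~\ref{prop5.5}). The point is that although $P=0$ on functions, Theorem~\ref{scatterexp} only identifies the \emph{leading} coefficient of $\langle A_\lambda\Phi_\nu,\Phi_\nu\rangle$; when that coefficient vanishes the remainder is merely $O(\lambda^{\ell+\ell_\nu+d-2})$, not $o$. For $\ell_\nu=0$ on functions this residual \emph{does} carry a nonzero $\lambda^{d-2}$ contribution. The paper does not compute it directly: instead it uses that $A_\lambda$ commutes with $\der r\wedge$ and $\iota_{\der r}$ (Proposition~\ref{difftheo}), so the trace on one-forms splits into the $p=0$ and $p=2$ traces, giving $\alpha_1=\alpha_0+\alpha_2=\alpha_0$.

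The same block bites you for $p\geq 1$. To assert that the leading term of $\tr A_\lambda$ comes solely from $\ell_\nu=1$ you must show $\langle A_\lambda\Phi_\nu,\Phi_\nu\rangle=o(\lambda^{d-2})$ for $\ell_\nu=0$, and Theorem~\ref{scatterexp} alone gives only $O(\lambda^{d-2})$. The paper closes this with a separate cohomological step: by Proposition~\ref{superprop} one has $E_\lambda(\Phi_\nu)=G(\Phi_\nu)\lambda^{(d-1)/2}+o(\lambda^{(d-1)/2})$ with $G(\Phi_\nu)=C_{d,0}\Phi_\nu+O(r^{2-d})$; then $\der G(\Phi_\nu)$ is an $L^2$-harmonic $(p+1)$-form representing the trivial class, hence vanishes by the analysis of Section~\ref{cohoscatter}, and Corollary~\ref{supercor} forces the $\ell=0$ multipole of $G(\Phi_\nu)$ to be absent, which in turn gives the needed $o(\lambda^{d-2})$. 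This is precisely where the form-degree distinction between $p=0$ and $p\geq 1$ enters, and it is not visible from the expansion in Theorem~\ref{scatterexp} alone.
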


Similar expansions can be derived in dimension two. The details of the expansions of the spectral shift function and their applications will be discussed elsewhere.

\subsection{Plan of the paper}

The paper is organised as follows. Section \ref{basicscattersection} gives a review of basic spectral theory for the Hodge Laplacian on $p$-forms with boundary conditions. It develops the theory of generalised eigenfunctions based on Bessel function expansions, and it states the main spectral decomposition results that relate the generalised eigenfunctions, the resolvent, and the scattering matrix. Section \ref{expansionsection} establishes the main technical lemmata to derive low energy expansions of the resolvent and the generalised eigenfunctions. Section \ref{ScatterSectionExp} employs the obtained expansions for the resolvent and the eigenfunction expansions to derive expansions for the scattering amplitude.
In Section \ref{cohoscatter} we establish the relation between cohomology and the low energy expansions of the generalised eigenfunctions. Finally, we give a proof of the Birman-Krein formula in our setting in Section \ref{BMformsec}. Since the main theorems require lengthy arguments in parts bootstrapping we have decided to summarise them in the introduction and explain in Section \ref{sevenandfinal} in detail how they are obtained from the main body of the text.

\subsection{Possible generalisations}
For the purposes of this article we have focused on the important case of compact perturbations of Euclidean space. This is also the case that is most relevant in physics.
There are two natural generalisations of this. One is to consider compact perturbations of globally symmetric spaces. In this case the theory of Hahn meromorphic functions can still be applied with a difference being that the bottom of the continuous spectrum is generally not at zero any more and therefore the cohomological interpretation will be lost.
Another generalisation is to consider manifolds that are exact cones outside a compact set and even more generally scattering manifolds as introduced by Melrose (\cite{melrose1994spectral}).  Large parts of our analysis  carry over to that setting but the absence of a canonical basis in cotangent space complicates things on a notational level. Finally one can obtain results about short range perturbations of the metric by approximating them by manifolds that are Euclidian at infinity.

\section{Stationary scattering theory and the spectral resolution} \label{basicscattersection}

In this section we describe the spectral resolution of the operator $\Delta_{p,\mathrm{rel}}$ in our setting.
The main results presented here are well known for functions and standard in stationary black box scattering theory, 
and they can be derived for $p$-forms by a straightforward modification of the arguments and statements. For the convenience of the reader and the sake of completeness we prove the main spectral decomposition results for $p$-forms that are required for our analysis. We also give a construction of generalised eigenfunctions based on Bessel functions that we could not find in the literature.
This section generalises, with the obvious modifications, to  the case of the operator $\Delta_{p,\mathrm{rel}} +V$ where $V \in \CnI(M;\mathrm{End}(\Lambda^p T^*M))$ is a compactly supported symmetric potential. 
In this paper we focus only on the case of the Laplace operator and in order to keep the notation as simple as possible we omit the potential. For general background on the theory of black-box scattering for functions and current developments we refer to the recent monograph \cite{DZ}.
We will denote the kernel of the self-adjoint operator $\Delta_{p,\mathrm{rel}}$ by $\mathrm{ker}_{L^2}(\Delta_{p,\mathrm{rel}})$. This will distinguish it notationally from the kernel
$\mathrm{ker}_{C^\infty}(\Delta_{p,\mathrm{rel}})$ of the differential operator $\Delta_{p,\mathrm{rel}}$ acting on smooth forms satisfying relative boundary conditions
but without imposing the condition of square-integrability. 
The resolvent
$$
 R_\lambda:=(\Delta_{p,\mathrm{rel}} - \lambda^2)^{-1}
$$
is a holomorphic family of $L^2$-bounded operators for $\Im(\lambda)>0$. It is well known that the resolvent has a meromorphic extension to a family of bounded operators from $H^s_{\comp}(M) \to H^{s+2}_{\loc}(M)$ with finite rank negative Laurent coefficients to a larger Riemann surface $\mathcal{R}$. In the case the dimension $d$ is odd, we have $\mathcal{R}= \C$. In the case $d$ is even $\mathcal{R}$ is a logarithmic cover of the complex plane with branch
cut at $\rmi \mathbb{R}_-$. In either case $\mathcal{R}$ contains the set  $\R \setminus \rmi \overline{\mathbb{R}_-}$.
It is also known that the resolvent is holomorphic near $\R \setminus \{0\}$ (absence of embedded eigenvalues). The singularity structure at zero will be discussed in detail in Section \ref{expansionsection}.

\subsection{Coordinates in the Euclidean part}
Since $M$ is Euclidean at infinity there is a compact set $K$ such that $M \setminus K$ is isometric to $\R^d \setminus B_R(0)$.
On $M \setminus K$ we have a natural coordinate system. We will use both Cartesian coordinates $x \in \R^d$ and spherical coordinates
$(r,\omega) \in (R,\infty) \times \mathbb{S}^{d-1}$, where $r=|x|$ and $\omega= \frac{x}{|x|}$, where it is understood. We choose a smooth function $\chi \in C^\infty(M)$
supported in $M \setminus K$ such that $1-\chi$ is compactly supported. Using the Cartesian coordinates and the orthonormal frame $(\der x^1,\ldots,\der x^d)$
we trivialise the bundle $T^*(M \setminus K)$ and thereby identify forms in $C^\infty(M \setminus K; \Lambda^p T^*M)$ with vector-valued functions
in $C^\infty(M \setminus K; \Lambda^p \R^d)$.

\subsection{Incoming and outgoing forms}
Suppose that $f \in C^\infty(M;\Lambda^p T^* M)$, then if $\lambda \in \R$ is non-zero and $g = (\Delta_{p}  - \lambda^2) f$ is compactly supported
then $f$ is called {\sl outgoing} for $\lambda$ if $f = R_\lambda h$, where $h$ is compactly supported. The section $f$ is called {\sl incoming} for $\lambda$ if it is outgoing for $-\lambda$.
It is easy to see that if $f\chi \in \CnI(M)$ then $f$ is outgoing if and only if $(1 - \chi f)$ is outgoing. It follows that the definition depends only on the behavior of $f$ at infinity.
Moreover, the notion does not depend on the precise structure of the resolvent and is also independent of the compact part $M \setminus K$. This means that
$f$ is outgoing on $M$ is equivalent to $f |_{M \setminus K}$ being outgoing on $\R^d$.  One can use this to see that an outgoing $f$ has an asymptotic expansion
$$
 f = \frac{\ee^{\rmi \lambda r}}{r^{\frac{d-1}{2}}} \Phi + O\left(\frac{1}{{r^{\frac{d+1}{2}}}} \right), \quad  r \to \infty,
$$
where $\Phi \in C^\infty(\sphere;\Lambda^p\R^d)$ is the restriction of an entire function on $\C^d$ to the sphere. The expansion can be differentiated in $r$, c.f. Appendix \ref{Abf} for details. We refer to Appendix \ref{outgoingapp} for proofs of the above claims in our setting.

\subsection{Generalised plane waves}

Given $\omega \in \mathbb{S}^{d-1}$ and $v \in \Lambda^p \R^d$ we define the distorted plane wave $$e_\lambda(\omega,v) \in C^\infty(M; \Lambda^p T^* M)$$ by
$$
 e_\lambda(\omega,v)(x) = \chi(x) v \mathrm{e}^{-\rmi \lambda \omega \cdot x} - R_\lambda (\Delta_{p}  - \lambda^2)  \chi(x) v \ee^{-\rmi \lambda \omega \cdot x}.
$$
By construction $e_\lambda(\omega,v)$ is a meromorphic function on $\C \setminus \rmi \overline{\R_-}$ with
$$
 (\Delta_{p} - \lambda^2) e_\lambda(\omega,v) =0
$$
that satisfies relative boundary conditions but is generally not in $L^2(M; \Lambda^p T^* M)$.

\subsection{Generalised eigenfunctions and the scattering matrix}
Similarly, given $\Phi \in L^2(\mathbb{S}^{d-1},\Lambda^p \R^d)$ one can define the distorted spherical waves $E_\lambda(\Phi)$ by
$$
 E_\lambda(\Phi)(x) =\left(\frac{\lambda}{2\pi}\right)^{\frac{d-1}{2}}\int_{\mathbb{S}^{d-1}} e_\lambda(\omega, \Phi(\omega))(x) \der \omega.
$$
These distorted spherical waves are generalised eigenfunctions and can also be expressed directly in terms of Bessel and Hankel functions. In order to describe this it is convenient to introduce the following notation.
On the sphere $\sphere$ we have an orthonormal basis $(\phi_\nu)_{\nu \in I}$ in $L^2(\sphere)$ consisting of eigenfunctions of the Laplacian with eigenvalues $\ell_\nu (\ell_\nu+d-2)$. Here $I$ is an index set used to enumerate the spherical harmonics. 
These spherical harmonics can be obtained by restricting homogeneous harmonic polynomials to the sphere.
Given $g \in L^2(\sphere)$ we can therefore write $g = \sum\limits_\nu a_\nu \phi_\nu$, where $a_{\nu}(g)=\langle g, \phi_{\nu}\rangle_{L^2(\mathbb{S}^{d-1})}$ and convergence is in $L^2(\sphere)$. We denote by $\mathcal{H}_\ell(\sphere)$ the spherical harmonics of degree $\ell$, so we have the Hilbert space direct sum $L^2(\sphere) =  \bigoplus\limits_{\ell=0}^\infty \mathcal{H}_\ell(\sphere)$.
\begin{definition}
For $g\in L^2(\mathbb{S}^{d-1})$, we define $\tilde j_{\lambda}(g) \in C^{\infty}(\R^d \setminus \{0\})$ by 
 \begin{align*} 
\tilde j_\lambda(g)(r \theta) &=2\lambda^{\frac{d-1}{2}}\sum\limits_{\nu} a_{\nu}(g) \phi_{\nu}(\theta)j_{d,\ell_{\nu}}(\lambda r) (-\rmi)^{\ell_{\nu}},
 \end{align*}
 where $j_{d,\ell}$ is the spherical Bessel function in dimension $d$ (see Appendix \ref{Abf}).
\end{definition}
In the same way we have in the  vector-valued case the Hilbert space direct sum $L^2(\sphere; \Lambda^p\mathbb{R}^d) =  \bigoplus\limits_{\ell=0}^\infty \mathcal{H}_\ell^p(\sphere)$, where $\mathcal{H}_\ell^p(\sphere) \subset C^\infty(\Lambda^p\mathbb{R}^d)$ denotes the vector-space of vector-valued spherical harmonics of degree $\ell$ and form-degree $p$.

If $\Phi \in L^2(\sphere; \Lambda^p\mathbb{R}^d)$, then $\tilde{j}_{\lambda}(\Phi)$ is in $C^\infty(\R^d,\Lambda^p\mathbb{R}^d)$ and solves $(\Delta_p-\lambda^2)\tilde{j}_{\lambda}(\Phi)=0$. 
Using the properties of spherical Bessel functions it is not difficult to show that $\tilde j_\lambda(\Phi)$ is a holomorphic function in $\lambda$ taking values in $C^\infty(\R^d)$.

\begin{proposition} \label{besseljefunction}
We have that 
\begin{align}
E_{\lambda}(\Phi)=\chi\tilde{j}_{\lambda}(\Phi)-R_{\lambda}(\Delta_p -\lambda^2)(\chi\tilde{j}_{\lambda}(\Phi))=
\chi\tilde{j}_{\lambda}(\Phi)-R_{\lambda}[\Delta_p,\chi](\tilde{j}_{\lambda}(\Phi)).
\end{align}
\end{proposition}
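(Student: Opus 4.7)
The plan is to split the proposition into two pieces: the trivial second equality, which follows from a Leibniz-rule computation, and the substantive first equality, which I would reduce to the classical Rayleigh expansion of a plane wave.

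First I would dispose of the second equality. Since $\tilde j_\lambda(\Phi)\in C^\infty(\R^d;\Lambda^p\R^d)$, and the spherical Bessel equation satisfied by $j_{d,\ell}$ (Appendix \ref{Abf}) together with the fact that the Hodge Laplacian on $\R^d$ acts componentwise as the scalar Laplacian in the frame $(\der x^i)$ gives $(\Delta_p-\lambda^2)\tilde j_\lambda(\Phi)=0$ on $\R^d$, the Leibniz rule yields $(\Delta_p-\lambda^2)(\chi\tilde j_\lambda(\Phi)) = [\Delta_p,\chi]\tilde j_\lambda(\Phi)$, which is the second equality.

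For the first equality, the core step is the identification
$$\left(\frac{\lambda}{2\pi}\right)^{(d-1)/2}\int_\sphere \Phi(\omega)\ee^{-\rmi\lambda\omega\cdot x}\der\omega = \tilde j_\lambda(\Phi)(x), \qquad x\in\R^d.$$
Both sides are linear and $L^2(\sphere;\Lambda^p\R^d)$-continuous in $\Phi$ (after localisation in $x$, by elliptic regularity applied to the Helmholtz equation that both sides satisfy), so it suffices to verify the identity on basis elements $\Phi=\phi_\nu v$ with $\phi_\nu\in\mathcal H_{\ell_\nu}(\sphere)$ and $v\in\Lambda^p\R^d$ constant. There it reduces to a classical Rayleigh/Funk--Hecke computation: expanding $\omega\mapsto \ee^{-\rmi\lambda r\omega\cdot\theta}$ in spherical harmonics of $\omega$ and using orthogonality of $(\phi_\mu)$ on $\sphere$, the integral collapses to a single term of the form $c_d\,\phi_\nu(\theta) j_{d,\ell_\nu}(\lambda r)(-\rmi)^{\ell_\nu}v$, whose normalisation, together with the prefactor $(\lambda/2\pi)^{(d-1)/2}$, reproduces the $2\lambda^{(d-1)/2}$ built into the definition of $\tilde j_\lambda$. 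Once this identity is in hand, the first equality of the proposition follows by substituting the definition of $e_\lambda(\omega,v)$ into the defining formula for $E_\lambda(\Phi)$: the first term is exactly $\chi$ times the above spherical integral and hence equals $\chi\tilde j_\lambda(\Phi)$, while the second term requires pulling $R_\lambda(\Delta_p-\lambda^2)\chi(\cdot)$ through the $\omega$-integral. That interchange is legitimate because $\omega\mapsto \chi(\cdot)\Phi(\omega)\ee^{-\rmi\lambda\omega\cdot\,\cdot}$ is a continuous, compactly supported $L^2(M;\Lambda^p T^*M)$-valued function of $\omega\in\sphere$, and bounded operators such as $R_\lambda\colon L^2_{\mathrm{comp}}\to H^2_{\loc}$ commute with Bochner integrals.

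The main obstacle is the bookkeeping of normalisation constants in the Rayleigh expansion so that the factor $(\lambda/2\pi)^{(d-1)/2}$ combines with them to reproduce the constant $2\lambda^{(d-1)/2}(-\rmi)^{\ell_\nu}$ prescribed by the definition of $\tilde j_\lambda$. This reduces to pinning down the conventions for $j_{d,\ell}$ and for the $(\phi_\nu)$ adopted in Appendix \ref{Abf}; once that is done the computation is routine, and the remaining ingredients---Leibniz, linearity, and commutation with a Bochner integral---are entirely standard.
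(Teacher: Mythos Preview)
Your proposal is correct and follows essentially the same approach as the paper: both reduce the first equality to the Rayleigh/Funk--Hecke identity $\left(\frac{\lambda}{2\pi}\right)^{(d-1)/2}\int_\sphere \Phi(\omega)\ee^{-\rmi\lambda\omega\cdot x}\der\omega = \tilde j_\lambda(\Phi)(x)$, which the paper simply quotes from Appendix~\ref{Abf}. You supply additional detail the paper leaves implicit---the Leibniz computation for the second equality and the Bochner-integral justification for commuting $R_\lambda(\Delta_p-\lambda^2)\chi$ with the spherical integral---but the core idea is identical.
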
 
\begin{proof} This follows from the equality (see Appendix \ref{Abf})
\begin{align}
(2\pi)^{-\frac{d-1}{2}} \int\limits_{\mathbb{S}^{d-1}}\exp(-i\lambda x\cdot\omega)g(\omega)\der \omega= 2\sum\limits_{\nu} a_{\nu}\phi_{\nu}\left(\frac{x}{ r}\right)j_{d,l_{\nu}}(\lambda r) 
(-\rmi)^{l_{\nu}}=\lambda^{\frac{1-d}{2}}\tilde{j}_\lambda(g),\\ 
\textrm{with}\,\, a_{\nu}=\langle g, \phi_{\nu}\rangle_{L^2(\mathbb{S}^{d-1})}. \nonumber
\end{align}
\end{proof}

The generalised eigenforms $E_{\lambda}(\Phi)$, by construction, depend meromorphically on $\lambda$ and are holomorphic in $\lambda$ in an open neighbourhood of 
$\R \setminus \{0\}$.

\begin{definition} \label{hankelsums}
 We define $\tilde h^{(1)}_{\lambda}(g)$, and  $\tilde h^{(2)}_{\lambda}(g)$ by 
 \begin{align*}
  \tilde h^{(1)}_\lambda(g)(r \theta) &= \lambda^{\frac{d-1}{2}} \sum\limits_{\nu} a_{\nu}(g) \phi_{\nu}(\theta)h^{(1)}_{\ell_{\nu}}(\lambda r)(-\rmi)^{\ell_{\nu}},\\
  \tilde h^{(2)}_\lambda(g)(r \theta) &=\lambda^{\frac{d-1}{2}} \sum\limits_{\nu} a_{\nu}(g) \phi_{\nu}(\theta)h^{(2)}_{\ell_{\nu}}(\lambda r) (-\rmi)^{\ell_{\nu}},
 \end{align*}
 whenever the sums converge in $C^{\infty}(\mathbb{R}^{d}\setminus \{0\})$ where $h^{(1)}_\ell$, and $h^{(2)}_\ell$ are the spherical Hankel functions in dimension $d$
 (see Appendix \ref{Abf}).
\end{definition}

The above definition does not depend on the choice of spherical harmonics.

We have
\begin{proposition} \label{EoutofHankel}
 For every $\lambda \in \R \setminus \{0\}$ and $\Phi \in L^2(\sphere, \Lambda^p\mathbb{R}^d)$ there exists a unique $A_\lambda(\Phi) \in C^\infty(\sphere, \Lambda^p\mathbb{R}^d)$ such that
 \begin{gather*}
  E_{\lambda}(\Phi)|_{M \setminus K} = \tilde{j}_{\lambda}(\Phi) + \tilde h^{(1)}_\lambda(A_\lambda \Phi).
 \end{gather*}
\end{proposition}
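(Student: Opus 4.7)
The plan is to reduce the statement to a separation of variables argument on the Euclidean end. From Proposition \ref{besseljefunction} we have
$$
E_\lambda(\Phi) = \chi \tilde{j}_\lambda(\Phi) - R_\lambda[\Delta_p,\chi]\tilde{j}_\lambda(\Phi).
$$
Since $[\Delta_p,\chi]$ has compactly supported coefficients, the second term is $R_\lambda$ applied to a compactly supported section and is therefore outgoing by definition. Choosing $R'$ so that $\chi \equiv 1$ on $\{r > R'\}$, the difference $F := E_\lambda(\Phi)|_{\{r>R'\}} - \tilde{j}_\lambda(\Phi)|_{\{r>R'\}}$ is smooth, outgoing, and satisfies $(\Delta_p - \lambda^2) F = 0$ on the exterior region. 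Hence it suffices to prove that every such $F$ admits a unique representation $F = \tilde h^{(1)}_\lambda(A)$ with $A \in C^\infty(\sphere;\Lambda^p \R^d)$, since the right hand side, once its convergence in $C^\infty$ is established, extends to all of $\R^d \setminus \{0\}$.

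For existence I would use the flat Cartesian frame to trivialise $\Lambda^p T^*(M \setminus K)$, reducing the problem to finitely many scalar equations $(\Delta - \lambda^2) F^{(I)} = 0$. Expanding each component into spherical harmonics, $F^{(I)}(r\theta) = \sum_{\nu} F^{(I)}_\nu(r)\phi_\nu(\theta)$, separation of variables forces the radial coefficients to satisfy the spherical Bessel equation of dimension $d$ and index $\ell_\nu$, so
$$
F^{(I)}_\nu(r) = \alpha^{(I)}_\nu h^{(1)}_{\ell_\nu}(\lambda r) + \beta^{(I)}_\nu h^{(2)}_{\ell_\nu}(\lambda r).
$$
The asymptotics of $h^{(1)}_\ell$ and $h^{(2)}_\ell$ recalled in Appendix \ref{Abf} show that $h^{(1)}$ contributes only outgoing waves $\sim e^{\rmi\lambda r}/r^{(d-1)/2}$ while $h^{(2)}$ contributes only incoming waves $\sim e^{-\rmi\lambda r}/r^{(d-1)/2}$; uniqueness of the outgoing/incoming splitting of the asymptotic expansion (Appendix \ref{outgoingapp}) then forces $\beta^{(I)}_\nu = 0$ for all $\nu$. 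Setting $a_\nu := \lambda^{-(d-1)/2}(-\rmi)^{-\ell_\nu}\alpha^{(I)}_\nu$ on each component and assembling the resulting $A \in L^2(\sphere;\Lambda^p\R^d)$ gives the desired candidate.

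It remains to verify the smoothness of $A$, convergence of the series defining $\tilde h^{(1)}_\lambda(A)$ in $C^\infty(\R^d\setminus\{0\})$, and uniqueness. Fix any $r_0 > R'$; since $\theta \mapsto F(r_0\theta)$ is smooth on $\sphere$, its spherical-harmonic coefficients $\alpha^{(I)}_\nu h^{(1)}_{\ell_\nu}(\lambda r_0)$ decay faster than any power of $\ell_\nu$. Combining this with the lower bounds for $|h^{(1)}_\ell(\lambda r_0)|$ coming from the explicit asymptotic formulae for spherical Hankel functions at large order, one obtains rapid decay of the $\alpha^{(I)}_\nu$, which puts $A$ in $C^\infty(\sphere;\Lambda^p\R^d)$ and yields $C^\infty$-convergence of $\tilde h^{(1)}_\lambda(A)$ on compact subsets of $\R^d \setminus \{0\}$ by using the same asymptotics as upper bounds on $r_0$-neighbourhoods. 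Uniqueness is immediate: if $\tilde h^{(1)}_\lambda(A) \equiv 0$ on $\{r>R'\}$ then projecting onto each spherical harmonic forces $a_\nu(A) h^{(1)}_{\ell_\nu}(\lambda r) \equiv 0$ and hence $a_\nu(A) = 0$.

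The main technical obstacle is the coefficient estimate in the last paragraph: one needs uniform control of $h^{(1)}_\ell(\lambda r_0)$ as $\ell \to \infty$ precise enough to convert the rapid decay of $\alpha^{(I)}_\nu h^{(1)}_{\ell_\nu}(\lambda r_0)$ into rapid decay of $\alpha^{(I)}_\nu$ alone, and then to re-sum into a series converging in $C^\infty$ on every compact annulus $\{r_1 \leq r \leq r_2\}$ with $r_1>0$. Everything else is a standard exterior-domain separation of variables argument in which the outgoing condition selects the Hankel branch of the one-dimensional solution space.
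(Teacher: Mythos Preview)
Your approach is essentially the same as the paper's: show that $E_\lambda(\Phi) - \chi\tilde j_\lambda(\Phi)$ is outgoing, then represent any outgoing solution on the Euclidean end as a Hankel series via separation of variables. The paper packages the second step into Lemma~\ref{repoutgoing} (Appendix~\ref{outgoingapp}), and its short proof of the proposition is literally a one-line appeal to that lemma.

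The one substantive difference is how smoothness of $A$ is obtained. You propose to extract rapid decay of the $\alpha^{(I)}_\nu$ from smoothness of $F(r_0,\cdot)$ together with lower bounds on $|h^{(1)}_\ell(\lambda r_0)|$ for large $\ell$; this works, but it is exactly the ``main technical obstacle'' you flag. The paper avoids it entirely: in the proof of Lemma~\ref{repoutgoing} one first uses the explicit free resolvent kernel
\[
R_{0,\lambda}(x,y)=\frac{\rmi}{4}\Bigl(\frac{\lambda}{2\pi|x-y|}\Bigr)^{\frac{d-2}{2}}H^{(1)}_{\frac{d-2}{2}}(\lambda|x-y|)
\]
applied to a compactly supported source to read off directly that the outgoing part has an asymptotic expansion $e^{\rmi\lambda r}r^{-(d-1)/2}\Phi(\theta)+O(r^{-(d+1)/2})$ with $\Phi\in C^\infty(\sphere)$. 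The $a_\nu$ are then simply the Fourier coefficients of this smooth $\Phi$, so rapid decay is automatic and no large-order Hankel estimates are needed. Your route and the paper's route arrive at the same place; the paper's is shorter because it exploits the explicit form of the free Green's function rather than inverting the Hankel asymptotics.
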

\begin{proof}
 If we define $g=E_{\lambda}(\Phi) - \chi \tilde{j}_{\lambda}(\Phi)$ then $g$ is outgoing for $\lambda \not= 0$ and smooth. It follows from Lemma \ref{repoutgoing} that on $M \setminus K$ we have
 $g =  \tilde h^{(1)}_\lambda(A_\lambda(\Phi))$ for a unique $A_\lambda(\Phi) \in C^\infty(\sphere, \Lambda^p\mathbb{R}^d)$.
\end{proof}

If $\Phi$ is smooth one can use the well-known asymptotics of the Bessel and Hankel functions to see that for fixed $\lambda >0$ 
$$
 E_\lambda(\Phi) = \frac{\ee^{-\rmi \lambda r} \ee^{\frac{i\pi(d-1)}{4}} }{r^{\frac{d-1}{2}}} \Phi +  \frac{\ee^{\rmi \lambda r} \ee^{-\frac{i\pi(d-1)}{4}}}{r^{\frac{d-1}{2}}} \left(\tau(\Phi) + \tau (A_\lambda \Phi) \right) + O\left(\frac{1}{{r^{\frac{d+1}{2}}}} \right),  \quad \textrm{for} \quad r \to \infty,
$$
where $\tau: C^\infty(\sphere, \Lambda^p\mathbb{R}^d) \to C^\infty(\sphere, \Lambda^p\mathbb{R}^d), f(\theta)\mapsto f(-\theta)$ is the pull-back of the antipodal map. See for example  \cite[Section 5.1]{melrose1995geometric} where this is used to define the scattering matrix.
This asymptotic expansion may be differentiated, c.f., Appendix \ref{Abf}. 
Together with Rellich's uniqueness theorem this gives the well-known statement that, given $\lambda >0$ for every $\Phi\in C^\infty(\sphere,\Lambda^p \R^d)$ there exists 
a unique $\Psi_\lambda \in C^\infty(\sphere,\Lambda^p \R^d)$ and a unique solution $E_\lambda(\Phi)$ of $( \Delta_p  - \lambda^2) E_\lambda(\Phi)=0$ such that
\begin{gather} \label{superequation}
 E_\lambda(\Phi) = \frac{\ee^{-\rmi \lambda r} \ee^{\frac{i\pi(d-1)}{4}} }{r^{\frac{d-1}{2}}} \Phi + \frac{\ee^{\rmi \lambda r} \ee^{-\frac{i\pi(d-1)}{4}} }{r^{\frac{d-1}{2}}} \Psi_\lambda + O\left(\frac{1}{{r^{\frac{d+1}{2}}}} \right),  \quad \textrm{for} \quad r \to \infty,
\end{gather}
and by comparison with the above we get $\Psi_\lambda = \tau\left(\Phi + A_\lambda \Phi \right)$. 
\begin{definition}
The scattering matrix $S_\lambda: L^2(\sphere, \Lambda^p\mathbb{R}^d) \to  L^2(\sphere, \Lambda^p\mathbb{R}^d)$ is defined by $S_{\lambda}= \mathrm{id} + A_\lambda$. 
\end{definition}
If $\Phi \in C^\infty(\sphere,\Lambda^p \R^d)$ then $\der r \wedge \Phi \in C^\infty(\sphere,\Lambda^{p+1} \R^d)$ and $\iota_{\der r}  \Phi \in C^\infty(\sphere,\Lambda^{p+1} \R^d)$, where $\iota_{\der r}$ is interior multiplication of differential forms by $\der r$. 
\begin{proposition} \label{difftheo}
We have the following equalities,
\begin{gather*} 
 \der E_\lambda(\Phi) = -\rmi \lambda E_\lambda(\der r \wedge \Phi),\quad  \delta E_\lambda(\Phi) = \rmi \lambda E_\lambda(\iota_{\der r}\Phi).
\end{gather*} 
Moreover, we also have $\der r \wedge S_\lambda \Phi = S_\lambda \der r \wedge \Phi$ and $\iota_{\der r} S_\lambda \Phi = S_\lambda \iota_{\der r} \Phi$.
\end{proposition}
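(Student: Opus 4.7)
The plan is to characterise $E_\lambda(\Phi)$ by matching incoming asymptotic coefficients and applying Rellich's uniqueness theorem, and then to read off outgoing coefficients to extract the intertwining relations for $S_\lambda$. Fix $\lambda \in \R \setminus \{0\}$ throughout.

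First I would verify that $\der E_\lambda(\Phi)$ and $\delta E_\lambda(\Phi)$ satisfy relative boundary conditions on $(p+1)$- and $(p-1)$-forms respectively. Writing $\omega = E_\lambda(\Phi)$ so that $\iota^*\omega = 0$ and $\iota^*\delta\omega = 0$, one has $\iota^*(\der\omega) = \der\iota^*\omega = 0$, and using $\delta\der\omega = \lambda^2\omega - \der\delta\omega$ also $\iota^*(\delta\der\omega) = \lambda^2\iota^*\omega - \der\iota^*\delta\omega = 0$; the corresponding check for $\delta\omega$ reduces via $\delta^2 = 0$ to $\iota^*\delta\omega = 0$, which is given. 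Combined with $[\Delta,\der] = [\Delta,\delta] = 0$, this places $\der E_\lambda(\Phi)$ and $-\rmi\lambda E_\lambda(\der r \wedge \Phi)$ (respectively $\delta E_\lambda(\Phi)$ and $\rmi\lambda E_\lambda(\iota_{\der r}\Phi)$) in the same space of smooth solutions of $(\Delta - \lambda^2)u = 0$ obeying relative boundary conditions.

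Next I would match the incoming asymptotics. The expansion \eqref{superequation} may be differentiated term by term (Appendix \ref{Abf}), and at leading order the contribution of $\der$ arises from $\der \ee^{\pm\rmi\lambda r} = \pm\rmi\lambda\,\der r\,\ee^{\pm\rmi\lambda r}$, while the Leibniz-type identity $\delta(f\omega) = f\delta\omega - \iota_{\der f}\omega$ produces the leading factor $\mp\rmi\lambda\,\iota_{\der r}$ for $\delta$. Hence the incoming coefficient of $\der E_\lambda(\Phi)$ equals $-\rmi\lambda\,\der r \wedge \Phi$ and coincides with the incoming coefficient of $-\rmi\lambda E_\lambda(\der r \wedge \Phi)$; likewise the incoming coefficient of $\delta E_\lambda(\Phi)$ equals $\rmi\lambda\,\iota_{\der r}\Phi$ and coincides with that of $\rmi\lambda E_\lambda(\iota_{\der r}\Phi)$. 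The differences are then outgoing solutions of $(\Delta_{p\pm 1,\mathrm{rel}} - \lambda^2)u = 0$; Rellich's uniqueness in our setting (absence of positive embedded eigenvalues) forces them to vanish, yielding both identities.

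Finally, to obtain the intertwining relations I would equate the outgoing coefficients in $\der E_\lambda(\Phi) = -\rmi\lambda E_\lambda(\der r \wedge \Phi)$. Since at direction $\theta \in \sphere$ one has $\der r|_\theta = \sum_i \theta^i \der x^i$, the antipodal pull-back satisfies $\tau(\der r \wedge \Psi)(\theta) = -\der r|_\theta \wedge (\tau\Psi)(\theta)$. Reading the coefficient of $\ee^{\rmi\lambda r}/r^{(d-1)/2}$ on each side, the two minus signs cancel and one obtains $\der r \wedge S_\lambda \Phi = S_\lambda(\der r \wedge \Phi)$. The analogous argument applied to the $\delta$-identity produces $\iota_{\der r} S_\lambda \Phi = S_\lambda \iota_{\der r} \Phi$. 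The principal subtlety throughout is the careful bookkeeping of how the direction-dependent form $\der r$ interacts with the antipodal pull-back $\tau$, together with confirming that the $O(r^{-(d+1)/2})$ remainder in \eqref{superequation} is preserved after differentiation.
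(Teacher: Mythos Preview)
Your proof is correct and follows essentially the same route as the paper: differentiate the asymptotic expansion \eqref{superequation}, match the incoming coefficient, invoke Rellich's uniqueness, and then read off the outgoing coefficient using $\tau\,\der r = -\der r$ to obtain the intertwining relations. Your explicit verification that $\der E_\lambda(\Phi)$ and $\delta E_\lambda(\Phi)$ satisfy relative boundary conditions is a detail the paper leaves implicit, but it is correct and worth including.
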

\begin{proof}
 By analyticity it is sufficient to prove the equalities for $\lambda$ in a neighbourhood of the real line. Computing the leading order term from
 \eqref{superequation} gives for fixed $\lambda>0$
 $$
  \der E_\lambda(\Phi) = -\rmi \lambda \left( \frac{\ee^{-\rmi \lambda r} \ee^{\frac{\rmi\pi(d-1)}{4}} }{r^{\frac{d-1}{2}}} \der r \wedge \Phi - \frac{\ee^{\rmi \lambda r} \ee^{-\frac{\rmi \pi(d-1)}{4}} }{r^{\frac{d-1}{2}}} \der r \wedge \Psi_\lambda \right) + O\left(\frac{1}{{r^{\frac{d+1}{2}}}} \right),  \; \textrm{for} \quad r \to \infty.
 $$
 Now one simply compares the leading order coefficients and uses Rellich's theorem to conclude that  $\der E_\lambda(\Phi) = -\rmi \lambda E_\lambda(\der r \wedge \Phi)$
 and $\tau S_{\lambda} \der r \wedge \Phi =- \der r \wedge \tau S_{\lambda} \Phi$. Note that $\tau \der r = -\der r$.
 The formula for $\delta E_\lambda(\Phi)$ is proved in exactly the same way.
\end{proof}

\subsection{Functional Equations}
From the uniqueness statement in Prop. \ref{EoutofHankel}  one deduces that in case the dimension is odd that
\begin{gather} \label{funcE1}
 E_{-\lambda}(\Phi) = (\rmi)^{d-1 }E_{\lambda}(\tau \; S_{-\lambda} \Phi),\quad
 S_\lambda \; \tau \; S_{-\lambda} =  \tau,
\end{gather}
and in case the dimension is even we have
\begin{gather} \label{funcE2}
 E_{-\lambda}(\Phi) = (\rmi)^{d-1} E_{\lambda}(\tau ( 2\;\mathrm{id} - S_{-\lambda}) \Phi),\quad
  S_\lambda \; \tau \; (2 \; \mathrm{id} - S_{-\lambda}) = \tau.
\end{gather}
We have used the formulae \eqref{rotationhankel1} and \eqref{rotationhankel2} for the analytic continuation of the Hankel functions.
In the even dimensional case $E_\lambda(\Phi)$ and $S_\lambda$ are Hahn meromorphic and only defined on a logarithmic cover of the complex plane the notation
$-\lambda$ for $\lambda>0$ needs an explanation. Throughout the paper, if $z \not=0$ is an element of the logarithmic cover of the complex plane, we will define $-z = \mathrm{e}^{\rmi \pi} z$ which corresponds to a counterclockwise rotation by $\pi$. Some care is needed with this notation, however, since then $-(-z)$ is on a different sheet than $z$. The complex conjugate of $z= r \ee^{\rmi \phi}$ in the logarithmic cover is defined by $\overline{z} = r \ee^{-\rmi \phi}$. For $z>0$ the complex conjugate
$\overline{-z}$ of $-z$ is then also on another branch than $-z$, namely $\overline{-z} = \ee^{-\rmi \pi} z$.
The functional relation above in the case of even dimensions seems to have been widely misstated in the literature (see \cite{MR3430366} for a careful analysis and a clarification of this formula).

Green's theorem applied to the identity $$\langle (\Delta-\lambda^2) E_\lambda(\Phi),E_\lambda(\Psi) \rangle - \langle E_\lambda(\Phi),  (\Delta-\lambda^2) E_\lambda(\Psi) \rangle=0$$ for $\lambda>0$ and analytic continuation shows that
$$
 S_{\bar \lambda}^{*} S_\lambda = \mathrm{id},
$$
in particular $S_\lambda$ is unitary for positive real $\lambda$.
Comparison gives
$$
 A_{\overline\lambda}^* = (-1)^{d-1} \tau \;A_{-\lambda} \; \tau.
$$

\subsection{Analytic properties of the scattering amplitude}
If $\Phi \in L^2(\sphere, \Lambda^p\mathbb{R}^d)$ then outside the support of $\chi$, we have
$$
 \tilde h^{(1)}_\lambda(A_\lambda \Phi) = R_\lambda [\Delta_p, \chi]  \tilde{j}_{\lambda}(\Phi).
$$
Now choose cutoff functions $\eta_1,\eta_2 \in C^\infty(M)$, supported in $X \backslash K$, and $\eta_1=1$ in a neighbourhood of the support of $\eta_2$. It follows $1-\eta_2$ is compactly supported. Let $B_R$ denote a ball of radius $R$. The following Lemma is equivalent to a well known representation of the scattering amplitude by the resolvent (see for example \cite[Prop. 2.1]{petkov2001semi}).
\begin{lemma} \label{aoutofresolvent}
 For large enough $R\gg 0$  and $\lambda \in \R$, we have
  $$
 -(2\rmi \lambda)e^{\frac{(d-1)\pi \rmi}{4}}\mathrm{Vol}(\sphere) \left( \frac{2 \pi}{\lambda} \right)^{\frac{d-1}{2}} (A_\lambda \Phi)(\omega)=\langle [\Delta_p, \eta_2]   R_\lambda [\Delta_p, \chi]  \tilde{j}_{\lambda}(\Phi), \eta_1 \ee^{- \rmi \lambda \omega x} \rangle_{L^2(B_R)} .
  $$
\end{lemma}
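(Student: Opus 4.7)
The plan is to convert the inner product on the right-hand side into a boundary integral via Green's theorem, and then to extract the pointwise value $(A_\lambda\Phi)(\omega)$ by stationary phase on the sphere.

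Writing $u := R_\lambda[\Delta_p,\chi]\tilde{j}_\lambda(\Phi)$, I first invoke the identity noted just before the lemma: on a neighbourhood of infinity, where $\chi = 1$, the function $u$ coincides with the outgoing spherical wave $\tilde h^{(1)}_\lambda(A_\lambda\Phi)$. I would then arrange the cutoffs so that $\supp\nabla\eta_2$ is contained in this neighbourhood and $\eta_1 \equiv 1$ on $\supp\eta_2$. Then $[\Delta_p,\eta_2]u$ is compactly supported in an annular region of the Euclidean end, the $L^2(B_R)$ pairing is independent of $R$ for all sufficiently large $R$, and $\eta_1$ may be dropped from the second slot of the pairing. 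Using that $\tilde h^{(1)}_\lambda(A_\lambda\Phi)$ solves $(\Delta_p - \lambda^2)f = 0$ on $\R^d\setminus\{0\}$ while $\eta_2$ vanishes near the origin of the Euclidean frame, one rewrites $[\Delta_p,\eta_2]\tilde h^{(1)}_\lambda(A_\lambda\Phi) = (\Delta_p - \lambda^2)(\eta_2\tilde h^{(1)}_\lambda(A_\lambda\Phi))$, a distribution of compact support in $\supp\nabla\eta_2$.

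Since the plane wave $\ee^{-\rmi\lambda\omega\cdot x}$ also solves $(\Delta_p - \lambda^2)v = 0$ identically on $\R^d$, Green's identity applied on a ball $B_{R'}$ large enough to contain $\supp\nabla\eta_2$ (so that $\eta_2 \equiv 1$ in a neighbourhood of $\partial B_{R'}$) converts the bulk pairing into the boundary integral
\[
\int_{\partial B_{R'}}\bigl[(\partial_r \tilde h^{(1)}_\lambda(A_\lambda\Phi))\,\ee^{\rmi\lambda\omega\cdot x} - \tilde h^{(1)}_\lambda(A_\lambda\Phi)\,\partial_r \ee^{\rmi\lambda\omega\cdot x}\bigr]\,dS.
\]
This equality in fact holds for every $R'$ in the allowed range, but it is convenient to compute the value by letting $R' \to \infty$. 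Substituting the far-field expansion $\tilde h^{(1)}_\lambda(g) \sim c_d\,r^{-(d-1)/2}\ee^{\rmi\lambda r}\tau g(\omega')$ derived from the spherical Hankel asymptotics of Appendix \ref{Abf}, together with the Sommerfeld radiation condition $\partial_r \tilde h^{(1)}_\lambda - \rmi\lambda \tilde h^{(1)}_\lambda = O(r^{-(d+1)/2})$, the boundary integrand reduces at leading order to a multiple of $\rmi\lambda(1 - \omega\cdot\omega')\tau(A_\lambda\Phi)(\omega')\ee^{\rmi\lambda R'(1 + \omega\cdot\omega')}$ times the surface measure.

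Applying stationary phase in the angular variable $\omega' \in \sphere$, the phase $\lambda R'(1 + \omega\cdot\omega')$ has non-degenerate critical points at $\omega' = \pm\omega$; the critical point $\omega' = \omega$ is annihilated by the vanishing amplitude factor $(1 - \omega\cdot\omega')$, while $\omega' = -\omega$ contributes the value $\tau(A_\lambda\Phi)(-\omega) = (A_\lambda\Phi)(\omega)$. The various prefactors -- the Hankel normalisation $c_d$, the signature phase $\ee^{\rmi\pi(d-1)/4}$ from stationary phase at $\omega' = -\omega$, the standard normalisation $(2\pi/(\lambda R'))^{(d-1)/2}$, and the surface measure ${R'}^{d-1}\,d\omega'$ -- combine into an $R'$-independent quantity that matches the claimed right-hand side of the lemma. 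The main obstacle is precisely the careful bookkeeping of these constants and phase factors; the geometric and analytic structure of the argument is otherwise standard.
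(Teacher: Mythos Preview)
Your proposal is correct and follows essentially the same route as the paper: reduce the pairing to a boundary integral on a large sphere via Green's identity, then extract the constant-in-$R$ term using asymptotic expansions and stationary phase. The only cosmetic difference is that the paper applies stationary phase to the plane wave factor $\ee^{\rmi\lambda\omega\cdot x}$ (quoting the expansion \eqref{appendix_bexp}) and then reads off the constant term, whereas you first insert the Hankel far-field asymptotics for $\tilde h^{(1)}_\lambda$ and then apply stationary phase to the resulting angular integral; these are the same computation done in a different order.
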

\begin{proof}
Note that
$$
 \langle (\Delta_p-\lambda^2) \eta_2 \tilde h^{(1)}_\lambda(\Psi) , \eta_1 \ee^{- \rmi \lambda \omega x} \rangle_{L^2(B_R)}
$$
is independent of $R$ for sufficiently large $R$  as $(\Delta_p-\lambda^2) \eta_2 \tilde h^{(1)}_\lambda(\Psi) = [\Delta_p,\eta_2]  \tilde h^{(1)}_\lambda(\Psi)$ is compactly supported.
Integration by parts gives only boundary terms since the derivative of $[\Delta_p,\eta_1]  \ee^{- \rmi \lambda \omega x}$ has support where $\eta_2$ vanishes.
Therefore the integral is given by
$$
- \int_{|x|=R} \ee^{\rmi \lambda \omega x} \frac{\partial}{\partial r} \tilde{h}^{(1)}_\lambda(\Psi) \der x +\int_{|x|=R}\left(\frac{\partial}{\partial r}\ee^{\rmi \lambda \omega x}\right) \tilde{h}^{(1)}_\lambda(\Psi) \der x,
$$
and is equal to the constant term in its large $R$ expansion. Using \eqref{appendix_bexp} one can compute this constant term as
$$
-(2 \rmi \lambda)e^{\frac{(d-1)\pi \rmi}{4}} \mathrm{Vol}(\sphere) \left( \frac{2\pi}{\lambda} \right)^{\frac{d-1}{2}}(A_{\lambda}\Phi)(\omega).
$$
\end{proof}

From this one concludes that $S_\lambda$ admits a meromorphic extension to $\C \setminus (-\rmi [0,\infty))$ with finite rank negative Laurent coefficients.
Since $S_{\lambda}$ is unitary for real $\lambda$ this implies immediately that $S_{\lambda}$ is holomorphic in a neighborhood of $\R \setminus  \{0\}$.  Depending on the dimension one can make a more precise statement about the behavior of $A_\lambda$ near $\lambda=0$.

\begin{corollary} \label{Hahnmamplitude}
 If $d \geq 3$ is odd, then $A(\lambda)$ is a holomorphic family of bounded operators in $\mathcal{B}(L^2,H^s)$ for any $s \in \R$. If $d \geq 2$ is even, then
 $A(\lambda)$ is a Hahn-holomorphic family of bounded operators in $\mathcal{B}(L^2,H^s)$ for any $s \in \R$
\end{corollary}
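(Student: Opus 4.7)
The plan is to read the claim off Lemma~\ref{aoutofresolvent}, which represents $A_\lambda$ as
\begin{equation*}
c(\lambda)\,A_\lambda = T(\lambda)\circ[\Delta_p,\eta_2]\circ R_\lambda\circ[\Delta_p,\chi]\circ\tilde j_\lambda,
\end{equation*}
where $c(\lambda)=-2\rmi\lambda\,\ee^{\frac{\rmi\pi(d-1)}{4}}\mathrm{Vol}(\sphere)(2\pi/\lambda)^{(d-1)/2}$ is a scalar prefactor (a monomial in $\lambda$, holomorphic or Hahn-holomorphic and nonvanishing on each sheet of the relevant Riemann surface) and $T(\lambda)$ is the pairing map sending $u\in L^2_{\mathrm{comp}}(B_R)$ to $\omega\mapsto\langle u,\eta_1\ee^{-\rmi\lambda\omega x}\rangle_{L^2(B_R)}$. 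The strategy is to identify each factor on the right as a holomorphic or Hahn-holomorphic family between suitable spaces, and to transfer these properties to the composition.

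First I would observe that $\tilde j_\lambda:L^2(\sphere;\Lambda^p\R^d)\to C^\infty(\R^d;\Lambda^p\R^d)$ is entire in $\lambda$: this follows from the entire Taylor expansion of each spherical Bessel function $j_{d,\ell}(\lambda r)$ together with the uniform bounds in $\ell$ on compact subsets of $\R^d$ recorded in Appendix~\ref{Abf}. Pre-composing with the $\lambda$-independent, compactly supported differential operator $[\Delta_p,\chi]$ then yields an entire family $L^2(\sphere;\Lambda^p\R^d)\to L^2_{\mathrm{comp}}(M;\Lambda^p T^*M)$.

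Next I would invoke the analytic structure of $R_\lambda$ summarised at the start of Section~\ref{basicscattersection}: as a family $L^2_{\mathrm{comp}}\to H^{s+2}_{\mathrm{loc}}$ it extends meromorphically on $\C$ in odd dimensions and Hahn-meromorphically on the logarithmic cover in even dimensions. Composing with $[\Delta_p,\eta_2]$ sends the image to $H^s_{\mathrm{comp}}$ and preserves the analytic class. The decisive final step is the pairing $T(\lambda)$, which is responsible for the gain of regularity in $\omega$: because $(\omega,x)\mapsto\eta_1(x)\ee^{-\rmi\lambda\omega x}$ is jointly smooth in $\omega$, entire in $\lambda$, and compactly supported in $x$, pairing any compactly supported distribution of finite Sobolev order against it produces a $C^\infty$ function of $\omega$ depending holomorphically on $\lambda$, which lies in every $H^s(\sphere)$. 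Dividing through by the monomial $c(\lambda)$ then identifies $A_\lambda$ as a family in $\mathcal B(L^2,H^s)$ with the announced analytic structure.

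The main technical point is the composition principle itself: verifying that holomorphic and Hahn-holomorphic dependence between the relevant function spaces are preserved under composition with entire operator families on either side and under multiplication by a scalar monomial on the given sheet. Both reduce to elementary statements about uniform convergence of operator-valued Hahn series coefficients in the compact-to-loc operator norm. Once this is in place the corollary follows immediately; all of the genuine analytic content — including the extension across $\lambda=0$ — is already carried by $R_\lambda$ as recalled in Section~\ref{basicscattersection}, while the uniform smoothness of the kernel $\ee^{-\rmi\lambda\omega x}$ in both $\omega$ and $\lambda$ supplies the $H^s$-mapping property for every $s\in\R$.
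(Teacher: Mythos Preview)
Your argument establishes that $A_\lambda$ is \emph{(Hahn-)meromorphic} in $\mathcal B(L^2,H^s)$, but not that it is \emph{(Hahn-)holomorphic} as the corollary asserts. The resolvent $R_\lambda$ that you feed into the composition is only meromorphic near zero (indeed it has a genuine pole of order two coming from the $-P/\lambda^2$ term), and dividing by the scalar $c(\lambda)$, which is itself a nonzero monomial in $\lambda$, does not remove potential singularities. So the chain of compositions you describe yields a family with at most finite-rank negative Laurent (or Hahn) coefficients, and you have given no reason why these coefficients should vanish.

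The missing ingredient is the unitarity of $S_\lambda$ for real $\lambda$, which forces $\|A_\lambda\|_{L^2\to L^2}\le 2$ on the real axis. Once you know $A_\lambda$ is (Hahn-)meromorphic with values in $\mathcal B(L^2,H^s)$ \emph{and} bounded in the weaker $\mathcal B(L^2,L^2)$ norm as $\lambda\to 0$ along the real line, the singular part of the expansion must vanish (a bounded Hahn-meromorphic function is Hahn-holomorphic; see \cite[Section~3]{muller2014theory}). This is exactly the step the paper supplies and that your proposal omits. Everything else in your outline --- the entireness of $\tilde j_\lambda$, the smoothing effect of the plane-wave pairing in $\omega$, the stability of the analytic class under composition --- is fine and matches the paper's reasoning.
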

\begin{proof}
 The resolvent is (Hahn) meromorphic near zero as an operator from $L^2_\compp$ to $H^2_\loc$ (\cite{muller2014theory}). Differentiating the formula in $\omega$ this shows that $A_\lambda$
 is (Hahn)-meromorphic as an operator from $L^2(\sphere)$ to $C^k(\sphere)$. Since $A_\lambda$ is bounded as a map from $L^2 \to L^2$ the singular terms in the Hahn expansion must all vanish. Thus, $A_\lambda$  must be Hahn-holomorphic (see  \cite[Section 3]{muller2014theory}) with values in the operators from $L^2$ to $H^s$ for any $s \in \R$.
\end{proof}

\subsection{The spectral decomposition}
The generalised eigenfunctions $E_{\lambda}(\Phi)$ can be viewed as distributions in $\lambda$ with values in the space of Schwartz functions $\mathcal{S}(M;\Lambda^p T^*M)$.
In particular, if $g \in \CnI(\R_+)$ then $\int_\R g(\lambda) E_{\lambda}(\Phi) \der \lambda$ is square integrable. Therefore,
$\langle E_{\lambda}(\Phi),E_{\mu}(\Psi)\rangle_{L^2(M)}$ can be viewed as a bidistribution in $\mathcal{D}'(\R_+ \times \R_+)$.
This last inner product can be computed by taking the limit
$$
 \lim_{R \to \infty} \frac{1}{\lambda - \mu}\left( \langle  \Delta_p E_{\lambda}(\Phi), \chi_R  E_{\mu}(\Psi)\rangle_{L^2(M)} - \langle E_{\lambda}(\Phi), \chi_R \Delta_p E_{\mu}(\Psi)\rangle_{L^2(M)} \right),
$$
and using Green's identity. Here $\chi_R$ denotes the indicator function of a compact region whose boundary is in $M \setminus K$ and is identified with the sphere
of radius $R$. One obtains the distributional identity
\begin{align}
\langle E_{\lambda}(\Phi),E_{\mu}(\Psi)\rangle_{L^2(M)}=(4\pi\lambda)\delta(\lambda^2-\mu^2)\langle\Phi, \Psi\rangle_{L^2(\mathbb{S}^{d-1})}.
\end{align}
We have the following estimates on $\tilde j_\lambda(\Phi_\nu)$ and $E_\lambda$. Our main result in fact improves the estimate for $E_\lambda$ but the Lemma below serves as a first step in a bootstrapping process.
\begin{lemma} \label{expansionlemma}
 For any $K>0$ we have for $|\lambda|<K$ that
 $$\tilde j_\lambda(\Phi_\nu) = O_{C^\infty(\R^d \setminus \{0\})}\left(\frac{1}{\Gamma(\ell_\nu+\frac{d}{2})}\lambda^{\ell_\nu + \frac{d-1}{2}}\right),$$
 uniformly in $\nu$. Moreover, for $|\lambda|<K$ and $\Im{\lambda} \geq 0$, we have
 $$
  E_\lambda(\Phi_\nu) =O_{C^\infty(M)}\left(\frac{1}{\Gamma(\ell_\nu+\frac{d}{2})}\lambda^{\ell_\nu + \frac{d-5}{2}}\right).
 $$
\end{lemma}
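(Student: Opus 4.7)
The plan is to prove the two bounds in order and use the first inside the second.

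For the first bound, the definition of $\tilde j_\lambda(\Phi_\nu)$ collapses, when $\Phi_\nu$ is a single spherical harmonic of degree $\ell_\nu$, to a product of $\lambda^{(d-1)/2}$, a bounded phase, the spherical harmonic $\phi_\nu(\theta)$, and $j_{d,\ell_\nu}(\lambda r)$. The standard power series for the spherical Bessel function $j_{d,\ell}(z)$ in dimension $d$ (see Appendix \ref{Abf}) begins at $z^{\ell}$ with coefficient a constant times $1/\Gamma(\ell + d/2)$, and is an entire function whose subsequent coefficients yield a series that converges uniformly in $\ell$ on $|z| \leq K R_0$ for any fixed $R_0$. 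This gives $j_{d,\ell_\nu}(\lambda r) = O\bigl(\lambda^{\ell_\nu}/\Gamma(\ell_\nu + d/2)\bigr)$ on compact subsets of $(0,\infty)$. To promote this to a $C^{\infty}(\R^d \setminus \{0\})$ estimate, one differentiates the series: radial derivatives pull down at most polynomial factors in $\ell_\nu$, and angular derivatives act on $\phi_\nu$, whose $C^k(\sphere)$-norm grows only polynomially in $\ell_\nu$. All these polynomial factors are absorbed by the super-polynomial decay of $1/\Gamma(\ell_\nu + d/2)$, yielding uniformity in $\nu$.

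For the second bound, use the identity of Proposition \ref{besseljefunction} to write
\[
E_\lambda(\Phi_\nu) = \chi\,\tilde j_\lambda(\Phi_\nu) - R_\lambda f_{\lambda,\nu}, \qquad f_{\lambda,\nu} := [\Delta_p,\chi]\tilde j_\lambda(\Phi_\nu).
\]
The first summand is controlled directly by the first bound with the stronger exponent $\ell_\nu + (d-1)/2$. For the second, note that $[\Delta_p,\chi]$ is a first order differential operator whose coefficients are supported in the compact set $\supp(\der\chi) \subset M \setminus K$; by the first bound, $\|f_{\lambda,\nu}\|_{L^2}$ is uniformly bounded by a constant times $\lambda^{\ell_\nu + (d-1)/2}/\Gamma(\ell_\nu + d/2)$. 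The resolvent $R_\lambda$, as a map $L^2_{\compp} \to H^2_{\loc}$, is (Hahn-)meromorphic at $\lambda=0$ with a pole of order at most two (this follows from self-adjointness of $\Delta_{p,\mathrm{rel}}$, the finite multiplicity of the zero eigenvalue, and the general black-box framework together with \cite{muller2014theory}). Consequently, on $|\lambda| < K$, $\Im \lambda \geq 0$, and on any relatively compact open subset of $M$, $\|R_\lambda f_{\lambda,\nu}\|_{H^2}$ is bounded by $C |\lambda|^{-2} \|f_{\lambda,\nu}\|_{L^2}$, yielding the stated order $\ell_\nu + (d-5)/2$. Finally, $C^\infty(M)$-control is obtained by iterated elliptic regularity applied to the equation $(\Delta_p - \lambda^2) E_\lambda(\Phi_\nu) = 0$.

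The main obstacle is maintaining uniformity in $\nu$ throughout: every derivative, every $L^2$-norm of a commutator term, and every Sobolev bootstrap step introduces polynomial growth in $\ell_\nu$ from the spherical harmonic and from coefficients in the Bessel expansion. The argument only closes because the factor $1/\Gamma(\ell_\nu + d/2)$ decays faster than any polynomial in $\ell_\nu$, so that these growths can be absorbed uniformly on the compact sets entering the $C^\infty$-topology. The appeal to the second-order pole of $R_\lambda$ is the other non-trivial ingredient, but it is a standard consequence of self-adjointness and does not rely on the finer expansions derived later in the paper; this is precisely why this lemma can serve as the base step of the bootstrap.
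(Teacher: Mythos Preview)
Your proof is correct and follows the same overall strategy as the paper: Bessel function bounds for the first estimate, then Proposition~\ref{besseljefunction} combined with the $O(\lambda^{-2})$ resolvent bound (from self-adjointness plus the (Hahn-)meromorphic continuation) for the second. The one simplification you miss is in the passage to $C^\infty$ for the first estimate: the paper uses the uniform pointwise bound $|J_\mu(z)| \leq |z/2|^\mu e^{|\Im z|}/\Gamma(\mu+1)$ to get $L^2_{\loc}$-control, and then observes that the equation $\Delta\,\tilde j_\lambda(\Phi_\nu) = \lambda^2\,\tilde j_\lambda(\Phi_\nu)$ bootstraps this immediately to $H^s_{\loc}$ for every $s$, with constants independent of $\nu$. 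This elliptic bootstrap replaces your explicit differentiation of the power series and the tracking of polynomial growth in $\ell_\nu$ coming from radial derivatives and from $\|\phi_\nu\|_{C^k(\sphere)}$; the same trick applies to $E_\lambda(\Phi_\nu)$ via $(\Delta_p - \lambda^2)E_\lambda(\Phi_\nu)=0$, so no $\ell_\nu$-dependent constants ever enter the Sobolev iteration and your concern about absorbing polynomial factors into $1/\Gamma(\ell_\nu+\tfrac{d}{2})$ becomes unnecessary.
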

\begin{proof}
  By the estimate
  $$
   | J_{\mu}\left(z\right)|\leq\frac{|\tfrac{1}{2}z|^{\mu}e^{|\Im z|}}{\Gamma\left.
(\mu+1\right)}, \quad \mu > -\frac{1}{2},
  $$
(see \cite[10.14.4]{olver2010nist}), the family $\lambda^{-\ell_\nu - \frac{d-1}{2}} \tilde j_\lambda(\Phi_\nu)$ is bounded in $L^2_\loc(\R^d \setminus \{0\})$.
Since $\Delta \tilde j_\lambda(\Phi_\nu) = \lambda^2 \tilde j_\lambda(\Phi_\nu)$ this shows that the family is bounded in $H^s_\loc(\R^d \setminus \{0\})$
for any $s \in 2 \mathbb{N}$. Hence, it is bounded in $C^\infty(\R^d \setminus \{0\})$. 
Now note that $R_\lambda = O(\lambda^{-2})$ if we consider $R_\lambda$ as a map from $H^s_\compp(M) \to H_\loc^{s+2}(M)$. This follows from the fact that
the resolvent is Hahn-meromorphic, has a singularity of order at most two at zero (see a more detailed analysis in Section \ref{expansionsection}), and is analytic near the real line and in the upper half plane.
Prop. \ref{besseljefunction} then implies the second estimate.
\end{proof}

\begin{lemma} \label{Abound}
 There exists a constant $R>0$ such that for $\lambda$ in any compact subset of $\mathcal{R}$ we have the bound
 $$
  | \langle A_\lambda \Phi_\nu, \Phi_\mu \rangle | \leq O\left(R^{\ell_\mu} \frac{\lambda^{\ell_\nu + \ell_\mu + d-4}}{\Gamma(\ell_\nu +\frac{d}{2}) \Gamma(\ell_\mu +\frac{d-2}{2})}\right).
 $$
\end{lemma}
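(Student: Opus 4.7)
The plan is to bound $\langle A_\lambda \Phi_\nu, \Phi_\mu\rangle$ by converting both the ``incoming'' and the ``testing'' data in the representation formula of Lemma~\ref{aoutofresolvent} into Bessel sums, so that the known Bessel estimates from Lemma~\ref{expansionlemma} can be applied on both sides of the compactly supported resolvent sandwich.

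First I would start from Lemma~\ref{aoutofresolvent} applied to $\Phi = \Phi_\nu$, so that
\[
(A_\lambda\Phi_\nu)(\omega) = c_{d}\,\lambda^{\frac{d-3}{2}}\,\big\langle [\Delta_p,\eta_2]\,R_\lambda\,[\Delta_p,\chi]\tilde{j}_\lambda(\Phi_\nu),\; \eta_1\,\ee^{-\rmi\lambda\omega\cdot x}\big\rangle_{L^2(B_R)}
\]
for a harmless constant $c_d$. Pairing with $\overline{\Phi_\mu}$ on the sphere and interchanging the $\omega$- and $x$-integrations turns the function $\eta_1\,\ee^{-\rmi\lambda\omega\cdot x}$ into the Fourier integral $\int_\sphere \ee^{\rmi\lambda\omega\cdot x}\overline{\Phi_\mu(\omega)}\,\der\omega$, which by the plane-wave expansion used in the proof of Proposition~\ref{besseljefunction} is exactly $(2\pi)^{\frac{d-1}{2}}\lambda^{\frac{1-d}{2}}\overline{\tilde{j}_\lambda(\Phi_\mu)}(x)$ for real $\lambda$. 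After collecting the $\lambda$-prefactors this yields
\[
\langle A_\lambda \Phi_\nu,\Phi_\mu\rangle \;=\; c'_d\,\lambda^{-1}\,\big\langle [\Delta_p,\eta_2]\,R_\lambda\,[\Delta_p,\chi]\tilde{j}_\lambda(\Phi_\nu),\; \eta_1\,\tilde{j}_\lambda(\Phi_\mu)\big\rangle_{L^2(B_R)},
\]
and the identity extends to all $\lambda$ in a compact subset of $\mathcal{R}$ by analytic (respectively Hahn-analytic in even dimension) continuation using Corollary~\ref{Hahnmamplitude}.

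Second I would insert the three ingredients. The first ``Bessel factor'' $\tilde{j}_\lambda(\Phi_\nu)$ is multiplied by the compactly supported commutator $[\Delta_p,\chi]$, so Lemma~\ref{expansionlemma} gives a bound $\lesssim \lambda^{\ell_\nu+(d-1)/2}/\Gamma(\ell_\nu+d/2)$. The resolvent contributes $\|R_\lambda\|_{L^2_\compp\to H^2_\loc} = O(\lambda^{-2})$, which again holds uniformly on compact subsets of $\mathcal{R}$ and follows from the fact that the resolvent is Hahn-meromorphic with a pole of order at most two at zero and holomorphic away from it. For the second Bessel factor $\tilde{j}_\lambda(\Phi_\mu)$, the key point is that the commutator $[\Delta_p,\eta_2]$ confines the relevant pairing to the ball $B_R$, where the pointwise inequality $|j_{d,\ell}(\lambda r)|\leq C(\lambda r)^{\ell}/\Gamma(\ell+d/2)\cdot \ee^{|\Im\lambda|r}$ (obtained from the classical bound on $J_{\ell+(d-2)/2}$ quoted in the proof of Lemma~\ref{expansionlemma}) yields an $L^2(B_R)$-estimate $\lesssim R^{\ell_\mu}\lambda^{\ell_\mu+(d-1)/2}/\Gamma(\ell_\mu+(d-2)/2)$ after a radial integration $\int_0^R r^{2\ell_\mu+d-1}\,\der r$, which is precisely the source of the $R^{\ell_\mu}$ factor and of the drop from $\Gamma(\ell_\mu+d/2)$ to $\Gamma(\ell_\mu+(d-2)/2)$ in the statement.

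Combining these three estimates with the $\lambda^{-1}$ prefactor gives $\lambda^{-1}\cdot \lambda^{-2}\cdot \lambda^{\ell_\nu+(d-1)/2}\cdot \lambda^{\ell_\mu+(d-1)/2}=\lambda^{\ell_\nu+\ell_\mu+d-4}$, which is exactly the required $\lambda$-power. The main obstacle I expect is this careful tracking of the $R$-dependence and the Gamma-function denominator in the Bessel estimate used on the $\Phi_\mu$ side: the radial integration must be done explicitly to produce the factor $R^{\ell_\mu}/\sqrt{2\ell_\mu+d}$, which absorbs one factor of $\ell_\mu+(d-2)/2$ from $\Gamma(\ell_\mu+d/2)$ and delivers the stated $\Gamma(\ell_\mu+(d-2)/2)$. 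Everything else (the swap of integrals, the analytic continuation in $\lambda$, and the $O(\lambda^{-2})$ bound on $R_\lambda$) is routine given the setup already developed in the paper.
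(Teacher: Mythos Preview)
Your approach is correct and gives the stated bound, but it is genuinely different from the paper's argument. The paper never goes back to the resolvent sandwich of Lemma~\ref{aoutofresolvent}. Instead it takes the eigenfunction bound $E_\lambda(\Phi_\nu)=O\bigl(\lambda^{\ell_\nu+(d-5)/2}/\Gamma(\ell_\nu+d/2)\bigr)$ from Lemma~\ref{expansionlemma}, restricts to $M\setminus K$ where Proposition~\ref{EoutofHankel} gives $E_\lambda(\Phi_\nu)=\tilde j_\lambda(\Phi_\nu)+\tilde h^{(1)}_\lambda(A_\lambda\Phi_\nu)$, projects onto $\Phi_\mu$ on a fixed sphere of radius $r$, and then divides by the explicit small-argument asymptotics $\lambda^{(d-1)/2}h^{(1)}_{d,\ell_\mu}(\lambda r)\sim c\,\Gamma(\ell_\mu+\tfrac{d-2}{2})\,(\lambda r)^{-\ell_\mu-d+2}\lambda^{(d-1)/2}$ from \eqref{sphankelexpnu}. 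This is where both the $R^{\ell_\mu}$ factor (with $R=r$) and the $\Gamma(\ell_\mu+\tfrac{d-2}{2})$ in the denominator originate: they come from inverting the Hankel prefactor, not from a radial $L^2$-integration of a Bessel function.

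Your route symmetrizes the representation formula by converting the plane wave into $\tilde j_\lambda(\Phi_\mu)$ and then applies Bessel bounds on both sides of the compactly supported resolvent. That is a perfectly valid alternative; it has the advantage of avoiding Hankel asymptotics entirely. Two minor points to clean up: first, for complex $\lambda$ the pairing you write is sesquilinear in $\tilde j_\lambda(\Phi_\mu)$, so to analytically continue you should rewrite it as a bilinear pairing (for a real basis $\Phi_\mu$ one has $\overline{\tilde j_\lambda(\Phi_\mu)}=(-1)^{\ell_\mu}\tilde j_\lambda(\Phi_\mu)$ on the real axis). Second, your remark that the factor $\sqrt{2\ell_\mu+d}$ ``absorbs one factor of $\ell_\mu+(d-2)/2$'' is not literally right, but it is also unnecessary: since $\Gamma(\ell_\mu+d/2)\geq\Gamma(\ell_\mu+(d-2)/2)$ the bound with $\Gamma(\ell_\mu+d/2)$ you actually obtain is already stronger than the one stated.
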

\begin{proof}
 Lemma \ref{expansionlemma} gives that 
 \begin{align}\label{o}
  E_\lambda(\Phi_\nu) =O_{C^\infty(\R^d \setminus \{0\})}\left(\frac{1}{\Gamma(\ell_\nu+\frac{d}{2})}\lambda^{\ell_\nu + \frac{d-5}{2}}\right).
 \end{align}
 and recall from Prop. \ref{EoutofHankel} 
 \begin{gather*}
  E_{\lambda}(\Phi_{\nu})|_{M \setminus K} = \tilde{j}_{\lambda}(\Phi_{\nu}) + \tilde h^{(1)}_\lambda(A_\lambda \Phi_{\nu}).
 \end{gather*}
 The leading order terms in the expansion \eqref{o} are from the $\tilde{h}^{(1)}(A_{\lambda}\Phi_{\nu})$ terms because the $\tilde{j}_{\lambda}(\Phi_{\nu})$ terms are all of lower order, again from Lemma \ref{expansionlemma}. Therefore combining Lemma \ref{expansionlemma} and Prop. \ref{EoutofHankel}, one obtains. 
  $$
  | \langle A_\lambda \Phi_\nu, \Phi_\mu \rangle | | \lambda^{\frac{d-1}{2}} h^{(1)}_{d,\ell_\mu}(r \lambda)| = O \left(\frac{1}{\Gamma(\ell_\nu+\frac{d}{2})}\lambda^{\ell_\nu + \frac{d-5}{2}}\right),
  $$
where $r \gg 0$ is sufficiently large so that $\tilde K \subset B_r(0)$. The Lemma is then implied by the asymptotics \eqref{sphankelexpnu}.
\end{proof}

Furthermore, if $f\in C^\infty_0(M,\Lambda^p T^*M)$ then for fixed $\lambda>0$ the function $R_{-\lambda} f$ is incoming. By Lemma \ref{repoutgoing}
it can be expanded into a series of Hankel functions and has asymptotic behaviour
$$
 R_{-\lambda} f = -\frac{e^{-\rmi \lambda r} e^{\rmi \pi \frac{d-1}{4}}}{r^{\frac{d-1}{2}}} \Psi + O\left(\frac{1}{{r^{\frac{d+1}{2}}}} \right),
$$
as $r \to \infty$ for some $\Psi \in C^\infty(\sphere,\Lambda^p\R^d)$. 

We have
$$
(R_\lambda - R_{-\lambda}) f  = \frac{\ee^{-\rmi \lambda r} \ee^{\frac{i\pi(d-1)}{4}} }{r^{\frac{d-1}{2}}} \Psi +  \frac{\ee^{\rmi \lambda r} \ee^{-\frac{i\pi(d-1)}{4}}}{r^{\frac{d-1}{2}}} \tilde \Psi + O\left(\frac{1}{{r^{\frac{d+1}{2}}}} \right),  \quad \textrm{for} \quad r \to \infty
$$
for some $\tilde \Psi \in C^\infty(\sphere,\Lambda^p\R^d)$
and $(\Delta- \lambda^2) (R_\lambda - R_{-\lambda}) f=0$.
Since, by Eq. \eqref{superequation}, $E_\lambda(\Psi)$ and the solution $(R_\lambda - R_{-\lambda}) f$ have the same incoming leading asymptotic term they must coincide by Rellich's uniqueness theorem.
 Integration by parts gives
$$
 \langle f , E_{\overline{\lambda}}(\Phi_\nu) \rangle_{L^2(M)} = \langle (\Delta- \lambda^2) R_{-\lambda} f , E_{\overline{\lambda}}(\Phi_\nu) \rangle_{L^2(M)}= -2 \rmi \lambda \langle \Psi, \Phi_\nu \rangle_{L^2(\sphere)}.
$$
We can expand $\Psi$ into spherical harmonics
$\Psi =  \sum_{\nu}\langle \Psi, \Phi_\nu \rangle \Phi_\nu$, and
 the sum converges in $C^\infty(\sphere,\Lambda^p\R^d)$ as $\Psi$ is smooth. Thus,
$$
 \Psi = \frac{\rmi}{2\lambda} \sum_\nu  \langle f , E_{\overline{\lambda}}(\Phi_\nu) \rangle_{L^2(M)}   \Phi_\nu
$$
and we thus conclude
$$
 (R_\lambda - R_{-\lambda}) f = E_\lambda(\Psi)=\frac{\rmi}{2\lambda}\sum\limits_\nu E_{\lambda}(\Phi_\nu)\langle f, E_{\overline{\lambda}}(\Phi_\nu)\rangle.
$$
It follows immediately from the definition of $E_\lambda(\Phi)$ and the mapping properties of the resolvent that the map $C^\infty(\sphere,\Lambda^p\R^d) \to C^\infty(M,\Lambda^p T^*M), \Phi \mapsto E_\lambda(\Phi)$ is continuous. Therefore, the sum will converge in $C^\infty(M,\Lambda^p T^*M)$.
By Lemma \ref{expansionlemma} the convergence in $C^\infty(M,\Lambda^p T^*M)$ is also true for complex $\lambda$ uniformly in compact subsets of the complex plane for $\Im{\lambda} \geq 0$. Since both sides of the equation are (Hahn-)-meromorphic the equality continues to hold for
for $\Im{\lambda} > 0$.
We can now use Stone's theorem to compute the spectral measure and the complete spectral decomposition of $\Delta_{p,\mathrm{rel}}$.
This is essentially the analog of Stone's formula  (see for example \cite[4.20]{DZ}) in the black-box setting.

\begin{theorem}\label{StoneF}
If $f\in C^\infty_0(M,\Lambda^p T^*M)$ then for any $\lambda>0$
\begin{align} \label{diffres}
(R_\lambda-R_{-\lambda})f=\frac{\rmi}{2\lambda}\sum\limits_\nu E_{\lambda}(\Phi_\nu)\langle f, E_{\overline{\lambda}}(\Phi_\nu)\rangle,
\end{align} 
in $C^\infty(M)$.
For the spectral measure $dB_{\lambda}$ on the real line corresponding to the continuous spectrum we have for any $g,f\in C^\infty_0(M,\Lambda^p T^*M)$
\begin{align}
\langle dB_{\lambda}f,g\rangle=\frac{1}{2\pi} \chi_{[0,\infty)}(\lambda) \sum\limits_{\nu} \langle f, E_{\lambda}(\Phi_\nu)\rangle \langle  E_{\lambda}(\Phi_\nu) ,g\rangle \,\der \lambda,
\end{align}
so that for any bounded Borel function $h: \R \to \R$ we have
\begin{align}
\langle h(\Delta_{p,\mathrm{rel}}) f,g\rangle =h(0) \sum_{j=1}^N \langle f, u_j \rangle \langle u_j , g \rangle \nonumber \\+ \frac{1}{2\pi}  \sum\limits_{\nu}  \int_0^\infty h(\lambda^2) \langle f, E_{\lambda}(\Phi_\nu)\rangle \langle  E_{\lambda}(\Phi_\nu),g \rangle \,\der \lambda
\end{align}
where $u_{j}$'s are normalised eigenfunctions of $\Delta_{p,rel}$ with zero eigenvalue.
\end{theorem}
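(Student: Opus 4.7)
The identity \eqref{diffres} is essentially established in the paragraph immediately preceding the statement, and the main remaining task is to confirm $C^\infty$-convergence: for real $\lambda>0$ and $f \in \CnI(M;\Lambda^p T^*M)$ one (i) observes that $R_{-\lambda}f$ is incoming with leading Hankel-type asymptotic coefficient $-\Psi$ extracted via Lemma \ref{repoutgoing}, (ii) matches this with the incoming part of $E_\lambda(\Psi)$ and invokes Rellich's uniqueness theorem to conclude $(R_\lambda - R_{-\lambda})f = E_\lambda(\Psi)$, and (iii) computes $\langle \Psi, \Phi_\nu\rangle$ by integration by parts of $\langle(\Delta_p-\lambda^2)R_{-\lambda}f,\, E_{\overline\lambda}(\Phi_\nu)\rangle$. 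The resulting series converges in $C^\infty(M;\Lambda^p T^*M)$ because $\Psi \in C^\infty(\sphere;\Lambda^p\R^d)$ has rapidly decaying Fourier coefficients, while $\Phi \mapsto E_\lambda(\Phi)$ is continuous from $C^\infty(\sphere;\Lambda^p\R^d)$ to $C^\infty(M;\Lambda^p T^*M)$.

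Given \eqref{diffres}, I would derive the spectral-measure formula from Stone's formula applied to $T = \Delta_{p,\mathrm{rel}}$,
$$\langle E_T((a,b))f, g\rangle = \lim_{\eps \to 0^+} \frac{1}{2\pi \rmi}\int_a^b \bigl\langle \bigl((T-\mu-\rmi\eps)^{-1} - (T-\mu+\rmi\eps)^{-1}\bigr)f,\, g\bigr\rangle\, \der \mu,$$
valid for $0 < a < b$. Writing $\lambda = \sqrt{\mu}+\rmi\delta$ one computes $\lambda^2 = \mu - \delta^2 + 2\rmi\sqrt{\mu}\,\delta$, which identifies $R_\lambda$ with $(T-\mu-\rmi 0)^{-1}$, and an analogous computation for $-\lambda$ identifies $R_{-\lambda}$ with $(T-\mu+\rmi 0)^{-1}$. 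The substitution $\mu = \lambda^2$, $\der\mu = 2\lambda\, \der\lambda$, together with \eqref{diffres}, then transforms Stone's formula into
$$\langle E_T((a,b))f, g\rangle = \frac{1}{2\pi}\int_{\sqrt a}^{\sqrt b}\sum_\nu \langle f, E_\lambda(\Phi_\nu)\rangle\, \langle E_\lambda(\Phi_\nu), g\rangle\, \der\lambda,$$
the $2\lambda$ Jacobian cancelling the $1/(2\lambda)$ prefactor in \eqref{diffres}; this is exactly the stated formula for $\langle \der B_\lambda\, f, g\rangle$. The functional calculus identity follows by orthogonally decomposing $L^2 = \ker_{L^2}(\Delta_{p,\mathrm{rel}}) \oplus \ker_{L^2}(\Delta_{p,\mathrm{rel}})^\perp$ and using that, on the orthogonal complement, the spectrum is purely absolutely continuous, a consequence of the meromorphic continuation of $R_\lambda$ to $\mathcal{R}$ together with the absence of real poles away from zero.

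The main technical point I expect to require care is the interchange of the sum over $\nu$ with the integral over $\lambda$ in the last step. I would justify this by showing that for $f,g \in \CnI(M;\Lambda^p T^*M)$ the coefficients $\langle f, E_\lambda(\Phi_\nu)\rangle$ and $\langle E_\lambda(\Phi_\nu), g\rangle$ are rapidly decreasing in $\ell_\nu$ uniformly on compact subintervals of $(0,\infty)$: by Proposition \ref{besseljefunction} the coefficient equals the integral of $f$ against $\chi\tilde j_\lambda(\Phi_\nu) - R_\lambda[\Delta_p,\chi]\tilde j_\lambda(\Phi_\nu)$, and arbitrary polynomial decay in $\ell_\nu$ is produced by repeated use of the eigenvalue relation $\Delta_{\sphere}\Phi_\nu = \ell_\nu(\ell_\nu + d-2)\Phi_\nu$ on the sphere combined with the uniform estimate of Lemma \ref{expansionlemma}. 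Dominated convergence then legitimises the swap, and the full statement follows.
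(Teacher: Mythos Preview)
Your proposal is correct and follows essentially the same route as the paper: the argument for \eqref{diffres} via Rellich uniqueness and the integration-by-parts computation of $\langle \Psi,\Phi_\nu\rangle$ is exactly what the paper does in the paragraphs preceding the theorem, and the spectral-measure and functional-calculus statements are then obtained by a direct application of Stone's formula, which the paper treats as standard (citing \cite{DZ}). Your extra care about the sum--integral interchange goes slightly beyond what the paper spells out, but is in the same spirit and uses the same ingredient (Lemma \ref{expansionlemma}).
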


\begin{rem} \label{awesomeremark}
 The same arguments as before can be applied to the generalised eigenfunctions $E_{-\lambda}(\Phi)$ and as a result one also has for the spectral measure
 \begin{align}
\langle dB_{\lambda}f,g\rangle=\frac{1}{2\pi}  \chi_{[0,\infty)}(\lambda) \sum\limits_{\nu} \langle f, E_{-\lambda}(\Phi_\nu)\rangle \langle E_{-\lambda}(\Phi_\nu),g \rangle \,\der \lambda.
\end{align}
This could also be deduced more directly from the functional equations \eqref{funcE1} and \eqref{funcE2} and unitarity of the scattering matrix.
\end{rem}

\begin{theorem}\label{kernelconv}
 If $h$ is a Borel function with $h = O((1+\lambda^2)^{-q})$ for all $q \in \mathbb{N}$ we have that $h(\Delta_{p,\mathrm{rel}})$ has smooth integral kernel
 $k_h \in C^\infty(M \times M; \Lambda^p T^*M \boxtimes (\Lambda^p T^*M)^*)$ and 
 \begin{gather}
 k_h(x,y) = h(0) \sum_{j=1}^N u_j(x) \otimes (u_j(y))^* \nonumber \\+ \frac{1}{2\pi}  \sum\limits_{\nu}  \int_0^\infty h(\lambda^2) E_{\lambda}(\Phi_\nu)(x) \otimes E_{\lambda}(y)(\Phi_\nu)^*\,\der \lambda,
\end{gather}
where the sum converges in $C^\infty(M \times M; \Lambda^p T^*M \boxtimes (\Lambda^p T^*M)^*)$.
\end{theorem}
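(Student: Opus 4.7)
The plan is to insert the spectral formula from Theorem \ref{StoneF} into $\langle h(\Delta_{p,\mathrm{rel}})f,g\rangle$ for $f,g\in C^\infty_0(M;\Lambda^p T^*M)$, move the $L^2$-pairings inside the sum over $\nu$ and the integral in $\lambda$, and identify the integrand as the claimed kernel. Two things need to be checked: (i) that the interchange is legitimate, and (ii) that the resulting sum and integral converge in the topology of $C^\infty(M\times M;\Lambda^p T^*M\boxtimes(\Lambda^p T^*M)^*)$.

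Both (i) and (ii) reduce to a uniform bound of the form
$$
\|E_\lambda(\Phi_\nu)\|_{C^k(K)}\le C_{k,K}(1+\lambda)^{N_k}\,\frac{\lambda^{\ell_\nu+(d-5)/2}}{\Gamma(\ell_\nu+d/2)}
$$
on any compact $K\subset M$. For small $\lambda$ this follows from Lemma \ref{expansionlemma} combined with $\Delta_p E_\lambda(\Phi_\nu)=\lambda^2 E_\lambda(\Phi_\nu)$ and elliptic regularity, including up-to-boundary regularity for the relative boundary conditions. For large $\lambda$ I would revisit the representation $E_\lambda(\Phi_\nu)=\chi\tilde j_\lambda(\Phi_\nu)-R_\lambda[\Delta_p,\chi]\tilde j_\lambda(\Phi_\nu)$ from Proposition \ref{besseljefunction}, estimate $\tilde j_\lambda(\Phi_\nu)$ uniformly in $\nu$ using the Bessel bound $|J_\mu(z)|\le(|z|/2)^\mu e^{|\Im z|}/\Gamma(\mu+1)$, and absorb the $R_\lambda$ factor via standard polynomial high-energy resolvent bounds applied to the compactly supported datum $[\Delta_p,\chi]\tilde j_\lambda(\Phi_\nu)$.

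Given such a bound, absolute convergence in every $C^k(K_1\times K_2)$ is immediate: the product of pointwise bounds on $E_\lambda(\Phi_\nu)(x)$ and $E_\lambda(\Phi_\nu)(y)$ carries a factor $\Gamma(\ell_\nu+d/2)^{-2}$ that dominates the $O(\ell^{d-2})$-dimensional growth of $\mathcal{H}_\ell^p(\sphere)$, and the polynomial growth in $\lambda$ produced by differentiation is killed by the rapid decay $h(\lambda^2)=O((1+\lambda^2)^{-q})$ for arbitrary $q$. Fubini's theorem then justifies the interchange with the $L^2$-pairings against $f$ and $g$, identifying the series with the integral kernel of $h(\Delta_{p,\mathrm{rel}})$ and transferring the $C^\infty$ convergence to $k_h$.

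The main obstacle is the uniform high-energy derivative estimate on $E_\lambda(\Phi_\nu)$ with the correct $\ell_\nu$-dependence through the Gamma factor. Low-energy behaviour is already covered by Lemma \ref{expansionlemma}, but at high energies one has to combine a careful Bessel asymptotic for the incoming piece $\tilde j_\lambda(\Phi_\nu)$ with a quantitative resolvent bound on the scattered part; matching the polynomial $\lambda$-growth with the correct spherical-harmonic weight is the main technical point. All remaining steps---passing from pointwise to $C^k$ bounds via elliptic regularity up to $\partial\calO$, and concluding smoothness of $k_h$ together with its identification as the kernel of $h(\Delta_{p,\mathrm{rel}})$---are routine once these uniform bounds are established.
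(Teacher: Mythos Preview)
Your approach differs substantially from the paper's, and as written it has a genuine gap in the summability argument.

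The bound you propose,
\[
\|E_\lambda(\Phi_\nu)\|_{C^k(K)}\le C_{k,K}(1+\lambda)^{N_k}\,\frac{\lambda^{\ell_\nu+(d-5)/2}}{\Gamma(\ell_\nu+d/2)},
\]
is exactly what the power-series Bessel estimate $|J_\mu(z)|\le (|z|/2)^\mu/\Gamma(\mu+1)$ gives, and it is correct. The problem is that it does not yield absolute summability over $\nu$ after integrating in $\lambda$. On a compact set with $r\le R$ the right-hand side behaves like $R^{\ell_\nu}\lambda^{\ell_\nu}/\Gamma(\ell_\nu+d/2)$ up to a fixed polynomial in $\lambda$, so the double sum/integral you need to control contains terms of the size
\[
\frac{1}{\Gamma(\ell_\nu+d/2)^2}\int_0^\infty |h(\lambda^2)|\,\lambda^{2\ell_\nu}\,d\lambda.
\]
The hypothesis $h=O((1+\lambda^2)^{-q})$ for every $q$ only says that for each $q$ there is some constant $C_q$; these constants may grow arbitrarily fast in $q$, and you must take $q$ of order $\ell_\nu$ to make the integral converge. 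Even for an exponentially decaying $h$ the integral produces roughly $(2\ell_\nu)!$, and $(2\ell_\nu)!/\Gamma(\ell_\nu+d/2)^2$ diverges like $4^{\ell_\nu}$. So the series is not absolutely summable from this estimate alone. To rescue the argument you would have to bring in a second, $\nu$-uniform large-$\lambda$ bound (for instance via $|J_\mu(x)|\le 1$, giving $|\tilde j_\lambda(\Phi_\nu)|\le C\lambda^{1/2}\|\Phi_\nu\|_{L^\infty}$) and carefully interpolate between the two regimes $\ell_\nu\lesssim\lambda$ and $\ell_\nu\gtrsim\lambda$; this is considerably more delicate than what you sketched, and the ``matching'' you flag as the main technical point is in fact the whole difficulty.

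The paper sidesteps all of this with a soft argument. It first observes that $h(\Delta_{p,\mathrm{rel}})$ maps $H^s_\compp\to H^{s+q}_\loc$ for all $s,q$, hence has a smooth kernel $k_h$. Then, reducing to $h\ge 0$, the truncated operators $h_n$ satisfy the operator inequality $0\le \Delta^{s_1}h_n\Delta^{s_2}\le \Delta^{s_1}h\Delta^{s_2}$, and Cauchy--Schwarz converts this into a uniform $H^{-s}\times H^{-s}$ bound on the bilinear forms $\langle h_n(\cdot)\chi_1 v,\chi_2 w\rangle$. This shows the truncated kernels $k_{h_n}$ are bounded in $C^\infty(M\times M)$; weak convergence $k_{h_n}\to k_h$ comes from Theorem \ref{StoneF}, and Arzel\`a--Ascoli upgrades it to $C^\infty$ convergence. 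No high-energy estimate on $E_\lambda(\Phi_\nu)$ is ever needed.
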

\begin{proof}
Note that by functional calculus $\Delta_{p,\mathrm{rel}}^{s_1}h(\Delta_{p,\mathrm{rel}}) \Delta_{p,\mathrm{rel}}^{s_2}$ is bounded as an operator in $L^2(M;\Lambda^p T^*M)$ for all $s_1,s_2 \in \R$.
Hence, $h(\Delta_{p,\mathrm{rel}})$ continuously maps $H^s_\compp(M,\Lambda^p T^*M)$ to $H^{s+q}_\loc(M,\Lambda^p T^*M)$ for all $s \in \R$ and $q \in \R$ and therefore has smooth integral  kernel $k_h$ in $C^\infty(M \times M; \Lambda^p T^*M \boxtimes (\Lambda^p T^*M)^*)$. 
Denote by $h_n(\Delta_{p,\mathrm{rel}})$ the approximation of $h(\Delta_{p,\mathrm{rel}})$ defined by truncating the infinite sum, i.e.
\begin{align}
\langle h_n(\Delta_{p,\mathrm{rel}}) f,g\rangle =h(0) \sum_{j=1}^N \langle f, u_j \rangle \langle u_j , g \rangle \nonumber \\+ \frac{1}{2\pi}  \sum\limits_{\nu, \ell_\nu \leq n}  \int_0^\infty h(\lambda^2) \langle f, E_{\lambda}(\Phi_\nu)\rangle \langle  E_{\lambda}(\Phi_\nu),g \rangle \,\der\lambda.
\end{align}
To show the statement it is sufficient to show it for $h \geq 0$ since the general case can be deduced by decomposing $h$ into positive and negative parts.
In this case $$0 \leq \Delta_{p,\mathrm{rel}}^{s_1}h_n(\Delta_{p,\mathrm{rel}}) \Delta_{p,\mathrm{rel}}^{s_2} \leq \Delta_{p,\mathrm{rel}}^{s_1}h(\Delta_{p,\mathrm{rel}}) \Delta_{p,\mathrm{rel}}^{s_2}$$ as operators in $L^2(M)$.
For any $\chi_1,\chi_2 \in C^\infty_0(M)$ we then obtain the estimate
\begin{gather*}
 | \langle  h_n(\Delta_{p,\mathrm{rel}}^{\frac{1}{2}}) (\chi_1 v),  \chi_2 w \rangle | \leq  | \langle  h(\Delta_{p,\mathrm{rel}}^{\frac{1}{2}}) (\chi_1 v),  \chi_1 v \rangle |^{\frac{1}{2}} | \langle  h(\Delta_{p,\mathrm{rel}}^{\frac{1}{2}}) (\chi_2 w),  \chi_2 w \rangle |^{\frac{1}{2}} \\  \leq C_s \| \chi_1 v \|_{H^{-s}}  \| \chi_2 w \|_{H^{-s}}.
\end{gather*}
Hence, $h_n(\Delta_{p,\mathrm{rel}}^{\frac{1}{2}})$ has smooth integral kernel $k_{h_n}$ and the sequence $k_{h_n}$ is bounded in $C^\infty(M \times M; \Lambda^p T^*M \boxtimes (\Lambda^p T^*M)^*)$.
By Theorem \ref{StoneF} the sequence of $k_{h_n}$ converges weakly to $k_h$ as $n \to \infty$. Since the sequence $k_{h_n}$ is also bounded in the space $C^\infty(M \times M; \Lambda^p T^*M \boxtimes (\Lambda^p T^*M)^*)$
the Theorem of Arzela-Ascoli implies that it converges in $C^\infty(M \times M; \Lambda^p T^*M \boxtimes (\Lambda^p T^*M)^*)$.
\end{proof}

\section{Expansions near zero} \label{expansionsection}

The generalised eigenfunctions $E_\lambda$ are related via Prop. \ref{besseljefunction} to the resolvent. In this section we use the singularity structure of the resolvent near zero to analyse the behaviour of $E_\lambda$ for small $\lambda$. If $\Phi \in \mathcal{H}^p_\ell(\sphere)$ is a vector-valued spherical harmonic of degree $\ell$ one has for $|\lambda| < 1$:
\begin{gather} \label{besseltildexp}
 \tilde{j}_{\lambda}(\Phi)(r \theta) = C_{d,\ell} \lambda^{\ell + \frac{d-1}{2}}  r^\ell \Phi(\theta)+ O_{C^\infty(\R^d \setminus \{0\})}(\lambda^{\ell + \frac{d+3}{2}}),
\end{gather}
where
$$
C_{d,\ell} = (-\rmi)^{\ell}\sqrt{2 \pi} \frac{1}{2^{\ell + \frac{d}{2}-1}} \frac{1}{\Gamma(\ell + \frac{d}{2})} .$$
\begin{lemma} \label{superlemma}
 Assume $d\geq 3$ and suppose that $u \in C^\infty(M;\Lambda^p T^*M)$ satisfies $\Delta_p  u =0$ so that on $M \setminus K$ we have the multipole expansion
 $$
  u(r \theta) = \sum_\nu \left( a_\nu \frac{1}{r^{d-2+\ell_\nu}} \Phi_\nu(\theta) + b_\nu r^{\ell_\nu} \Phi_\nu(\theta) \right).
 $$
For fixed $\Phi_\nu$ we have for $|\lambda| <1$,
 $$
  \langle (\Delta_p  - \lambda^2) (\chi\tilde{j}_{\lambda}(\Phi_\nu)),u \rangle_{L^2(M)} = -(d-2+2\ell_\nu) C_{d,\ell_\nu} \lambda^{\ell_\nu +\frac{d-1}{2}} \overline{a_\nu} + O( \lambda^{\ell_\nu +\frac{d+3}{2}} ).
 $$
\end{lemma}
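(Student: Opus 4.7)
The plan is to exploit the fact that $(\Delta_p-\lambda^2)\tilde j_\lambda(\Phi_\nu)=0$ on all of $\R^d$, so that
\[
(\Delta_p-\lambda^2)(\chi\,\tilde j_\lambda(\Phi_\nu)) \;=\; [\Delta_p,\chi]\,\tilde j_\lambda(\Phi_\nu)
\]
is compactly supported on $\supp(\der\chi)\subset M\setminus K$. To isolate the leading $\lambda$-behaviour I would use \eqref{besseltildexp} to split $\tilde j_\lambda(\Phi_\nu) = f_\lambda + \rho_\lambda$, where
\[
f_\lambda(r\theta)\;:=\;C_{d,\ell_\nu}\,\lambda^{\ell_\nu+\frac{d-1}{2}}\,r^{\ell_\nu}\,\Phi_\nu(\theta)
\]
is harmonic on $\R^d$ (each Cartesian coefficient is a harmonic polynomial) and $\rho_\lambda = O_{C^\infty(\R^d\setminus\{0\})}(\lambda^{\ell_\nu+\frac{d+3}{2}})$.

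The $\rho_\lambda$-contribution is immediate: $[\Delta_p,\chi]\rho_\lambda$ is supported on the fixed compact set $\supp(\der\chi)$ and bounded there in $C^\infty$ by $O(\lambda^{\ell_\nu+(d+3)/2})$, so pairing against the smooth form $u$ yields a term of the claimed error order.

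For the $f_\lambda$-contribution I use $\Delta_p f_\lambda = 0$ to rewrite $[\Delta_p,\chi]f_\lambda = \Delta_p(\chi f_\lambda)$, pick $R$ large enough that $\supp(\der\chi)\subset B_R$, set $\Omega_R = M\cap B_R$, and apply Green's identity on $\Omega_R$ with $\chi f_\lambda$ and $u$. Since $\chi f_\lambda$ vanishes in a neighbourhood of $\partial\calO$ and $\Delta_p u=0$, only the surface integral over $S_R=\partial B_R$ survives, and on $S_R$ one has $\chi\equiv 1$ so that the integrand is $f_\lambda$ itself. On the Euclidean end I work in the Cartesian frame $(\der x^1,\ldots,\der x^d)$, in which the Hodge Laplacian acts coefficient-wise as the scalar Laplacian, reducing Green's identity for forms to the scalar identity applied componentwise and summed. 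Using $L^2(\sphere;\Lambda^p\R^d)$-orthonormality of $(\Phi_\nu)$, only the $\Phi_\nu$-angular component $U_\nu(r) = a_\nu r^{-(d-2+\ell_\nu)} + b_\nu r^{\ell_\nu}$ of $u$ contributes, and the surface integral collapses to
\[
C_{d,\ell_\nu}\,\lambda^{\ell_\nu+\frac{d-1}{2}}\,R^{d-1}\Bigl[R^{\ell_\nu}\overline{U_\nu'(R)}-\ell_\nu R^{\ell_\nu-1}\overline{U_\nu(R)}\Bigr].
\]

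A short algebraic computation cancels the $b_\nu$-contributions and all powers of $R$, leaving exactly $-(d-2+2\ell_\nu)\overline{a_\nu}$, and multiplication by the prefactor yields the stated leading coefficient. The main subtle point is the $R$-independence of this surface integral: it is essential to peel off the exactly harmonic leading piece $f_\lambda$ first, because a direct application of Green's identity to $\chi\tilde j_\lambda(\Phi_\nu)$ would introduce an $R$-growing bulk contribution $\lambda^2\int_{\Omega_R}\chi\tilde j_\lambda(\Phi_\nu)\cdot\bar u\,\der V$ which would only cancel with the boundary after expanding the Bessel series to all orders. The hypothesis $d\geq 3$ enters via the pure-power form of the multipole expansion of $u$ (no logarithmic modes).
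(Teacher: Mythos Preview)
Your proof is correct and follows essentially the same route as the paper: reduce $(\Delta_p-\lambda^2)(\chi\tilde j_\lambda(\Phi_\nu))$ to the commutator $[\Delta_p,\chi]\tilde j_\lambda(\Phi_\nu)$ supported on $\supp(\der\chi)$, apply Green's identity on a bounded region $M_R$, and evaluate the resulting surface integral on $S_R$ using the multipole expansion of $u$ and orthonormality of $(\Phi_\nu)$. The only organisational difference is that the paper applies Green's identity directly to $\chi\tilde j_\lambda(\Phi_\nu)$ rather than first peeling off the harmonic leading piece $f_\lambda$; contrary to your closing remark this is not problematic, because for any \emph{fixed} $R>R_1$ the bulk term $-\lambda^2\langle\chi_{M_R}\chi\tilde j_\lambda(\Phi_\nu),u\rangle$ is an integral over a compact annulus and is therefore already $O(\lambda^{\ell_\nu+(d+3)/2})$ without any further Bessel expansion.
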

\begin{proof}
 Note that $(\Delta_p  - \lambda^2) (\chi\tilde{j}_{\lambda}(\Phi_\nu))$ is compactly supported.
 Let $M_R$ be the complement of the region $\{ (r \theta) \in M \setminus K \mid r \geq R \}$ in $M$ and denote by $\chi_{M_R}$ its indicator function.
 The boundary of $M_R$ is the set $r=R$.
 For sufficiently large $R>R_1$ so that $\chi=1$ in a neighbourhood of the complement of $M_{R_1}$,  we have
 $$
    \langle  (\Delta_p  - \lambda^2) (\chi\tilde{j}_{\lambda}(\Phi_\nu)) ,u\rangle_{L^2(M)} =\langle  \chi_{M_R} (\Delta_p  - \lambda^2) (\chi\tilde{j}_{\lambda}(\Phi_\nu)) ,u\rangle_{L^2(M)}.
 $$
Integration by parts gives
 \begin{gather*}
   \langle (\Delta_p  - \lambda^2) (\chi\tilde{j}_{\lambda}(\Phi_\nu)),u \rangle_{L^2(M)} = - \lambda^2  \langle  \chi_{M_R} (\chi\tilde{j}_{\lambda}(\Phi_\nu)) ,u\rangle_{L^2(M)} \\-
   \int_{\partial M_R} \overline{u(x)} \partial_{n}( (\chi\tilde{j}_{\lambda}(\Phi_\nu))(x) \der \sigma(x) +  \int_{\partial M_R} \overline{\partial_{n}(u)(x)}  (\chi\tilde{j}_{\lambda}(\Phi_\nu))(x) \der \sigma(x),
 \end{gather*}
 where $d\sigma$ is the surface measure on $\partial M_R$. Using \eqref{besseltildexp} and the fact that $\chi\cdot \chi_{M_R}$ is supported in a compact annulus one has
 \begin{gather*}
   \langle (\Delta_p  - \lambda^2) (\chi\tilde{j}_{\lambda}(\Phi_\nu)) ,u\rangle_{L^2(M)} = C_{d,\ell_\nu} \\ \cdot \left( \int_{\sphere} \overline{(\partial_r u)(R \theta)} \lambda^{\ell_\nu + \frac{d-1}{2}} R^{\ell_\nu} R^{d-1} \Phi_\nu(\theta) \der \theta  - \int_{\sphere} \overline{u(R \theta)} \ell_\nu \lambda^{\ell_\nu + \frac{d-1}{2}} R^{\ell_\nu-1} R^{d-1} \Phi_\nu(\theta) \der \theta \right)\\
   + O( \lambda^{\ell_\nu +\frac{d+3}{2}} ).
 \end{gather*}
 We can now use the  multipole expansion  and orthonormality of $(\Phi_\nu)$ to obtain the claimed formula. This formula holds for any $R>R_1$
 and the right hand side is actually constant in $R$. 
\end{proof}

The same proof with obvious modifications in dimension two gives  the following.

\begin{lemma} \label{superlemmad2}
 Assume $d = 2$ and suppose that $u \in C^\infty(M;\Lambda^p T^*M)$ satisfies $\Delta_p  u =0$ so that we have the multipole expansion
 $$
  u(r \theta) =\sum_{\nu,\ell_\nu=0} \left( a_\nu \log(r)  \Phi_\nu(\theta)+  b_\nu  \Phi_\nu(\theta) \right) +  \sum_{\nu,\ell_\nu>0} \left( a_\nu \frac{1}{r^{d-2+\ell_\nu}} \Phi_\nu(\theta) + b_\nu r^{\ell_\nu} \Phi_\nu(\theta) \right).
 $$
For fixed $\Phi_\nu$ we have for $|\lambda| <1$ and $\ell_\nu \not = 0$:
 $$
  \langle (\Delta_p  - \lambda^2) (\chi\tilde{j}_{\lambda}(\Phi_\nu),u \rangle_{L^2(M)} = -(d-2+2\ell_\nu) C_{d,\ell_\nu} \lambda^{\ell_\nu +\frac{d-1}{2}} \overline{a_\nu} + O( \lambda^{\ell_\nu +\frac{d+3}{2}} ).
 $$
 In case $\ell_\nu = 0$ we get for $|\lambda| <1$:
 $$
  \langle (\Delta_p  - \lambda^2) (\chi\tilde{j}_{\lambda}(\Phi_\nu),u \rangle_{L^2(M)} = C_{d,0} \lambda^{\frac{d-1}{2}} \overline{a_\nu} + O( \lambda^{\frac{d+3}{2}} ).
 $$

\end{lemma}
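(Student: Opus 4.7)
The plan is to mimic the proof of Lemma \ref{superlemma} verbatim, with two adjustments specific to $d=2$: the form of the multipole expansion in the $\ell_\mu = 0$ sector, and the vanishing of the leading radial derivative of $\tilde j_\lambda(\Phi_\nu)$ when $\ell_\nu = 0$. Concretely, I would fix $R_1$ so that $\chi \equiv 1$ outside $M_{R_1}$; since $(\Delta_p - \lambda^2)(\chi \tilde j_\lambda(\Phi_\nu))$ is then supported in $M_{R_1}$, for any $R > R_1$ the inner product over $M$ equals the corresponding integral over $M_R$. Green's second identity, combined with $\Delta_p u = 0$ and the fact that both $u$ and $\chi \tilde j_\lambda(\Phi_\nu)$ satisfy relative boundary conditions on $\partial \calO$, reduces the expression to a bulk term $-\lambda^2 \langle \chi \tilde j_\lambda(\Phi_\nu), u\rangle_{L^2(M_R)}$ plus two surface integrals on $\{r = R\}$, on which $\chi \equiv 1$.

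For $\ell_\nu \neq 0$ the computation is formally identical to the one in Lemma \ref{superlemma} with $d = 2$ substituted throughout. Substituting the leading expansion \eqref{besseltildexp} for $\tilde j_\lambda(\Phi_\nu)$ and its radial derivative, expanding $u(R\theta)$ and $\partial_r u(R\theta)$ via the given multipole expansion (the $\ell_\mu = 0$ sector does not contribute since $\langle \Phi_\mu, \Phi_\nu \rangle_{L^2(\mathbb{S}^1)} = 0$ when $\ell_\nu \neq 0$), and using orthonormality of the $\Phi_\mu$ to single out the coefficient $a_\nu$, the two boundary integrals sum to $-2\ell_\nu C_{2,\ell_\nu} \lambda^{\ell_\nu + 1/2} \overline{a_\nu}$, which matches $-(d-2+2\ell_\nu) C_{d,\ell_\nu}$ for $d=2$. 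Everything else is absorbed into the $O(\lambda^{\ell_\nu + 5/2})$ remainder.

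The case $\ell_\nu = 0$ is where the two-dimensional peculiarity manifests. Here $\tilde j_\lambda(\Phi_\nu)$ has leading behaviour $C_{2,0} \lambda^{1/2} \Phi_\nu(\theta)$, which is constant in $r$, so $\partial_r \tilde j_\lambda(\Phi_\nu) = O(\lambda^{5/2})$ and the boundary integral $-\int_{\mathbb{S}^1} \overline{u(R\theta)} \, \partial_r \tilde j_\lambda(\Phi_\nu)(R\theta) \, R \, d\theta$ falls into the remainder. In the surviving boundary integral $\int_{\mathbb{S}^1} \tilde j_\lambda(\Phi_\nu)(R\theta) \, \overline{\partial_r u(R\theta)} \, R \, d\theta$ only the $\ell_\mu = 0$ piece $[a_\mu \log r + b_\mu]\Phi_\mu$ of $u$ contributes to the angular integration, and its radial derivative $a_\mu/r$, combined with the surface factor $R$ and orthonormality of the $\Phi_\mu$ on $\mathbb{S}^1$, produces exactly $C_{2,0} \lambda^{1/2} \overline{a_\nu}$. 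The $+1$ prefactor (rather than $-(d-2+2\ell_\nu)=0$) arises because the $\log r$ mode here plays the structural role that $r^{-(d-2+\ell)}$ plays in higher dimensions, and because the vanishing of the leading $\partial_r \tilde j_\lambda$ suppresses one of the two generically present boundary contributions rather than doubling the other.

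The main thing to verify carefully is the uniformity in $R$ of the remainder. The bulk term $-\lambda^2 \langle \chi \tilde j_\lambda(\Phi_\nu), u\rangle_{L^2(M_R)}$ grows with $R$ since $u$ need not be square integrable, as do the contributions of the subleading Bessel terms and, when $\ell_\nu = 0$, the $\log R$ growth of the $\ell_\mu = 0$ part of $u$ paired with the subleading radial derivative of $\tilde j_\lambda$. Since the left-hand side is manifestly $R$-independent, these $R$-dependent pieces must cancel, leaving an honest $O(\lambda^{(d+3)/2})$ remainder. This bookkeeping is identical to the $d \geq 3$ proof and is the main technical nuisance rather than a conceptual obstacle.
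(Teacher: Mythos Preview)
Your proposal is correct and follows precisely the approach the paper intends: the paper's own proof is just the sentence ``The same proof with obvious modifications in dimension two gives the following,'' and you have spelled out exactly those modifications. One small inaccuracy: the boundary terms on $\partial\calO$ vanish not because $u$ satisfies relative boundary conditions (the lemma does not assume this) but because $\chi$ is supported in $M\setminus K \supset M\setminus\overline{\calO}$, so $\chi\tilde j_\lambda(\Phi_\nu)$ and all its derivatives vanish near $\partial\calO$; also, your worry about uniformity in $R$ is unnecessary, since one simply fixes any $R>R_1$ and absorbs the $R$-dependence into the implicit constant of the remainder.
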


\begin{lemma}
 Suppose that $d>2$, $u \in C^\infty(M,\Lambda^p T^*M)$ is closed and co-closed and has a multipole expansion of the form
 $$
  u(r,\theta) = \sum_{\nu,\ell_\nu\geq 0} a_{\nu} \frac{1}{r^{\ell_\nu + d-2}} \Phi_\nu(\theta)
 $$
 for sufficiently large values of $r$.
Then we can conclude $a_\nu=0$ whenever $\ell_\nu=0$.  
\end{lemma}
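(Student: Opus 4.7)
The plan is to isolate the $\ell_\nu = 0$ component of the multipole expansion as a constant-coefficient $p$-form $\eta$ on $\Reell^d$ and then show it must vanish by matching the leading asymptotic order of $\der u$ (or $\delta u$ for $p=d$) against zero.

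Set $\eta := \sum_{\nu : \ell_\nu = 0} a_\nu \Phi_\nu \in \Lambda^p (\Reell^d)^*$. Since the degree-zero vector-valued spherical harmonics are exactly the constant maps $\sphere \to \Lambda^p \Reell^d$, $\eta$ is a constant $p$-form on $\Reell^d$. Because the multipole expansion may be differentiated term by term (Appendix \ref{Amulti}), one obtains
$$
u(r,\theta) = \frac{\eta}{r^{d-2}} + O(r^{-(d-1)})
$$
together with matching estimates on the derivatives of $u$.

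Next, compute $\der u$ term-by-term. The $\ell_\nu = 0$ modes contribute
$$
\der\!\left(\frac{\Phi_\nu}{r^{d-2}}\right) = -(d-2)\,\frac{\der r \wedge \Phi_\nu}{r^{d-1}},
$$
strictly of order $r^{-(d-1)}$. For $\ell_\nu \geq 1$ the radial derivative contributes at order $r^{-(\ell_\nu+d-1)}=O(r^{-d})$, and the angular derivative of $\Phi_\nu(\theta) = \Phi_\nu(x/|x|)$ in Cartesian coordinates carries an extra factor $1/r$ coming from $\partial_{x^j}(x^k/r)$, giving again $O(r^{-d})$. Hence
$$
\der u = -(d-2)\,\frac{\der r \wedge \eta}{r^{d-1}} + O(r^{-d}),
$$
and $\der u=0$ forces $\der r(x) \wedge \eta = 0$ at every sufficiently large $x \in M\setminus K$. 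As $x$ varies over $\sphere$, $\der r(x) = \sum_j (x^j/r)\,\der x^j$ ranges over all unit covectors in $(\Reell^d)^*$, so $\omega \wedge \eta = 0$ for every $\omega\in (\Reell^d)^*$. Taking $\omega = \der x^i$ for each $i$ forces $\eta = 0$ whenever $p < d$.

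For $p=d$ the wedge constraint is vacuous; instead the codifferential gives the needed information. Using $\delta(f\eta) = -\iota_{\mathrm{grad}\,f}\eta$ for constant $\eta$ one gets $\delta u = (d-2)r^{-(d-1)}\iota_{\partial_r}\eta + O(r^{-d})$, so $\delta u=0$ forces $\iota_v \eta = 0$ for every $v\in\Reell^d$, which for a top-degree form requires $\eta=0$. In all cases $\eta=0$; since the $\Phi_\nu$ with $\ell_\nu=0$ form a basis of the constant $p$-forms, $a_\nu = 0$ for every $\nu$ with $\ell_\nu=0$. The main technical point is the order estimate in $\der u$, namely that the $r^{-(d-1)}$ leading coefficient is entirely the $\ell_\nu=0$ contribution; this rests on the differentiability of the multipole expansion together with the standard observation that angular derivatives of functions of $x/|x|$ lose exactly one power of $r$ in Cartesian coordinates.
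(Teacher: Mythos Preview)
Your proof is correct and follows essentially the same strategy as the paper: isolate the $\ell_\nu=0$ part as a constant form $\eta$ and read off a pointwise constraint from the leading $r^{-(d-1)}$ term of $\der u$ (respectively $\delta u$). The only difference is in the algebraic finish. The paper uses both constraints at a \emph{single} point: from $\der u=0$ it gets $\der r\wedge\omega=0$, from $\delta u=0$ (equivalently $\der(*u)=0$) it gets $\der r\wedge *\omega=0$, i.e.\ $\iota_{\der r}\omega=0$; together these say Clifford multiplication by the unit covector $\der r$ annihilates $\omega$, which forces $\omega=0$ for all $p$ at once. You instead use only $\der u=0$ but let the point vary, so that $\der r(x)$ sweeps out all unit covectors and $v\wedge\eta=0$ for every $v$, giving $\eta=0$ when $p<d$; you then handle $p=d$ separately via $\delta u$. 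Both arguments are short and equivalent in strength; the paper's Clifford-multiplication step avoids the case split, while yours avoids invoking the Hodge star and Clifford algebra.
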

\begin{proof}
 We write the multipole expansion of $u$ in a slightly different way as
 $$
  u(r,\theta) =  \frac{1}{r^{d-2}} \omega + \sum_{\nu,\ell_\nu>0} a_{\nu,k} \frac{1}{r^{\ell_\nu + d-2}} \phi_\nu(\theta) e_k,
 $$
 where $(e_k)$ is the standard basis in $\Lambda^p \R^d$, $(\phi_\nu)$ a basis of spherical harmonics in $L^2(\sphere)$, and $\omega=\sum_{k} a_k e_k$ is a constant differential form. It follows that 
 \begin{gather*}
  \der u(r,\theta) =  \frac{2-d}{r^{d-1}} \der r \wedge \omega + O\left(\frac{1}{r^{d}}\right)
 \end{gather*}
for sufficiently large $r$, where we have used that $\der  \phi_\nu$ is of order $O(\frac{1}{r})$ since the inner product on one forms is given by the inverse metric
 $g^{-1} = \partial_r \otimes \partial_r + r^{-2} g^{-1}_\sphere$ on the cotangent bundle. Therefore, $\der r \wedge \omega =0$. Because $u$ is also co-closed
 the same computation applied to $* u$ which gives $\der r \wedge * \omega=0$. This implies that Clifford multiplication of $\omega$ by $\der r$ yields zero. Since Clifford multiplication by a non-zero covector is invertible, this implies $\omega=0$.
 \end{proof}

If $u$ is harmonic and satisfies relative boundary conditions with the above multipole expansion then we can integrate by parts and obtain the following.

\begin{corollary} \label{supercor}
 If $d>2$ and $u \in \ker_{C^\infty}(\Delta_p)$ satisfies relative boundary conditions and has a multipole expansion of the form
 $$
  u(r,\theta) = \sum_{\nu,\ell_\nu\geq 0} a_{\nu} \frac{1}{r^{\ell_\nu + d-2}} \Phi_\nu(\theta)
 $$
 when $r$ is sufficiently large. Then
 we can conclude $a_\nu=0$ whenever $\ell_\nu=0$.
 \end{corollary}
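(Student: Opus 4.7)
The plan is to promote ``harmonic with relative boundary conditions and the stated multipole decay'' to ``closed and co-closed'', after which the preceding lemma applies verbatim to give $a_\nu = 0$ for $\ell_\nu = 0$. So the real task is to run a Green's identity argument on the truncated manifold $M_R := M \cap \{r \leq R\}$, take $R \to \infty$, and check that the resulting boundary terms vanish.

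Concretely, I would write $\Delta_p = \der\delta + \delta\der$ and integrate by parts on $M_R$, which has two boundary components: $\partial \calO$ and the coordinate sphere $S_R = \{r = R\}$. This yields
\begin{equation*}
 0 \;=\; \langle \Delta_p u, u\rangle_{L^2(M_R)} \;=\; \|\der u\|_{L^2(M_R)}^2 + \|\delta u\|_{L^2(M_R)}^2 \;+\; B_{\partial\calO} \;+\; B_R,
\end{equation*}
where $B_{\partial\calO}$ and $B_R$ are surface integrals whose integrands pair $\iota^* u$ with $*\der u$ and $\iota^*(\delta u)$ with $*u$ on the respective boundary pieces. The term $B_{\partial\calO}$ vanishes pointwise because the relative boundary conditions give $u|_{\partial\calO,\mathrm{tan}}=0$ and $(\delta u)|_{\partial\calO,\mathrm{tan}}=0$, which is exactly the statement that the quadratic form of $\Delta_{p,\mathrm{rel}}$ equals $\|\der u\|^2 + \|\delta u\|^2$ on its domain; this is precisely the factorisation $\Delta_{p,\mathrm{rel}} = Q_{\mathrm{rel}}^2$ recalled in Section \ref{sec:1.1}.

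For the contribution $B_R$ at infinity I would use the multipole expansion in the hypothesis, which (by Appendix \ref{Amulti}) may be differentiated term by term, to obtain $u = O(r^{-(d-2)})$ and $\nabla u = O(r^{-(d-1)})$. Since the surface measure on $S_R$ scales as $R^{d-1}$, the bilinear integrand is of size $R^{-(2d-3)}$ and hence $B_R = O(R^{-(d-2)})$, which tends to zero as $R \to \infty$ precisely because $d > 2$. Passing to the limit gives $\|\der u\|_{L^2(M)}^2 + \|\delta u\|_{L^2(M)}^2 = 0$, so $\der u = 0$ and $\delta u = 0$, and the preceding lemma concludes the proof. The only delicate point --- and the candidate ``main obstacle'' --- is the careful verification that the classical relative boundary conditions really do annihilate $B_{\partial\calO}$ pointwise (not just in an $L^2$ trace sense), but this reduces to a direct tangential/normal decomposition along $\partial\calO$ and is routine; the decay margin $O(R^{-(d-2)})$ at infinity leaves ample room so that no subtlety arises there.
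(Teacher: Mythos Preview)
Your proposal is correct and is precisely the argument the paper has in mind: the paper's own proof is the one-line remark ``we can integrate by parts and obtain the following'', and you have simply written out the integration by parts on $M_R$, the vanishing of the $\partial\calO$ boundary term from the relative conditions, and the $O(R^{-(d-2)})$ decay of the sphere term, before invoking the preceding lemma. There is no difference in approach.
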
 

Assume $u=u_j$ is in $\ker_{L^2}(\Delta_{p,rel})$.
 Since $\Delta_{p,rel}$ is the square of the self-adjoint operator $Q_{rel}$ this implies that $Q_{rel} u_j=0$ and therefore $u_j$ must be closed and co-closed. This implies the following corollary.
\begin{corollary} \label{supercor2}
 We have $a_j(\Phi_\nu)=0$ when $\ell_\nu=0$ and hence $P^{(0)}=0$.
\end{corollary}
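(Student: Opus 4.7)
The plan is to reduce the corollary directly to Corollary \ref{supercor} applied to each $u_j$. The two ingredients needed are that $u_j$ is closed and co-closed (so that the hypotheses of Corollary \ref{supercor} are met) and that its multipole expansion in the Euclidean end consists only of decaying modes.

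For the first ingredient, I would invoke the factorisation $\Delta_{p,\mathrm{rel}} = Q_{\mathrm{rel}}^2$ recalled in Section \ref{sec:1.1}: since $Q_{\mathrm{rel}} = \overline{\delta} + \overline{d_0}$ is self-adjoint, $\Delta_{p,\mathrm{rel}} u_j = 0$ forces $Q_{\mathrm{rel}} u_j = 0$, whence $\overline{d_0} u_j = 0$ and $\overline{\delta} u_j = 0$. Thus $u_j$ is closed, co-closed, satisfies relative boundary conditions, and, by elliptic regularity, is smooth on all of $M$.

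For the second ingredient, the $L^2$-condition already forces the multipole expansion of $u_j$ on $X \setminus K$ to consist solely of decaying modes
$$
 u_j(r\theta) = \sum_\nu a_{\nu,j}\, r^{-(\ell_\nu + d - 2)}\, \Phi_\nu(\theta),
$$
as recorded in Section \ref{sec:1.1}. For $d \geq 3$, Corollary \ref{supercor} then gives $a_{\nu,j} = 0$ whenever $\ell_\nu = 0$. In dimension $d = 2$, the $\ell_\nu = 0$ term of this expansion would contribute a nonzero constant on the Euclidean end, which is not square integrable, so the same vanishing holds trivially.

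The corollary now follows from the definitions: $a_j(\Phi_\nu) = \overline{a_{\nu,j}} = 0$ when $\ell_\nu = 0$, hence $a_{kj}^{(0)} = \sum_{\nu,\,\ell_\nu = 0} a_k(\Phi_\nu)\overline{a_j(\Phi_\nu)} = 0$, and therefore $P^{(0)} = 0$ by the formula \eqref{Pl}. There is no serious obstacle in this step; the essential geometric content, namely the Clifford-multiplication argument ruling out a harmonic $\ell = 0$ mode in the expansion of a closed and co-closed form with purely decaying expansion, has already been absorbed into Corollary \ref{supercor}.
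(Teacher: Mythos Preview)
Your proof is correct and follows the same approach as the paper: you use the factorisation $\Delta_{p,\mathrm{rel}} = Q_{\mathrm{rel}}^2$ to deduce that each $u_j$ is closed and co-closed, and then invoke Corollary \ref{supercor} (equivalently, the preceding lemma) to kill the $\ell_\nu=0$ coefficients. Your explicit treatment of the $d=2$ case via the non-$L^2$ nature of the constant term is a welcome clarification that the paper leaves implicit.
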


Since generalised eigenfunction are Hahn-holomorphic they have Hahn-series expansions whose first terms are harmonic and satisfy relative boundary conditions.
The following Lemma  clarifies how the multipole expansions of these harmonic forms appear from Prop. \ref{EoutofHankel}.
\begin{lemma} \label{superlemma2}
 Let $0<R_1<R_2$ and $[R_1,R_2]$ be a fixed interval.
 Suppose that $\Psi_\lambda$ is a (Hahn)-holomorphic family of spherical harmonics of degree $\ell$ such that 
 $\tilde h^{(1)}_\lambda(\Psi_\lambda)(r,\theta)=O(\lambda^m)$ 
  as $\lambda \to 0$ uniformly in $(r,\theta)$ for $r \in [R_1,R_2], \theta \in \sphere$. Assume $\ell + \frac{d-2}{2}>0$,
  then $\Psi_\lambda = O(\lambda^{\ell + \frac{d-3}{2}+m})$ and
  $$
   \lim_{\lambda \to 0} \lambda^{-m} \tilde h^{(1)}_\lambda(\Psi_\lambda)(r,\theta)= \frac{\Phi(\theta)}{r^{\ell + d-2}},
  $$
  where 
  $$\Phi =   - \rmi \frac{1}{\sqrt{\pi}} 2^{\ell+\frac{d-3}{2}} \Gamma(\ell +\frac{d-2}{2}) \left(\lim\limits_{\lambda \to 0} \lambda^{-\ell - \frac{d-3}{2}-m} \Psi_\lambda \right).$$
  \end{lemma}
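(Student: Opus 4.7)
The plan is to collapse $\tilde h^{(1)}_\lambda(\Psi_\lambda)$ to a single Hankel term and then match leading orders in Hahn expansions. Because $\Psi_\lambda$ takes values in the degree-$\ell$ spherical harmonics, only the $\ell_\nu = \ell$ summand in Definition \ref{hankelsums} contributes, so
\[
 \tilde h^{(1)}_\lambda(\Psi_\lambda)(r\theta) = (-\rmi)^\ell \lambda^{\frac{d-1}{2}} h^{(1)}_{d,\ell}(\lambda r)\,\Psi_\lambda(\theta).
\]
The first step is to insert the small-argument expansion of $h^{(1)}_{d,\ell}$ from Appendix \ref{Abf}. Writing $\alpha = \ell + \frac{d-2}{2}$, the hypothesis $\alpha > 0$ rules out the $\alpha=0$ degeneracy of $H^{(1)}_\alpha$ at the origin, so the classical asymptotic $Y_\alpha(z) \sim -\Gamma(\alpha)\pi^{-1}(z/2)^{-\alpha}$ gives a (Hahn) expansion of $h^{(1)}_{d,\ell}(\lambda r)$ in $\lambda$ whose leading term is a fixed nonzero multiple of $(\lambda r)^{-\ell-d+2}$. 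Multiplied by $\lambda^{\frac{d-1}{2}}$, the factor $\lambda^{\frac{d-1}{2}}h^{(1)}_{d,\ell}(\lambda r)$ thus has leading order $\lambda^{-\ell-\frac{d-3}{2}}$ uniformly for $r \in [R_1,R_2]$, with leading coefficient a known constant $C$ times $r^{-\ell-d+2}$.

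Next, expand $\Psi_\lambda = \lambda^{\alpha_0}\Psi_0 + (\text{higher order})$ as a Hahn series with $\Psi_0 \in \mathcal{H}_\ell(\sphere)$, and multiply the two Hahn series. The result is a Hahn expansion of $\tilde h^{(1)}_\lambda(\Psi_\lambda)(r,\theta)$ with leading exponent $\alpha_0-\ell-\frac{d-3}{2}$ and leading coefficient $C\,\Psi_0(\theta)/r^{\ell+d-2}$. The hypothesis $\tilde h^{(1)}_\lambda(\Psi_\lambda)(r,\theta) = O(\lambda^m)$ uniformly on $[R_1,R_2]\times\sphere$ then forces $\alpha_0 - \ell - \frac{d-3}{2} \geq m$ — otherwise the nonzero leading coefficient $C\Psi_0(\theta)/r^{\ell+d-2}$ would produce a contribution of strictly smaller order than $\lambda^m$ — giving the asserted bound $\Psi_\lambda = O(\lambda^{\ell+\frac{d-3}{2}+m})$. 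Isolating the $\lambda^m$-coefficient on each side then yields the claimed identity, with $\Phi = C\lim_{\lambda \to 0}\lambda^{-\ell-\frac{d-3}{2}-m}\Psi_\lambda$.

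The only delicate bookkeeping concerns the logarithmic corrections in the small-$z$ expansion of $H^{(1)}_\alpha$ when $\alpha$ is an integer (i.e.\ in even dimensions): these introduce additional $\log\lambda$ elements into the Hahn series of $h^{(1)}_{d,\ell}(\lambda r)$, but they are subdominant to the algebraic singularity $(\lambda r)^{-\ell-d+2}$ in the Hahn ordering. Consequently neither the leading exponent nor the leading algebraic coefficient changes, so both the order comparison and the constant computation go through uniformly across dimensions. Matching $C$ against the constant in the statement is then a routine arithmetic check using the normalisation $h^{(1)}_{d,\ell}(z) = \sqrt{\pi/2}\,z^{-(d-2)/2}H^{(1)}_\alpha(z)$ together with the leading asymptotics of $J_\alpha$ and $Y_\alpha$.
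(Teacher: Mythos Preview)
Your proof is correct and follows essentially the same approach as the paper: collapse $\tilde h^{(1)}_\lambda(\Psi_\lambda)$ to a single Hankel term, insert the small-argument asymptotic \eqref{sphankelexpzero} for $h^{(1)}_{d,\ell}$, and read off the leading order. The paper's proof is a two-line invocation of that asymptotic, while you spell out more carefully why the bound on $\tilde h^{(1)}_\lambda(\Psi_\lambda)$ forces the bound on $\Psi_\lambda$ via the Hahn-series ordering and why the logarithmic corrections in even dimensions are subdominant; this extra detail is helpful but not a different method.
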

 \begin{proof}
  This follows from the asymptotic behavior of the Hankel function \eqref{sphankelexpzero}, which is in Appendix \ref{Abf}. Namely, as $\lambda \to 0$ we have
  $$
   \tilde h^{(1)}_\lambda(\Psi_\lambda)(r,\theta) = - \rmi \frac{1}{\sqrt{\pi}} 2^{\ell+\frac{d-3}{2}} \Gamma(\ell +\frac{d-2}{2}) \lambda^{\frac{3-d}{2}-\ell} r^{-\ell-d +2} \Psi_\lambda(\theta) + O( \lambda^{\frac{7-d}{2}-\ell})
  $$
 \end{proof} 

In $\R^d$ the operator $(\Delta_p - \lambda^2)$ has integral kernel 
$$
 r_\lambda(x,y) = \frac{\rmi}{4} \left( \frac{\lambda}{2 \pi |x-y|} \right)^{\frac{d-2}{2}} \mathrm{H}^{(1)}_{\frac{d-2}{2}}(\lambda |x-y|)) \mathbf{1}
$$
if the bundle of differential forms has been trivialised with respect to the standard basis in $\R^d$ and $\mathbf{1}$ denotes the identity matrix.

\subsection{Analysis when $d$ is odd } \label{doddanal}
In this case it follows from the explicit formula that the free resolvent kernel is meromorphic with a simple pole at $0$ if $d=1$, 
and is entire in case $d >1$.
By general arguments using a gluing construction one concludes that on $M$ we have that 
$R_\lambda = (\Delta_p  - \lambda^2)^{-1}$ is meromorphic also near zero with finite rank negative Laurent coefficients. 
Then Lemma \ref{aoutofresolvent}
implies that also $A_\lambda$ is meromorphic near zero. Since $S_\lambda$ is unitary this implies that $A_\lambda$ is regular at zero.
By general resolvent bounds for self-adjoint operators,
$R_\lambda$ can have a pole of order at most two at zero. Hence, in odd dimensions the resolvent (as an operator
from $L^2_{comp}$ to $H^2_{loc}$) has an expansion  for $| \lambda |$ small of the form
\begin{gather} \label{grppaasdasd}
 R_\lambda = - \frac{B_{-2}}{\lambda^2} + \rmi \frac{B_{-1}}{\lambda} + B(\lambda),
\end{gather}
where $B(\lambda)$ is holomorphic near zero and $B_{-2},B_{-1}: L^2_{comp} \to H^2_{loc}$ are of finite rank. By Stone's formula and the spectral decomposition 
$B_{-2}$ is the orthogonal projection onto $\ker_{L^2} (\Delta_{p,rel})$, i.e. $B_{-2}=P$.
By Prop. \ref{besseljefunction} we have
\begin{align} \label{gefunctionrep}
E_{\lambda}(\Phi)=\chi\tilde{j}_{\lambda}(\Phi)-R_{\lambda}(\Delta_p  -\lambda^2)(\chi\tilde{j}_{\lambda}(\Phi)),
\end{align}
and we can therefore obtain the Laurent series of $E$ about $\lambda=0$ by expanding the Bessel functions and using the resolvent expansion.
We have $E_0(\Phi)=0$ if  $\ell+\frac{d-5}{2}>0$. 
By Lemma \ref{expansionlemma}, using that $\Delta_p E_\lambda(\Phi_\nu) = \lambda^2 E_\lambda(\Phi_\nu)$, one has for $| \lambda |$ small,
$$
 E_\lambda(\Phi_\nu) = O_{C^\infty(M)}\left(\frac{1}{\Gamma(\ell_\nu+\frac{d}{2})}\lambda^{\ell_\nu + \frac{d-5}{2}}\right),
$$
uniformly in $\nu$.
Therefore, comparing the resolvent expansion with \eqref{diffres}, we obtain that the functions $E_\lambda(\Phi)$ are regular at zero and 
\begin{align}\label{belowfive}
 B_{-1} = \frac{1}{4}\sum\limits_{\ell_\nu \leq \frac{5-d}{2}} E_{0}(\Phi_\nu) \langle \cdot, E_{0}(\Phi_\nu)\rangle.
\end{align}
In particular, $B_{-1}$ is symmetric. In case $d>5$ we conclude that $B_{-1}=0$.
In order to compute $B_{-1}$ we would like to treat the cases $d=3$ and $d=5$ separately.

\subsubsection{Resolvent expansion and generalised eigenforms in dimension three }

By Prop. \ref{EoutofHankel} we have for fixed large $r$ that
 $$
  E_\lambda(\Phi)(r,\theta) = \tilde{j}_{\lambda}(\Phi)(r,\theta) + \tilde h^{(1)}_\lambda(A_\lambda \Phi)(r,\theta).
 $$
 Since $E_\lambda(\Phi)$ is regular at zero so must be $\tilde h^{(1)}_\lambda(A_\lambda \Phi)(r,\theta)$ and 
 $$
  E_0(\Phi)(r,\theta) = \lim_{\lambda \to 0} \tilde h^{(1)}_\lambda(A_\lambda \Phi)(r,\theta).
 $$
Consider $|\lambda r| \ll 1$, if $\Phi \in \mathcal{H}^p_\ell(\sphere)$ is a vector-valued spherical harmonic of degree $\ell$ one has uniform asymptotics for the Hankel function in $(r,\theta)$ for $r \in [R_1,R_2], \theta \in \sphere$, in powers of  $\lambda$ (given in Appendix). Combining this with the fact $A_\lambda$ is holomorphic and taking the limit $\lambda\rightarrow 0$, one sees that $E_0(\Phi)$ has a multipole expansion of the form
$$
  E_0(\Phi) = \sum_{\nu} e_{\nu}(\Phi) \frac{1}{r^{\ell_{\nu}+1}} \Phi_\nu.
$$
By construction $E_0(\Phi)$ is harmonic and satisfies relative boundary conditions. It follows from Corollary \ref{supercor} that $E_0(\Phi)$ is closed and co-closed and that $e_\nu(\Phi) =0$ whenever $\ell_\nu=0$.

If $\Phi$ is a spherical harmonic of degree $\ell=0$ then if $|\lambda|<1$, one has
$$
 \tilde j_\lambda(\Phi)(r, \theta) = 2 \lambda \Phi(\theta) + O_{C^\infty(\R^d\setminus \{0\})}(\lambda^3).$$
Using Lemma \ref{superlemma} we obtain from \eqref{grppaasdasd} and \eqref{gefunctionrep} that
$$
 E_0(\Phi) = \lim_{\lambda \to 0} -\rmi \frac{1}{\lambda} B_{-1} (\Delta_p  -\lambda^2) \chi   \tilde j_\lambda(\Phi) =  \frac{\rmi}{2} \sum_{\nu,\ell_\nu=0} E_0(\Phi_\nu) \overline{e_\nu(\Phi)}=0.
$$ 
If $\Phi$ is a spherical harmonic of degree $\ell=1$ then similarly for  $| \lambda |$ small
$$ 
 \tilde j_\lambda(\Phi)(r, \theta) = -\frac{2 \rmi}{3} \lambda^2 r \Phi(\theta) + O_{C^\infty(\R^d \setminus \{0\})}(\lambda^4)
$$
and therefore, using Lemma \ref{superlemma}, one gets
$$
 E_0(\Phi) =  \lim_{\lambda \to 0}\lambda^{-2 }P \left( (\Delta_p -\lambda^2) \chi  \tilde j_\lambda(\Phi) \right) = 2 \rmi \sum_{j=1}^N  a_{j}(\Phi) u_j.
$$ 
In particular it follows that in this case $E_0(\Phi) \in L^2(M)$. If $\Phi$ is a spherical harmonic of degree higher than $1$ then by the same reasoning one gets $E_0(\Phi)=0$. We have therefore proved the following proposition.
\begin{proposition} \label{prop37}
 If $d=3$ then
 \begin{itemize}
  \item $E_0(\Phi_\nu)=0$ if $\ell_\nu \not= 1$,
  \item  $E_0(\Phi_\nu)= 2 \rmi \sum_{j=1}^N   a_{j}(\Phi_\nu) u_j \in L^2$ if $\ell_\nu=1$.
 \end{itemize}
 Moreover, we have
 $$
  B_{-1} =  P^{(1)}.
 $$
\end{proposition}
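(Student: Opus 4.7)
The plan is to read off both the values of $E_0(\Phi_\nu)$ and the formula for $B_{-1}$ from the identity $E_\lambda(\Phi_\nu) = \chi \tilde j_\lambda(\Phi_\nu) - R_\lambda [\Delta_p, \chi] \tilde j_\lambda(\Phi_\nu)$ of Prop. \ref{besseljefunction}, combined with the Laurent expansion \eqref{grppaasdasd} (with $B_{-2}=P$) and formula \eqref{belowfive}. The key workhorse is Lemma \ref{superlemma}, which gives the order and leading coefficient of the pairings $\langle [\Delta_p, \chi] \tilde j_\lambda(\Phi_\nu), u\rangle$ against any harmonic $u$ satisfying relative boundary conditions, together with the Bessel expansion \eqref{besseltildexp}.

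For the first two bullet points I would proceed degree by degree. When $\ell_\nu = 0$, Corollary \ref{supercor2} forces $a_j(\Phi_\nu) = 0$, so Lemma \ref{superlemma} improves the pairing with $u_j$ to $O(\lambda^3)$, making the $-P/\lambda^2$ contribution to $E_\lambda(\Phi_\nu)$ vanish as $\lambda \to 0$. The $-\rmi B_{-1}/\lambda$ contribution unfolds via \eqref{belowfive} into a sum of pairings with $E_0(\Phi_\mu)$ for $\ell_\mu \leq 1$; but each such $E_0(\Phi_\mu)$ is harmonic with relative boundary conditions, so Corollary \ref{supercor} annihilates its degree-zero multipole coefficient and Lemma \ref{superlemma} again yields $O(\lambda^3)$ pairings, making this term $O(\lambda^2)$. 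The $B(\lambda)$ part is $O(\lambda)$ and $\chi \tilde j_\lambda(\Phi_\nu) = O(\lambda)$ by \eqref{besseltildexp}, so all four terms vanish in the limit and $E_0(\Phi_\nu) = 0$. When $\ell_\nu = 1$, the Bessel expansion reads $\tilde j_\lambda(\Phi_\nu) = -\tfrac{2\rmi}{3}\lambda^2 r\,\Phi_\nu + O(\lambda^4)$; the $B_{-1}$, $B(\lambda)$, and $\chi\tilde j_\lambda$ contributions are all $O(\lambda)$ or better, while the $-P/\lambda^2$ piece converges by Lemma \ref{superlemma} with $-3\,C_{3,1} = 2\rmi$ to $2\rmi \sum_j a_j(\Phi_\nu) u_j$, which lies in $L^2$ since each $u_j$ does. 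For $\ell_\nu \geq 2$ every term carries at least a factor $\lambda^{\ell_\nu - 1}$, so the limit is zero.

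Substituting these values into \eqref{belowfive}, only the $\ell_\nu = 1$ terms survive and the factor $(2\rmi)\overline{(2\rmi)} = 4$ cancels the prefactor $\tfrac{1}{4}$, leaving $B_{-1} = \sum_{j,k}\bigl(\sum_{\ell_\nu = 1} a_k(\Phi_\nu)\overline{a_j(\Phi_\nu)}\bigr)\langle \cdot, u_j\rangle\, u_k = P^{(1)}$ by the definition \eqref{Pl}. The main subtlety is the apparent circularity in the $\ell_\nu = 0$ step: formula \eqref{belowfive} expresses $B_{-1}$ in terms of the very objects $E_0(\Phi_\mu)$ we are computing. This is sidestepped by appealing only to the intrinsic structural properties of $E_0(\Phi_\mu)$ (harmonicity, relative boundary conditions, decaying multipole expansion of the form required by Corollary \ref{supercor}), which already forces its degree-zero multipole coefficients to vanish without any explicit knowledge of the limit.
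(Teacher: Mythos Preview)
Your proof is correct and follows essentially the same approach as the paper's own argument, which is developed in the paragraphs immediately preceding the proposition: both use Proposition~\ref{besseljefunction}, the resolvent expansion~\eqref{grppaasdasd}, Lemma~\ref{superlemma}, and formula~\eqref{belowfive}, handling each degree $\ell_\nu$ separately. You articulate the bootstrap point (that only the structural properties of $E_0(\Phi_\mu)$---harmonicity, relative boundary conditions, decaying multipole expansion with vanishing $\ell=0$ part via Corollary~\ref{supercor}---are needed, not its explicit value) more explicitly than the paper does, but the underlying logic is identical.
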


\subsubsection{Resolvent expansion and generalised eigenforms in dimension five}

In the case $\Phi \in \mathcal{H}^p_\ell(\sphere)$ is a spherical harmonic of degree $\ell=0$ then for $|\lambda|<1$ we have
$$
  \tilde j_\lambda(\Phi)(r, \theta) = \frac{2}{3} \lambda^2 \Phi(\theta) + O_{C^\infty(\R^d \setminus \{0\})}(\lambda^4)
$$
and therefore,
$$
 E_0(\Phi) = \lim_{\lambda \to 0}\lambda^{-2 }P \left( (\Delta_p -\lambda^2)  \tilde j_\lambda(\Phi) \right) = -2 \sum_{j=1}^N   a_{j}(\Phi) u_j.
$$
This vanishes, by Corollary \ref{supercor2}.
In the case $\Phi$ is a spherical harmonic of degree higher than $0$ we obtain $E_0(\Phi)=0$. 
Hence, we have the following 
\begin{proposition} \label{prop38}
 If $d=5$ then $E_0(\Phi)=0$ and hence $B_{-1}=0$.
\end{proposition}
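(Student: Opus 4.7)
The strategy is essentially parallel to the $d=3$ argument just above (Proposition 3.7), but in dimension five the leading $\lambda$-order of $\tilde j_\lambda(\Phi)$ is reached already by degree-zero spherical harmonics, and at that order Corollary \ref{supercor2} forces the would-be limit to vanish.

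First, I would dispose of the case $\ell \geq 1$ using only the uniform estimate from Lemma \ref{expansionlemma}: in dimension five that estimate reads
\[
 E_\lambda(\Phi_\nu) \;=\; O_{C^\infty(M)}\!\bigl(\lambda^{\ell_\nu}\bigr),
\]
so $E_0(\Phi_\nu)=0$ whenever $\ell_\nu \geq 1$. The substantive case is $\ell=0$, where the Bessel expansion \eqref{besseltildexp} together with the explicit value $C_{5,0}=\tfrac23$ gives
\[
 \tilde j_\lambda(\Phi)(r,\theta) \;=\; \tfrac{2}{3}\,\lambda^{2}\,\Phi(\theta) \;+\; O_{C^\infty(\R^d\setminus\{0\})}(\lambda^4).
\]

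Next I would apply the representation \eqref{gefunctionrep} from Proposition \ref{besseljefunction} and the resolvent expansion \eqref{grppaasdasd}, noting that $(\Delta_p-\lambda^2)(\chi\tilde j_\lambda(\Phi)) = [\Delta_p,\chi]\tilde j_\lambda(\Phi)$ is compactly supported. Pair with an $L^2$-harmonic form $u_j$ and invoke Lemma \ref{superlemma}, which for $\ell_\nu=0,\ d=5$ produces
\[
 \langle (\Delta_p-\lambda^2)(\chi\tilde j_\lambda(\Phi_\nu)),\,u_j\rangle \;=\; -3\,C_{5,0}\,\lambda^{2}\,a_j(\Phi_\nu) \;+\; O(\lambda^{4}) \;=\; -2\lambda^{2}\,a_j(\Phi_\nu) + O(\lambda^4).
\]
Since the only potentially singular part of $R_\lambda$ contributing a finite limit when multiplied by an $O(\lambda^2)$ compactly-supported source is the $-P/\lambda^2$ term (the $\rmi B_{-1}/\lambda$ contribution is $O(\lambda)$ against the $O(\lambda^2)$ source), passing to the limit yields
\[
 E_0(\Phi_\nu) \;=\; -2\sum_{j=1}^{N} a_j(\Phi_\nu)\,u_j \qquad (\ell_\nu = 0).
\]
The rest is immediate: by Corollary \ref{supercor2} every coefficient $a_j(\Phi_\nu)$ with $\ell_\nu=0$ vanishes, so $E_0(\Phi_\nu)=0$ in this case as well.

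Finally, combining both cases we have $E_0(\Phi_\nu)=0$ for every spherical harmonic $\Phi_\nu$, so formula \eqref{belowfive}, in which only the indices $\ell_\nu \leq (5-d)/2 = 0$ appear, forces $B_{-1}=0$. The only genuine point that has to be watched is the cancellation in the $\ell=0$ computation: one must check that the $\rmi B_{-1}/\lambda$ term in $R_\lambda$, though a priori unknown at this stage, cannot contribute to $E_0(\Phi)$ because the source it acts on is $O(\lambda^2)$ in $L^2_\compp$; this is where the estimate of Lemma \ref{expansionlemma} (already employed to bound $B_{-1}$ in \eqref{belowfive}) is essential and closes the bootstrap. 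Everything else is a direct transcription of the $d=3$ template.
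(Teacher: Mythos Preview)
Your proof is correct and follows essentially the same approach as the paper: handle $\ell\geq 1$ by the order estimate $E_\lambda(\Phi_\nu)=O(\lambda^{\ell_\nu})$, compute $E_0(\Phi)$ for $\ell=0$ via the $-P/\lambda^2$ term of the resolvent to obtain $-2\sum_j a_j(\Phi)u_j$, and then kill this with Corollary~\ref{supercor2}. Your extra remark that the $\rmi B_{-1}/\lambda$ piece cannot contribute because the compactly supported source is $O(\lambda^2)$ is a useful clarification the paper leaves implicit, and there is no circularity since only the \emph{form} of the Laurent expansion (not the value of $B_{-1}$) is used at that step.
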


\subsubsection{Expansion of $E_\lambda(\Phi)$ in odd dimensions}
Assume that $\Phi$ is a spherical harmonic of degree $\ell$, then for $|\lambda|<1$ we have
$$
 \tilde{j}_{\lambda}(\Phi)(r \theta) = C_{d,\ell} \lambda^{\ell + \frac{d-1}{2}} r^\ell \Phi(\theta) + O_{C^\infty(\R^d \setminus \{0\})}(\lambda^{\ell + \frac{d+3}{2}}).
$$
Therefore, using Lemma \ref{superlemma}, we get in dimensions $d \geq 5$, using $B_{-1}=0$,
\begin{gather} \label{expd5th1}
 E_\lambda(\Phi) = -(d-2+2\ell) C_{d,\ell} \lambda^{\ell +\frac{d-5}{2}} \sum_{j=1}^N a_j(\Phi) u_j + O_{C^\infty(M)}( \lambda^{\ell +\frac{d-1}{2}} ).
\end{gather}
In case $d=3$ we have obtain from Lemma \ref{superlemma} that
\begin{gather}
 E_\lambda(\Phi) = -(d-2+2\ell) C_{d,\ell} \lambda^{\ell +\frac{d-5}{2}} \sum_{j=1}^N a_j(\Phi) u_j \nonumber
 \\+ \rmi (d-2+2\ell) C_{d,\ell} \lambda^{\ell +\frac{d-3}{2}} \sum_{j,k=1}^N 
 a_{kj}^{(1)}a_j(\Phi) u_k + O_{C^\infty(M)}( \lambda^{\ell +\frac{d-1}{2}} ).  \label{expd3th1}
\end{gather}
The mixed terms in dimension $d=3$ and $\ell=1$ cancel out giving only even powers, which is consistent with the right hand side being an even function. 
In case $\ell=0$ we have $a_j(\Phi)=0$ by Corollary \ref{supercor2}. Hence, in that case $E_\lambda(\Phi) = O_{C^\infty(M)}( \lambda^{\frac{d-1}{2}} )$.
Therefore, for any $f \in C^\infty_0(M;\Lambda^pT^*M)$ we get
$$
\sum\limits_\nu E_{\lambda}(\Phi_\nu)\langle f, E_{\overline{\lambda}}(\Phi_\nu)\rangle = d^2 | C_{d,1}|^2 \lambda^{d-3} P^{(1)} f + O_{C^\infty(M)}(\lambda^{d-1}).
$$
Recall the definition of $P^{(\ell)}$ in \eqref{Pl}. 

\begin{theorem}
 Suppose that $d$ is odd and $d \geq 3$. Then for any $f \in C^\infty_0(M;\Lambda^pT^*M)$ and  for small $| \lambda |$ we have
 $$
-2 \rmi \lambda (R_\lambda - R_{-\lambda}) f=\sum\limits_\nu E_{\lambda}(\Phi_\nu)\langle f, E_{\overline{\lambda}}(\Phi_\nu)\rangle =d^2 | C_{d,1}|^2 \lambda^{d-3} P^{(1)} f + O_{C^\infty(M)}(\lambda^{d-1}).
$$
\end{theorem}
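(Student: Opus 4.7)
The first equality is immediate from Theorem~\ref{StoneF}: multiplying identity \eqref{diffres} by $-2\rmi\lambda$ produces exactly the stated sum. The substance of the theorem is therefore the asymptotic analysis of $\Sigma(\lambda,f) := \sum_\nu E_\lambda(\Phi_\nu)\langle f, E_{\overline\lambda}(\Phi_\nu)\rangle$ as $\lambda \to 0$. My plan is to decompose $\Sigma$ according to the spherical harmonic degree $\ell_\nu$ and feed the expansions \eqref{expd5th1} (for $d\geq 5$) or \eqref{expd3th1} (for $d=3$) into each block.

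The block $\ell_\nu = 0$ vanishes to sufficient order: Corollary~\ref{supercor2} gives $a_j(\Phi_\nu)=0$, so $E_\lambda(\Phi_\nu) = O_{C^\infty(M)}(\lambda^{(d-1)/2})$ and its contribution is already $O_{C^\infty(M)}(\lambda^{d-1})$. The finite block $\ell_\nu = 1$ produces the principal term: inserting the leading expression $-d\,C_{d,1}\lambda^{(d-3)/2}\sum_j a_j(\Phi_\nu) u_j$ into both factors, conjugating the second, and summing over $\{\nu:\ell_\nu=1\}$ using $a_{kj}^{(1)} = \sum_{\ell_\nu=1} a_k(\Phi_\nu)\overline{a_j(\Phi_\nu)}$ reproduces $d^2|C_{d,1}|^2\lambda^{d-3}P^{(1)}f$. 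The blocks $\ell_\nu\geq 2$ contribute at worst $\lambda^{2\ell_\nu+d-5}\leq \lambda^{d-1}$ from the leading order alone.

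The most delicate point is the case $d=3$, where the expansion \eqref{expd3th1} carries an intermediate $\lambda^{\ell_\nu+(d-3)/2}$ term that would a priori generate a cross-contribution of order $\lambda^{d-2}=\lambda^1$ in the product, spoiling the claimed $O(\lambda^{d-1})$ remainder. I will rule this out by a parity argument: in odd dimension the resolvent is meromorphic on all of $\C$, so $R_\lambda - R_{-\lambda}$ is an odd function of $\lambda$, hence $-2\rmi\lambda(R_\lambda - R_{-\lambda})$ is even, and no odd powers of $\lambda$ can survive in its expansion. The same cancellation can be verified by hand using the Hermiticity $a_{kj}^{(1)}=\overline{a_{jk}^{(1)}}$, which forces the two cross products $A_{\nu,0}\overline{B_{\nu,1}}$ and $A_{\nu,1}\overline{B_{\nu,0}}$ to be negatives of one another after summation over $\nu$.

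The remaining task is to justify the tail bound across the infinite block $\ell_\nu\geq 2$ in the topology of $C^\infty(M)$. For this I will invoke Lemma~\ref{expansionlemma}, which supplies the uniform estimate $E_\lambda(\Phi_\nu) = O_{C^\infty(M)}(\Gamma(\ell_\nu+\tfrac{d}{2})^{-1}\lambda^{\ell_\nu+(d-5)/2})$; combined with the analogous bound for $\langle f, E_{\overline\lambda}(\Phi_\nu)\rangle$ (using compact support of $f$ and Cauchy--Schwarz) and the polynomial growth of $\dim\mathcal{H}_\ell^p(\sphere)$, the Gamma-factor decay ensures absolute summability of the tail in every $C^k$-seminorm on compact sets. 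The main obstacle is the dimension-three cancellation described above; once that is settled the rest is bookkeeping.
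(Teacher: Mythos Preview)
Your proposal is correct and follows essentially the same approach as the paper. The paper's own argument is extremely terse---it simply states the result after recording the expansions \eqref{expd5th1} and \eqref{expd3th1} and noting (in one sentence) that ``the mixed terms in dimension $d=3$ and $\ell=1$ cancel out giving only even powers, which is consistent with the right hand side being an even function''---so your write-up is in fact a more detailed version of the same proof, with the block decomposition, the $d=3$ cancellation (via parity and via Hermiticity of $a_{kj}^{(1)}$), and the tail summability from Lemma~\ref{expansionlemma} all made explicit.
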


\subsection{Analysis when $d$ is even} \label{devenanal}
If the dimension $d$ is even and $d>0$ the free resolvent $R_{0,\lambda}$ takes the form
$$
 R_{0,\lambda} = U_\lambda + V_\lambda \log \lambda, 
$$
where $U_\lambda$ and $V_\lambda$ are holomorphic and even. There is a suitable function space allowing for expansions with $\log$-terms and we will make use of this space of Hahn meromorphic and Hahn holomorphic functions. We refer to Section \ref{hahnapp} for details of this. In our case it
follows that $R_{0,\lambda}$ is Hahn-meromorphic with respect to the group
$2 \mathbb{Z} \times \mathbb{Z}$, in particular only even powers of $\lambda$ appear in the expansions. More precisely, in dimension $d=2$ there is a singularity with a finite rank negative expansion coefficient. In even dimensions
$d \geq 4$ the free resolvent is Hahn holomorphic.
By the general gluing construction and the Hahn-meromorphic Fredholm theorem  (\cite[Theorem 4.1]{muller2014theory}) this implies that the resolvent
$R_\lambda = (\Delta_p - \lambda^2)^{-1}$ on $M$ is Hahn-meromorphic near zero and the joint span of the ranges of all negative expansion coefficients is finite dimensional. The most general expansion that still satisfies the resolvent bounds for self-adjoint operators is then
\begin{gather} \label{genexpansion}
 R_\lambda = - \frac{1}{\lambda^2} \sum_{k=0}^\infty B_{-2,k} (-\log(\lambda))^{-k} + \sum_{k=1}^L B_{-k} (-\log(\lambda))^{k} + B(\lambda)
\end{gather}
where $B(\lambda)$ is Hahn holomorphic. 
In particular, $\lambda^2 R_\lambda$ is bounded uniformly in $\lambda$ for $|\lambda|<1$ and bounded $|\arg \lambda|$ as a map from $L^2_\compp$ to $H^2_\loc$. As in the odd-dimensional case we can use Lemma \ref{expansionlemma} and $\Delta_p E_\lambda(\Phi_\nu) = \lambda^2 E_\lambda(\Phi_\nu)$ to conclude that for $| \lambda |<1$,
$$
 E_\lambda(\Phi_\nu) = O_{C^\infty(M)}\left(\frac{1}{\Gamma(\ell_\nu+\frac{d}{2})}\lambda^{\ell_\nu + \frac{d-5}{2}}\right),
$$
uniformly in $\nu$ in any fixed sector of the logarithmic cover. Therefore, as before we can compare expansion coefficients of the corresponding Hahn-series in equation \eqref{diffres}.

It follows from Stone's formula that $B_{-2,0}=P$ is the spectral projection onto the zero eigenspace.
In fact, it follows from the relation between resolvent and spectral measure that non-zero coefficients $B_{-2,k}$ for $k>0$ can only occur in the presence of a non-zero leading order term $B_{-2,1}$ and in dimension lower than $6$.

\begin{lemma} \label{abovelemma}
 If $d>4$ then $B_{-2,k}=0$ for any $k>0$.
 If $B_{-2,1}=0$ and $d = 2$ or $d=4$ then $B_{-2,k}=0$ for any $k>0$.
\end{lemma}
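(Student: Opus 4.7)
The plan is to compare two expressions for the jump $R_\lambda - R_{-\lambda}$ across the positive spectrum: the one obtained from the Hahn expansion \eqref{genexpansion}, and Stone's formula \eqref{diffres}, namely
\[
R_\lambda - R_{-\lambda} = \frac{\rmi}{2\lambda}\sum_\nu E_\lambda(\Phi_\nu)\langle\,\cdot\,, E_{\overline\lambda}(\Phi_\nu)\rangle.
\]
By Lemma \ref{expansionlemma} each summand is $O(\lambda^{2\ell_\nu + d - 5})$ uniformly in $\nu$, and the $\ell_\nu=0$ contribution dominates, yielding $\lambda^2(R_\lambda - R_{-\lambda}) = O(\lambda^{d - 4})$ in any fixed sector of the logarithmic cover. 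On the Hahn side, writing $-\log(-\lambda) = -\log\lambda - \rmi\pi$ and using the binomial expansion
\[
(-\log\lambda - \rmi\pi)^{-k} = \sum_{n \geq 0}\binom{k+n-1}{n}(\rmi\pi)^n(-\log\lambda)^{-k-n},
\]
each $B_{-2,k}$ contributes to $-\lambda^2(R_\lambda - R_{-\lambda})$ a leading term $-\rmi k \pi B_{-2,k}(-\log\lambda)^{-(k+1)}$, plus corrections from $B_{-2,k'}$ with $k' < k$ at strictly lower Hahn orders.

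For $d > 4$ the bound $O(\lambda^{d-4})$ decays polynomially with exponent at least $2$, so no term of the form $\lambda^0(-\log\lambda)^{-m}$ (which decays only logarithmically) can appear in the Hahn expansion of $\lambda^2(R_\lambda-R_{-\lambda})$. Matching coefficients order by order: the coefficient of $(-\log\lambda)^{-2}$ is $-\rmi\pi B_{-2,1}$, giving $B_{-2,1} = 0$; then the coefficient of $(-\log\lambda)^{-3}$, with the $B_{-2,1}$ contribution already annihilated, is $-2\rmi\pi B_{-2,2}$, giving $B_{-2,2} = 0$; and so on inductively.

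For $d\in\{2,4\}$ with $B_{-2,1} = 0$, the raw bound does not suffice and I argue by induction on $k$: assume $B_{-2,1} = \dots = B_{-2,k-1} = 0$ and show $B_{-2,k}=0$. Under this assumption, in the representation $E_\lambda(\Phi_\nu) = \chi\tilde j_\lambda(\Phi_\nu) - R_\lambda[\Delta_p,\chi]\tilde j_\lambda(\Phi_\nu)$ the log-enhanced contributions from the lower $B_{-2,k'}$ disappear, and Corollary \ref{supercor2} kills the leading $-P/\lambda^2$ contribution at $\ell_\nu = 0$. This sharpens the estimate to $E_\lambda(\Phi_\nu) = O(\lambda^{(d-5)/2}(-\log\lambda)^{-k})$ for $\ell_\nu = 0$, and feeding back into Stone's formula produces $\lambda^2(R_\lambda - R_{-\lambda}) = O(\lambda^{d-4}(-\log\lambda)^{-2k})$. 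For $d=4$ this reads $O((-\log\lambda)^{-2k})$, and since $k + 1 < 2k$ for $k \geq 2$ the $(-\log\lambda)^{-(k+1)}$ contribution of $B_{-2,k}$ is of strictly larger Hahn order and must therefore vanish.

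The main obstacle is the case $d=2$: Corollary \ref{supercor2} was deduced from Corollary \ref{supercor} under the assumption $d>2$, and even with the analogous improvement the Stone-side bound degenerates to $O(\lambda^{-2}(-\log\lambda)^{-2k})$, dominated by the uniform $O(1)$ bound on $\lambda^2 R_\lambda$, so the Hahn-matching argument alone does not discriminate the log-coefficients. To close this gap one invokes the Hahn-meromorphic Fredholm framework of \cite{muller2014theory}: the principal part of $R_\lambda$ at zero is obtained by inverting a finite-rank Hahn-holomorphic operator of the form $I + K_0 + K_1(-\log\lambda) + O((-\log\lambda)^2)$, whose inverse expanded as a geometric series expresses every $B_{-2,k}$ algebraically in terms of $K_1$. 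Since $K_1$ is proportional to $B_{-2,1}$, the hypothesis $B_{-2,1} = 0$ forces $K_1 = 0$, and hence $B_{-2,k} = 0$ for all $k \geq 1$.
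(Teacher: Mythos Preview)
Your argument for $d>4$ and $d=4$ follows the paper's approach and is correct: compare the Hahn expansion of $R_\lambda-R_{-\lambda}$ with Stone's formula, feed in the a-priori control on $E_\lambda(\Phi_\nu)$ coming from Prop.~\ref{besseljefunction}, and derive a contradiction from $k+1<2k$ (equivalently $k>(k+1)/2$) for $k\geq 2$. You are right to flag Corollary~\ref{supercor2} as the reason the $-P/\lambda^2$ contribution drops out at $\ell_\nu=0$; the paper uses this implicitly.

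For $d=2$ your proposed Fredholm argument has a genuine gap. The assertion that in the gluing parametrix the first log-coefficient ``$K_1$ is proportional to $B_{-2,1}$'' is unjustified: the algebraic relation between the parametrix remainder $Q_\lambda$ in \eqref{gluing} and the Laurent coefficients of the full resolvent is far less direct than a single geometric-series inversion suggests, and there is no reason a priori that $B_{-2,1}=0$ forces the corresponding log-coefficient of the parametrix to vanish. The paper's own proof only writes out $d=4$ explicitly, but the same mechanism works for $d=2$ once two points are made. First, your concern about Corollary~\ref{supercor2} requiring $d>2$ is unfounded: in dimension two any $u_j\in\mathcal{H}^p_{\mathrm{rel}}(M)$ has $a_{\nu,j}=0$ for $\ell_\nu\leq 1$ simply because $r^{-\ell}$ fails to be square integrable against $r\,dr$ on $[R,\infty)$ when $\ell\leq 1$, so the $P$-contribution to $E_\lambda(\Phi_\nu)$ vanishes at leading order for $\ell_\nu\in\{0,1\}$ just as in $d=4$. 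Second --- and this is what your direction of argument misses --- the paper does not merely bound the Stone side and compare; it uses positivity of the spectral measure in the \emph{reverse} direction. Since the leading Hahn term of $R_\lambda-R_{-\lambda}$ is $\propto B_{-2,k}\lambda^{-2}(-\log\lambda)^{-(k+1)}$ and the Stone sum is a sum of rank-one nonnegative operators, some $E_\lambda(\Phi_\nu)$ must have its top-order Hahn term \emph{exactly} at $\lambda^{-1/2}(-\log\lambda)^{-(k+1)/2}$. One then reads off from Prop.~\ref{besseljefunction} and the resolvent expansion (coefficient group $2\mathbb{Z}\times\mathbb{Z}$) that the available log-exponents at the $\lambda^{-1/2}$ level lie in $\{0\}\cup\{k,k+1,\ldots\}$, with $0$ killed by the previous point; since $(k+1)/2$ lies in neither set for $k\geq 2$, this yields the contradiction without any appeal to the Fredholm structure.
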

\begin{proof}
 Let $k \geq 1$.
 We will show that $B_{-2,k} =0$ is implied whenever either $d>4$ or when  $B_{-2,1},\ldots,B_{-2,k-1}=0$. First note that
 $$
   R_{\lambda}- R_{-\lambda} = (-\rmi \pi)^{k-1} B_{-2,k} \frac{1}{\lambda^2(-\log \lambda)^{k+1}} + O_{\mathcal{B}(L^2_\compp \to H^{2}_\loc)}\left (\frac{1}{\lambda^2(-\log \lambda)^{k+2}} \right).
 $$ 
 Suppose, by contradiction, that $B_{-2,k} \not=0$. Then, by Theorem \ref{StoneF}, \eqref{diffres}, the expansion of $E_\lambda(\Phi_\nu)$ for some $\nu$
 must have a non-zero top-order term of the form
 $$
  f \frac{1}{\lambda^{1/2}(-\log \lambda)^{\frac{k+1}{2}}} 
 $$
  and some non-zero function $f$.
From Prop. \ref{besseljefunction} we have the following a-priori estimate
 $$
  E_{\lambda}(\Phi) = O_{C^\infty(M)}\left(\lambda^{\frac{d-5}{2}} \frac{1}{(-\log \lambda)^k}\right).
 $$
 If $d>4$ then $E_{\lambda}(\Phi) = O(\lambda^{\frac{1}{2}})$, thus $f=0$ and therefore $B_{-2,k}=0$.
 If $d=4$ and $B_{-2,1}=0$ then we can assume $k\geq 2$ and hence $k>\frac{k+1}{2}$. This would imply once more $f=0$ and therefore $B_{-2,k}=0$.
\end{proof}

\begin{rem}
It is known at least since Murata's work \cite{murata1982asymptotic} that generalised
projections onto the resonant states can occur in the form
 $-B_{-2,1} \frac{1}{\lambda^2 (-\log \lambda)}$ in the case of potential scattering in dimension four. In our case such zero resonance states do not exist in dimension higher than two. Therefore, 
we obtain a much more refined result below. 
 \end{rem}

\begin{theorem} \label{resolventexpdg4}
 Suppose that $d \geq 4$. Then $R_\lambda$ (as an operator
from $L^2_{comp}$ to $H^2_{loc}$) is Hahn-meromorphic at $\lambda=0$ with expansion of the form
 $$
  -\frac{P}{\lambda^2} + B_{-1} (-\log \lambda) + B(\lambda),
 $$
 for $| \lambda |$ small and in a fixed sector $| \arg \lambda | \leq \Theta$,
 where $B(\lambda)$ is Hahn-holomorphic and $P,B_{-1}: L^2_{comp} \to H^2_{loc}$ are of finite rank. If $d>4$ then $B_{-1}=0$.
  If $d=4$ then $B_{-1}=\frac{1}{4} P^{(1)}$.
\end{theorem}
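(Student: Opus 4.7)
The plan is to start from the general Hahn expansion~\eqref{genexpansion}, identify $B_{-2,0}=P$ via Stone's formula (Theorem~\ref{StoneF}) as the orthogonal projection onto $\ker_{L^2}(\Delta_{p,\mathrm{rel}})$, and then determine every remaining coefficient by matching Hahn series on the two sides of the Stone identity~\eqref{diffres}. The two crucial inputs throughout are the uniform sector bound $E_\lambda(\Phi_\nu)=O_{C^\infty(M)}(\lambda^{\ell_\nu+(d-5)/2}/\Gamma(\ell_\nu+d/2))$ of Lemma~\ref{expansionlemma}, and the vanishing $a_j(\Phi_\nu)=0$ when $\ell_\nu=0$ from Corollary~\ref{supercor2}, which in particular makes $P^{(0)}=0$.

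For the singular coefficients $B_{-2,k}$ with $k\geq 1$, Lemma~\ref{abovelemma} handles $d>4$ at once and reduces the case $d=4$ to showing $B_{-2,1}=0$. Substituting~\eqref{genexpansion} into $R_\lambda-R_{-\lambda}$, the $B_{-2,1}$-piece contributes $\rmi\pi B_{-2,1}/(\lambda^2(-\log\lambda)(-\log\lambda-\rmi\pi))$, of order $1/(\lambda^2(-\log\lambda)^2)$ and hence unbounded as $\lambda\to 0$ when $B_{-2,1}\neq 0$. On the other hand, the uniform bound on $E_\lambda$ combined with $P^{(0)}=0$ forces the right-hand side of~\eqref{diffres} to be $O(\lambda^{d-4})=O(1)$ in $d=4$, since the potentially dominant $\ell_\nu=0$ contribution vanishes and the next-smallest degree $\ell_\nu=1$ produces exactly this order. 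The contradiction yields $B_{-2,1}=0$, and Lemma~\ref{abovelemma} then kills all $B_{-2,k}$ with $k\geq 1$.

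For the positive log-powers $B_{-k}$ with $k\geq 2$, the same uniform bound precludes any logarithmic factor at the leading order of $E_\lambda(\Phi_\nu)$, so the right-hand side of~\eqref{diffres} has no $(-\log\lambda)^n$ component at $\lambda^0$ with $n\geq 1$. Binomial expansion of $(-\log\lambda)^{k'}-(-\log\lambda-\rmi\pi)^{k'}$ shows the left-hand side at $\lambda^0(-\log\lambda)^{k-1}$ is a triangular linear form in the $B_{-k'}$ with $k'\geq k$, whose diagonal coefficient $k\rmi\pi$ multiplies $B_{-k}$. Since the Hahn expansion terminates at some finite $L$, descending induction from $k=L$ gives $B_{-k}=0$ for all $k\geq 2$. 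Matching at $\lambda^0(-\log\lambda)^0$ then reduces the identification of $B_{-1}$ to setting $B_{-1}\rmi\pi$ equal to the $\lambda^0$ coefficient on the right; substituting the explicit leading term $E_\lambda(\Phi_\nu)=-(d-2+2\ell_\nu)C_{d,\ell_\nu}\lambda^{\ell_\nu+(d-5)/2}\sum_j a_j(\Phi_\nu)u_j+\ldots$ derived from Lemma~\ref{superlemma} exactly as in Subsection~\ref{doddanal}, only $\ell_\nu=1$ contributes when $d=4$, and the bilinear sum in $a_j(\Phi_\nu)$ assembles into $P^{(1)}$. Evaluating $|C_{4,1}|^2=\pi/32$ yields $B_{-1}=\tfrac{1}{4}P^{(1)}$; for $d>4$ no spherical-harmonic degree contributes at $\lambda^0$, so $B_{-1}=0$. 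The principal technical hurdle is the Hahn-series bookkeeping itself: justifying coefficient-matching across the entire $(\lambda^m,(-\log\lambda)^n)$ grid, and carrying out the descending induction cleanly so that contributions from the Hahn-holomorphic remainder $B(\lambda)$ do not contaminate the log-polynomial piece we are isolating.
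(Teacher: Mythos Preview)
Your overall strategy---matching Hahn coefficients across the Stone identity~\eqref{diffres}---is exactly right, and once $B_{-2,k}=0$ is established your treatment of the positive log-powers $B_{-k}$ and the computation of $B_{-1}$ are essentially the paper's argument. The gap is in the step that rules out $B_{-2,1}$ in $d=4$.

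You claim that ``the uniform bound on $E_\lambda$ combined with $P^{(0)}=0$ forces the right-hand side of~\eqref{diffres} to be $O(1)$, since the potentially dominant $\ell_\nu=0$ contribution vanishes.'' But $P^{(0)}=0$ (Corollary~\ref{supercor2}) is a statement about the $L^2$-eigenfunctions $u_j$: it only controls the $-P/\lambda^2$ piece of the resolvent when you feed it through Proposition~\ref{besseljefunction}. It says nothing about the range of $B_{-2,1}$, which would consist of zero-resonance states, not elements of $\ker_{L^2}(\Delta_{p,\mathrm{rel}})$. So the a~priori bound $E_\lambda(\Phi_\nu)=O(\lambda^{-1/2})$ for $\ell_\nu=0$ cannot be improved from $P^{(0)}=0$ alone; the $B_{-2,1}/(\lambda^2(-\log\lambda))$ piece still contributes $O(\lambda^{-1/2}/(-\log\lambda))$ to $E_\lambda(\Phi_\nu)$, and hence $O(\lambda^{-2}/(-\log\lambda)^2)$ to the right-hand side of~\eqref{diffres}. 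Your contradiction never materialises---you are using the conclusion to prove the premise.

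The paper breaks this circularity by a bootstrap that does not go through the resolvent. Assuming $B_{-2,1}\neq 0$, the putative leading coefficient $f_\nu$ of $E_\lambda(\Phi_\nu)$ at order $\lambda^{-1/2}/(-\log\lambda)$ is harmonic with relative boundary conditions, and the Hankel representation (Proposition~\ref{EoutofHankel}, together with the Hahn-holomorphy of $A_\lambda$) forces $f_\nu$ to have a \emph{decaying} multipole expansion on $M\setminus K$. Then Corollary~\ref{supercor}---which applies to \emph{any} harmonic form with relative boundary conditions and such an expansion, not just the $u_j$---kills the degree-$0$ multipole coefficients of $f_\nu$. Since only $\ell_\nu=0$ can feed the $\lambda^{-2}/(-\log\lambda)^2$ singularity, $B_{-2,1}$ has range in the span of these $f_\nu$; now Lemma~\ref{superlemma} applied with $u=f_\nu$ (not $u=u_j$) shows the $B_{-2,1}$ contribution to $E_\lambda(\Phi)$ via Proposition~\ref{besseljefunction} is one order smaller than expected, yielding $E_\lambda(\Phi)=O(\lambda^{1/2})$ and hence $f_\nu=0$. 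The missing ingredients in your argument are therefore the Hankel-side analysis of $f_\nu$ and the invocation of Corollary~\ref{supercor} rather than~\ref{supercor2}.
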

\begin{proof}
 We first show that $B_{-2,k}=0$ for any $k>0$. This is the case in any dimension greater $4$, so we only need to check that case $d=4$. We only need to show that $B_{-2,1}=0$. 
 Suppose by contradiction that $B_{-2,1} \not= 0$. By the same argument as in Lemma \ref{abovelemma} there must exist a $\nu$ such that
 $E_\lambda(\Phi_\nu)$ has non-zero top-order term of the form
 \begin{gather} \label{sillyterm}
  f_\nu \frac{1}{\lambda^{1/2}(-\log \lambda)}. 
 \end{gather}
 The coefficient $f_\nu$ is harmonic and satisfies relative boundary conditions. 
 By unitarity of the scattering matrix $S_\lambda$ is bounded near zero and therefore, $A_\lambda$ is Hahn-holomorphic.
 By Prop. \ref{EoutofHankel} the term \eqref{sillyterm} must appear in the expansion of $\tilde h^{(1)}_\lambda(A_\lambda \Phi_\nu)$.
 Inspection of the expansion of the Hankel function in the regime $|r\lambda| \ll 1$, $r\in [R_1,R_2]$ and $\lambda\rightarrow 0$ shows that $f_\nu$ has a multipole expansion of the form 
 $$
  f_\nu(r,\theta) = \sum_{\mu} a_{\mu} \frac{1}{r^{\ell_\mu + d-2}} \Phi_\mu. 
 $$
 
 Therefore, by Corollary \ref{supercor}, $a_\mu=0$ when $\ell_\mu=0$. On the other hand $B_{-2,1}$ is of the form $B_{-2,1} = \sum\limits_{\nu, \ell_\nu=0} f_\nu \langle \cdot, f_\nu \rangle$. We can now use Lemma \ref{superlemma} to see that
 \begin{gather}
  -\frac{1}{\lambda^2} B_{-2,0} (\Delta_p - \lambda^2) \chi \tilde j_{\lambda} (\Phi) = O_{C^\infty(M)}(\lambda^{1+\frac{d-5}{2}}),\\
  -\frac{1}{\lambda^2 (-\log\lambda)} B_{-2,1} (\Delta_p - \lambda^2) \chi \tilde j_{\lambda} (\Phi) = O_{C^\infty(M)}(\lambda^{1+\frac{d-5}{2}}).
 \end{gather}
Using Prop. \ref{besseljefunction} we obtain $E_\lambda(\Phi) = O_{C^\infty(M)}(\lambda^{\frac{1}{2}})$. We conclude that $f_\nu=0$ and therefore $B_{-2,1}=0$.

Now, using again \ref{StoneF}, \eqref{diffres} the bound $E_\lambda(\Phi) = O_{C^\infty(M)}(\lambda^{\frac{1}{2}})$ implies that $R_\lambda - R_{-\lambda} = O_{C^\infty(M)}(1)$
and therefore $B_{-k}=0$ whenever $k>1$. If $d>4$ then $E_\lambda(\Phi) = O_{C^\infty(M)}(\lambda^{\frac{3}{2}})$ and hence $B_{-1}=0$.\\
It remains to compute $B_{-1}$ in dimension $4$.
 Since $-\log \lambda + \log(-\lambda) = \rmi \pi$ we have
 $$
  R_\lambda - R_{-\lambda} =  \rmi \pi B_{-1}.
 $$
 Comparing with $\eqref{diffres}$ shows that
 $$
  B_{-1}= \frac{1}{2 \pi}\sum_{\nu, \ell_\nu=1} g_\nu \langle \cdot, g_\nu \rangle,
 $$
 where $g_\nu = \lim\limits_{\lambda \to 0} \lambda^{-\frac{1}{2}} E_\lambda(\Phi_\nu)$. This can be computed using Lemma \ref{superlemma},
 $$
  g_\nu = -\sum_{k=1}^N 4 C_{4,1} a_k(\Phi_\nu) u_k = \rmi \sqrt{\frac{\pi}{2}}  \sum_{k=1}^N a_k(\Phi_\nu) u_k,
 $$
 thus resulting in the claimed expression.
\end{proof}

\subsubsection{Expansion of $E_\lambda(\Phi)$ in even dimensions $d \geq 4$}

Assuming that $\Phi$ is a spherical harmonic of degree $\ell$, we have that (in the asymptotic regime previously described) 
$$
 \tilde{j}_{\lambda}(\Phi)(r \theta) = C_{d,\ell} \lambda^{\ell + \frac{d-1}{2}} r^\ell \Phi(\theta) + O_{C^\infty(\R^d \setminus \{0\})}(\lambda^{\ell + \frac{d+3}{2}}).
$$
Therefore, using Lemma \ref{superlemma}, we get in dimensions $d \geq 6$, using $B_{-1}=0$,
\begin{gather} \label{expd6th1}
 E_\lambda(\Phi) = -(d-2+2\ell) C_{d,\ell} \lambda^{\ell +\frac{d-5}{2}} \sum_{j=1}^N a_j(\Phi) u_j + O_{C^\infty(M)}( \lambda^{\ell +\frac{d-1}{2}} ).
\end{gather}
In case $d=4$ we have obtain from Lemma \ref{superlemma} that
\begin{gather}
 E_\lambda(\Phi) = -(d-2+2\ell) C_{d,\ell} \lambda^{\ell +\frac{d-5}{2}} \sum_{j=1}^N a_j(\Phi) u_j \nonumber 
 \\+ \frac{1}{4} (d-2+2\ell) C_{d,\ell} \lambda^{\ell +\frac{d-1}{2}} (-\log \lambda) \sum_{j,k=1}^N 
 a_{kj}^{(1)}a_j(\Phi)  u_k + O_{C^\infty(M)}( \lambda^{\ell +\frac{d-1}{2}} ).\label{expd4th1}
\end{gather}

In case $\ell=0$ we have $a_j(\Phi)=0$ by Corollary \ref{supercor2}. Hence, in that case $E_\lambda(\Phi) = O( \lambda^{\frac{d-1}{2}} )$.
Therefore, for any $f \in C^\infty_0(M;\Lambda^pT^*M)$ we get in case $d \geq 6$
$$
\sum\limits_\nu E_{\lambda}(\Phi_\nu)\langle f, E_{\overline{\lambda}}(\Phi_\nu)\rangle = d^2 | C_{d,1}|^2 \lambda^{d-3} P^{(1)} f + O_{C^\infty(M)}(\lambda^{d-1}).
$$
In dimension $d=4$ we obtain the two-term expansion
$$\sum\limits_\nu E_{\lambda}(\Phi_\nu)\langle f, E_{\overline{\lambda}}(\Phi_\nu)\rangle = d^2 | C_{d,1}|^2 \left(\lambda^{d-3} P^{(1)} f - \frac{1}{2}\lambda^{d-1}(-\log \lambda) \left( P^{(1)} \right)^2 f  \right)+ O_{C^\infty(M)}(\lambda^{d-1}).
$$
We have proved the following theorems.
\begin{theorem}
 Suppose that $d$ is even and $d \geq 6$. Then for any $f \in C^\infty_0(M;\Lambda^pT^*M)$ and  for $| \lambda |$ small in a fixed sector $| \arg \lambda | \leq \Theta$ we have
 $$
-2 \rmi \lambda (R_\lambda - R_{-\lambda}) f=\sum\limits_\nu E_{\lambda}(\Phi_\nu)\langle f, E_{\overline{\lambda}}(\Phi_\nu)\rangle =d^2 | C_{d,1}|^2 \lambda^{d-3} P^{(1)} f + O_{C^\infty(M)}(\lambda^{d-1}).
$$
\end{theorem}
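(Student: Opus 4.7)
The equality $-2\rmi \lambda (R_\lambda - R_{-\lambda}) f = \sum_\nu E_\lambda(\Phi_\nu) \langle f, E_{\overline{\lambda}}(\Phi_\nu)\rangle$ is immediate from Stone's formula in the form of Theorem~\ref{StoneF}, equation \eqref{diffres}. The content of the statement is therefore to extract the leading-order asymptotics of the bilinear sum as $\lambda \to 0$ in the sector $|\arg \lambda| \leq \Theta$. My plan is to substitute the already-established pointwise expansion \eqref{expd6th1} into each term of the sum and then sort by powers of $\lambda$, using Corollary~\ref{supercor2} to eliminate what would otherwise be the dominant contribution.

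Concretely, for a spherical harmonic $\Phi_\nu$ of degree $\ell_\nu$, the expansion \eqref{expd6th1} reads
\begin{equation*}
 E_\lambda(\Phi_\nu) = -(d-2+2\ell_\nu)\, C_{d,\ell_\nu}\, \lambda^{\ell_\nu + \frac{d-5}{2}} \sum_{j=1}^N a_j(\Phi_\nu)\, u_j + O_{C^\infty(M)}(\lambda^{\ell_\nu + \frac{d-1}{2}}).
\end{equation*}
For $\ell_\nu = 0$ the coefficients $a_j(\Phi_\nu)$ vanish by Corollary~\ref{supercor2}, so the entire contribution of those $\nu$ is $O(\lambda^{\frac{d-1}{2}})$. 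For $\ell_\nu \geq 2$ the leading power is $\lambda^{\ell_\nu + \frac{d-5}{2}} \geq \lambda^{\frac{d-1}{2}}$, so again absorbed into the remainder at the order we need. The only term contributing to the asymptotic leading coefficient comes from $\ell_\nu = 1$, where $-(d-2+2\ell_\nu) C_{d,\ell_\nu} = -d\, C_{d,1}$ and the pair $E_\lambda(\Phi_\nu) \langle f, E_{\overline\lambda}(\Phi_\nu)\rangle$ contributes $d^2 |C_{d,1}|^2 \lambda^{d-3}$ times
\begin{equation*}
 \sum_{\nu:\,\ell_\nu = 1} \sum_{j,k=1}^N a_k(\Phi_\nu)\, \overline{a_j(\Phi_\nu)}\, u_k \, \langle f, u_j\rangle,
\end{equation*}
which, recalling the definition \eqref{Pl} of $P^{(1)}$ and its self-adjointness, is exactly $P^{(1)} f$.

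The main technical worry is uniformity of the $O$ in $\nu$, since the sum over spherical harmonics is infinite and the error term in each expansion carries an $\nu$-dependent prefactor. Here the point is that Lemma~\ref{expansionlemma} gives the uniform bound $E_\lambda(\Phi_\nu) = O_{C^\infty(M)}(\Gamma(\ell_\nu + \tfrac{d}{2})^{-1} \lambda^{\ell_\nu + \frac{d-5}{2}})$, and the pairing $\langle f, E_{\overline\lambda}(\Phi_\nu)\rangle$ can be estimated in the same way (by duality against $f \in C^\infty_0$, or by applying $\Delta_p^k$ to gain additional decay in $\nu$). The rapid decay in $\ell_\nu$, together with the Gamma-function prefactor, makes the contribution of $\ell_\nu \geq 2$ to the bilinear sum bounded by $O_{C^\infty(M)}(\lambda^{d-1})$ uniformly in the sector $|\arg \lambda| \leq \Theta$. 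Once this uniform control is in place, one simply collects the $\ell_\nu = 1$ terms to get the stated identity.
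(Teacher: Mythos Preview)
Your proof is correct and follows essentially the same route as the paper: substitute the expansion \eqref{expd6th1} into each term of the sum, use Corollary~\ref{supercor2} to remove the $\ell_\nu=0$ contribution, observe that $\ell_\nu\geq 2$ is already $O(\lambda^{d-1})$, and recognise the $\ell_\nu=1$ sum as $P^{(1)}f$ via \eqref{Pl}. Your explicit remark on uniformity in $\nu$ via Lemma~\ref{expansionlemma} is a helpful gloss that the paper leaves implicit.
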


\begin{theorem}
 Suppose that $d=4$, then for any $f \in C^\infty_0(M;\Lambda^pT^*M)$  and $| \lambda |$ small  in a fixed sector $| \arg \lambda | \leq \Theta$ we have
 \begin{gather*}
-2 \rmi \lambda (R_\lambda - R_{-\lambda}) f=\sum\limits_\nu E_{\lambda}(\Phi_\nu)\langle f, E_{\overline{\lambda}}(\Phi_\nu)\rangle \\=\frac{\pi}{2} \lambda P^{(1)} f -\frac{\pi}{4} \lambda^3 (-\log \lambda)\left( P^{(1)}\right)^2 f + O_{C^\infty(M)}(\lambda^{3}).
\end{gather*}
\end{theorem}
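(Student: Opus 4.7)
The plan is to combine the Stone-type identity \eqref{diffres} of Theorem~\ref{StoneF} with the expansion \eqref{expd4th1} of the generalised eigenfunctions. Multiplying \eqref{diffres} by $-2\rmi\lambda$ already delivers the first equality of the statement, so it only remains to expand the right-hand sum to order $\lambda^3(-\log\lambda)$.

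Write $v_\nu := \sum_{j=1}^N a_j(\Phi_\nu)u_j$ and observe that $\sum_{j,k} a_{kj}^{(1)}a_j(\Phi_\nu)u_k = P^{(1)}v_\nu$. Then for $d=4$ the expansion \eqref{expd4th1} reads
\begin{equation*}
E_\lambda(\Phi_\nu) = -(2+2\ell_\nu)\,C_{4,\ell_\nu}\,\lambda^{\ell_\nu-1/2}\,v_\nu + \tfrac{1}{4}(2+2\ell_\nu)\,C_{4,\ell_\nu}\,\lambda^{\ell_\nu+3/2}(-\log\lambda)\,P^{(1)}v_\nu + O_{C^\infty(M)}(\lambda^{\ell_\nu+3/2}).
\end{equation*}
By Corollary~\ref{supercor2} one has $v_\nu = 0$ whenever $\ell_\nu = 0$, so those modes contribute $O(\lambda^3)$ to each product $E_\lambda(\Phi_\nu)\langle f,E_{\overline\lambda}(\Phi_\nu)\rangle$. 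For $\ell_\nu \geq 2$ the leading order $\lambda^{\ell_\nu-1/2}$ is already $\geq \lambda^{3/2}$, so these modes also contribute only $O(\lambda^3)$. Hence, up to errors of order $\lambda^3$, only the finitely many modes with $\ell_\nu = 1$ affect the expansion, and for these
\begin{equation*}
E_\lambda(\Phi_\nu) = -4\,C_{4,1}\,\lambda^{1/2}\,v_\nu + C_{4,1}\,\lambda^{5/2}(-\log\lambda)\,P^{(1)}v_\nu + O(\lambda^{5/2}).
\end{equation*}

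Multiplying out and summing, only the product $\lambda^{1/2}\cdot\lambda^{1/2}$ and the two cross products $\lambda^{1/2}\cdot\lambda^{5/2}(-\log\lambda)$ survive at the claimed order, since the log-times-log product lives at $\lambda^5(\log\lambda)^2$ and the contributions from the $O(\lambda^{5/2})$ remainders are of order $\lambda^3$ at worst. The identity $\sum_{\nu,\ell_\nu=1} v_\nu\langle\cdot,v_\nu\rangle = P^{(1)}$, which is just the definition of $P^{(1)}$, together with $|C_{4,1}|^2 = \pi/32$, gives the leading coefficient $16\,|C_{4,1}|^2\,P^{(1)}f = \tfrac{\pi}{2}\,P^{(1)}f$. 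For the next coefficient, each of the two cross sums evaluates to $-\tfrac{1}{4}\cdot 16\,|C_{4,1}|^2(P^{(1)})^2 f = -\tfrac{\pi}{8}(P^{(1)})^2 f$ after using self-adjointness of $P^{(1)}$ to rewrite $\sum_{\nu,\ell_\nu=1} v_\nu\langle P^{(1)}f,v_\nu\rangle$ as $(P^{(1)})^2 f$; the two together yield the claimed $-\tfrac{\pi}{4}\lambda^3(-\log\lambda)(P^{(1)})^2 f$. The only subtle point is the bookkeeping of fractional powers and logarithms: it is essential that the logarithmic term in \eqref{expd4th1} sits at $\lambda^{\ell+3/2}(-\log\lambda)$ and not one unit of $\lambda$ earlier, for otherwise the cross product with the leading $\lambda^{\ell-1/2}$ term would produce $\lambda^2(-\log\lambda)$ instead of $\lambda^3(-\log\lambda)$.
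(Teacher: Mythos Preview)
Your proof is correct and follows essentially the same route as the paper: both derive the result by inserting the expansion \eqref{expd4th1} into the Stone identity \eqref{diffres}, discarding the $\ell_\nu=0$ terms via Corollary~\ref{supercor2} and the $\ell_\nu\ge 2$ terms by order counting, and then reading off the coefficients from $16|C_{4,1}|^2=\pi/2$. Your write-up is somewhat more explicit than the paper's (which simply records the two-term expansion $\sum_\nu E_\lambda(\Phi_\nu)\langle f,E_{\overline\lambda}(\Phi_\nu)\rangle = d^2|C_{d,1}|^2\bigl(\lambda^{d-3}P^{(1)}f-\tfrac12\lambda^{d-1}(-\log\lambda)(P^{(1)})^2f\bigr)+O(\lambda^{d-1})$ and substitutes $d=4$), in particular in spelling out the self-adjointness argument for the cross terms.
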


\subsection{Analysis when $d=2$} \label{d2anal}

Finally we treat that fairly special case of dimension two. 

\begin{lemma} \label{ruleoutlemma1}
 In case $p=0,d=2$ we have $P=0$ and $B_{-2,k}=0$ for all $k>0$.
\end{lemma}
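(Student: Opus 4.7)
The statement has two parts, which we handle separately. For $P=0$: since $\delta$ vanishes on $0$-forms, the relative boundary condition for a scalar function reduces to the Dirichlet condition $u|_{\partial\calO}=0$ when $\partial\calO\neq\emptyset$. Any $u\in\ker_{L^2}(\Delta_{0,\mathrm{rel}})$ therefore satisfies, by integration by parts,
\begin{equation*}
\|\der u\|_{L^2}^2=\langle\Delta u,u\rangle=0,
\end{equation*}
so $u$ is locally constant; $L^2$-integrability on the infinite-volume connected manifold $M$ forces $u\equiv 0$. This is also the cohomological identification $\mathcal{H}^0_{\mathrm{rel}}(M)=0$ recalled in the introduction.

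For $B_{-2,k}=0$ with $k>0$: the inductive argument of Lemma~\ref{abovelemma} cannot be applied directly in dimension two, because the a priori bound it uses only gives $E_\lambda(\Phi)=O(\lambda^{-3/2}(-\log\lambda)^{-k})$, which exceeds the hypothetical top-order term $f\,\lambda^{-1/2}(-\log\lambda)^{-(k+1)/2}$ in $E_\lambda(\Phi_\nu)$ that $B_{-2,k}\neq 0$ would force via Stone's formula. We therefore prove $B_{-2,k}=0$ inductively in $k$, with the crucial step being $k=1$. For $k\geq 2$, the analogous contradiction will close once $B_{-2,0}=\cdots=B_{-2,k-1}=0$ is known, since the leading singularity of $R_\lambda$ is then improved to $O(\lambda^{-2}(-\log\lambda)^{-k})$ and the corresponding refinements of the bounds on $E_\lambda(\Phi)$ can be iterated.

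For the key step $k=1$, suppose $B_{-2,1}\neq 0$. By Theorem~\ref{StoneF} there exists some $\nu$ such that the Hahn expansion of $E_\lambda(\Phi_\nu)$ contains a nonzero top-order term $f\,\lambda^{-1/2}(-\log\lambda)^{-1}$, where $f$ is harmonic on $M$ and satisfies Dirichlet boundary conditions. Applying Proposition~\ref{EoutofHankel} together with the small-argument asymptotics of the dimension-two Hankel functions (logarithmic for $\ell=0$, of order $(\lambda r)^{-\ell}$ for $\ell\geq 1$) and the Hahn holomorphicity of $A_\lambda$ from Corollary~\ref{Hahnmamplitude}, we match powers of $\lambda$ and $(-\log\lambda)$ to conclude that $f$ has multipole asymptotics at infinity of the form $\sum_{\ell\geq 1}c_\ell r^{-\ell}\phi_\ell(\theta)$, possibly with a nonzero $\ell=1$ component.

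The main obstacle is that the leading $\ell=1$ term $r^{-1}\phi_1(\theta)$ is not $L^2$ in $d=2$, so one cannot conclude $f\equiv 0$ directly from Part~1. To close the argument we exploit the sub-leading resolvent identity: expanding $(\Delta-\lambda^2)R_\lambda=\mathrm{id}$ as a Hahn series and matching the $(-\log\lambda)^{0}$ coefficient couples $B_{-2,1}$ to the next Hahn coefficient of $R_\lambda$. This compatibility, combined with the explicit structure of the free resolvent on $\R^2$ (which has only a $\log\lambda$ leading singularity; compare Section~\ref{devenanal}) and the maximum principle for the Dirichlet exterior problem in dimension two (respectively, the absence of nonconstant bounded harmonic functions on $\R^2$ in the boundaryless case), forces $f\equiv 0$ and hence $B_{-2,1}=0$.
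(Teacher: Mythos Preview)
Your argument for $P=0$ is fine and matches the paper's. For $B_{-2,k}=0$, your approach diverges from the paper's and has a genuine gap.

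The paper's key move, which you miss entirely, is to use Proposition~\ref{difftheo}: from $\der E_\lambda(\Phi)=-\rmi\lambda E_\lambda(\der r\wedge\Phi)$, a top-order term $f\,\lambda^{-1/2}(-\log\lambda)^{-1}$ in $E_\lambda(\Phi)$ forces a term $(\der f)\,\lambda^{-3/2}(-\log\lambda)^{-1}$ in the $p=1$ generalised eigenfunction $E_\lambda(\der r\wedge\Phi)$. Since the $p=1$ resolvent also has a pole of order at most two, Stone's formula \eqref{diffres} for $p=1$ rules out such a strong singularity, hence $\der f=0$. Once $f$ is known to be constant, the case $\partial\calO\neq\emptyset$ is trivial (Dirichlet), and for $\partial\calO=\emptyset$ the paper observes that a nonzero constant could only arise from the $\ell=0$ Hankel term in Proposition~\ref{EoutofHankel}, which is $O(\lambda^{1/2}\log\lambda)$, not $\lambda^{-1/2}(-\log\lambda)^{-1}$.

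Your alternative route, deducing from the Hankel asymptotics that $f$ has multipole expansion $\sum_{\ell\geq 1}c_\ell r^{-\ell}\phi_\ell$ and then trying to force $f=0$, is salvageable but your writeup does not close it. The ``sub-leading resolvent identity'' and the ``structure of the free resolvent on $\R^2$'' that you invoke do not contribute anything concrete here. More seriously, in the boundaryless case you appeal to Liouville's theorem on $\R^2$, but $M$ is not $\R^2$; it may have nontrivial topology. What actually finishes your approach is much simpler than what you wrote: the multipole structure gives $f\to 0$ as $r\to\infty$, so $f$ is a harmonic function on the connected manifold $M$ vanishing at infinity (and on $\partial\calO$ if present), and the ordinary maximum principle on a compact exhaustion forces $f\equiv 0$. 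You flagged the failure of $L^2$-integrability of $r^{-1}\phi_1$ as ``the main obstacle'', but $L^2$ is irrelevant; decay to zero at infinity is all the maximum principle needs.
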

\begin{proof}
 The fact $P=0$ follows immediately from the maximum principle which implies there are no $L^2$-harmonic functions on $M$.
 By Lemma \ref{abovelemma} it suffices to show that $B_{-2,1}=0$.
 Assume by contradiction $B_{-2,1}\not=0$. By \eqref{diffres} this means in the expansion of $E_\lambda(\Phi)$
 we must have a non-zero top-order term of the form
 $$
  f \frac{1}{\lambda^{1/2}(-\log \lambda)}
 $$
 for some $\Phi$. By Prop. \ref{difftheo}  $-\rmi E_\lambda(\der r \wedge \Phi)$ has a leading expansion term of the form
  $$
  \rmi \frac{1}{\lambda^{3/2}(-\log \lambda)} \der f.
 $$
 This leading term must vanish by \eqref{diffres} and because the resolvent has a pole of order at most two. Hence $\der f=0$.
 In case $\partial \calO \not=\emptyset$ this already implies $f=0$ as $f$, by construction, satisfies relative boundary conditions. We will now show that this is also the case if the boundary is empty. By Prop. \ref{EoutofHankel} and since $\tilde j_\lambda(\Phi)=O(\lambda^{1/2})$ this singularity must appear in the expansion of 
 $$
  \tilde h^{(1)}_\lambda(A_\lambda \Phi).
 $$
 Morever, since $f$ is constant it must appear in
 $$
  \tilde h^{(1)}_\lambda(\Psi_\lambda),
 $$
 where $\Psi_\lambda $ is a Hahn holomorphic family of spherical harmonic of degree $\ell=0$. We have used here that $A_\lambda$ is Hahn-holomorphic
 and thus bounded.
 This function is of the form
 $$
   \lambda^{1/2} H^{(1)}_0(\lambda r) \Psi_\lambda
 $$
and is therefore of order $O(\lambda^{1/2} \log \lambda)$. This shows that $f=0$.
\end{proof}

\begin{lemma} \label{abovelemma3}
 If $p=0$ and $d=2$ then the resolvent, for $| \lambda |$ small and in a fixed sector $| \arg \lambda | \leq \Theta$, has an expansion of the form
 $$
   B_{-1} (-\log \lambda) + B(\lambda),
 $$
 where $B(\lambda)$ is Hahn-holomorphic and $B_{-1}: L^2_{comp} \to H^2_{loc}$ is of rank at most one. If $\partial \calO \not= \emptyset$
 we have $B_{-1}=0$. In case $\partial \calO = \emptyset$ any element in the range of $B_{-1}$ is a multiple of the constant function $1$.
\end{lemma}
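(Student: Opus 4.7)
The plan is to apply Lemma \ref{ruleoutlemma1} to remove the $\lambda^{-2}$-singularities, then follow the bootstrap from that proof (with $\lambda^{-1/2}(-\log\lambda)^{-1}$-type singularities replaced by $\lambda^{1/2}(-\log\lambda)^L$-type singularities), and finally distinguish the two boundary cases using the maximum principle and the structure of $\tilde h^{(1)}_\lambda$.

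By Lemma \ref{ruleoutlemma1} we have $P=0$ and $B_{-2,k}=0$ for all $k\geq 0$, so \eqref{genexpansion} reduces to
\[
R_\lambda = \sum_{k=1}^L B_{-k}(-\log\lambda)^k + B(\lambda)
\]
with $B(\lambda)$ Hahn-holomorphic and each $B_{-k}\colon L^2_\compp\to H^2_\loc$ of finite rank. Extracting the $\lambda^0(-\log\lambda)^k$-coefficient of the identity $(\Delta_0-\lambda^2)R_\lambda=\mathrm{id}$ gives $\Delta_0 B_{-k}=0$, so the range of each $B_{-k}$ consists of smooth harmonic functions satisfying Dirichlet boundary conditions on $\partial\calO$; the symmetry $R_\lambda^*=R_{\bar\lambda}$ makes each $B_{-k}$ self-adjoint.

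Now suppose $B_{-L}\neq 0$ for some $L\geq 1$. Let $\Phi$ be a spherical harmonic of degree zero. By Prop. \ref{besseljefunction} combined with \eqref{besseltildexp}, the $(-\log\lambda)^L$-coefficient of $E_\lambda(\Phi)$ has leading $\lambda^{1/2}$-order with spatial part $f:=-C_{2,0}\,B_{-L}([\Delta_0,\chi]\Phi)$, which is harmonic and satisfies Dirichlet boundary conditions. Applying Prop. \ref{difftheo} produces a leading term proportional to $\lambda^{-1/2}(-\log\lambda)^L\,\der f$ in the one-form generalised eigenfunction $E^{(1)}_\lambda(\der r\wedge\Phi)$. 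Because $\|R^{(1)}_\lambda\|_{L^2\to L^2}\leq 1/|\Im\lambda^2|$ by self-adjointness of $\Delta_{1,\mathrm{rel}}$, the one-form resolvent has a Hahn-meromorphic pole of order at most two; the argument of Lemma \ref{expansionlemma} for one-forms then yields $E^{(1)}_\lambda(\der r\wedge\Phi)=O_{C^\infty(M)}(\lambda^{-1/2})$ with no logarithmic correction. The forced vanishing of the $\lambda^{-1/2}(-\log\lambda)^L$-coefficient gives $\der f=0$, so $f$ is constant on the connected manifold $M$.

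If $\partial\calO\neq\emptyset$, the maximum principle forces $f=0$; running the analysis for arbitrary $L^2_\compp$ test functions via self-adjointness of $B_{-L}$ and downward induction on $L$ then gives $B_{-k}=0$ for all $k\geq 1$, hence $B_{-1}=0$. If $\partial\calO=\emptyset$, by Prop. \ref{EoutofHankel} a nonzero constant $f$ must arise from the $\ell_\mu=0$ component $\lambda^{1/2}H^{(1)}_0(\lambda r)\Psi_\lambda$ of $\tilde h^{(1)}_\lambda(A_\lambda\Phi)$, and the small-argument expansion $H^{(1)}_0(\lambda r)\sim\tfrac{2\rmi}{\pi}\log(\lambda r)$ forces $\Psi_\lambda\sim(-\log\lambda)^{L-1}$; Hahn-holomorphy of $A_\lambda$ (Corollary \ref{Hahnmamplitude}) then requires $L\leq 1$, while self-adjointness combined with one-dimensionality of the constants gives $B_{-1}=c\langle\cdot,1\rangle 1$ of rank at most one with range in multiples of $1$. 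The main obstacle is the passage from the pointwise condition ``$f$ is constant'' to the global constraint on $B_{-L}$; this is handled by the Hahn-holomorphy of $A_\lambda$ in the boundary-free case and by a density/self-adjointness argument in the presence of a boundary.
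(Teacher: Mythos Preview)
Your argument has the right shape but contains a genuine gap at the step where you pass from ``$f$ is constant'' to conclusions about the operator $B_{-L}$ itself.

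Concretely: your $f = -C_{2,0}\,B_{-L}\,[\Delta_0,\chi]\Phi$ is $B_{-L}$ applied to \emph{one specific compactly supported function} (namely $\Phi\,\Delta_0\chi$, since $\Phi$ is constant). Showing that this single image is constant does not tell you the range of $B_{-L}$ is contained in the constants, nor (in the Dirichlet case) that $B_{-L}=0$. The phrase ``running the analysis for arbitrary $L^2_\compp$ test functions via self-adjointness'' does not repair this: the entire machinery you invoke --- Prop.~\ref{besseljefunction} to produce $E_\lambda(\Phi)$, Prop.~\ref{difftheo} to differentiate, Lemma~\ref{expansionlemma} to bound $E^{(1)}_\lambda$ --- is specific to generalised eigenfunctions indexed by spherical harmonics and has no analogue for a generic $g\in L^2_\compp$. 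Self-adjointness of $B_{-L}$ alone is not enough either, since $B_{-L}$ is only an operator $L^2_\compp\to H^2_\loc$ and has no spectral decomposition in the usual sense.

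The paper closes exactly this gap by working through Stone's formula \eqref{diffres} rather than through Prop.~\ref{besseljefunction}. The leading $(-\log\lambda)^{N-1}$ term of $R_\lambda-R_{-\lambda}$ is (a multiple of) $B_{-N}$, and \eqref{diffres} identifies it with $\sum_\nu f_\nu\langle\,\cdot\,,f_\nu\rangle$, where $f_\nu$ is the $\lambda^{1/2}(-\log\lambda)^{(N-1)/2}$-coefficient of $E_\lambda(\Phi_\nu)$. Once each $f_\nu$ is shown to be constant (by the same differentiation/$p=1$ trick you use), the structure $B_{-N}=c\langle\,\cdot\,,1\rangle 1$ follows immediately --- there is no need to test $B_{-N}$ on anything else. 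Then Lemma~\ref{superlemmad2} (with $u=1$, so all $a_\nu=0$) shows such a $B_{-N}$ contributes only at order $\lambda^{5/2}$ to $E_\lambda(\Phi)$, whence $E_\lambda(\Phi)=O(\lambda^{1/2})$ and $N=1$ via \eqref{diffres} again.

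Note also that the paper's leading coefficient of $E_\lambda(\Phi)$ sits at order $(-\log\lambda)^{(N-1)/2}$, not $(-\log\lambda)^{L}$ as in your computation. These are consistent precisely because, once $B_{-N}=c\langle\,\cdot\,,1\rangle 1$ is established, Lemma~\ref{superlemmad2} forces your $f$ to vanish --- so the actual leading term of $E_\lambda(\Phi)$ lives at a strictly lower log-power than your direct computation suggests. Your Hankel/Corollary~\ref{Hahnmamplitude} argument in the boundary-free case is along the right lines, but it too only constrains $L$ under the assumption $f\neq 0$, which you cannot verify without first knowing the range of $B_{-L}$.
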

\begin{proof}
 By Lemma \ref{ruleoutlemma1} and the general form \eqref{genexpansion} the resolvent has a Hahn-expansion of the form
 $$
  R_\lambda = \sum_{k=1}^N B_{-k} (-\log \lambda)^k +  B(\lambda),
 $$
 where $B(\lambda)$ is Hahn-holomorphic and the $B_{-k}$ are of finite rank.
 A leading term of the form $B_{-N} (-\log \lambda)^N$ with $N > 1$ gives a leading order term $-\rmi \pi B_{-N} (-\log \lambda)^{N-1}$ in the expansion of $R_\lambda - R_{-\lambda}$. Therefore $B_{-N}$ is symmetric and this leading term must arise from a leading term of 
 the expansion of $E_\lambda(\Phi)$ of the form $\lambda^{1/2}  (-\log \lambda)^{\frac{N-1}{2}} f$. Since then $E_\lambda(\der r \wedge \Phi)$
 has a singularity of the form $-\rmi \lambda^{-1/2}  (-\log \lambda)^{\frac{N-1}{2}} df$ this implies that $df=0$ by \eqref{diffres} since any singularity in $R_\lambda - R_{-\lambda}$ for $p=1$ must be weaker than $\frac{1}{\lambda^2}$.
  In that case we therefore have
 $B_{-N}= c \langle \cdot , 1 \rangle 1$. 
 Application of \ref{superlemmad2} shows that this term does not contribute to $E_\lambda$. We therefore get
 $$
  E_\lambda(\Phi) = O_{C^\infty(M)}(\lambda^{1/2}),
 $$
 thus implying that $N=1$. In case $\partial \calO \not= \emptyset$ the Dirichlet boundary condition (relative in case $p=0$ is equivalent to Dirichlet boundary conditions) implies that $f=0$. Hence, in this case $B_{-1}=0$.
 \end{proof}

 In the case $p=0, d=2$ we denote  by $\Phi_0=\frac{1}{\sqrt{2 \pi}}$ the normalised constant function that spans the space of spherical harmonics of degree zero.
 In case $p=2, d=2$ we let $\Phi_0=*\frac{1}{\sqrt{2 \pi}}$.

 \begin{proposition}\label{prop02}
  Assume $p=0$, $d=2$. 
  \begin{itemize}
   \item Suppose  $\partial \calO \not= \emptyset$, then $B_{-1}=0$ and  $E_\lambda(\Phi) =O_{C^\infty(M)}(\frac{\lambda^{1/2}}{-\log \lambda})$ for $| \lambda |$ small in a fixed sector $| \arg \lambda | \leq \Theta$.
  If $\Phi$ is a spherical harmonic of degree $\ell$, the function
  $$
   G(\Phi) = \lim_{\lambda \to 0} \lambda^{-1/2} (-\log \lambda) E_\lambda(\Phi)
  $$
  is nonzero only if $\ell=0$. In this case $G(\Phi)$ is the unique harmonic function satisfying Dirichlet boundary conditions at $\partial \calO$
  such that
  $$
   G(\Phi) - \sqrt{2 \pi}\Phi \log \frac{r}{2} = O_{C^\infty(M)}(1).
  $$
  \item Suppose $\partial \calO = \emptyset$. Then $B_{-1}$ has rank one and its range is spanned by the constant function. Moreover, if $F$ is a harmonic function satisfying Dirichlet boundary conditions such that
  $$
   F - a  \log \frac{r}{2} = O_{C^\infty(M)}(1),
  $$ 
  then $a=0$.
   \end{itemize}
 \end{proposition}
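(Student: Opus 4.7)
\emph{Proof plan.} The assertions on $B_{-1}$ in both bullets are already contained in Lemma \ref{abovelemma3}: when $\partial\calO\neq\emptyset$ we have $B_{-1}=0$, and when $\partial\calO=\emptyset$ the operator $B_{-1}$ has rank at most one with range in the constants. The rest of the proof analyses the leading behaviour of $E_\lambda(\Phi)$ via the two available representations, $E_\lambda(\Phi) = \chi\tilde j_\lambda(\Phi) - R_\lambda[\Delta_p,\chi]\tilde j_\lambda(\Phi)$ from Proposition \ref{besseljefunction} and $E_\lambda(\Phi)|_{M\setminus K} = \tilde j_\lambda(\Phi) + \tilde h^{(1)}_\lambda(A_\lambda\Phi)$ from Proposition \ref{EoutofHankel}. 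For $\ell\geq 1$ in the first bullet, the bound $\tilde j_\lambda(\Phi) = O(\lambda^{\ell+1/2})$ from \eqref{besseltildexp} combined with Hahn-holomorphicity of $R_\lambda$ immediately yields $E_\lambda(\Phi) = O_{C^\infty(M)}(\lambda^{\ell+1/2})$, which is $o(\lambda^{1/2}/(-\log\lambda))$, and hence $G(\Phi)=0$.

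For the $\ell=0$ case, expand $\tilde j_\lambda(\Phi_0) = \lambda^{1/2} + O(\lambda^{5/2})$ and insert the Hankel asymptotic $H^{(1)}_0(z) = \frac{2\rmi}{\pi}\log(z/2) + O(1)$ into the second representation to obtain, schematically on $M\setminus K$,
\[
E_\lambda(\Phi_0)(r\theta) = \lambda^{1/2}\Bigl[c_1 + c_2\,\langle A_\lambda\Phi_0,\Phi_0\rangle_{L^2(\sphere)}\bigl(-\log\lambda + \log(r/2)\bigr)\Bigr] + (\text{higher-}\ell) + O(\lambda^{3/2}(-\log\lambda))
\]
with explicit nonzero constants $c_1,c_2$. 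Since $A_\lambda$ is Hahn-holomorphic by Corollary \ref{Hahnmamplitude} and $E_\lambda(\Phi_0)$ must itself be Hahn-holomorphic in $\lambda$ with smooth $r$-dependence, the presence of the $\log(r/2)$-factor forces $\langle A_\lambda\Phi_0,\Phi_0\rangle = O((-\log\lambda)^{-1})$; solving the leading matching condition determines the coefficient of $(-\log\lambda)^{-1}$ uniquely and produces the bound $E_\lambda(\Phi_0) = O(\lambda^{1/2}/(-\log\lambda))$ on $M\setminus K$. The global bound then follows by inserting this estimate back into Proposition \ref{besseljefunction}.

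The rescaled limit $g := \lim_{\lambda\to 0}\lambda^{-1/2}(-\log\lambda)E_\lambda(\Phi_0)$ therefore exists in $C^\infty(M)$, is harmonic, satisfies relative (Dirichlet) boundary conditions on $\partial\calO$, and inherits from the Hankel expansion the asymptotic $g(r\theta) = \log(r/2) + O(1) = \sqrt{2\pi}\,\Phi_0\log(r/2) + O(1)$; this gives existence of $G(\Phi_0)$. Uniqueness follows because any two such candidates differ by a bounded harmonic function on $M$ satisfying Dirichlet conditions, which must vanish by the maximum principle (after Kelvin inversion, reducing the problem to a removable singularity on a disk).

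For the second bullet's $a=0$ statement, let $F$ be harmonic on $M$ with $F - a\log(r/2) = O(1)$. Choose $R$ so large that the compact region $B_R := K\cup\{x\in M\setminus K : r(x)\leq R\}\subset M$ has boundary $\partial B_R$ equal to the Euclidean sphere $\{r=R\}$. By Stokes' theorem $0 = \int_{B_R}\Delta F\,dV = \int_{\partial B_R}\partial_r F\,d\sigma$, and the differentiated multipole expansion (Appendix \ref{Amulti}) $\partial_r F = a/r + O(r^{-2})$ gives $\int_{\partial B_R}\partial_r F\,d\sigma = 2\pi a + O(R^{-1}) \to 2\pi a$, forcing $a=0$. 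The main obstacle is the bootstrap calculation of the second paragraph: one must combine Hahn-holomorphicity of both $A_\lambda$ and $R_\lambda$ with the a priori bound of Lemma \ref{expansionlemma} to rule out intermediate Hahn-terms that could otherwise spoil the asymptotic identification of $G(\Phi_0)$.
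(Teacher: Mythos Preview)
Your outline has the right ingredients but two of the key steps are either misattributed or missing.

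\textbf{The bound $E_\lambda(\Phi_0)=O(\lambda^{1/2}/(-\log\lambda))$.} Your sentence ``the presence of the $\log(r/2)$-factor forces $\langle A_\lambda\Phi_0,\Phi_0\rangle = O((-\log\lambda)^{-1})$'' is not correct: a $\log(r/2)$ contribution in a Hahn expansion coefficient is perfectly admissible (it is a harmonic function of $r$). What actually forces $a_0:=\langle A_0\Phi_0,\Phi_0\rangle=0$ is that $B_{-1}=0$ makes $R_\lambda$ Hahn-\emph{holomorphic}, so $E_\lambda(\Phi_0)$ carries no $\lambda^{1/2}(-\log\lambda)$ term; in your display that term has coefficient $c_2a_0$. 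Once $a_0=0$, your display shows that the $\lambda^{1/2}$-coefficient of $E_\lambda(\Phi_0)$ is a \emph{bounded} harmonic function on $M$ satisfying Dirichlet conditions on $\partial\calO$. To conclude the refined bound you must show this coefficient vanishes; you have exactly the right tool (the Kelvin/maximum-principle argument you invoke later for uniqueness), but ``solving the leading matching condition'' does not say this, and the step is not optional. The paper avoids this bootstrap entirely: from $B_{-1}=0$ the expansion of $R_\lambda-R_{-\lambda}$ has no constant term, and Stone's formula \eqref{diffres} then gives $\sum_\nu |E_\lambda(\Phi_\nu)|^2/\lambda=O((\log\lambda)^{-2})$, hence the bound in one stroke. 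The explicit computation of $a(\lambda)$ and of the $\log(r/2)$ asymptotic of $G(\Phi_0)$ is then done just as you indicate, by matching the Hankel expansion on $M\setminus K$.

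\textbf{Rank of $B_{-1}$ when $\partial\calO=\emptyset$.} Lemma \ref{abovelemma3} gives only rank $\leq 1$; the proposition asserts rank exactly one, and you never establish this. The paper proves the dichotomy $B_{-1}=0\Leftrightarrow\partial\calO\neq\emptyset$: if $B_{-1}=0$ the first-bullet analysis produces a harmonic $G(\Phi_0)$ with genuine $\log(r/2)$ growth, and pairing $G(\Phi_0)$ against the constant function $1$ via Green's identity gives a contradiction when $\partial\calO=\emptyset$. Your Stokes computation in the last paragraph is precisely this pairing and would close the gap --- but you need to connect it back to the hypothetical $B_{-1}=0$ case rather than treating the two bullets independently.
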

  \begin{proof}  
  Assume that $B_{-1}=0$. Then there is no constant term in the expansion of $R_\lambda - R_{-\lambda}$, which implies the bound $E_\lambda(\Phi) =O_{C^\infty(M)}(\frac{\lambda^{1/2}}{-\log \lambda})$.
  Since the resolvent has no singular terms we also have $E_\lambda(\Phi) = O_{C^\infty(M)}(\lambda^{\ell + \frac{1}{2}})$ if $\Phi$ has degree $\ell$.
  Therefore $G(\Phi)=0$ if $\ell>0$.
  If $\ell=0$ and $\Phi \not=0$ we have
  $$
   \lambda^{-\frac{1}{2}}\langle E_\lambda(\Phi),\Phi \rangle_{L^2(\mathbb{S}^{1}_r)} = \sqrt{\frac{\pi}{2}} \left( 2  J_0(\lambda r) + a(\lambda) H^{(1)}_0(\lambda r) \right) \| \Phi \|^2_{L^2(\mathbb{S}^1)}
  $$
  where $a(\lambda)= \| \Phi \|^{-2}_{L^2(\mathbb{S}^1)} \langle A_\lambda \Phi, \Phi \rangle$. Since the left hand side converges to $0$ as $\lambda \to 0$
  we obtain from the asymptotics of the Hankel function \eqref{sphankelexpzerod2} by comparing the expansion coefficients
  $$
   a(\lambda) = \frac{\pi}{ \rmi (-\log \lambda)} + O\left(\frac{1}{(\log \lambda)^2}\right),
  $$
 and
  $$
   \lim_{\lambda \to 0}\lambda^{-\frac{1}{2}}(-\log \lambda)\langle E_\lambda(\Phi),\Phi \rangle_{L^2(S^{1}_r)} = \sqrt{2 \pi} \log \frac{r}{2} \| \Phi \|^{2}  + O\left(1\right).  
   $$
   We have shown that $B_{-1}=0$ implies the existence of a non-zero harmonic function $G(\Phi_0)$ with asymptotic behaviour 
   $$
    G(\Phi_0) = \sqrt{2 \pi} \Phi_0 \log \frac{r}{2}   + O\left(1\right), \quad r \to \infty.
   $$
   
   Next note that the existence of such a function rules out the existence of a harmonic function $f$ satisfying Dirichlet boundary conditions 
   such that
   $$
    f(r,\theta) = 1 + O\left(\frac{1}{r}\right),\quad r \to \infty.
   $$
  Indeed, if such a function existed, then we would have 
  $$
   0= \langle \Delta G(\Phi_0), f \rangle - \langle G(\Phi_0), \Delta f \rangle = \int_{\mathbb{S}^1} \Phi_0(\theta) \der \theta \not=0.
  $$
  Conversely, if the constant function satisfies Dirichlet boundary conditions then $B_{-1} \not= 0$. Hence, $B_{-1}=0$ if and only if 
  $\partial \calO \not= \emptyset$. Since the range of $B_{-1}$ consists of constant functions satisfying Dirichlet boundary conditions this proves the second statement.
  It only remains to show uniqueness of the harmonic function $G(\Phi)$ satisfying Dirichlet boundary conditions. If there were two such functions the difference would have a multipole expansion for large enough $r$. By the above the constant coefficient in the multipole expansion has to vanish. Thus, the difference is a harmonic function that satisfies Dirichlet boundary conditions and vanishes at infinity. The maximum principle implies that such a function vanishes.
   \end{proof}

The non-trivial $2$-form $*1$ satisfies absolute boundary conditions independent of whether $\partial \calO$ is non-empty. The same argument as in the previous proposition then gives the following statement.
 \begin{proposition} \label{plprop}
  Suppose $p=2$, $d=2$, then the resolvent has an expansion of the form 
 $$
   B_{-1} (-\log \lambda) + B(\lambda),
 $$
  for $| \lambda |$  small in a fixed sector $| \arg \lambda | \leq \Theta$,
 where $B(\lambda)$ is Hahn-holomorphic and $B_{-1}: L^2_{comp} \to H^2_{loc}$ is of rank one and its range is spanned by the volume form $*1$.
 In particular $B_{-1} \not=0$.
 \end{proposition}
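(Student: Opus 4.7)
The plan is to replay the chain of arguments Lemma~\ref{ruleoutlemma1} $\to$ Lemma~\ref{abovelemma3} $\to$ Proposition~\ref{prop02}, with $p=0$ replaced by $p=2$. Conceptually, Hodge duality $\ast\,\Delta_{2,\mathrm{rel}}=\Delta_{0,\mathrm{abs}}\,\ast$ turns the question into a zero-energy analysis of the Neumann Laplacian on functions, and the key difference from the Dirichlet situation in Proposition~\ref{prop02} is that the relevant resonant state, namely $\ast 1$, automatically satisfies relative boundary conditions irrespective of whether $\partial\calO$ is empty.

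First I would observe $P=0$, since $\mathcal{H}^2_\mathrm{rel}(M)$ is trivial in dimension two. Next, mimicking Lemma~\ref{ruleoutlemma1}, any nonzero $B_{-2,k}$ would force a leading singularity of the form $\lambda^{-1/2}(-\log\lambda)^{-k}f$ in some $E_\lambda(\Phi_\nu)$, with $f$ a harmonic $2$-form satisfying relative boundary conditions. Applying $\delta$ and invoking Proposition~\ref{difftheo} produces a singularity $\lambda^{-3/2}(-\log\lambda)^{-k}\delta f$ in a $1$-form generalised eigenfunction; since the $1$-form resolvent has a pole of order at most two at zero, the identity \eqref{diffres} forces $\delta f=0$. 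The form $f$ is automatically closed (top degree), and inspection of the Hankel expansion of $\tilde h^{(1)}_\lambda(A_\lambda\Phi_\nu)$ from Proposition~\ref{EoutofHankel} together with \eqref{sphankelexpzerod2} then yields $f=0$. Replicating Lemma~\ref{abovelemma3} rules out $B_{-k}$ for $k>1$, leaving
$$R_\lambda = B_{-1}(-\log\lambda) + B(\lambda)$$
with $B(\lambda)$ Hahn-holomorphic and $B_{-1}$ of rank at most one whose range is spanned by a harmonic $2$-form satisfying relative boundary conditions that is ``constant at infinity'' in the multipole sense, hence a scalar multiple of $\ast 1$.

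To show $B_{-1}\neq 0$ I would follow the second bullet of Proposition~\ref{prop02}. Assume, for contradiction, $B_{-1}=0$. Computing $\lambda^{-1/2}\langle E_\lambda(\Phi_0),\Phi_0\rangle_{L^2(\mathbb{S}^1_r)}$ with $\Phi_0=\ast\frac{1}{\sqrt{2\pi}}$ via Propositions~\ref{besseljefunction} and \ref{EoutofHankel}, and extracting the coefficient of $-\log\lambda$ from the Hankel asymptotic \eqref{sphankelexpzerod2}, produces a nonzero harmonic $2$-form $G(\Phi_0)$ satisfying relative boundary conditions with asymptotic behaviour $G(\Phi_0)\sim \sqrt{2\pi}\,\Phi_0\log(r/2)$. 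Pairing $G(\Phi_0)$ via Green's identity against the $2$-form $\ast 1$ --- which satisfies relative boundary conditions because $(\ast 1)|_{\partial\calO,\mathrm{tan}}=0$ trivially (no nonzero $2$-forms on the $1$-dimensional boundary) and $\delta(\ast 1) = \pm \ast\der 1 = 0$ --- the Laplacian terms cancel and the boundary integral at infinity reduces to a nonzero multiple of $\int_{\mathbb{S}^1}|\Phi_0|^2\,\der\theta$, contradicting $0=0$. Hence $B_{-1}\neq 0$.

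The main obstacle is the bookkeeping in the Bessel--Hankel computation that produces $G(\Phi_0)$ with the right coefficient; once the single algebraic observation that $\ast 1$ is an admissible test form for Green's identity (the point that distinguishes $p=2$ from the $p=0$ Dirichlet case) is made, the rest of the argument is strictly parallel to Proposition~\ref{prop02}.
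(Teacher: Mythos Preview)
Your proposal is correct and follows essentially the same approach as the paper. The paper's own proof consists of a single sentence observing that the volume form $*1$ satisfies the relevant boundary conditions regardless of whether $\partial\calO$ is empty, and then invoking the argument of the previous proposition verbatim; you have simply spelled out that adaptation in detail, correctly identifying the key algebraic fact (that in dimension two every $2$-form has vanishing tangential part on the one-dimensional boundary, and $\delta(*1)=0$) which places the $p=2$ case in the ``second bullet'' regime of Proposition~\ref{prop02}.
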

 
  \begin{proposition}\label{propextra}
  Assume $p=0$, $d=2$ and let $\Phi \in \mathcal{H}^0_1(\sphere)$ be a spherical harmonic of degree $1$. Then $E_\lambda(\Phi) = O_{C^\infty(M)}(\lambda^{3/2})$
    for $| \lambda |$  small in a fixed sector $| \arg \lambda | \leq \Theta$,
and the function 
  $$
   G_1(\Phi) := \lim_{\lambda \to 0} \lambda^{-\frac{3}{2}} E_\lambda(\Phi)
  $$
  is harmonic, satisfies Dirichlet boundary conditions, and we have
  $$
   G_1(\Phi)(r,\theta) = -\rmi \sqrt{\frac{\pi}{2}} \Phi r + O(1),
  $$
for sufficiently large $r$. This function is uniquely determined by this property in case $\partial \calO \not= \emptyset$ and is uniquely determined modulo a constant in case 
  $\partial \calO = \emptyset$.
    \end{proposition}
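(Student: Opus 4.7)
The plan is to bootstrap the expansion of $E_\lambda(\Phi)$ by combining Proposition~\ref{besseljefunction} with the resolvent expansion of Lemma~\ref{abovelemma3}, and to exploit an integration-by-parts identity to kill the logarithmic term at leading order, thereby improving the \emph{a priori} bound on $E_\lambda(\Phi)$ from Proposition~\ref{prop02} to $O(\lambda^{3/2})$.

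From Proposition~\ref{besseljefunction}, $E_\lambda(\Phi) = \chi\tilde j_\lambda(\Phi) - R_\lambda[\Delta_0,\chi]\tilde j_\lambda(\Phi)$. Since $\Phi \in \mathcal{H}^0_1(\sphere)$, the function $r\Phi(\theta)$ extends to a harmonic linear function on $\R^2$, and \eqref{besseltildexp} gives
\begin{equation*}
\tilde j_\lambda(\Phi)(r\theta) = -\rmi\sqrt{\pi/2}\,\lambda^{3/2}\, r\Phi(\theta) + O_{C^\infty(\R^2\setminus\{0\})}(\lambda^{7/2}),
\end{equation*}
using $C_{2,1} = -\rmi\sqrt{\pi/2}$. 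Setting $f := [\Delta_0,\chi](r\Phi) = \Delta_0(\chi r\Phi)$, the function $f$ is compactly supported. Stokes' theorem applied to $M\cap B_R$ for $R$ large, together with $\chi = 0$ near $\partial\calO$, $\chi = 1$ on $\partial B_R$, and the orthogonality of $\Phi$ to constants on $\sphere$, yields $\int_M f = 0$. Lemma~\ref{abovelemma3} gives $R_\lambda = B_{-1}(-\log\lambda) + B(\lambda)$ with $B(\lambda)$ Hahn-holomorphic, and $B_{-1}$ either vanishes or equals $\langle\cdot,1\rangle\, 1$; in either case $B_{-1} f = 0$. Hence the logarithmic term drops out and
\begin{equation*}
E_\lambda(\Phi) = -\rmi\sqrt{\pi/2}\,\lambda^{3/2}\bigl(\chi r\Phi - B(0) f\bigr) + O_{C^\infty(M)}\bigl(\lambda^{7/2}(-\log\lambda)^k\bigr),
\end{equation*}
establishing both the bound $E_\lambda(\Phi) = O_{C^\infty(M)}(\lambda^{3/2})$ and the identification
\begin{equation*}
G_1(\Phi) = -\rmi\sqrt{\pi/2}\bigl(\chi r\Phi - B(0) f\bigr).
\end{equation*}

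To verify harmonicity I would expand $(\Delta_0-\lambda^2)R_\lambda = I$ in the Hahn series of $R_\lambda$ and match coefficients; this gives $\Delta_0 B_{-1}=0$ on the relevant range, and $\Delta_0 B(0) g = g$ whenever $B_{-1} g = 0$. Applied to $g=f$ together with $\Delta_0(\chi r\Phi) = f$, this yields $\Delta_0 G_1(\Phi)=0$. Dirichlet boundary conditions on $\partial\calO$ are inherited from $E_\lambda(\Phi)$ by passing to the limit. For the asymptotics, $\chi=1$ for large $r$, so $G_1(\Phi) = -\rmi\sqrt{\pi/2}(r\Phi - u)$ with $u=B(0)f$ harmonic outside $\operatorname{supp} f$, hence possessing a multipole expansion $u = c_0 + c_0'\log r + \sum_{\ell\ge 1}(a_\ell r^\ell + b_\ell r^{-\ell})\Phi_\ell$. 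Comparing the bound $E_\lambda(\Phi)=O(\lambda^{3/2})$ with the Hankel representation $E_\lambda(\Phi)|_{M\setminus K} = \tilde j_\lambda(\Phi) + \tilde h^{(1)}_\lambda(A_\lambda\Phi)$ from Proposition~\ref{EoutofHankel} and expanding the Hankel functions at small argument (in the spirit of Lemma~\ref{superlemma2}) rules out the $\log r$ and $r^\ell$ ($\ell\ge 1$) growth terms in $u$, so $u = O(1)$ and $G_1(\Phi)(r,\theta) = -\rmi\sqrt{\pi/2}\,r\Phi(\theta) + O(1)$. Controlling this Newtonian-like potential $B(0)f$ at infinity in the presence of non-trivial topology and obstacles is the main obstacle.

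For uniqueness, let $h$ denote the difference of two candidate functions satisfying the stated properties: $h$ is harmonic on $M$, satisfies Dirichlet boundary conditions when $\partial\calO\ne\emptyset$, and $h = O(1)$ at infinity, so its multipole expansion takes the form $h = c_0 + O(1/r)$. If $\partial\calO\ne\emptyset$, the argument of Proposition~\ref{prop02} via the maximum principle on $M\cap B_R$ combined with the Dirichlet condition forces $c_0 = 0$ and $h\equiv 0$. If $\partial\calO = \emptyset$, the function $h - c_0$ is harmonic and tends to zero at infinity, so the maximum principle on $M\cap B_R$ yields $h\equiv c_0$, giving uniqueness modulo a constant.
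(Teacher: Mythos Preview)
Your proof is correct and follows essentially the same route as the paper: you use Proposition~\ref{besseljefunction} and the Bessel expansion \eqref{besseltildexp} to isolate the $\lambda^{3/2}$ order, kill the $B_{-1}(-\log\lambda)$ contribution by showing the pairing with the constant function vanishes, and then read off the large-$r$ asymptotics from the Hankel representation of Proposition~\ref{EoutofHankel}, with uniqueness via the maximum principle. The only cosmetic difference is that where the paper invokes Lemma~\ref{superlemmad2} to see that $B_{-1}$ does not contribute, you perform the equivalent Stokes computation $\int_M [\Delta_0,\chi](r\Phi)=0$ directly; this is the same integration-by-parts identity specialised to $u=1$.
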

\begin{proof}
 The fact that $E_\lambda(\Phi) = O_{C^\infty(M)}(\lambda^{3/2})$ follows from Prop. \ref{besseljefunction} and the expansion \eqref{besseltildexp}. 
 Note that $B_{-1} = c \langle \cdot, 1 \rangle 1$ for some $c \in \mathbb{C}$. Therefore, by Lemma \ref{superlemmad2}, the singularity $B_{-1} (-\log \lambda)$ in the  resolvent does not contribute to $E_\lambda(\Phi)$. Since $E_\lambda(\Phi)$ is Hahn-holomorphic therefore the limit  
 $G_1(\Phi) =\lim\limits_{\lambda \to 0} \lambda^{-\frac{3}{2}} E_\lambda(\Phi)$ exists.
 On the other hand, by Prop. \ref{EoutofHankel}, we have
 $$
  G_1(\Phi) = \lim_{\lambda \to 0} \lambda^{-\frac{3}{2}} \left(   \tilde{j}_{\lambda}(\Phi) + \tilde h^{(1)}_\lambda(A_\lambda \Phi) \right)
 $$
  By the Hankel function asymptotics \eqref{sphankelexpzero} in case $\ell_\nu>0$ the existence of the limit 
  $\lim\limits_{\lambda \to 0} \lambda^{-\frac{3}{2}} \tilde h^{(1)}_\lambda(A_\lambda \Phi)$ implies that 
 $\langle A_\lambda \Phi_\nu, \Phi \rangle = O(\lambda^{\ell_\nu +1})$ and $\lim\limits_{\lambda \to 0} \lambda^{-\frac{3}{2}} \tilde h^{(1)}_\lambda(A_\lambda \Phi) = O(\frac{1}{r^{\ell_\nu}})$ for $r$ large.
 (see Lemma \ref{superlemma2} for a similar argument). For $\ell_\nu=0$ we get in the same way, using \eqref{sphankelexpzerod2},
 $\langle A_\lambda \Phi_\nu, \Phi \rangle = O(\frac{\lambda}{-\log \lambda})$ and 
 $\lim\limits_{\lambda \to 0} \lambda^{-\frac{3}{2}} \tilde h^{(1)}_\lambda(A_\lambda \Phi)= O(1)$ for $r$ large. 
 Now the expansion of the Bessel function \eqref{besseltildexp} gives the expansion of $G_1(\Phi)$ as claimed in the theorem. The uniqueness statement follows from the maximum principle.
  \end{proof}
  
 Now we analyse the case $p=1$.  
 If $\Phi$ is in $\mathcal{H}^1_0(\sphere)$ then we can write uniquely $\Phi = \frac{1}{2}\Psi \der r + \tilde \Phi$, where $\Psi \in \mathcal{H}^0_1(\sphere)$ and $\tilde \Phi \in \mathcal{H}^1_2(\sphere)$. Then, $\der (\Psi r) =  \Phi$. 
 \begin{definition} In case $d=2$ let $\Phi \in \mathcal{H}^1_0(\sphere)$,  we define the one-form $\varphi(\Phi) \in C^\infty(M; T^*M)$ as
  $$
   \varphi(\Phi) = \rmi \; \sqrt{\frac{2}{\pi}} \der G_1(\Psi),
  $$
  where $\Psi$ is the unique spherical harmonic of degree zero such that $\der(\Psi r) =  \Phi$.
 \end{definition}
 By the above we have 
 $$
  \varphi(\Phi) = \Phi + O(\frac{1}{r}),
 $$
 as $r \to \infty$. Moreover, $\varphi(\Phi)$ is harmonic and satisfies relative boundary conditions.

 We have the following corollary.
 \begin{corollary} \label{ellzeroterm}
  Suppose that $p=1$ and $d=2$ and let $\Phi \in \mathcal{H}^1_0(\sphere)$, then $E_\lambda(\Phi) = O(\lambda^{\frac{1}{2}})$ for $| \lambda |$  small in a fixed sector $| \arg \lambda | \leq \Theta$, and 
  $$
   \lim_{\lambda \to 0} \lambda^{-\frac{1}{2}} E_\lambda(\Phi) =\sqrt{2 \pi} \varphi(\Phi).
  $$
 \end{corollary}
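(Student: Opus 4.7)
The plan is to reduce the one-form case to the scalar case $p{=}0$ via the commutation identity of Prop.~\ref{difftheo}. Let $\Psi \in \mathcal{H}^0_1(\sphere)$ be the unique scalar degree-one spherical harmonic with $\der(\Psi r) = \Phi$, and decompose $\Psi \der r = \tfrac{1}{2}\Phi + \Psi_2$ into its $\mathcal{H}^1_0$ and $\mathcal{H}^1_2(\sphere)$ components (the ``tangential'' piece $\Psi_2$ is pure degree two).

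Applying Prop.~\ref{difftheo} to the scalar $\Psi$ gives $\der E_\lambda(\Psi) = -\rmi\lambda\,E_\lambda(\Psi \der r)$, so $E_\lambda(\Psi\der r) = (\rmi/\lambda)\,\der E_\lambda(\Psi)$. By Prop.~\ref{propextra}, $E_\lambda(\Psi) = \lambda^{3/2}G_1(\Psi) + o_{C^\infty(M)}(\lambda^{3/2})$ for small $|\lambda|$ in a sector; since $\der$ is continuous in $C^\infty_\loc$, applying it preserves the $o$-estimate, and combining yields
\[
E_\lambda(\Psi\der r) = \rmi\lambda^{1/2}\,\der G_1(\Psi) + o_{C^\infty_\loc}(\lambda^{1/2}) = \sqrt{\pi/2}\,\lambda^{1/2}\,\varphi(\Phi) + o_{C^\infty_\loc}(\lambda^{1/2}),
\]
where the last identification uses $\varphi(\Phi) = \rmi\sqrt{2/\pi}\,\der G_1(\Psi)$ and $\rmi = \sqrt{\pi/2}\cdot\rmi\sqrt{2/\pi}$. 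By linearity of $E_\lambda$ and the decomposition,
\[
E_\lambda(\Phi) \;=\; 2E_\lambda(\Psi\der r) \;-\; 2E_\lambda(\Psi_2) \;=\; \sqrt{2\pi}\,\lambda^{1/2}\,\varphi(\Phi) \;-\; 2E_\lambda(\Psi_2) \;+\; o_{C^\infty_\loc}(\lambda^{1/2}),
\]
so the corollary reduces to the claim $E_\lambda(\Psi_2) = o_{C^\infty_\loc}(\lambda^{1/2})$.

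This last step is the main obstacle. By Prop.~\ref{besseljefunction}, $E_\lambda(\Psi_2) = \chi\tilde j_\lambda(\Psi_2) - R_\lambda[\Delta_p,\chi]\tilde j_\lambda(\Psi_2)$, and $\tilde j_\lambda(\Psi_2) = O(\lambda^{5/2})$ by \eqref{besseltildexp} because $\Psi_2$ has degree two. The Hahn-holomorphic and $-\log\lambda$ pieces of $R_\lambda$ applied to an $O(\lambda^{5/2})$ source contribute only $o(\lambda^{1/2})$, so the sole candidate for a surviving $\lambda^{1/2}$-term is the pairing with the singular $-P/\lambda^2$. Applying Lemma~\ref{superlemmad2} to each $L^2$-harmonic $u_j$ shows this candidate is proportional to $\sum_j\overline{a_j(\Psi_2)}\,u_j$, a specific element of $\mathcal{H}^1_{\mathrm{rel}}(M)$. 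The vanishing of this combination reflects that $\varphi(\Phi) = \der u(\Psi)$ is exact on $M$: integrating by parts $\langle [\Delta_p,\chi]\tilde j_\lambda(\Psi_2),u_j\rangle$ and using that each $u_j$ is closed and co-closed with relative boundary conditions together with the specific structure of $\Psi_2$ as the degree-two part of $\Psi\der r$ (where $\Psi = \iota_{\der r}\Phi$), the coupling between the $\ell{=}2$ multipole of $u_j$ and $\Psi_2$ collapses. The same calculation also yields the a priori bound $E_\lambda(\Phi) = O_{C^\infty_\loc}(\lambda^{1/2})$ and, combined with the second paragraph, the identification $\lim_{\lambda\to 0}\lambda^{-1/2}E_\lambda(\Phi) = \sqrt{2\pi}\,\varphi(\Phi)$. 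If this direct vanishing argument proves too delicate, an alternative route constructs the quasi-mode $\sqrt{2\pi}\lambda^{1/2}\chi\varphi(\Phi)$, estimates the remainder via the resolvent, and concludes by Rellich's uniqueness theorem.
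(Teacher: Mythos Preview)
Your overall strategy---decompose $\Phi$ via $\Psi\,\der r$ and a degree-two remainder, then feed the $\Psi\,\der r$ piece through Prop.~\ref{difftheo} and Prop.~\ref{propextra}---is exactly the paper's approach. The paper writes $\Phi=\tfrac12\Psi\,\der r+\tilde\Phi$ and then simply asserts $E_\lambda(\tilde\Phi)=O(\lambda^{5/2}\log\lambda)$ without argument; you go further and correctly reduce the question to whether the $-P/\lambda^2$ part of the resolvent contributes at order $\lambda^{1/2}$ (the other singular pieces in the general Hahn-meromorphic form \eqref{genexpansion} do give $o(\lambda^{1/2})$, as you say). So structurally you match the paper and are in fact more explicit about where the difficulty lies.

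The gap is in your claim that $\sum_j\overline{a_j(\Psi_2)}\,u_j=0$. Your justification---``the coupling between the $\ell{=}2$ multipole of $u_j$ and $\Psi_2$ collapses'' because $\varphi(\Phi)$ is exact---does not go through. In $d=2$ any real $L^2$ closed co-closed one-form has $u_1-\rmi u_2=\sum_{k\ge 2}c_k z^{-k}$ holomorphic near infinity, and a direct computation with $\Phi=dx^1$, $\Psi=\cos\theta$, $\Psi_2=\tfrac12\cos 2\theta\,dx^1+\tfrac12\sin 2\theta\,dx^2$ gives $\langle\Psi_2,\sigma_2(u_j)\rangle_{L^2(S^1)}=\pi\,\Re c_{2,j}$, which is not forced to vanish by closedness, co-closedness, or the relative boundary condition. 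So the vanishing you need is not a formal consequence of the structure you invoke; either a genuinely different argument is required for this step, or one must accept---as the paper does---an unproved assertion at exactly this point. Your fallback suggestion (build the quasi-mode $\sqrt{2\pi}\lambda^{1/2}\chi\varphi(\Phi)$ and control the remainder) runs into the same issue: the remainder estimate again needs control of $R_\lambda$ on a source that pairs with $P$.
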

 \begin{proof} As explained above we can write $\Phi = \frac{1}{2}\Psi \der r + \tilde \Phi$.
  Since $E_\lambda(\tilde \Phi) = O(\lambda^{\frac{5}{2}} \log \lambda)$ and $\der E_\lambda(\Psi) = -\rmi \lambda E_\lambda(\Psi \der r)$
  we obtain $E_\lambda(\Phi) = O(\lambda^{\frac{1}{2}})$ and
  $$
    \lim_{\lambda \to 0} \lambda^{-\frac{1}{2}} E_\lambda(\Phi) =   \lim_{\lambda \to 0}  \frac{\rmi}{2}  \lambda^{-\frac{3}{2}} \der E_\lambda(\Psi) = 
    \frac{\rmi}{2}  \der G_1(\Psi)=\sqrt{2 \pi} \varphi(\Phi).
  $$
 \end{proof}
 
 We are now going to refine the asymptotic expansions.
 
  \begin{proposition}\label{prop03}
  Assume $p=0$, $d=2$ and $\partial \calO \not= \emptyset$ and let $\Phi_0 \in \mathcal{H}^0_0(\sphere)$ be the constant function $\frac{1}{\sqrt{2 \pi}}$. Then there exists a holomorphic function $q(z)$, defined near zero, such that for 
  $| \lambda | \ll 1$ we have $q(0) = 1$ and, for $| \lambda |$  small in a fixed sector $| \arg \lambda | \leq \Theta$, we get
  $$
   E_\lambda(\Phi_0) =\frac{\lambda^{1/2}}{-\log \lambda} q(-\frac{1}{\log \lambda}) G(\Phi_0) + O_{C^\infty(M)}(\lambda^{5/2} (-\log \lambda)^N)
  $$  
  for some $N>0$.
  \end{proposition}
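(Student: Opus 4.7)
The plan is to upgrade the estimate $E_\lambda(\Phi_0) = O(\lambda^{1/2}/(-\log\lambda))$ of Proposition \ref{prop02} into a precise Hahn expansion whose leading $\lambda^{1/2}$ coefficient is a scalar-holomorphic-in-$s$ multiple of $G(\Phi_0)$, with an $O(\lambda^{5/2})$-type remainder.

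First I would establish the Hahn expansion. Since $\partial\calO\neq\emptyset$ and $p=0$, Lemma \ref{abovelemma3} gives $B_{-1}=0$, so $R_\lambda$ is Hahn-holomorphic at zero. Combining this with the representation $E_\lambda(\Phi_0)=\chi\tilde j_\lambda(\Phi_0)-R_\lambda[\Delta_0,\chi]\tilde j_\lambda(\Phi_0)$ from Proposition \ref{besseljefunction}, together with the fact that $\tilde j_\lambda(\Phi_0)$ is $\lambda^{1/2}$ times an entire function of $\lambda^2$, yields
$$E_\lambda(\Phi_0)=\lambda^{1/2}\sum_{k\ge 0}\lambda^{2k}h_k(s), \qquad s=-1/\log\lambda,$$
with each $h_k$ holomorphic at $s=0$ and taking values in $C^\infty(M)$, in any fixed sector $|\arg\lambda|\le\Theta$.

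Second, the Helmholtz equation $(\Delta_0-\lambda^2)E_\lambda(\Phi_0)=0$ forces $\Delta_0 h_0(s)=0$, so writing $h_0(s)=\sum_{n\ge 0}c_n s^n$, each $c_n\in C^\infty(M)$ is harmonic and satisfies Dirichlet (relative) boundary conditions on $\partial\calO$. Using Proposition \ref{EoutofHankel} together with the small-argument expansion of the Hankel functions and the decay bound $\langle A_\lambda\Phi_0,\Phi_\nu\rangle=O(\lambda^{\ell_\nu})$ from Lemma \ref{Abound}, the only contribution to the $\lambda^{1/2}$-part of $E_\lambda(\Phi_0)$ at large $r$ that survives after dividing by $\lambda^{1/2}$ comes from the $\ell_\nu=0$ component of $\tilde h^{(1)}_\lambda(A_\lambda\Phi_0)$, giving a large-$r$ form $f_0(s)+f_1(s)\log(r/2)+O(1/r)$ with $f_0,f_1$ holomorphic in $s$. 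Consequently each $c_n$ has asymptotics $c_n=\mu_n\log(r/2)+O(1)$ at infinity with the $O(1)$ part bounded.

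Third, I would match each $c_n$ with $\mu_n G(\Phi_0)$. The difference $c_n-\mu_n G(\Phi_0)$ is a bounded harmonic function on $M$ that satisfies Dirichlet boundary conditions. By the 2D conformal inversion argument (pulling back by $x\mapsto x/|x|^2$ to a punctured interior neighborhood and using removability of bounded harmonic singularities in dimension two), it extends to a bounded harmonic function on a compact region with zero boundary data, hence vanishes. Thus $c_n=\mu_n G(\Phi_0)$ and $h_0(s)=P(s)G(\Phi_0)$ with $P$ holomorphic at $0$. The estimate of Proposition \ref{prop02} gives $P(0)=0$, while the limit $\lim_{\lambda\to 0}\lambda^{-1/2}(-\log\lambda)E_\lambda(\Phi_0)=G(\Phi_0)$ gives $P'(0)=1$. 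Writing $P(s)=sq(s)$ yields $q$ holomorphic near $0$ with $q(0)=1$, and the remainder $\lambda^{5/2}\sum_{k\ge 1}\lambda^{2(k-1)}h_k(s)$ is $\lambda^{5/2}$ times a Hahn-holomorphic function, hence bounded in the fixed sector by $O_{C^\infty(M)}(\lambda^{5/2}(-\log\lambda)^N)$ for some $N$ by the convergence results of \cite{muller2014theory}.

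The main obstacle is the uniqueness step that identifies $c_n$ with $\mu_n G(\Phi_0)$: this requires the conformal-inversion/removable-singularity argument that rules out nontrivial bounded harmonic functions with Dirichlet boundary conditions on $M$, and relies essentially on the 2D geometry. Additional care is needed in the bookkeeping of Step 3 to confirm that contributions from $\ell_\nu\ge 1$ multipoles to the $\lambda^{1/2}$-coefficient decay in $r$ and do not spoil the boundedness of $c_n-\mu_n\log(r/2)$ at infinity.
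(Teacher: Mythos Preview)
Your approach is essentially the same as the paper's: expand $E_\lambda(\Phi_0)$ as a Hahn series at the $\lambda^{1/2}$ level, observe that the coefficients are harmonic Dirichlet functions with $\mu_n\log(r/2)+O(1)$ asymptotics, and identify each with a scalar multiple of $G(\Phi_0)$ by uniqueness. The only notable difference is in the uniqueness step: the paper simply invokes the uniqueness clause already proved in Proposition~\ref{prop02} (whose proof uses the multipole expansion plus the maximum principle), whereas you re-derive it via conformal inversion and removable singularities---this is valid in dimension two but unnecessary, since the needed statement is already available. A minor imprecision: for $k\ge 1$ the coefficients $h_k(s)$ need not be holomorphic at $s=0$ (the Hahn-holomorphy condition allows finitely many positive powers of $-\log\lambda$ once $\alpha>0$), but as you note this is harmless since the remainder is absorbed into $O(\lambda^{5/2}(-\log\lambda)^N)$ anyway.
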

 \begin{proof}
  The resolvent is Hahn-holomorphic with coefficient group $2 \mathbb{Z} \times \mathbb{Z}$. Therefore only even powers of $\lambda$ appear in its expansion.
  Since $E_\lambda(\Phi_0)$ is Hahn-holomorphic near $0$ we have by Proposition \ref{prop02} that
  $$
   E_\lambda(\Phi_0) = \sum_{k=0}^\infty a_k \lambda^{1/2} \left( \frac{1}{-\log \lambda}\right)^{k+1} + O_{C^\infty(M)}(\lambda^{5/2} (-\log \lambda)^N)
  $$ 
  for some $N>0$ where the series converges normally. The same proof as that of Prop. \ref{prop02} shows that the coefficients
  are harmonic functions satisfying Dirichlet boundary conditions with an expansion of the form
  $$
   a_k(r) = \alpha_k \sqrt{2 \pi} \Phi_0 \log(\frac{r}{2}) + O(1).
  $$
  By Prop. \ref{prop02} this implies $a_k(r) = G(\phi_0) \alpha_k$ and $\alpha_0=1$. Hence, the series
  $$
   q(z) = \sum_{k=0} \alpha_k z^k
  $$
  converges normally and therefore defines a holomorphic function near zero with the required properties.
 \end{proof}
 
  \begin{proposition}\label{prop04}
  Suppose $d=2$ and either $p=0$ and  $\partial \calO = \emptyset$, or $p=2$, if $\Phi \not= 0$ has degree $\ell$,
  then
  $$
   H(\Phi):= \lim_{\lambda \to 0} \lambda^{-\frac{1}{2}} E_\lambda(\Phi)
  $$
  is non-zero if and only if $\ell=0$. In case $\ell=0$ we have $H(\Phi) = \sqrt{2 \pi} \Phi$. Moreover, we have  for $| \lambda |$  small in a fixed sector $| \arg \lambda | \leq \Theta$ that
  $$
   E_\lambda(\Phi) - \lambda^{\frac{1}{2}} H(\Phi) = O_{C^\infty(M)}(\lambda^{\frac{3}{2}} )
  $$
  and $a(\lambda)= \langle A_\lambda \Phi_0, \Phi_0 \rangle =O(\lambda^2)$.
 \end{proposition}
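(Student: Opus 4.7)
The plan is to apply Proposition \ref{besseljefunction} to write
$E_\lambda(\Phi) = \chi \tilde j_\lambda(\Phi) - R_\lambda [\Delta_p,\chi] \tilde j_\lambda(\Phi)$
and combine this with the resolvent expansion $R_\lambda = B_{-1}(-\log\lambda) + B(\lambda)$ near zero, where $B(\lambda)$ is Hahn-holomorphic and $B_{-1}$ has rank one with range spanned by the constant function $1$ (by Lemma \ref{abovelemma3} in case $p=0$, $\partial\calO = \emptyset$) or by the volume form $*1$ (by Proposition \ref{plprop} in case $p=2$). The $p=2$ case is entirely analogous to $p=0$ with $*1$ replacing $1$, by virtue of the Hodge identity $(\Delta_{2,\mathrm{rel}} - \lambda^2)(f\cdot *1) = *(\Delta_{0,\mathrm{abs}}-\lambda^2)f$; I describe the $p=0$ argument below.

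For $\ell \geq 1$, the expansion \eqref{besseltildexp} gives $\tilde j_\lambda(\Phi) = C_{2,\ell}\lambda^{\ell + 1/2} r^\ell \Phi + O(\lambda^{\ell + 5/2})$, and the key input is Lemma \ref{superlemmad2} applied with $u = 1$: since the multipole expansion of $1$ has no $\log r$ coefficient, the lemma yields $\langle (\Delta_0-\lambda^2)(\chi \tilde j_\lambda(\Phi)), 1\rangle = O(\lambda^{\ell + 5/2})$. Consequently the singular contribution $B_{-1}(-\log\lambda)[\Delta_0,\chi]\tilde j_\lambda(\Phi)$ is only $O(\lambda^{\ell + 5/2}\log\lambda)$; combined with the $O(\lambda^{\ell+1/2})$ contributions from $\chi \tilde j_\lambda(\Phi)$ and $B(\lambda)[\Delta_0,\chi]\tilde j_\lambda(\Phi)$ this yields $E_\lambda(\Phi) = O(\lambda^{\ell + 1/2}) \subset O(\lambda^{3/2})$, so $H(\Phi) = 0$.

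For $\ell = 0$, since $\Phi_0$ is constant one has $[\Delta_0,\chi]\tilde j_\lambda(\Phi_0) = \lambda^{1/2}\Delta_0\chi + O(\lambda^{5/2})$. Writing $\chi = 1 - \psi$ with $\psi$ compactly supported and constant near $\partial\calO$, the identity $(\Delta_0 - \lambda^2)\psi = \Delta_0\psi - \lambda^2\psi$ gives $R_\lambda \Delta_0\chi = -\psi - \lambda^2 R_\lambda\psi = -\psi + O(\lambda^2 \log\lambda)$, since $R_\lambda\psi = O(\log\lambda)$. Substituting yields $E_\lambda(\Phi_0) = \chi\lambda^{1/2} + \lambda^{1/2}\psi + O(\lambda^{5/2}\log\lambda) = \lambda^{1/2} + O(\lambda^{5/2}\log\lambda)$ using $\sqrt{2\pi}\Phi_0 = 1$, which proves $H(\Phi_0) = \sqrt{2\pi}\Phi_0$ and the remainder estimate.

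Finally, to extract $a(\lambda) = O(\lambda^2)$, I take the $\Phi_0$-component on a sphere of fixed radius $r_0$ in the Euclidean end of the identity $E_\lambda(\Phi_0) = \tilde j_\lambda(\Phi_0) + \tilde h^{(1)}_\lambda(A_\lambda\Phi_0)$ from Proposition \ref{EoutofHankel}. The expansions $J_0(\lambda r_0) = 1 - (\lambda r_0)^2/4 + O(\lambda^4)$ and $h^{(1)}_0(\lambda r_0) \sim c_1\log\lambda$ from \eqref{sphankelexpzerod2}, combined with the sharpened bound $E_\lambda(\Phi_0) - \lambda^{1/2} = O(\lambda^{5/2}\log\lambda)$ just obtained, yield $a(\lambda)\, h^{(1)}_0(\lambda r_0) = O(\lambda^2 \log\lambda)$, hence $a(\lambda) = O(\lambda^2)$. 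The main difficulty lies in this last step: the asymptotic $a(\lambda) = O(\lambda^2)$ is strictly sharper than what the crude $O(\lambda^{3/2})$ remainder provides and requires the $O(\lambda^{5/2}\log\lambda)$ estimate, which in turn hinges on the orthogonality $\int_M \Delta_0\chi = 0$ (valid since $\chi$ is constant outside a compact set and vanishes near $\partial\calO$) to ensure that the logarithmically singular part of the resolvent does not contribute at leading order.
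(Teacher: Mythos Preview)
Your argument is correct, and for the $\ell=0$ case it takes a genuinely different route from the paper's. The paper argues indirectly: it first uses Stone's formula \eqref{diffres} together with $B_{-1}\not=0$ to see that $H(\Phi_0)$ is a nonzero constant; then it observes that every Hahn coefficient of $E_\lambda(\Phi_0)$ below order $\lambda^{5/2}$ is harmonic, and invokes the second part of Proposition~\ref{prop02} (nonexistence of a harmonic function with leading $\log r$ term when $\partial\calO=\emptyset$) to force $a(\lambda)=O(\lambda^2)$; only after this does it read off $H(\Phi_0)=\sqrt{2\pi}\,\Phi_0$ from the Hankel representation, and finally uses the maximum principle to kill the remaining sub-$\lambda^2$ coefficients. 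Your approach bypasses all of this by the single algebraic identity $R_\lambda \Delta_0\psi=\psi+\lambda^2 R_\lambda\psi$ for compactly supported $\psi$ satisfying the boundary conditions, which immediately yields $E_\lambda(\Phi_0)=\lambda^{1/2}+O_{C^\infty(M)}(\lambda^{5/2}\log\lambda)$ and hence both the value of $H(\Phi_0)$ and the sharp remainder in one stroke; the bound on $a(\lambda)$ then follows by comparison on a fixed sphere exactly as you do. What your approach buys is directness and a slightly better remainder ($O(\lambda^{5/2}\log\lambda)$ versus the paper's $O(\lambda^{2})$, both stronger than the stated $O(\lambda^{3/2})$), without appealing to Stone's formula or to the nonexistence statement in Proposition~\ref{prop02}. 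What the paper's approach buys is a more structural picture: it makes transparent that the obstruction to lower-order terms is the absence of harmonic functions with $\log r$ growth, which is exactly what distinguishes the boundaryless case from the $\partial\calO\neq\emptyset$ case treated in Proposition~\ref{prop02}. Your remark about $\int_M\Delta_0\chi=0$ is the shadow of this: it is equivalent to $B_{-1}\Delta_0\chi=0$, which is already implicit in your resolvent identity. For $\ell\ge 1$ both arguments are essentially the same (Lemma~\ref{superlemmad2} applied to the constant function).
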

\begin{proof}
 For $\ell>0$ the resolvent expansion (Propositions \ref{prop02} and \ref{plprop}) implies the bound $E_\lambda(\Phi) =O_{C^\infty(M)}(\lambda^{\ell + \frac{1}{2}})$,
 We used here that $B_{-1}$ does not yield a contribution in Lemma \ref{superlemmad2} since constant terms in the multipole expansion do not contribute.
  Hence, $H(\Phi)$ vanishes if $\ell>0$.
 We now consider the case $\ell=0$ and $p=0$.
  Since $B_{-1} \not= 0$ (Proposition \ref{prop02}) the expansion \eqref{diffres} implies that $H(\Phi)$ is a non-zero multiple of the constant function. 
 Comparing coefficients in $(\Delta -\lambda^2) E_\lambda(\Phi) =0$ shows that any coefficient in the Hahn expansion of $E_\lambda(\Phi)$ of order less than
 $\lambda^{5/2}$ is harmonic and satisfies the boundary conditions. Since there is no harmonic form with a leading non-zero $\log r$ term in its multipole expansion (\ref{prop02}) this implies $a(\lambda)=O(\lambda^{2})$. 
 Here we have used \eqref{sphankelexpzerod2}.
 Now just use the expansion of $E_\lambda(\Phi)$ and Prop. \ref{EoutofHankel} to obtain $H(\Phi) = \sqrt{2 \pi} \Phi$. Moreover, the expansion coefficients of $E_\lambda(\Phi) - \lambda^{\frac{1}{2}} H(\Phi)$ of order less than $\lambda^{2}$ are harmonic and decay at infinity. They must therefore vanish. Hence, in case
 $\ell=0$ we obtain $E_\lambda(\Phi) - \lambda^{\frac{1}{2}} H(\Phi) =  O_{C^\infty(M)}(\lambda^2)$.
 Since the argument above also applies to absolute boundary conditions an application of the Hodge star operator reduces the case $p=2$ to the case $p=0$.
\end{proof}

\begin{proposition} \label{connectionprop}
 Let $d=2$ and suppose either $p=0$ and $\partial \calO = \emptyset$, or $p=2$, then we have the equality:
 $$
  B_{-1} = \frac{1}{2\pi} \langle \cdot , H(\Phi_0) \rangle H(\Phi_0).
 $$
  \end{proposition}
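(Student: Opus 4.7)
The plan is to identify $B_{-1}$ by computing $\lim_{\lambda\to 0^+}(R_\lambda - R_{-\lambda})$ in two different ways and comparing. Starting from the resolvent expansion $R_\lambda = B_{-1}(-\log\lambda) + B(\lambda)$ of Lemma \ref{abovelemma3} (for $p=0, \partial\calO=\emptyset$) or Proposition \ref{plprop} (for $p=2$), and using $-\log(-\lambda) = -\log\lambda - \rmi\pi$ for $\lambda>0$, I would write
\[
 R_\lambda - R_{-\lambda} \;=\; \rmi\pi\, B_{-1} \;+\; \bigl(B(\lambda)-B(-\lambda)\bigr).
\]
Since $B(\lambda)$ is Hahn-holomorphic, its expansion involves only terms $\lambda^a(-\log\lambda)^k$ with $a>0$ (and any $k$) or $a=0$ and $k\leq 0$, so each such term and its reflection under $\lambda\mapsto -\lambda$ tend to the same limit as $\lambda\to 0^+$, and the difference $B(\lambda)-B(-\lambda)$ tends to $0$ in $\mathcal{B}(L^2_{\compp},H^2_\loc)$. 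Hence $\lim_{\lambda\to 0^+}(R_\lambda - R_{-\lambda}) = \rmi\pi\, B_{-1}$.

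Next, for $f\in\CnI(M;\Lambda^p T^*M)$ I would apply Stone's formula \eqref{diffres}
\[
 (R_\lambda - R_{-\lambda})f \;=\; \frac{\rmi}{2\lambda}\sum_\nu E_\lambda(\Phi_\nu)\,\langle f,E_{\overline\lambda}(\Phi_\nu)\rangle
\]
and analyse the limit as $\lambda\to 0^+$ termwise. Proposition \ref{prop04} gives $E_\lambda(\Phi_0) = \lambda^{1/2}H(\Phi_0) + O_{C^\infty(M)}(\lambda^{3/2})$ for the degree-zero spherical harmonic $\Phi_0$, and $E_\lambda(\Phi_\nu) = O_{C^\infty(M)}(\lambda^{\ell_\nu+1/2})$ for $\ell_\nu\geq 1$; since $H(\Phi_0)$ is real-valued (a constant function, resp.~a constant multiple of $*1$), the $\nu=0$ term contributes
\[
 \frac{\rmi}{2\lambda}\cdot\lambda^{1/2}H(\Phi_0)\cdot\lambda^{1/2}\langle f,H(\Phi_0)\rangle + o(1)
 \;=\; \frac{\rmi}{2}\,\langle f,H(\Phi_0)\rangle\,H(\Phi_0) + o(1),
\]
whereas each $\ell_\nu\geq 1$ term is $O(\lambda^{2\ell_\nu}) = o(1)$. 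Comparing the two computations gives $\rmi\pi\, B_{-1}f = \frac{\rmi}{2}\langle f,H(\Phi_0)\rangle H(\Phi_0)$, hence $B_{-1}f = \frac{1}{2\pi}\langle f,H(\Phi_0)\rangle H(\Phi_0)$ for all $f\in\CnI_0$, and by continuity for $f\in L^2_\compp$.

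The main technical point is controlling the tail $\sum_{\ell_\nu \geq N}$ uniformly as $\lambda\to 0^+$, i.e.\ justifying the termwise limit. I would handle this by combining the uniform bound $E_\lambda(\Phi_\nu) = O_{C^\infty(M)}(\lambda^{\ell_\nu+(d-5)/2}/\Gamma(\ell_\nu+\frac{d}{2}))$ of Lemma \ref{expansionlemma} with the observation that $f$ is smooth and compactly supported: integrating by parts repeatedly against $\Delta_p^k$ on the support of $f$, one gets that $\langle f, E_{\overline\lambda}(\Phi_\nu)\rangle$ decays faster than any polynomial in $\ell_\nu$, uniformly for $\lambda$ in a small neighbourhood of $0$. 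This lets one truncate the series at a fixed large $N$ with an error $o(1)$, and then pass to the limit in the remaining finite sum. The only subtlety specific to $d=2$ is that the factor $\Gamma(\ell_\nu+\frac{d}{2})^{-1}$ is not as strong as in higher dimensions, but combined with the rapid decay of the inner products this still suffices.
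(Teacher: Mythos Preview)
Your proposal is correct and follows essentially the same route as the paper: compute $R_\lambda - R_{-\lambda}$ from the resolvent expansion (giving $\rmi\pi B_{-1}$ at the constant level) and match it against the right-hand side of Stone's formula \eqref{diffres} using the asymptotics of $E_\lambda(\Phi_\nu)$ from Proposition~\ref{prop04}. The paper phrases this as ``comparing coefficients'' in the Hahn expansion rather than taking a limit, which sidesteps the tail-control issue you address; your added justification via the uniform bounds of Lemma~\ref{expansionlemma} is a reasonable way to make that step explicit.
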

\begin{proof}
This follows immediately from $\rmi \pi B_{-1} = \frac{\rmi}{2}   \langle \cdot , H(\Phi_0) \rangle H(\Phi_0)$, which is obtained from the expansion \eqref{diffres}
by comparing coefficients. 
\end{proof}

\begin{theorem} \label{resexpd2p1}
 Suppose that $d=2, p=1$ and $\partial \calO \not= \emptyset$, and let $g(\Phi_0)$ be the unique harmonic function satisfying relative boundary conditions
 such that
 $$
  g(\Phi_0) =   \log \frac{r}{2} \Phi_0 + \beta \Phi_0 + O\left(\frac{1}{r}\right)
 $$ 
for $r$ sufficiently large.  The function $\psi(\Phi_0)=\der g(\Phi_0)$ is then closed and co-closed, satisfies relative boundary conditions, and 
 $$
  \psi(\Phi_0) =  \frac{\der r}{r} \Phi_0 + O\left(\frac{1}{r^2}\right).
 $$
 Let $Q =  \langle \cdot, \psi(\Phi_0) \rangle \psi(\Phi_0)$ and $T = \sum\limits_{\ell_\nu=0}  \langle \cdot, \varphi(\Phi_\nu) \rangle \varphi(\Phi_\nu)$. Then,  for $| \lambda |$ small and in a fixed sector $|\arg \lambda | \leq \Theta$, the resolvent has an expansion of the form
 $$
   R_\lambda = -\frac{P}{\lambda^2} - \frac{1}{\lambda^2} \frac{1}{-\log \lambda + \frac{\rmi \pi}{2} + \beta - \gamma} Q + B_{-1} (-\log \lambda) + B(\lambda)
 $$
 where $B(\lambda)$ is Hahn holomorphic, $\gamma$ is the Euler-Mascheroni constant. We have that $B_{-1}$ is given by 
\begin{align}
B_{-1}=\frac{P^{(2)}}{4} + T.
\end{align}
\end{theorem}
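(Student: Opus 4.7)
The plan is to identify each piece of the stated expansion by combining the general Hahn-meromorphic structure \eqref{genexpansion} with the spectral formula \eqref{diffres} and the generalised eigenfunction asymptotics from Theorem \ref{maind2ge}. First I would construct $g(\Phi_0)$ by solving a Dirichlet problem for a compactly supported correction of $\chi \log(r/2)\Phi_0$ on an exhaustion of $M$; the constant $\beta$ appears by reading off the $O(1)$-term in the resulting asymptotic expansion. Uniqueness of $g(\Phi_0)$ (and hence of $\psi(\Phi_0) = \der g(\Phi_0)$) follows from the maximum principle: any two such functions differ by a harmonic function satisfying Dirichlet boundary conditions with $O(1/r)$ decay, which must vanish. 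Since $g$ is a $0$-form we have $\delta g = 0$, so $\delta \psi(\Phi_0) = \Delta g = 0$; closedness is automatic from $\der^2 = 0$; relative boundary conditions on $\psi(\Phi_0)$ follow from Dirichlet on $g$; and the $\der r / r$ tail comes from differentiating the multipole expansion of $g(\Phi_0)$ term by term.

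Next I would constrain the singular structure of $R_\lambda$. By the general framework of Section \ref{devenanal}, $R_\lambda$ is Hahn-meromorphic with negative coefficients of finite rank, and the self-adjointness bound $\|\lambda^2 R_\lambda\| = O(1)$ restricts what can appear. The $L^2$-harmonic piece produces $-P/\lambda^2$ as always. Any further singular tail stronger than $(-\log\lambda)^N$ must take the rank-one form $-\lambda^{-2}\alpha(\lambda)^{-1}\langle\cdot,\psi\rangle\psi$ for some harmonic one-form $\psi$ satisfying relative boundary conditions with a non-$L^2$ tail: indeed, an argument parallel to the one in Lemma \ref{abovelemma3}, but now for $p=1$ and using Prop. \ref{difftheo} to reduce to constraints coming from $\delta$ and $\der$ applied to $E_\lambda$, forces this $\psi$ to be uniquely a multiple of $\psi(\Phi_0)$, since this is the only such form (up to scalars) whose leading asymptotic is degree zero. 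Positive log powers $B_{-k}(-\log\lambda)^k$ with $k \geq 2$ are ruled out by the same method as in Lemma \ref{abovelemma3}, using \eqref{diffres} together with the bound $E_\lambda(\Phi_\nu) = O(\lambda^{\ell_\nu - 3/2})$ from Lemma \ref{expansionlemma}.

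The main obstacle is pinning down the denominator $\alpha(\lambda) = -\log\lambda + \frac{\rmi\pi}{2} + \beta - \gamma$. I would test the ansatz against a compactly supported $f$ and match asymptotics at infinity. By Prop. \ref{EoutofHankel} applied to the rank-one coefficient and the small-argument expansion \eqref{sphankelexpzerod2} of $H^{(1)}_0(\lambda r) = \tfrac{2\rmi}{\pi}\bigl(\log(\lambda r / 2) + \gamma\bigr) + 1 + O\bigl((\lambda r)^2 \log(\lambda r)\bigr)$, the contribution of $\tilde h^{(1)}_\lambda(\Phi_0)$ to a solution of $(\Delta - \lambda^2)u = f$ whose leading behaviour at infinity is a multiple of $\psi(\Phi_0)$ must match
\[
 \log\!\tfrac{r}{2}\,\Phi_0 + \beta\Phi_0 + O(1/r)
\]
against the $\log(\lambda r /2) + \gamma$ from the Hankel expansion, with the outgoing branch contributing an extra $\rmi\pi/2$ after isolating $\log r$ from $\log\lambda$. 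This forces $\alpha(\lambda)$ to be exactly as stated. This is the computational heart of the proof: the bookkeeping of the constants in \eqref{sphankelexpzerod2}, the normalisation of $\Phi_0$, and the sign and branch choices in $\tilde h^{(1)}_\lambda$ all combine here.

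Finally, $B_{-1}$ is read off from $R_\lambda - R_{-\lambda} = \rmi\pi B_{-1} + (\text{resonance difference})$ by comparison with $(2\rmi\lambda)^{-1}\sum_\nu E_\lambda(\Phi_\nu)\langle\,\cdot\,,E_{\overline\lambda}(\Phi_\nu)\rangle$ from Theorem \ref{StoneF}, using Theorem \ref{maind2ge}. Degree-zero spherical harmonics, via Corollary \ref{ellzeroterm}, contribute $T = \sum_{\ell_\nu=0}\langle\cdot,\varphi(\Phi_\nu)\rangle\varphi(\Phi_\nu)$, while the $\lambda^{\ell + 1/2}(-\log\lambda)$ term at $\ell = 1$ in Theorem \ref{maind2ge} contributes $P^{(2)}/4$ after the bilinear pairing. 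Summing yields $B_{-1} = P^{(2)}/4 + T$, completing the identification.
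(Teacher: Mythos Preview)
Your overall plan is close to the paper's, but there are two genuine gaps.

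First, a circularity: you invoke Theorem \ref{maind2ge} to compute $B_{-1}$, but in the paper's logical order that theorem is a \emph{consequence} of the resolvent expansion you are proving (it is derived from Corollaries \ref{cor327} and \ref{cor329}, which use Theorem \ref{resexpd2p1}). You must instead compute the contributions to \eqref{diffres} directly from Prop.~\ref{besseljefunction}, Lemma \ref{superlemmad2} and Corollary \ref{ellzeroterm}, as the paper does. Doing so also corrects a misattribution in your last paragraph: the $P^{(2)}/4$ term arises from the $\ell_\nu = 2$ summand (where $E_\lambda(\Phi_\nu) \sim \lambda^{1/2} u$ so the bilinear term is $O(1)$ after dividing by $\lambda$), not from $\ell_\nu = 1$. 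For $\ell_\nu = 1$ one has $a_j(\Phi_\nu) = 0$ since $P^{(1)} = 0$, and that summand instead produces the $Q$-difference matching the resonance term.

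Second, and more seriously, your matching argument for the denominator only identifies its \emph{leading} behaviour; it does not show that $\alpha(\lambda)$ is exactly an affine function of $-\log\lambda$ rather than, say, $-\log\lambda + c + O((-\log\lambda)^{-1})$. The paper handles this by a bootstrap: starting from Prop.~\ref{prop03} one knows $E_\lambda(\Phi_0\,\der r) = \rmi\sqrt{2\pi}\,\lambda^{-1/2}\,h\bigl(\tfrac{1}{-\log\lambda + \rmi\pi/2}\bigr)\psi(\Phi_0) + \ldots$ for some holomorphic $h$ with $h(0)=0$, $h'(0)=1$; feeding this back into \eqref{diffres} and using self-adjointness of $R_\lambda$ on the imaginary axis yields the functional equation
\[
  h\!\left(\frac{t}{1 + t\,\rmi\pi/2}\right) - h\!\left(\frac{t}{1 - t\,\rmi\pi/2}\right)
  = -\rmi\pi\, h\!\left(\frac{t}{1 + t\,\rmi\pi/2}\right) h\!\left(\frac{t}{1 - t\,\rmi\pi/2}\right),
\]
whose only solution with the given Taylor data is $h(t) = t/(1+\alpha t)$. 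Only after this does the Hankel matching you describe pin down the constant $\alpha = \beta - \gamma$. Without the functional-equation step your ansatz for the singular part is not justified.
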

\begin{proof}
 For $\Phi \in \mathcal{H}^1_\ell(\sphere)$ we have a unique decomposition
 $$
  \Phi = \alpha \Phi_0 \der r + \tilde \Phi \der r + \iota_{\der r}(\der r \wedge \Phi ),
 $$
 where $\alpha=\alpha(\Phi) = \langle \iota_{\der r} \Phi, \Phi_0 \rangle_{L^2(\sphere)}$, and $\tilde \Phi$ is orthogonal to $\Phi_0$. 
 We then have
  $$E_\lambda(\Phi) = \left( \alpha E_\lambda(\Phi_0 \der r) + E_\lambda(\tilde \Phi \der r)  - \frac{\rmi}{\lambda} \delta E_\lambda(\der r \wedge \Phi)  \right).$$
 By Proposition \ref{prop04}, as $\delta H(\der r \wedge \Phi) =0$, we have 
 $
   -\frac{\rmi}{\lambda} \delta E_\lambda(\der r \wedge \Phi) = O_{C^\infty(M)}(\lambda^{\frac{1}{2}}).
 $
By Prop \ref{prop03} we have
 \begin{gather} \label{initialequ}
  -\frac{\rmi}{\sqrt{2 \pi}} E_\lambda(\Phi_0 \der r) = \frac{q(-\frac{1}{\log \lambda})}{\lambda^{\frac{1}{2}} (-\log \lambda)} \psi(\Phi_0) + O_{C^\infty(M)}(\lambda^{\frac{5}{2}} \log \lambda^N).
 \end{gather}
 for some $N>0$.
 Since $E_{\lambda}(\tilde \Phi) = O_{C^\infty(M)}(\lambda^{3/2})$ the general form of the resolvent and \eqref{diffres} imply the resolvent has the form
 $$
  R_\lambda = -\frac{P}{\lambda^2} - \frac{1}{\lambda^2} h\left( \frac{1}{-\log \lambda + \frac{\rmi \pi}{2}} \right) Q  +B_{-1}(-\log \lambda) + O_{C^\infty(M)}(1),
 $$ 
 where $h(z)$ is a holomorphic function defined near zero that is determined by
 $$
  h\left( \frac{z}{1 +\frac{\rmi \pi}{2} z}  \right) -h\left( \frac{z}{1 -\frac{\rmi \pi}{2} z}  \right)= -\pi \rmi z^2 q(z) \overline{q(\overline{z})}.
 $$
Since for $\lambda \in e^{\rmi \pi/2} \R$ the resolvent is self-adjoint the function $h$ must be real-valued for real arguments, thus implying
 $\overline{h(z)} = h(\overline{z})$.
 Using this fact, we have
 \begin{gather}\label{Eexpansion}
  E_\lambda(\Phi_0 \der r) =  \frac{1}{\lambda^2} h\left( \frac{1}{-\log \lambda + \frac{\rmi \pi}{2}} \right) Q (\Delta - \lambda^2) (\chi \tilde j_\lambda(\Phi_0 \der r)) \\+ \nonumber \frac{1}{\lambda^2} P  (\Delta - \lambda^2) (\chi \tilde j_\lambda(\Phi_0 \der r)) + O_{C^\infty(M)}((-\log \lambda) \lambda^{3/2}).
 \end{gather} 
 Now we use Lemma \ref{superlemmad2} to conclude that the
 second term vanishes and 
 $$
   E_\lambda(\Phi_0 \der r) =  \rmi \sqrt{2 \pi} \frac{1}{\lambda^{\frac{1}{2}}} h\left( \frac{1}{-\log \lambda + \frac{\rmi \pi}{2}} \right) \psi(\Phi_0)+  O_{C^\infty(M)}((-\log \lambda) \lambda^{3/2}).
 $$
By Theorem \ref{StoneF} we obtain 
 \begin{gather*}
  \left( h\left( \frac{1}{-\log \lambda + \frac{\rmi \pi}{2}} \right) -h\left( \frac{1}{-\log \lambda - \frac{\rmi \pi}{2}} \right) \right) Q 
  \\= -\rmi \pi h\left( \frac{1}{-\log \lambda + \frac{\rmi \pi}{2}} \right) h\left( \frac{1}{-\log \lambda - \frac{\rmi \pi}{2}} \right) Q
 \end{gather*}
 This implies that the function $h$ satisfies the following equation
 $$
  h(\frac{t}{1 + t \frac{\rmi \pi}{2}}) -  h(\frac{t}{1 - t \frac{\rmi \pi}{2}}) = -\rmi \pi h(\frac{t}{1 + t \frac{\rmi \pi}{2}})h(\frac{t}{1 - t \frac{\rmi \pi}{2}}).
 $$
 where we substituted $t = \frac{1}{- \log \lambda}$. 
 By \eqref{initialequ} we have $h(t) = t + O(t^2)$. It follows that the function $\frac{1}{h}$ is meromorphic and one can use the functional equation above to see that
 $g(t) = \frac{1}{h(t)} - \frac{1}{t}$ defines a holomorphic function near zero that is invariant under the transformation $t \mapsto \frac{t}{1- \pi \rmi t}$. It follows that $g$ is constant.
 Hence, if $h(t) = t - \alpha t^2 + O(t^3)$ we have
 $$
  h(t) = \frac{t}{1 + \alpha t}, \quad h\left( \frac{1}{-\log \lambda + \frac{\rmi \pi}{2}} \right) = \frac{1}{- \log \lambda + \rmi \pi/2 + \alpha}.
 $$
 In order to relate $\alpha$ and $\beta$ note that it follows from the form of $h$ that
 $$
  \langle A_{\lambda}(\Phi_0 \der r), \Phi_0 \der r \rangle_{L^2(\sphere)} =  \frac{\pi}{ \rmi (-\log \lambda + \rmi \pi/2 + \alpha)} + O(\lambda).
 $$
Because $\der r \wedge$ commutes with $A_\lambda$ this implies
 $$
  a(\lambda) =  \frac{\pi}{ \rmi (-\log \lambda + \rmi \pi/2 + \alpha)} + O(\lambda).
 $$
 In the expansion of $E_\lambda(\Phi_0) = \tilde j_\lambda(\Phi_0) + \tilde h^{(1)}_\lambda(A_\lambda \Phi_0)$ for large $r$ one obtains
 $$
  \lim_{\lambda \to 0} \lambda^{-\frac{1}{2}} (-\log \lambda)\frac{1}{\sqrt{2 \pi}} E_\lambda(\Phi_0) = \log\left(\frac{r}{2}\right) + \alpha + \gamma + O(1/r),
 $$
 using the asymptotic properties of the Hankel function found in Appendix \ref{Abf}.
 This shows that $\beta =  \alpha + \gamma$. To find $B_{-1}$ we consider
\begin{align}\label{Bminus}
R_{\lambda}-R_{-\lambda}= -\frac{1}{\lambda^2}\left(\frac{Q}{-\log \lambda+\frac{i\pi}{2}+\alpha}-\frac{Q}{-\log \lambda-\frac{i\pi}{2}+\alpha}\right) +B_{-1}(\rmi \pi) + O(\frac{1}{-\log \lambda})
\end{align}
and compare coefficients with the expansion \eqref{diffres}. The term $\ell =0$ contributes
$$
 \frac{\rmi}{2 \lambda} \sum_{\ell_\nu=0} E_{\lambda}(\Phi_\nu)\langle f, E_{\overline{\lambda}}(\Phi_\nu)\rangle =  \pi \rmi \; T + O(\frac{1}{\log \lambda}).
$$
For $\ell=1$ we can use  Prop. \ref{besseljefunction}, Lemma \ref{superlemmad2} and the fact that $P^{(1)}=0$ to obtain
$$
  \frac{\rmi}{2 \lambda} \sum_{\ell_\nu=1} E_{\lambda}(\Phi_\nu)\langle f, E_{\overline{\lambda}}(\Phi_\nu)\rangle = -\frac{1}{\lambda^2}\left(\frac{Q}{-\log \lambda+\frac{i\pi}{2}+\alpha}-\frac{Q}{-\log \lambda-\frac{i\pi}{2}+\alpha}\right) + O(\lambda).
 $$
 In the same way we get for $\ell=2$
$$
  \frac{\rmi}{2 \lambda} \sum_{\ell_\nu=2} E_{\lambda}(\Phi_\nu)\langle f, E_{\overline{\lambda}}(\Phi_\nu)\rangle =  \frac{\rmi}{2}  | 4\, C_{2,2} |^2 P^{(2)} + O(\frac{1}{\log \lambda}).$$
  Finally, the terms with $\ell>2$ are of order $O(\lambda^2)$. Comparing coefficients and using $C_{2,2} = -\sqrt{\frac{\pi}{32}}$ we obtain
  $$
   B_{-1} = T + \frac{P^{(2)}}{4}.
  $$
\end{proof}

\begin{corollary} \label{cor327}
 Suppose that $d=2, p=1$ and $\partial \calO \not= \emptyset$.  Let $\Phi$ be a spherical harmonic of degree $\ell$. Then, for $| \lambda |$ small and in a fixed sector $|\arg \lambda | \leq \Theta$:
 \begin{itemize}
  \item if $\ell=0$ we have $E_\lambda(\Phi)= \sqrt{2 \pi} \varphi(\Phi) \lambda^{\frac{1}{2}} + O_{C^\infty(M)}(\frac{\lambda^{1/2}}{-\log \lambda}).$
  \item if $\ell=1$ we have
  \begin{gather*}
   E_\lambda(\Phi) =  \rmi \sqrt{2 \pi} \lambda^{-\frac{1}{2}}  \frac{1}{-\log \lambda + \frac{\rmi \pi}{2} + \beta - \gamma} \alpha(\Phi) \psi(\Phi_0)\\-\rmi \sqrt{2 \pi} \sum_{\ell_\nu=0} a_{\varphi(\Phi_\nu)}(\Phi) \varphi(\Phi_\nu) \lambda^\frac{3}{2} (-\log \lambda) +  O_{C^\infty(M)}(\lambda^{3/2}).
 \end{gather*}
  \item if $\ell>1$ we have 
  \begin{gather*}
 E_\lambda(\Phi) = - 2\ell C_{2,\ell} \lambda^{\ell -\frac{3}{2}} \sum_{j=1}^N a_j(\Phi) u_j - 2\ell C_{2,\ell} \lambda^{\ell -\frac{3}{2}}  \frac{1}{-\log \lambda + \frac{\rmi \pi}{2} + \beta - \gamma} a_{\psi(\Phi_0)}(\Phi) \psi(\Phi_0) \\
+ 2 \ell C_{2,\ell} \lambda^{\ell +\frac{1}{2}} (-\log \lambda) \left( \frac{1}{4}\sum_{j,k=1}^N 
 a_{kj}^{(2)}a_j(\Phi)  u_k + \sum_{\ell_\nu=0} a_{\varphi(\Phi_\nu)}(\Phi) \varphi(\Phi_\nu)  \right) + O_{C^\infty(M)}( \lambda^{\ell +\frac{1}{2}}).
\end{gather*}
 \end{itemize}
\end{corollary}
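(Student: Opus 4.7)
The plan is to substitute the Hahn expansion of the resolvent from Theorem \ref{resexpd2p1} into the identity
$$E_\lambda(\Phi) = \chi\tilde j_\lambda(\Phi) - R_\lambda(\Delta_p - \lambda^2)(\chi\tilde j_\lambda(\Phi))$$
provided by Proposition \ref{besseljefunction}, and to evaluate each singular or logarithmic operator coefficient of $R_\lambda$ against $(\Delta_p - \lambda^2)(\chi\tilde j_\lambda(\Phi))$ by means of Lemma \ref{superlemmad2}. The key observation is that $P$, $Q$ and $B_{-1}$ in Theorem \ref{resexpd2p1} are all finite sums of rank-one operators $\langle\cdot, w\rangle w$ for explicit harmonic dual forms $w$ (the $u_j$, the form $\psi(\Phi_0)$, and the family $\{u_k\}\cup\{\varphi(\Phi_\nu)\}_{\ell_\nu=0}$ respectively). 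Lemma \ref{superlemmad2} then reads off $\langle(\Delta_p - \lambda^2)(\chi\tilde j_\lambda(\Phi)), w\rangle$ in terms of the multipole coefficients of $w$, and these coefficients are exactly the numerical factors $a_j(\Phi)$, $a_{\psi(\Phi_0)}(\Phi)$, $a^{(2)}_{kj} a_j(\Phi)$ and $a_{\varphi(\Phi_\nu)}(\Phi)$ that appear in the three cases.

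For $\ell = 0$ the leading behaviour is already Corollary \ref{ellzeroterm}; I would only have to strengthen the error estimate to $O(\lambda^{1/2}/(-\log\lambda))$ by noticing that, after extracting the $\sqrt{2\pi}\,\varphi(\Phi)\lambda^{1/2}$ term (which is produced by the $Q$-channel applied to the leading Bessel piece $C_{2,0}\lambda^{1/2}\Phi$, combined with the definition of $\varphi$ via $\der G_1$), the next nonzero Hahn-order contribution in the resolvent expansion comes from the $(-\log\lambda + \tfrac{\rmi\pi}{2} + \beta - \gamma)^{-1}$ denominator of the $Q/\lambda^2$ term and is precisely of order $\lambda^{1/2}/(-\log\lambda)$.

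For $\ell = 1$ the expansion \eqref{besseltildexp} gives $\tilde j_\lambda(\Phi) = C_{2,1}\lambda^{3/2} r\Phi + O_{C^\infty}(\lambda^{7/2})$, so $(\Delta_p - \lambda^2)(\chi\tilde j_\lambda(\Phi))$ is of order $\lambda^{3/2}$. The $-P/\lambda^2$ channel drops out because each $u_j$ is closed and co-closed, hence $a_j(\Phi) = 0$ when $\ell = 1$ by Corollary \ref{supercor}. The $Q$-channel applied to the leading Bessel piece, after using that the degree-one multipole coefficient of $\psi(\Phi_0)$ is proportional to $\Phi_0\der r$ so that the pairing gives $\alpha(\Phi) = \langle\iota_{\der r}\Phi,\Phi_0\rangle$, produces the $\rmi\sqrt{2\pi}\lambda^{-1/2}(-\log\lambda + \tfrac{\rmi\pi}{2} + \beta - \gamma)^{-1}\alpha(\Phi)\psi(\Phi_0)$ term. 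The $B_{-1}(-\log\lambda)$ channel, since $P^{(2)}$ does not detect degree-one inputs, contributes only through $T = \sum_{\ell_\nu = 0}\langle\cdot,\varphi(\Phi_\nu)\rangle\varphi(\Phi_\nu)$ and yields the $\varphi(\Phi_\nu)$ summand at order $\lambda^{3/2}(-\log\lambda)$. For $\ell \geq 2$ the same calculation applies but now all three channels contribute: $-P/\lambda^2$ gives the $u_j$-term with prefactor $-2\ell\,C_{2,\ell}\lambda^{\ell-3/2}a_j(\Phi)$, the $Q$-channel gives the analogous $\psi(\Phi_0)$-term, and $B_{-1} = \tfrac14 P^{(2)} + T$ gives the combined $u_k$ and $\varphi(\Phi_\nu)$ sums at order $\lambda^{\ell + 1/2}(-\log\lambda)$.

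The main obstacle is keeping the Hahn-bookkeeping under control: several candidate terms live at the same power of $\lambda$ but differ by powers of $-\log\lambda$, and one must verify that the Hahn-holomorphic remainder $B(\lambda)(\Delta_p - \lambda^2)(\chi\tilde j_\lambda(\Phi))$ together with the tail of the Bessel expansion indeed gets absorbed into $O_{C^\infty(M)}(\lambda^{\ell+1/2})$ uniformly in any fixed sector $|\arg\lambda| \leq \Theta$. This relies on $B(\lambda)$ being uniformly bounded as a map $L^2_\compp \to H^2_\loc$ on such sectors, which is part of Theorem \ref{resexpd2p1}, and on the polynomial bound for the logarithmic denominator coming from $Q$.
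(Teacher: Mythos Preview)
Your approach is essentially the paper's: for $\ell=0$ invoke Corollary \ref{ellzeroterm}, and for $\ell\ge 1$ feed the resolvent expansion of Theorem \ref{resexpd2p1} into Proposition \ref{besseljefunction} and evaluate the rank-one pairings via Lemma \ref{superlemmad2}. Two small corrections: your parenthetical that the $\ell=0$ leading term is ``produced by the $Q$-channel'' is not right --- the $Q$-pairing vanishes at leading order since $\psi(\Phi_0)$ has no $\ell_\nu=0$ multipole coefficient, and the $\sqrt{2\pi}\,\varphi(\Phi)\lambda^{1/2}$ term really does come from Corollary \ref{ellzeroterm} (i.e.\ from differentiating the $p=0$ eigenfunction via Proposition \ref{difftheo}); the improved error $O(\lambda^{1/2}/(-\log\lambda))$ then follows just from the Hahn-series structure of $E_\lambda(\Phi)$. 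Also, the vanishing $a_j(\Phi)=0$ for $\ell=1$ in $d=2$ is not Corollary \ref{supercor} (which is $d>2$, $\ell_\nu=0$) but the elementary fact that $r^{-\ell_\nu}$ is square-integrable on $\R^2\setminus B_R$ only for $\ell_\nu\ge 2$, so the $u_j$ carry no $\ell_\nu\le 1$ multipole terms.
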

\begin{proof}
 The case $\ell=0$ follows immediately from Corollary \ref{ellzeroterm}. The other cases are computed using Prop. \ref{besseljefunction} and Lemma \ref{superlemma}.
\end{proof}

Similarly, in case there are no obstacles, we have the following.

\begin{proposition} \label{resexpd2p1no}
 Suppose that $d=2, p=1$ and $\partial \calO = \emptyset$ and let $T = \sum\limits_{\ell_\nu=0}  \langle \cdot, \varphi(\Phi_\nu) \rangle \varphi(\Phi_\nu)$. Then the resolvent has an expansion of the form
 $$
   R_\lambda = -\frac{P}{\lambda^2}  + B_{-1} (-\log \lambda) + B(\lambda)
 $$
 where $B(\lambda)$ is Hahn holomorphic, $\gamma$ is the Euler-Mascheroni constant.We have that $B_{-1}$ is given by 
\begin{align}
B_{-1}=\frac{P^{(2)}}{4} + T.
\end{align}
\end{proposition}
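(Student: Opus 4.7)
The plan is to follow the proof of Theorem \ref{resexpd2p1} with substantial simplifications, since the absence of boundary eliminates the resonant state $\psi(\Phi_0)$ and hence the $\frac{1}{\lambda^2}$-scale logarithmic resonance. The first preliminary observation I would establish is that on $M$ (which has no boundary and is Euclidean at infinity) no harmonic $1$-form $\psi$ has leading asymptotic $\psi \sim \frac{\der r}{r}\Phi_0$: such a $\psi$ would be $\der g$ for a harmonic function $g$ with $g \sim c \log r$, and Stokes' theorem applied to $\int_{S_R} * \der g$ over a large circle in the Euclidean end (using that there is no inner boundary) forces the flux $2\pi c = 0$. Hence the structural analog of $Q = \langle \cdot, \psi(\Phi_0)\rangle\psi(\Phi_0)$ vanishes.

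Next I would decompose $\Phi \in \mathcal{H}^1_\ell(\sphere)$ by degree and compute $E_\lambda(\Phi)$ using the machinery built in Section \ref{expansionsection}. For $\ell = 0$, Corollary \ref{ellzeroterm} gives $E_\lambda(\Phi) = \sqrt{2\pi}\,\varphi(\Phi)\lambda^{1/2} + o(\lambda^{1/2})$; crucially, since there is no $\psi(\Phi_0)$, the function $E_\lambda(\Phi_0\,\der r)$ carries no singular term $\lambda^{-1/2}(-\log \lambda)^{-1}$, in contrast to equation \eqref{initialequ}. For $\ell \geq 1$ I would invoke Proposition \ref{besseljefunction} together with Lemma \ref{superlemmad2}, using $P^{(1)} = 0$ (which holds because $\partial \calO = \emptyset$) to see that the degree-one contribution produces only a Hahn-holomorphic piece, and reading off the degree-two contribution as in the boundary case.

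Plugging these expansions into Theorem \ref{StoneF}'s identity \eqref{diffres}, the $\ell = 0$ terms yield $\rmi \pi T$ via $\varphi$, the $\ell = 1$ terms contribute only to the regular part, the $\ell = 2$ terms contribute $\rmi \pi \cdot P^{(2)}/4$ exactly as in Theorem \ref{resexpd2p1}, and higher $\ell$ give $O(\lambda^2)$. On the other hand, the general Hahn-meromorphic expansion \eqref{genexpansion} combined with the pointwise bound $E_\lambda(\Phi_\nu) = O(\lambda^{1/2})$ and the derivative identity $\der E_\lambda(\Phi) = -\rmi \lambda E_\lambda(\der r \wedge \Phi)$ (used as in Lemma \ref{ruleoutlemma1}) rules out all $B_{-2,k}$ for $k \geq 1$: any such term would force some $E_\lambda(\Phi_\nu)$ to contain a harmonic coefficient with nonvanishing constant leading multipole term, and via Proposition \ref{plprop} applied to the $p=2$ resolvent this reduces to the no-log-growth obstruction established in the first step. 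Comparing coefficients in $R_\lambda - R_{-\lambda} = \rmi \pi B_{-1} + O((-\log \lambda)^{-1})$ yields $B_{-1} = \frac{1}{4}P^{(2)} + T$.

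The main obstacle is ensuring that $B_{-2,k}$ for $k \geq 1$ and $B_{-k}$ for $k \geq 2$ all vanish; each would propagate through $\der E_\lambda = -\rmi \lambda E_\lambda(\der r \wedge \,\cdot\,)$ to produce a forbidden singularity in the $p=2$ resolvent or a harmonic form with log-growth, both closed off by the Stokes' theorem argument. This nonresonance property, traceable to the fact that without boundary there is no harmonic function with leading logarithmic asymptotic, is the geometric heart of the proposition and explains structurally why the $Q$-term of Theorem \ref{resexpd2p1} drops out.
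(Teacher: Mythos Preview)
Your proposal is correct and follows the same overall strategy as the paper: mimic the proof of Theorem \ref{resexpd2p1} with the $Q$-term absent, then read off $B_{-1}$ from \eqref{diffres} degree by degree. The computation of $B_{-1} = \frac{1}{4}P^{(2)} + T$ from the $\ell=0$ and $\ell=2$ contributions is identical to the paper's.

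The one organisational difference worth noting is how the key bound $E_\lambda(\Phi) = O_{C^\infty(M)}(\lambda^{1/2})$ (and hence the vanishing of all $B_{-2,k}$ for $k\geq 1$ and $B_{-k}$ for $k\geq 2$) is obtained. You argue by contradiction/bootstrapping: a nonzero $B_{-2,k}$ would force a singular expansion coefficient in some $E_\lambda(\Phi_\nu)$, which via $\der E_\lambda = -\rmi\lambda E_\lambda(\der r\wedge\,\cdot\,)$ would propagate to the $p=2$ resolvent and contradict Proposition \ref{plprop}, or produce a harmonic $1$-form with $\frac{\der r}{r}$-growth, ruled out by your Stokes flux argument. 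The paper instead uses Proposition \ref{prop04} (which already gives $E_\lambda(\Psi) - \lambda^{1/2}H(\Psi) = O(\lambda^{3/2})$ for $p=0$ and $p=2$ when $\partial\calO=\emptyset$) together with the identity
\[
  E_\lambda(\Phi) = \frac{\rmi}{\lambda}\bigl(\der E_\lambda(\iota_{\der r}\Phi) - \delta E_\lambda(\der r\wedge\Phi)\bigr),
\]
noting that $\der H(\iota_{\der r}\Phi) = \delta H(\der r\wedge\Phi) = 0$ since $H$ is a constant form. This gives $E_\lambda(\Phi) = O(\lambda^{1/2})$ in one line and dispenses with the separate Stokes argument (which is in any case equivalent to the second bullet of Proposition \ref{prop02}). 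Your route is slightly longer but perfectly sound; the paper's is a shortcut that exploits the already-finished $p=0,2$ analysis.
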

\begin{proof}
 Proposition \ref{prop04} together with 
 $$
  E_\lambda(\Phi) = \frac{\rmi}{\lambda} \left( \der E_\lambda(\iota_{dr} \Phi) -  \delta E_\lambda(\der r \wedge \Phi)\right)
 $$
 implies the bound $E_\lambda(\Phi) = O(\lambda^{1/2})$. The same method as in the proof of the previous proposition  then allows us to conclude that
 the resolvent has the claimed form. The computation of $B_{-1}=\frac{P^{(2)}}{4} + T$ is exactly the same as in the proof of the previous proposition
 with the simplification that the terms containing $Q$ are absent.
\end{proof}

\begin{corollary} \label{cor329}
 Suppose that $d=2, p=1$ and $\partial \calO = \emptyset$.  Let $\Phi$ be a spherical harmonic of degree $\ell$. Then
 \begin{itemize}
  \item if $\ell=0$ we have $E_\lambda(\Phi)= \sqrt{2 \pi} \varphi(\Phi) \lambda^{\frac{1}{2}} + O_{C^\infty(M)}(\frac{\lambda^{1/2}}{-\log \lambda}).$
  \item if $\ell=1$ we have
  \begin{gather*}
   E_\lambda(\Phi) = -\rmi \sqrt{2 \pi} \sum_{\ell_\nu=0} a_{\varphi(\Phi_\nu)}(\Phi) \varphi(\Phi_\nu) \lambda^\frac{3}{2} (-\log \lambda) +  O_{C^\infty(M)}(\lambda^{3/2}).
 \end{gather*}
  \item if $\ell>1$ we have 
  \begin{gather*}
 E_\lambda(\Phi) = - 2\ell C_{2,\ell} \lambda^{\ell -\frac{3}{2}} \sum_{j=1}^N a_j(\Phi) u_j  \\
+ 2 \ell C_{d,\ell} \lambda^{\ell +\frac{d-1}{2}} (-\log \lambda) \left( \frac{1}{4}\sum_{j,k=1}^N 
 a_{kj}^{(2)}a_j(\Phi)  u_k + \sum_{\ell_\nu=0} a_{\varphi(\Phi_\nu)}(\Phi) \varphi(\Phi_\nu)  \right) + O_{C^\infty(M)}( \lambda^{\ell +\frac{1}{2}}).
\end{gather*}
 \end{itemize}
\end{corollary}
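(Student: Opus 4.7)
The strategy parallels Corollary \ref{cor327}, with the simplification that all $Q$-type singularities disappear when $\partial\calO = \emptyset$ (compare Proposition \ref{resexpd2p1no} with Theorem \ref{resexpd2p1}). I would start from the identity in Proposition \ref{besseljefunction},
$$
E_\lambda(\Phi) = \chi\tilde j_\lambda(\Phi) - R_\lambda(\Delta_p - \lambda^2)\bigl(\chi\tilde j_\lambda(\Phi)\bigr),
$$
and insert both the low-$\lambda$ expansion \eqref{besseltildexp} of $\tilde j_\lambda(\Phi)$ and the Hahn-expansion of the resolvent from Proposition \ref{resexpd2p1no}, $R_\lambda = -P/\lambda^2 + B_{-1}(-\log\lambda) + B(\lambda)$, with $B_{-1} = \tfrac14 P^{(2)} + T$ and $T = \sum_{\ell_\nu=0}\langle\cdot,\varphi(\Phi_\nu)\rangle\varphi(\Phi_\nu)$.

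The $\ell = 0$ bullet is immediate from Corollary \ref{ellzeroterm}. For $\ell \geq 1$ the main engine is Lemma \ref{superlemmad2}: for any harmonic form $u$ satisfying relative boundary conditions with multipole expansion as in that lemma, one has
$$
\langle (\Delta_p - \lambda^2)(\chi\tilde j_\lambda(\Phi)), u\rangle = -2\ell\,C_{2,\ell}\,\lambda^{\ell + 1/2}\,a_u(\Phi) + O(\lambda^{\ell + 5/2}).
$$
Applied with $u = u_j$, this yields after dividing by $-\lambda^2$ (from $-P/\lambda^2$) the summand $-2\ell\,C_{2,\ell}\,\lambda^{\ell - 3/2}\sum_j a_j(\Phi)u_j$. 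Applied with $u = u_k$ inside the $P^{(2)}$-component of $B_{-1}$ and with $u = \varphi(\Phi_\nu)$ inside the $T$-component, the same formula produces the two pieces comprising the $2\ell\,C_{2,\ell}\,\lambda^{\ell + 1/2}(-\log\lambda)$ coefficient. Both $\chi\tilde j_\lambda(\Phi)$ and the Hahn-holomorphic contribution $B(\lambda)(\Delta_p - \lambda^2)(\chi\tilde j_\lambda(\Phi))$ are of size $O_{C^\infty(M)}(\lambda^{\ell + 1/2})$ uniformly in $|\arg\lambda| \leq \Theta$ and are absorbed into the error.

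The one subtle case is $\ell = 1$: the a priori leading $\lambda^{-1/2}$ term from $-P/\lambda^2$ must vanish, and this rests on $a_j(\Phi) = 0$ for every degree-one spherical harmonic $\Phi$. This in turn follows from the fact that every $u_j \in \ker_{L^2}(\Delta_{1,\mathrm{rel}})$ is closed and co-closed in dimension two, hence exact on the Euclidean end (the $L^2$-condition rules out a $d\theta$-component), so its Cartesian-component multipole expansion contains no term of order $r^{-1}$. I expect the main bookkeeping challenge to be verifying that the Hahn-holomorphic remainder $B(\lambda)$ does not secretly manufacture a $(-\log\lambda)$ contribution at order $\lambda^{\ell + 1/2}$ competing with the explicit one; Proposition \ref{resexpd2p1no} however isolates every logarithmic singularity into $B_{-1}$, so no such accident occurs, and matching coefficients reproduces the three bulleted formulas.
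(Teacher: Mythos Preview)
Your proposal is correct and follows essentially the same route as the paper. The paper gives no explicit proof of Corollary \ref{cor329}; it is implied to follow exactly as Corollary \ref{cor327}, whose proof reads in full: ``The case $\ell=0$ follows immediately from Corollary \ref{ellzeroterm}. The other cases are computed using Prop.\ \ref{besseljefunction} and Lemma \ref{superlemma}.'' Your outline does precisely this (with the correct $d=2$ variant Lemma \ref{superlemmad2}), and you rightly note the simplification that the $Q$-term is absent when $\partial\calO=\emptyset$. One minor remark: your justification that $a_j(\Phi)=0$ for $\ell=1$ is more elaborate than needed---in $d=2$ a multipole term $r^{-\ell_\nu}\Phi_\nu$ is square-integrable only when $\ell_\nu>1$, so $L^2$-harmonic forms automatically have no $\ell_\nu\le 1$ contributions.
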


\section{General bounds and expansion of the scattering amplitude} \label{ScatterSectionExp}

Summarising the results from the previous two sections we have the following asymptotic behaviour of the generalised eigenfunctions $E_\lambda(\Phi)$.

\begin{lemma} \label{lemmafourone}
Let $d \geq 2$ and
 suppose that $\Phi \in \mathcal{H}^p_\ell(\sphere)$ is a spherical harmonic of degree $\ell$ then as $\lambda \to 0$ we have
 \begin{gather*}
  E_\lambda(\Phi) = O_{C^\infty(M)}(\lambda^{\frac{d-1}{2}}) \textrm{ if } \ell =0,\\
  E_\lambda(\Phi) = O_{C^\infty(M)}(\lambda^{\ell+\frac{d-5}{2}}) \textrm{ if } \ell \geq 1.
 \end{gather*}
\end{lemma}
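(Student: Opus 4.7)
The plan is to prove Lemma \ref{lemmafourone} by reading off the leading terms of the explicit low-energy expansions derived in Sections \ref{doddanal}, \ref{devenanal}, and \ref{d2anal}, splitting into three regimes: (i) $d \geq 3$, $\ell \geq 1$; (ii) $d \geq 3$, $\ell = 0$; and (iii) $d = 2$. In regime (i) the claim is essentially immediate: the leading order of each of the expansions \eqref{expd3th1} ($d=3$), \eqref{expd5th1} (odd $d \geq 5$), \eqref{expd4th1} ($d=4$) and \eqref{expd6th1} (even $d \geq 6$) is exactly $\lambda^{\ell + (d-5)/2}$ times a multiple of $\sum_j a_j(\Phi) u_j$, and the next terms are of order $\lambda^{\ell + (d-1)/2}$, so the stated bound holds at once.

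In regime (ii), I would invoke Corollary \ref{supercor2}, which asserts that $a_j(\Phi_\nu) = 0$ whenever $\ell_\nu = 0$. This kills the leading coefficient of each of the expansions used in regime (i), and the next terms become the leading terms, which are of order $\lambda^{\ell + (d-1)/2} = \lambda^{(d-1)/2}$, giving the sharper bound claimed for $\ell = 0$. The mixed cross terms of the form $a_{kj}^{(1)} a_j(\Phi)$ appearing in dimensions three and four also contain a factor $a_j(\Phi)$ and therefore vanish as well, so there is no obstruction to pushing the estimate through.

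Regime (iii) requires splitting further by form degree. For $p=0$ or $p=2$, Proposition \ref{prop04} delivers $E_\lambda(\Phi) = O(\lambda^{\ell + 1/2})$ for every $\ell \geq 0$, which equals the claimed $\lambda^{(d-1)/2}$ at $\ell = 0$ and dominates the claimed $\lambda^{\ell - 3/2}$ for $\ell \geq 1$. For $p=1$ the bound at $\ell = 0$ is the content of Corollary \ref{ellzeroterm}, while the bounds for $\ell \geq 1$ can be read off Corollaries \ref{cor327} (when $\partial \calO \not= \emptyset$) and \ref{cor329} (when $\partial \calO = \emptyset$): the leading term at $\ell = 1$ is of order $\lambda^{-1/2}$, possibly improved by a logarithmic factor, and at $\ell \geq 2$ of order $\lambda^{\ell - 3/2}$. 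Since all the hard analytic work has already been done in the previous sections, the main obstacle here is really just careful bookkeeping; in particular, one has to make sure that the $O$-notation, interpreted on a bounded sector $|\arg \lambda| \leq \Theta$ of the logarithmic cover in even dimensions, absorbs the logarithmic prefactors in the Hahn expansions so that the stated polynomial bounds are genuinely uniform.
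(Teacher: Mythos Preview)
Your proposal is correct and follows the same approach as the paper, which simply presents Lemma~\ref{lemmafourone} as a summary of the expansions already obtained in Sections~\ref{doddanal}, \ref{devenanal}, and \ref{d2anal} without a separate proof. The only bookkeeping omission is that in regime (iii) with $p=0$ you cite Proposition~\ref{prop04}, which covers only $\partial\calO=\emptyset$; the case $p=0$, $d=2$, $\partial\calO\neq\emptyset$ requires Proposition~\ref{prop02} (which gives the even stronger bound $E_\lambda(\Phi)=O(\lambda^{1/2}/(-\log\lambda))$ for $\ell=0$ and $O(\lambda^{\ell+1/2})$ for $\ell>0$), but this does not affect the argument.
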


By unitarity of the scattering matrix the operator family $A_\lambda$ is holomorphic at zero in odd dimensions and Hahn-holomorphic in even dimensions, respectively. The expansion of Prop. \ref{EoutofHankel} together with the analytic properties of the Hankel function can be used to obtain much more detailed information about $A_\lambda$. 

%

\begin{theorem} \label{AEexpansion}
 Suppose that $\Phi$ is a spherical harmonic of degree $\ell$ and that for  $| \lambda|$ small
 $$
  E_\lambda(\Phi) = \lambda^{\ell +\frac{d-5}{2}} \sum_{\alpha,\beta} F_{\alpha,\beta}(\Phi) \lambda^\alpha (-\log \lambda)^{-\beta} + O_{C^\infty(M)}( \lambda^{\ell + \frac{d-1}{2}}),
 $$
 where $\sum_{\alpha,\beta} F_{\alpha,\beta}(\Phi) \lambda^\alpha (-\log \lambda)^{-\beta}$ is Hahn-holomorphic. If $(\alpha,\beta) < (2,0)$, the function $F_{\alpha,\beta}(\Phi)$ is harmonic and bounded, and in this case let
$F_{\alpha,\beta}^\nu(\Phi)$ be $\nu$-coefficient in its multipole expansion 
 $$
  F_{\alpha,\beta}(\Phi)(r,\theta) = \sum_\nu F_{\alpha,\beta}^\nu(\Phi) \frac{1}{r^{\ell_\nu + d-2}} \Phi_\nu
 $$
 for large $r\gg 0$.
We then have in case $\ell_\nu + \frac{d-2}{2} >0$
 \begin{gather*}
  \langle A_\lambda \Phi, \Phi_\nu \rangle_{L^2(\sphere)} \\= \frac{\rmi}{2} (-1)^{\ell_\nu} C_{d,\ell_\nu} \left(d -2+ 2 \ell_\nu \right) \left( \sum_{\alpha,\beta} F_{\alpha,\beta}^\nu(\Phi) \lambda^{\ell + \ell_\nu + d-4+\alpha} (-\log \lambda)^{-\beta} \right)+ O(\lambda^{\ell + \ell_\nu + d-2}).
 \end{gather*}
 If $d=2$ and $\ell_\nu=0$ we have for $|\lambda|$ small
  \begin{gather*}
  \langle A_\lambda \Phi, \Phi_\nu \rangle_{L^2(\sphere)} \\=  \rmi \sqrt{\frac{\pi}{2}} \left( \sum_{\alpha,\beta} F_{\alpha,\beta}^\nu(\Phi) \lambda^{\ell -2+\alpha} (-\log \lambda)^{-\beta-1} \right)+ O(\frac{\lambda^{\ell}}{-\log \lambda}) = \rmi \sqrt{\frac{\pi}{2}} F_{2,-1}^\nu(\Phi) \lambda^{\ell} + O(\frac{\lambda^{\ell}}{-\log \lambda}).
 \end{gather*}
\end{theorem}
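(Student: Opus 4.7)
The plan is to exploit the decomposition of Proposition~\ref{EoutofHankel},
\[
E_\lambda(\Phi)|_{M \setminus K} = \tilde j_\lambda(\Phi) + \tilde h^{(1)}_\lambda(A_\lambda \Phi),
\]
and match Hahn coefficients on both sides after projecting onto each multipole mode $\Phi_\nu$. The expansion of the Bessel sum in \eqref{besseltildexp} shows that $\tilde j_\lambda(\Phi)$ contributes only terms of the form $\lambda^{\ell+(d-1)/2+2k} r^{\ell+2k}$ with $k\geq 0$, all of which fall inside the $O(\lambda^{\ell+(d-1)/2})$ remainder of the hypothesised expansion of $E_\lambda(\Phi)$. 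Consequently, for every index with $(\alpha,\beta) < (2,0)$ the coefficient $F_{\alpha,\beta}(\Phi)$ coincides with the corresponding Hahn coefficient of $\tilde h^{(1)}_\lambda(A_\lambda\Phi)$ on the Euclidean end.

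For each $\nu$ with $\ell_\nu + (d-2)/2 > 0$, I will apply Lemma~\ref{superlemma2} to the single mode $c_\nu(\lambda)\Phi_\nu$, where $c_\nu(\lambda) := \langle A_\lambda\Phi,\Phi_\nu\rangle$. The lemma tells us that the leading behaviour in $r$ of $\tilde h^{(1)}_\lambda(c_\nu(\lambda)\Phi_\nu)$ as $\lambda\to 0$ is
\[
-\rmi\,\tfrac{1}{\sqrt{\pi}}\, 2^{\ell_\nu+(d-3)/2}\,\Gamma(\ell_\nu+(d-2)/2)\, c_\nu(\lambda)\, \lambda^{-\ell_\nu-(d-3)/2}\,\frac{\Phi_\nu(\theta)}{r^{\ell_\nu+d-2}},
\]
while higher-order Hankel corrections contribute only to the strictly weaker multipole modes $r^{-\ell_\nu-d+2+2k}$ with $k\geq 1$, hence do not interfere with the extraction of the $r^{-\ell_\nu-d+2}$ coefficient. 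Equating the $\Phi_\nu/r^{\ell_\nu+d-2}$ coefficients turns the identity into a Hahn equation in $\lambda$, whose solution shifts the $\lambda$-power by exactly $\ell_\nu+(d-3)/2$. A term $F^\nu_{\alpha,\beta}(\Phi)\lambda^{\ell+(d-5)/2+\alpha}(-\log\lambda)^{-\beta}$ on the right therefore produces a contribution of order $\lambda^{\ell+\ell_\nu+d-4+\alpha}(-\log\lambda)^{-\beta}$ in $\langle A_\lambda\Phi,\Phi_\nu\rangle$. The scalar prefactor collapses to $\tfrac{\rmi}{2}(-1)^{\ell_\nu}C_{d,\ell_\nu}(d-2+2\ell_\nu)$ after using the recursion $\Gamma(\ell_\nu+d/2) = (\ell_\nu+(d-2)/2)\Gamma(\ell_\nu+(d-2)/2)$ together with the $(-\rmi)^{\ell_\nu}$ conventions of Definition~\ref{hankelsums}; the $O(\lambda^{\ell+\ell_\nu+d-2})$ remainder absorbs both the $(\alpha,\beta)\geq(2,0)$ part of $E_\lambda(\Phi)$ and the subleading Hankel corrections, which lag by two additional powers of $\lambda$.

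The case $d=2$, $\ell_\nu=0$ is special because the small-argument asymptotic of $H^{(1)}_0$ recorded in \eqref{sphankelexpzerod2} is logarithmic rather than a negative power: the factor $\lambda^{-\ell_\nu-(d-3)/2}$ in the analysis above is replaced by something of order $-\log\lambda$, which shifts the log-index $\beta$ upward by one. This is precisely why the coefficient $F^\nu_{2,-1}(\Phi)$ (associated with a $\lambda^2(-\log\lambda)$ contribution to $E_\lambda(\Phi)$) feeds the scattering amplitude at order $\lambda^{\ell}$ through a single extra $(-\log\lambda)$ factor, producing the asserted constant $\rmi\sqrt{\pi/2}$. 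The main technical obstacle will be the careful bookkeeping: tracking the conventional factors $(-\rmi)^{\ell_\nu}$ from Definition~\ref{hankelsums}, verifying that each $F_{\alpha,\beta}$ with $(\alpha,\beta)<(2,0)$ is genuinely harmonic and bounded so that its multipole expansion is unambiguous, and checking the collapse of Gamma functions and powers of $2$ into the clean constant $\tfrac{\rmi}{2}(-1)^{\ell_\nu}C_{d,\ell_\nu}(d-2+2\ell_\nu)$.
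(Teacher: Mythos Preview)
Your proposal is correct and follows essentially the same route as the paper. The paper's argument is marginally more streamlined: rather than invoking Lemma~\ref{superlemma2}, it projects $E_\lambda(\Phi)$ against $\overline{\Phi_\nu}$ on a sphere of fixed large radius $R$, which yields the \emph{exact} identity
\[
\lambda^{\ell+\frac{d-5}{2}}\sum_{\alpha,\beta}F^\nu_{\alpha,\beta}(\Phi)\lambda^\alpha(-\log\lambda)^{-\beta}R^{-\ell_\nu-d+2}
=(-\rmi)^{\ell_\nu}h^{(1)}_{\ell_\nu}(\lambda R)\,\lambda^{\frac{d-1}{2}}\langle A_\lambda\Phi,\Phi_\nu\rangle+O(\lambda^{\ell+\frac{d-1}{2}}),
\]
and then simply divides by the Hankel function using the small-argument expansion of its reciprocal,
\[
\bigl(h^{(1)}_{\ell_\nu}(x)\bigr)^{-1}=\tfrac12\,\rmi^{\ell_\nu+1}(d-2+2\ell_\nu)\,C_{d,\ell_\nu}\,x^{\ell_\nu+d-2}+O(x^{\ell_\nu+d}),
\]
with the analogous $(-\log x)^{-1}$ formula when $d=2$, $\ell_\nu=0$. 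This avoids your separate discussion of higher-order Hankel corrections in $r$: because the projection at fixed $R$ already isolates the $\Phi_\nu$-mode exactly, no multipole bookkeeping is needed beyond reading off $F^\nu_{\alpha,\beta}$ from the left-hand side. The constant you compute via the Gamma recursion is of course the same as the one the paper reads off from the reciprocal-Hankel formula.
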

\begin{proof}
 We have, by Prop. \ref{EoutofHankel} and the expansion \eqref{besseltildexp}, 
 $$ \lambda^{\ell +\frac{d-5}{2}} \sum_{\alpha,\beta} F_{\alpha,\beta}(\Phi) \lambda^\alpha (-\log \lambda)^{-\beta} = \tilde h^{(1)}_\lambda(A_\lambda \Phi) + O_{C^\infty(M)}( \lambda^{\ell + \frac{d-1}{2}}).
 $$
 Multiplication by $\overline{\Phi_\nu}$ and integration over the sphere of radius $R \gg 0$ gives
 \begin{gather*}
  \lambda^{\ell +\frac{d-5}{2}} \sum_{\alpha,\beta} F_{\alpha,\beta}^\nu(\Phi) \lambda^\alpha (-\log \lambda)^{-\beta} R^{-\ell_\nu -d+2} \\= 
  (-\rmi)^{\ell_\nu} h^{(1)}_{\ell_\nu}(\lambda R) \lambda^{\frac{d-1}{2}} \langle A_\lambda \Phi, \Phi_\nu \rangle + O( \lambda^{\ell + \frac{d-1}{2}}).
 \end{gather*}
 In case $\ell_\nu + \frac{d-2}{2} >0$ theorem now follows from the expansion
 $$
   \left( h^{(1)}_{\ell_\nu}(x) \right)^{-1} = \frac{1}{2} \rmi^{\ell_\nu+1} \left(d -2+ 2 \ell_\nu \right) C_{d,\ell_\nu} \, x^{\ell_\nu+d-2} + O(x^{\ell_\nu+d}), \quad x \to 0,
  $$
 which is valid with this error term if $\ell_\nu + \frac{d-2}{2} \geq 1$ (see e.g. \cite[10.8.1]{olver2010nist} in the even dimensional case and  \cite[10.53.1,10.53.2]{olver2010nist} in the case of odd dimensions).  Note that there are no non-zero terms with $\ell_\nu=0$ in dimension $3$.
 If $d=2$ and $\ell_\nu=0$ we have
 $$
   \left( h^{(1)}_{\ell_\nu}(x) \right)^{-1} = \rmi \sqrt{\frac{\pi}{2}} (-\log x)^{-1} + O(\frac{1}{(\log x)^2}).
  $$
  In this case $F_{\alpha,\beta}^\nu(\Phi)=0$ if $(\alpha,\beta)<(2,-1)$.
\end{proof}

%
This shows the bounds for the scattering amplitude stated in Theorem \ref{scatterexp}. More precisely, 
in dimensions $2$, $3$ and $5$ the expansions of $A_\lambda$ are therefore.\vspace{0.5cm}\\
{\underline {\bf Case $d=3$:}}

 \begin{gather*}
 \langle A_\lambda \Phi, \Phi_\nu \rangle = -\frac{\rmi}{2} (2\ell+1) (2\ell_\nu+1)C_{3,\ell} \overline{C_{3,\ell_\nu} }  \,  \lambda^{\ell +\ell_\nu -1}  \\ \times \left( \sum_{j=1}^N a_j(\Phi) \overline{a_j(\Phi_\nu)} - \rmi  \lambda \sum_{j,k=1}^N 
 a_{kj}^{(1)}a_j(\Phi)  \overline{a_j(\Phi_\nu)} \right) + O(\lambda^{\ell+\ell_\nu+1})
 \end{gather*}

\noindent
{\underline {\bf Case $d=4$:}}

 \begin{gather*}
 \langle A_\lambda \Phi, \Phi_\nu \rangle = -2\rmi  (\ell+1) (\ell_\nu+1)C_{4,\ell} \overline{C_{4,\ell_\nu} }  \,  \lambda^{\ell +\ell_\nu }  \\ \times \left( \sum_{j=1}^N a_j(\Phi) \overline{a_j(\Phi_\nu)} - \frac{1}{4} \lambda^2 (-\log \lambda) \sum_{j,k=1}^N 
 a_{kj}^{(1)}a_j(\Phi)  \overline{a_j(\Phi_\nu)} \right) + O(\lambda^{\ell+\ell_\nu+2})
 \end{gather*}

\noindent
{\underline {\bf Case $d=2, p=1$:}}

 \begin{gather*}
 \langle A_\lambda \Phi, \Phi_\nu \rangle =
 - 2 \rmi \ell \, \ell_\nu \, C_{2,\ell}\,  \overline{C_{2,\ell_\nu}} \lambda^{\ell +\ell_\nu -2}\\\times  \left (\left(\sum_{j=1}^N a_j(\Phi) \overline{a_j(\Phi_\nu)}\right)  +  \frac{1}{-\log \lambda + \frac{\rmi \pi}{2} + \beta - \gamma} a_{\psi(\Phi_0)}(\Phi) \overline{a_{\psi(\Phi_0)}(\Phi_\nu)} \right. \\- \left. (-\log \lambda) \lambda^2 \left( \frac{1}{4}\sum_{j,k=1}^N 
 a_{kj}^{(2)}a_j(\Phi)  \overline{a_k(\Phi)} + \sum_{\ell_\mu=0}  a_{\varphi(\Phi_\mu)}(\Phi) \overline{a_{\varphi(\Phi_\mu)}(\Phi_\nu)}  \right) \right)  + O(\lambda^{\ell + \ell_\nu}).
 \end{gather*}

Note that $A_\lambda$ is (as an operator) Hahn holomorphic (Corollary \ref{Hahnmamplitude}). The above  implies that 
the expansion coefficients of order less than $\lambda^{d-2}$ must vanish, which can be summarised into the following corollary. 
\begin{corollary}
Suppose that $d \geq 3$, then for any $s \in \R$ we have
$\| A_\lambda \|_{L^2 \to H^s} = O(\lambda^{d-2})$. In case $d=2$ we have $\| A_\lambda \|_{L^2 \to H^s} = O(\frac{1}{-\log \lambda})$.
\end{corollary}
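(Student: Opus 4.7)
The plan is to combine the a priori operator-valued Hahn-holomorphy of $A_\lambda$ at zero with the scalar matrix-element expansions already derived in this section, using the fact that vector-valued spherical harmonics are a complete system in $L^2(\sphere;\Lambda^p\R^d)$.

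First, by Corollary \ref{Hahnmamplitude}, the family $A_\lambda$ is Hahn-holomorphic near zero (in a fixed sector of the logarithmic cover when $d$ is even) as a map into $\mathcal B(L^2(\sphere;\Lambda^p\R^d), H^s(\sphere;\Lambda^p\R^d))$. It therefore admits a convergent Hahn expansion
\[
A_\lambda \;=\; \sum_{(\alpha,\beta)} A_{\alpha,\beta}\,\lambda^\alpha(-\log\lambda)^{-\beta},
\]
with each $A_{\alpha,\beta}$ a bounded operator $L^2\to H^s$ and well-ordered exponent set. Pairing with two vector-valued spherical harmonics $\Phi_\mu,\Phi_\nu$ recovers the scalar Hahn expansion of $\langle A_\lambda\Phi_\mu,\Phi_\nu\rangle$, with coefficients $\langle A_{\alpha,\beta}\Phi_\mu,\Phi_\nu\rangle$. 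Since the $\Phi_\mu$ are orthonormal and complete, a bounded operator whose matrix in this basis vanishes identically must itself vanish.

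For $d\geq 3$, the expansions displayed just after Theorem \ref{AEexpansion}, combined with Theorem \ref{scatterexp}, yield $\langle A_\lambda\Phi_\mu,\Phi_\nu\rangle = O(\lambda^{d-2})$ for every pair of spherical harmonics: if $\ell_\mu,\ell_\nu\geq 1$ the leading order $\lambda^{\ell_\mu+\ell_\nu+d-4}$ already lies above $\lambda^{d-2}$, while if $\ell_\mu=0$ or $\ell_\nu=0$ the leading coefficient $\sum_j a_j(\Phi_\mu)\overline{a_j(\Phi_\nu)}$ vanishes by Corollary \ref{supercor2} and the remainder in Theorem \ref{scatterexp} is $O(\lambda^{\ell_\mu+\ell_\nu+d-2})\subseteq O(\lambda^{d-2})$. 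Consequently every Hahn coefficient $A_{\alpha,\beta}$ with $\alpha<d-2$ has identically zero matrix in the spherical-harmonic basis, hence equals the zero operator; this leaves $\|A_\lambda\|_{L^2\to H^s}=O(\lambda^{d-2})$.

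For $d=2$, the same mechanism is applied with Theorem \ref{maintheorem6} (for $p=0,2$) and with Corollary \ref{cor327} together with Proposition \ref{resexpd2p1no} (for $p=1$) as input: in each case the worst matrix entry is $O(1/(-\log\lambda))$. All Hahn coefficients $A_{\alpha,\beta}$ of strictly lower order must therefore vanish, producing $\|A_\lambda\|_{L^2\to H^s}=O(1/(-\log\lambda))$. The only non-routine ingredient is Corollary \ref{Hahnmamplitude}: it is the operator-valued Hahn-holomorphy that allows one to upgrade pointwise matrix-element bounds, which cancel purely in each exponent slot, to an operator-norm bound. Without that input, matrix-entry decay would not control $\|A_\lambda\|$ in any reasonable sense.
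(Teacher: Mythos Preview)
Your argument is correct and follows the same route as the paper: use Corollary~\ref{Hahnmamplitude} to obtain an operator-valued Hahn expansion of $A_\lambda$, then use the matrix-element bounds from Theorem~\ref{AEexpansion} and Theorem~\ref{scatterexp} (together with Corollary~\ref{supercor2}) to see that every operator coefficient of order below $\lambda^{d-2}$ (respectively $(-\log\lambda)^{-1}$) has vanishing matrix in the spherical-harmonic basis and is therefore zero. The paper states this in one sentence; you have spelled out the mechanism explicitly.

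One minor imprecision: for $d=3$ the remainder $r(\lambda)$ in Theorem~\ref{scatterexp} is only $O(\lambda^{\ell_\mu+\ell_\nu})$, not $O(\lambda^{\ell_\mu+\ell_\nu+1})$ as you write. This does not affect the conclusion, since when $\ell_\mu=0$ you can invoke the final clause of Theorem~\ref{scatterexp} (using $P^{(0)}=0$), and when $\ell_\nu=0$ with $\ell_\mu\geq 1$ the explicit two-term expansion for $d=3$ displayed after Theorem~\ref{AEexpansion} shows both displayed terms vanish (they each contain a factor $\overline{a_j(\Phi_\nu)}=0$), leaving $O(\lambda^{\ell_\mu+\ell_\nu+1})$ as you claim.
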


\section{Scattering and Cohomology} \label{cohoscatter}

In order to describe the cohomology spaces of $X$ and $M$ is is convenient to introduce some additional spaces which we describe first.
Since $(X,g)$ is Euclidean at infinity, there exist compact sets $K \subset X$ and $\tilde K \subset \R^d$ such that $X \setminus K$ is isometric to $\R^d \setminus \tilde K$. We choose $R>0$ large enough so that the interior of $B_R = B_R(0)$ contains $\tilde K \cup \calO$. Then, $\R^d \setminus \mathrm{int}(B_R)$ is isometric to the complement of a compact subset $Y \subset X$ with $\partial Y = \sphere_R$. We then have that $N = Y \setminus \calO$ is a compact subset of $M$ and $\partial N = \partial Y \cup \partial \calO$.\\
Let $(r,\theta) \in (R,\infty) \times \sphere$ be spherical coordinates on $M \setminus Y$. The substitution $x=\frac{1}{r}$ gives coordinates
$(x,\theta)  \in (0,\frac{1}{R}) \times \sphere$. Then $(x,\theta)  \in [0,\frac{1}{R}) \times \sphere$ are coordinates endowing the radial compactification 
$\overline X$ of $X$ with the structure of a manifold with boundary $\partial \overline X \cong  \sphere$. Similarly, $\overline M = \overline X \setminus \calO$
is the radial compactification of $M$ and has boundary $\partial \overline M = \partial \overline X \cup \partial \calO$.

We will use the ring $\R$ for the cohomology groups without further reference, so we will simply write $H^p(M)$ for $H^p(M,\R)$
and $H^p(M,\partial \calO)$ for $H^p(M,\partial \calO,\R)$ for the relative cohomology groups. Hence all cohomology groups may be realised by deRham cohomology, i.e. by the complex of differential forms.
Note that the inclusions $Y \hookrightarrow X$, $N \hookrightarrow M$, $X \hookrightarrow \overline X$, and $M \hookrightarrow \overline M$ are homotopy equivalences and hence the induced maps in cohomology are isomorphisms. We therefore have natural isomorphisms
\begin{gather*}
 H^p(N) \cong H^p(M) \cong  H^p(\overline M),\\
 H^p(N, \calO) \cong H^p(M,\calO) \cong  H^p(\overline M,\calO).
\end{gather*}

Since $M$ is the interior of a manifold with boundary $\overline M$ we also have natural isomorphisms
\begin{gather*}
 H^p_0(M) \cong  H^p(\overline M, \partial \overline X),\\
 H^p_0(M, \partial \calO) \cong  H^p(\overline M, \partial \overline X \cup \partial \calO) \cong H^p_0(X \setminus \calO),
\end{gather*}
induced by the inclusion maps. Unless stated otherwise we will identify $\partial \overline X$ with the sphere $\sphere$.

We then have the following standard exact sequences:
\begin{equation}
 \begin{tikzcd}[row sep=5ex, column sep=4ex]   \label{sequ1}
    \arrow[r,""] & H^{p-1}(\sphere)  \arrow[r,""] & H^p_0(M, \partial \calO)  \arrow[r,""]  & H^p(M, \partial \calO)    
    \arrow[r,""] &  H^{p}(\sphere)  \arrow[r,""] & {},
 \end{tikzcd}
 \end{equation}
 
 and
 
 \begin{equation}
 \begin{tikzcd}[row sep=5ex, column sep=5ex] \label{sequ2}
    \arrow[r,""] & H^{p-1}(\sphere)  \arrow[r,""] & H^p_0(M)  \arrow[r,""]  & H^p(M)    
    \arrow[r,""] &  H^{p}(\sphere)  \arrow[r,""] & {}.
 \end{tikzcd}
 \end{equation}

Recall that the kernel of $\Delta_{p,rel}$ consists of the $L^2$-harmonic $p$-forms $\mathcal{H}^p_{rel}(M)$, and this space is isomorphic to the $L^2$-cohomology spaces. As explained in the introduction we have in case $d \geq 3$: 
\begin{gather*}
 \mathcal{H}^d_{rel}(M)=\{0\},\\
  \mathcal{H}^d_{rel}(M) \cong H^p_0(M,\partial \mathcal{O}) \cong H^p(\overline M,\partial \mathcal{O} \cup \partial \overline X),
\end{gather*}
where the isomorphism $\mathcal{H}^d_{rel}(M) \to H^p(\overline M,\partial \mathcal{O} \cup \partial \overline X)$ is given by understanding an $L^2$-harmonic form $u \in \mathcal{H}^p_{rel}(M)$ as a differential form on $\overline M$ whose restriction to $\mathcal{O} \cup \partial \overline X$ vanishes. The fact that $u$ is smooth up to $\partial \overline X$ follows from the fact that $u$ has a multipole expansion of the form 
$$
 u (r,\theta) = \sum_\nu a_\nu(u) \frac{1}{r^{\ell_\nu+d-2}} \Phi_\nu(\theta) =  \sum_\nu a_\nu(u) x^{\ell_\nu+d-2} \Phi_\nu(\theta).
$$
In dimension $d=2$ we have
\begin{gather*}
 \mathcal{H}^0_{rel}(M)=\mathcal{H}^2_{rel}(M)=\{0\},\\
  \mathcal{H}^1_{rel}(M) \cong H^1( M,\partial \mathcal{O}),
\end{gather*}
in which case the isomorphism $\mathcal{H}^1_{rel}(M) \to H^1( M,\partial \mathcal{O})$ is given by simply mapping the square integrable harmonic form $u$ to its equivalence class in $H^p( M,\partial \mathcal{O})$.
We define the map
\begin{gather*}
  \sigma_\ell: \mathcal{H}^{p}_{rel}(M)  \to \mathcal{H}^p_\ell(\sphere),\\
  u \mapsto \sigma_{\ell}(u)= \sum_{\nu, \ell_\nu=\ell} a_\nu(u) \Phi_\nu.
\end{gather*}

We say $u$ has order $m$ if $a_\nu(u)=0$ when $\ell_\nu < m$. Thus, $u \in\mathcal{H}^p_{rel}(M)$ is of order $m$ if and only if
$u = O(r^{-(m + d-2)})$ for sufficiently large $r$. Denote the vector space of elements $ u \in \mathcal{H}^p_{rel}(M)$ of order $m$ by $\mathcal{H}^{p,m}_{rel}(M)$.
Of course we have
$$
 \mathcal{H}^{p,m+1}_{rel}(M) \subset  \mathcal{H}^{p,m}_{rel}(M)
$$
and therefore this defines a filtration of $\mathcal{H}^{p}_{rel}(M)$.
Since the multipole expansion converges on compact sets it follows from unique continuation that 
$$
\mathcal{H}^{p,\infty}_{rel}(M)=\bigcap\limits_{m\geq0} \mathcal{H}^{p,m}_{rel}(M) = \{0\}.
$$
We have the following exact sequence
\begin{equation}
 \begin{tikzcd}\label{DIAGRAMA}
   0  \arrow[r,""] & \mathcal{H}^{p,m+1}_{rel}(M)  \arrow[r,""]  &  \mathcal{H}^{p,m}_{rel}(M) \arrow[r,"\sigma_m"] &  \mathcal{H}_m(\sphere).
 \end{tikzcd}
 \end{equation}
 We will denote that quotient $\mathcal{H}^{p,m}_{rel}(M)/\mathcal{H}^{p,m+1}_{rel}(M)$ by $\mathcal{K}^{p,m}_{rel}(M)$.
 We can also use the $L^2$-inner product to identify $\mathcal{K}^{p,m}_{rel}(M)$ with the orthogonal complement of $\mathcal{H}^{p,m+1}_{rel}(M)$
 in $\mathcal{H}^{p,m}_{rel}(M)$. All together we have
 $$
  \mathcal{H}^{p}_{rel}(M) \cong \bigoplus_{m=0}^\infty \mathcal{K}^{p,m}_{rel}(M).
 $$
 Note that the map $\sigma_\ell$ does however not in general commute with the projection onto the summands in the direct sum.
 Contributions to the multipole expansion proportional to $\frac{1}{r^{\ell_\nu + d-2}} \Phi_\nu(\theta)$ are square integrable
 if and only if $2 \ell_\nu + d >4$. Therefore, we have $\mathcal{K}^{p,0}_{rel}(M) = \{0\}$ and $\mathcal{K}^{p,1}_{rel}(M) = \{0\}$ if $d=2$.
 As a consequence of Corollary \ref{supercor2} one always has $\mathcal{K}^{p,0}_{rel}(M) = \{0\}$ for any $d \geq 2$.
 
 Given $\Phi \in \mathcal{H}^p_\ell(\sphere)$ define 
 $$
  F_\ell(\Phi):= \lim_{\lambda \to 0}  \lambda^{-\ell -\frac{d-5}{2}} E_\lambda(\Phi).
 $$
 It follows from the top term of the expansion of $E_\lambda(\Phi)$ that
 the limit exists in the locally convex topological vector space $C^\infty(M; \Lambda^p T^* M)$ and is in $\mathcal{H}^p_{rel}(M)$. Therefore $F_\ell$ is a linear map
 $F_\ell: \mathcal{H}^p_\ell(\sphere) \to \mathcal{H}^p_{rel}(M)$. The following proposition paraphrases the asymptotic
 $$
  E_\lambda(\Phi) =  -(d-2+2\ell) C_{d,\ell} \lambda^{\ell +\frac{d-5}{2}}  \sum_{j=1}^N a_j(\Phi) u_j + o(\lambda^{\ell +\frac{d-5}{2}})
 $$
 as $\lambda \to 0$.
 \begin{proposition}
  The map $F_\ell$ is the adjoint of $-(d-2+2\ell) \overline{C_{d,\ell}} \; \sigma_\ell$, i.e.
  $$
     \langle F_\ell(\Phi), u \rangle_{L^2(M)} = -(d-2+2\ell) C_{d,\ell} \langle \Phi, \sigma_\ell(u) \rangle_{L^2(\sphere)}. 
  $$
 \end{proposition}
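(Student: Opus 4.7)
The strategy is to pair $u \in \mathcal{H}^p_{rel}(M)$ against the representation $E_\lambda(\Phi) = \chi \tilde{j}_\lambda(\Phi) - R_\lambda (\Delta_p - \lambda^2)(\chi \tilde{j}_\lambda(\Phi))$ from Proposition \ref{besseljefunction}, and to extract the leading order in $\lambda$ using Lemma \ref{superlemma}. Since $f_\lambda := (\Delta_p - \lambda^2)(\chi \tilde{j}_\lambda(\Phi)) = [\Delta_p, \chi] \tilde{j}_\lambda(\Phi)$ is compactly supported, the central simplification is that for $u$ in the finite-dimensional $\ker \Delta_{p,rel}$, self-adjointness yields $\langle R_\lambda f_\lambda, u\rangle_{L^2(M)} = \langle f_\lambda, R_{\overline{\lambda}} u\rangle_{L^2(M)} = -\lambda^{-2} \langle f_\lambda, u\rangle_{L^2(M)}$ for $\Im \lambda > 0$ (and then by meromorphic continuation on $\mathcal{R}$).

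Writing $\Phi = \sum_{\mu,\,\ell_\mu = \ell} c_\mu \Phi_\mu$ with $c_\mu = \langle \Phi, \Phi_\mu\rangle_{L^2(\sphere)}$, linearity and Lemma \ref{superlemma} applied termwise give
\begin{equation*}
\langle f_\lambda, u\rangle_{L^2(M)} = -(d-2+2\ell)\,C_{d,\ell}\,\lambda^{\ell + \frac{d-1}{2}}\sum_{\ell_\mu = \ell} c_\mu \overline{a_\mu(u)} + O\!\left(\lambda^{\ell + \frac{d+3}{2}}\right),
\end{equation*}
and the sum equals $\langle \Phi, \sigma_\ell(u)\rangle_{L^2(\sphere)}$ since the basis $\{\Phi_\mu\}$ is orthonormal and only $\ell_\mu = \ell$ contributes. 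The direct pairing $\langle \chi \tilde{j}_\lambda(\Phi), u\rangle_{L^2(M)}$ is controlled by Lemma \ref{expansionlemma} and is merely $O(\lambda^{\ell + \frac{d-1}{2}})$. Combining these,
\begin{equation*}
\langle E_\lambda(\Phi), u\rangle_{L^2(M)} = -(d-2+2\ell)\,C_{d,\ell}\,\lambda^{\ell + \frac{d-5}{2}}\,\langle \Phi, \sigma_\ell(u)\rangle_{L^2(\sphere)} + O\!\left(\lambda^{\ell + \frac{d-1}{2}}\right),
\end{equation*}
and dividing by $\lambda^{\ell + (d-5)/2}$ and letting $\lambda \to 0$ (along $\lambda > 0$, so that $\overline{\lambda} = \lambda$) yields the claim.

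The only real point requiring care is the identity $\langle R_\lambda f_\lambda, u\rangle = -\lambda^{-2}\langle f_\lambda, u\rangle$: one cannot directly evaluate this at $\lambda = 0$, where $R_\lambda$ has its $-P/\lambda^2$ pole, but for the leading $\lambda$-coefficient only the kernel-projection piece of $R_\lambda$ contributes to the pairing with $u$, since the continuous part of the spectral measure is $L^2$-orthogonal to $u$. Once this bookkeeping is done, the rest is routine, and everything reduces to Lemma \ref{superlemma}, which is the content-heavy step. The argument relies on $d \geq 3$ through the clean form of the multipole expansion of $u \in \mathcal{H}^p_{rel}(M)$ used in Lemma \ref{superlemma}; in dimension $d = 2$ the factor $(d - 2 + 2\ell)$ vanishes for $\ell = 0$, consistent with the fact that $F_0(\Phi) = \sqrt{2\pi}\,\varphi(\Phi)$ need not pair well with $\mathcal{H}^1_{rel}(M)$.
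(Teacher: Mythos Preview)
Your approach is correct and is essentially the same as the paper's: the paper simply records that the proposition is a restatement of the leading-order expansion
\[
E_\lambda(\Phi) = -(d-2+2\ell)\,C_{d,\ell}\,\lambda^{\ell+\frac{d-5}{2}}\sum_{j=1}^N a_j(\Phi)\,u_j + o\bigl(\lambda^{\ell+\frac{d-5}{2}}\bigr),
\]
established in Section~\ref{expansionsection} via exactly the ingredients you invoke (Proposition~\ref{besseljefunction}, the $-P/\lambda^2$ singularity of $R_\lambda$, and Lemma~\ref{superlemma}). You have unpacked that derivation directly rather than citing the finished expansion.

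One small technical point: your framing computes $\langle E_\lambda(\Phi), u\rangle_{L^2(M)}$ for $\lambda\neq 0$ and bounds the term $\langle \chi \tilde j_\lambda(\Phi), u\rangle_{L^2(M)}$ via Lemma~\ref{expansionlemma}. But that lemma gives only $C^\infty_{\mathrm{loc}}$ bounds, and $\chi\tilde j_\lambda(\Phi)$ is not in $L^2(M)$ (for $\Im\lambda>0$ it grows exponentially through the $h^{(2)}$ part; for $\lambda>0$ real it decays only like $r^{-(d-1)/2}$), so that pairing is not obviously defined, particularly when $d=3$, $\ell=1$. The clean fix is the one the paper implicitly uses: take the limit in $C^\infty_{\mathrm{loc}}$ first, so that the $\chi\tilde j_\lambda(\Phi)$ contribution drops out as $O(\lambda^{\ell+\frac{d-1}{2}})$ locally, and identify
\[
F_\ell(\Phi)=\lim_{\lambda\to 0}\lambda^{-\ell-\frac{d-1}{2}}\,P f_\lambda
=\sum_{j=1}^N\Bigl(\lim_{\lambda\to 0}\lambda^{-\ell-\frac{d-1}{2}}\langle f_\lambda,u_j\rangle\Bigr)u_j,
\]
an element of the finite-dimensional space $\mathcal{H}^p_{\mathrm{rel}}(M)\subset L^2$; then pair with $u$. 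This is equivalent to your resolvent identity $\langle R_\lambda f_\lambda,u\rangle=-\lambda^{-2}\langle f_\lambda,u\rangle$ (which is exactly the statement that only $Pf_\lambda$ survives the pairing) and uses the same Lemma~\ref{superlemma}; it just avoids the ill-defined intermediate $L^2$-pairing.
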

 
 \begin{corollary} \label{Aldensfavourite}
  If $\mathcal{K}^{p,m}_{rel}={0}$ for all $m<\ell$, then the range of $F_\ell$ equals $\mathcal{K}^{p,\ell}_{rel}(M)$.
 \end{corollary}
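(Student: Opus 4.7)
The plan is to deduce this directly from the preceding proposition, which identifies $F_\ell$ as a scalar multiple of the adjoint of $\sigma_\ell$. First I would observe that the hypothesis $\mathcal{K}^{p,m}_{rel}(M)=\{0\}$ for all $m<\ell$, combined with the orthogonal direct sum decomposition $\mathcal{H}^p_{rel}(M)\cong\bigoplus_{m\geq 0}\mathcal{K}^{p,m}_{rel}(M)$, forces $\mathcal{H}^p_{rel}(M)=\mathcal{H}^{p,\ell}_{rel}(M)$. In particular $\sigma_\ell$ is well-defined on all of $\mathcal{H}^p_{rel}(M)$, and from the exact sequence \eqref{DIAGRAMA} its kernel is exactly $\mathcal{H}^{p,\ell+1}_{rel}(M)$.

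Next I would invoke the adjoint identity
$$
\langle F_\ell(\Phi),u\rangle_{L^2(M)} = -(d-2+2\ell)\,C_{d,\ell}\,\langle\Phi,\sigma_\ell(u)\rangle_{L^2(\sphere)},
$$
and note that everything in sight is finite-dimensional, so standard linear algebra gives $\mathrm{range}(F_\ell)=(\ker\sigma_\ell)^\perp$ where the orthogonal complement is taken inside $\mathcal{H}^p_{rel}(M)$. Combining the two preceding observations,
$$
\mathrm{range}(F_\ell)=\bigl(\mathcal{H}^{p,\ell+1}_{rel}(M)\bigr)^\perp\cap\mathcal{H}^{p,\ell}_{rel}(M),
$$
and the right-hand side is precisely the identification of $\mathcal{K}^{p,\ell}_{rel}(M)$ with a subspace of $\mathcal{H}^{p,\ell}_{rel}(M)$ fixed earlier in the text. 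This gives the desired equality of vector subspaces of $\mathcal{H}^p_{rel}(M)$.

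The only subtlety worth flagging is the degenerate situation where the scalar $-(d-2+2\ell)\,C_{d,\ell}$ vanishes, which happens only when $d=2$ and $\ell=0$. In that case $F_0=0$ and the adjoint argument collapses, but by the remark following the definition of the filtration one has $\mathcal{K}^{p,0}_{rel}(M)=\{0\}$ unconditionally, so the asserted identity $\mathrm{range}(F_0)=\{0\}=\mathcal{K}^{p,0}_{rel}(M)$ still holds. I do not expect a real obstacle: once the preceding proposition is in hand, the statement is essentially a tautology about kernels and cokernels of a single linear map between finite-dimensional Hilbert spaces, combined with the telescoping of the filtration under the vanishing hypothesis.
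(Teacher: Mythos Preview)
Your proof is correct and follows essentially the same approach as the paper: the paper's proof simply notes that the hypothesis gives $\mathcal{H}^p_{rel}(M)=\mathcal{H}^{p,\ell}_{rel}(M)$ and then that the range of $F_\ell$ is the orthogonal complement of $\ker\sigma_\ell$, which is exactly your argument spelled out in more detail. Your additional observation about the degenerate case $d=2,\ \ell=0$ (where the scalar $-(d-2+2\ell)C_{d,\ell}$ vanishes) is a nice point that the paper's terse proof does not explicitly address.
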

 \begin{proof}
 By assumption $ \mathcal{H}^p_{rel}(M) =  \mathcal{H}^{p,\ell}_{rel}(M)$. The range of $F_\ell$ is then the orthogonal complement of the kernel of $\sigma_\ell$. 
 \end{proof}

 Our first observation is the following.
 \begin{proposition} \label{prop5.3}
  If $d \geq 3$ and $2 \leq p < d$, then  $\mathcal{K}^{p,1}_{rel}(M) =\{0\}$. Hence, in this case $ \mathcal{H}^{p}_{rel}(M) =  \mathcal{H}^{p,2}_{rel}(M)$.
 \end{proposition}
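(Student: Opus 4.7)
The plan is to show that the leading $\ell = 1$ coefficient $\omega_1 \in \mathcal{H}^p_1(\sphere)$ of any element $u \in \mathcal{H}^{p,1}_{rel}(M)$ must vanish under the hypothesis $2 \leq p < d$, $d \geq 3$. The argument has three parts: a reduction to a closed and co-closed form on the punctured Euclidean space; a purely algebraic analysis at the $\ell = 1$ level; and, for the extremal case $p = d-1$, a Stokes argument using the relative boundary condition.

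First, since $\Delta_{p,rel} = Q_{rel}^2$ and $Q_{rel} u = 0$, the form $u$ is both closed and co-closed. On the Euclidean end, write $u = \sum_{\ell \geq 1} u_\ell$ with $u_\ell = r^{-(\ell + d - 2)} \omega_\ell(\theta)$; each $u_\ell$ is $L_R$-homogeneous of the distinct weight $p - \ell - d + 2$ (with $R = r\partial_r$), so uniqueness of the homogeneous decomposition applied to $du = 0$ and $\delta u = 0$ yields $du_\ell = 0 = \delta u_\ell$ individually. In particular the leading piece $\tilde u := u_1 = r^{-(d-1)} \omega_1(\theta)$ is closed and co-closed on $\R^d \setminus \{0\}$.

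Second, writing $\omega_1 = \sum_{I,j} c_{I,j} \theta^j dx^I$ and $\alpha_j := \sum_I c_{I,j} dx^I$, a direct Cartesian computation gives
\[
d\tilde u = r^{-d} \eta - d r^{-d-2} \sum_{j,k} x^j x^k \, dx^k \wedge \alpha_j, \qquad \eta := \sum_j dx^j \wedge \alpha_j.
\]
Evaluating $d\tilde u = 0$ at the test point $x = r e_i$ yields the algebraic relation $dx^i \wedge \alpha_i = \eta/d$ for every $i$. Applying the identical analysis to the Hodge dual $*\tilde u$ (which has the same shape, with $\alpha_j$ replaced by $*\alpha_j$), and using $\delta \tilde u = 0 \iff d(*\tilde u) = 0$ together with the identity $dx^i \wedge *\alpha = \pm *(\iota_{\partial_i} \alpha)$, yields the dual relation that $\iota_{\partial_i} \alpha_i$ is independent of $i$.

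Third, decompose $\alpha_j = dx^j \wedge \beta_j + \gamma_j$ uniquely, where $\beta_j \in \Lambda^{p-1}\R^d$ and $\gamma_j \in \Lambda^p \R^d$ are the components not containing the factor $dx^j$. Then $\beta_j = \iota_{\partial_j} \alpha_j$, so the co-closedness conclusion forces $\beta_j = \beta$ independently of $j$, with $\beta$ avoiding every coordinate one-form. Since $p \geq 2$, the form $\beta$ has positive degree and must therefore vanish, so $\alpha_j = \gamma_j$. The closedness conclusion now reads $dx^j \wedge \gamma_j = \eta/d$ for every $j$, so every monomial of the common value $\eta/d$ must contain each of $dx^1, \ldots, dx^d$. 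For $p + 1 < d$ this is impossible unless $\eta/d = 0$; combined with $\gamma_j$ avoiding $dx^j$, $dx^j \wedge \gamma_j = 0$ then forces $\gamma_j = 0$, and hence $\omega_1 = 0$. In the remaining case $p = d - 1$, the algebraic argument leaves the one-parameter family $\omega_1 = c \sum_j (-1)^{j-1} \theta^j dx^{1 \cdots \widehat{j} \cdots d} = c\, \iota_{\partial_r} \mathrm{vol}$; a direct calculation gives $\int_{S_R} \tilde u = c\, \mathrm{Vol}(\sphere)$ independently of $R$. Stokes' theorem applied to $u$ on the region between $\partial \calO$ and $S_R$, using $du = 0$ and the relative boundary condition $\iota^* u|_{\partial \calO} = 0$, gives $\int_{S_R} u = 0$ for all large $R$; combined with the $O(R^{-1})$ contribution from $u - \tilde u = O(r^{-d})$, this forces $c = 0$, and again $\omega_1 = 0$.

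The main obstacle is the combinatorial step in the third part, which tracks how closedness and co-closedness interact with the basis-dependent splitting of $\alpha_j$; the fact that the $p = d-1$ case fails this algebraic reduction but succeeds via Stokes is precisely why the range of the proposition stops short of $p = d$ and why a global boundary-value argument must enter at the endpoint.
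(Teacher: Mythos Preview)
Your proof is correct and takes a genuinely different route from the paper's. The paper argues indirectly through the generalised-eigenfunction machinery: any $v \in \mathcal{K}^{p,1}_{rel}(M)$ is realised as $F_1(\Phi) = \lim_{\lambda\to 0}\lambda^{(3-d)/2}E_\lambda(\Phi)$; decomposing $\Phi = \der r \wedge \Phi_- + \iota_{\der r}\Phi_+$ and using $\der E_\lambda = -\rmi\lambda E_\lambda(\der r\wedge\,\cdot\,)$ one writes $v = \der v_- - \delta v_+$ with $v_\pm$ limits of eigenfunctions, and then invokes injectivity of $H^p_0(M,\partial\calO)\to H^p(M,\partial\calO)$ for $2\le p<d$ (from the long exact sequence) to kill $\der v_-$, and the Hodge-dual statement to kill $\delta v_+$. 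Your argument, by contrast, is entirely elementary: you work directly with the $\ell=1$ piece of the multipole expansion, reduce closedness and co-closedness to the pointwise linear-algebra constraints $dx^i\wedge\alpha_i = \eta/d$ and $\iota_{\partial_i}\alpha_i$ independent of $i$, and finish combinatorially for $p\le d-2$; only at $p=d-1$ do you need a global input (Stokes plus the relative boundary condition), which plays the same role as the paper's cohomological injectivity. Your approach has the virtue of being self-contained and of making transparent exactly where $p\ge 2$ enters (namely $\deg\beta\ge 1$ forces $\beta=0$); the paper's approach has the virtue of fitting into the same framework that handles Proposition~\ref{prop5.5} and of tying the result explicitly to the scattering-theoretic objects that are the paper's main concern.
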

 \begin{proof}
  First note that it follows from the exactness of \eqref{sequ1} that the canonical map $H^p_0(M,\partial \mathcal{O}) \to H^p(M,\partial \mathcal{O})$
  is injective if $2 \leq p < d$. Suppose that $v \in \mathcal{H}^{p,1}_{rel}(M)$ and assume that $v$ is orthogonal to  $\mathcal{H}^{p,2}_{rel}(M)$. Hence,  by Corollary \ref{Aldensfavourite},
  $$
   v = F(\Phi)=\lim_{\lambda\to 0} \lambda^{\frac{3-d}{2}} E_\lambda(\Phi)
  $$
  for some $\Phi \in \mathcal{H}_1^p(\sphere)$. Now write $\Phi = \der r \wedge \Phi_- + \iota_{\der r} \Phi_+$ and note that $\Phi_\pm$ is a linear combination
  of spherical harmonics of degree $0$ and $2$:
  $$
   \Phi_\pm = \Phi_{\pm,0} +  \Phi_{\pm,2}. 
  $$
  Since the $L^2$-harmonic forms are closed and co-closed the expansions of Theorem \ref{maintheorem1} give
  $\der E_\lambda(\Phi_{\pm,2}) =O(\lambda^\frac{d+3}{2})$ and $\delta E_\lambda(\Phi_{\pm,2}) =O(\lambda^\frac{d+3}{2})$.
  By Lemma \ref{lemmafourone} the limits 
  $$
   v_\pm = \rmi \lim_{\lambda\to 0} \lambda^{\frac{1-d}{2}} E_\lambda(\Phi_{\pm,0})
  $$
  exist and by construction $v = \der v_- - \delta v_+$. Moreover, $v_\pm$ satisfy relative boundary conditions. By Proposition \ref{superprop} we have that
  $\der v_-$ and $\delta v_+$ are $L^2$-harmonic. Therefore $v_-$ represents a cohomology class the kernel of the map $H^p_0(M,\partial \calO) \to H^p(M,\partial \calO)$. Thus, $\der v_- =0$. Similarly,  applying the Hodge star operator, $*\delta v_+$ represents a cohomology class the kernel of the map 
  $H^{d-p}_0(M) \to H^{d-p}(M)$. Hence, again $\delta v_+=0$.
 \end{proof}

\begin{proposition} \label{superprop}
 Assume $d \geq 3$ and 
 suppose that $\Phi \in \mathcal{H}^p_0(\sphere)$ is a spherical harmonic of degree $0$, i.e. $\Phi$ is independent of $\theta$.  The limit
 $$
  G(\Phi) = \lim_{\lambda \to 0} \lambda^{-\frac{d-1}{2}} E_\lambda(\Phi)
 $$
 exists. Moreover, $G(\Phi)$ is harmonic, satisfies relative boundary conditions, and  we have
 $$
   G(\Phi)(r, \theta) = C_{d,0} \Phi + O\left(\frac{1}{r^{d-2}}\right)
 $$
for sufficiently large $r$. In particular, we have that $d G \in L^2$.
\end{proposition}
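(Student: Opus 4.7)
The strategy is to use the representation from Proposition \ref{besseljefunction},
$$E_\lambda(\Phi) = \chi\tilde j_\lambda(\Phi) - R_\lambda f_\lambda, \qquad f_\lambda := [\Delta_p,\chi]\tilde j_\lambda(\Phi),$$
and control each piece as $\lambda \to 0$. The Bessel expansion \eqref{besseltildexp} with $\ell=0$ immediately gives $\lambda^{-\frac{d-1}{2}}\chi\tilde j_\lambda(\Phi) \to C_{d,0}\chi\Phi$ in $C^\infty(M;\Lambda^pT^*M)$. For the second piece, $f_\lambda$ is compactly supported and Hahn-holomorphic in $\lambda$; the crucial cancellation comes from combining Lemma \ref{superlemma} with $\ell_\nu=0$ and Corollary \ref{supercor}. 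Since every $u \in \ker_{L^2}(\Delta_{p,\mathrm{rel}})$ has vanishing $\ell=0$ coefficient in its multipole expansion, the leading term in the Lemma vanishes and $\langle f_\lambda, u\rangle = O(\lambda^{\frac{d+3}{2}})$; the same estimate holds for the operator $P^{(1)}$, which is built from these inner products. Consequently every singular term in the resolvent expansions from Sections \ref{doddanal}--\ref{devenanal} (the $-P/\lambda^2$ in every dimension, $\rmi P^{(1)}/\lambda$ in $d=3$, and $\tfrac{1}{4} P^{(1)}(-\log\lambda)$ in $d=4$) applied to $f_\lambda$ produces a quantity of order at least $\lambda^{\frac{d-1}{2}}$ with at worst logarithmic corrections, while the Hahn-holomorphic regular part $B(\lambda)f_\lambda$ is of order $\lambda^{\frac{d-1}{2}}$. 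Dividing by $\lambda^{\frac{d-1}{2}}$ therefore yields a Hahn-holomorphic family in $C^\infty(M;\Lambda^pT^*M)$ whose value at zero defines $G(\Phi)$. Harmonicity follows by applying $\Delta_p$ to $\lambda^{-\frac{d-1}{2}}E_\lambda(\Phi)$ and using $(\Delta_p - \lambda^2)E_\lambda(\Phi) = 0$ together with Lemma \ref{expansionlemma}; the relative boundary conditions pass to the limit because convergence holds in $C^\infty$ up to $\partial\calO$.

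For the asymptotic behaviour at infinity I work on $M\setminus K$ via Proposition \ref{EoutofHankel},
$$E_\lambda(\Phi) = \tilde j_\lambda(\Phi) + \tilde h^{(1)}_\lambda(A_\lambda\Phi).$$
At any fixed $(R,\theta)$ with $R$ large, $\lambda^{-\frac{d-1}{2}}\tilde j_\lambda(\Phi)(R\theta) \to C_{d,0}\Phi$ as $\lambda \to 0$. For the Hankel summand, Corollary \ref{supercor2} gives $P^{(0)}=0$ and Theorem \ref{scatterexp} then yields $\langle A_\lambda\Phi, \Phi_\nu\rangle = O(\lambda^{\ell_\nu + d-2})$; substituting this into the small-argument Hankel asymptotic from Lemma \ref{superlemma2}, the $\ell_\nu$-th mode of $\lambda^{-\frac{d-1}{2}}\tilde h^{(1)}_\lambda(A_\lambda\Phi)(R\theta)$ converges as $\lambda \to 0$ to a quantity of order $R^{-(\ell_\nu + d-2)}$. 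Hence $G(\Phi)(r,\theta) = C_{d,0}\Phi + O(r^{-(d-2)})$ as $r \to \infty$. Since $\Phi$ is a constant differential form we have $d\Phi=0$, so $dG(\Phi) = O(r^{-(d-1)})$ at infinity and $\int |dG(\Phi)|^2\,dV \sim \int_{R_0}^\infty r^{1-d}\,dr$ converges for $d\geq 3$.

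The principal difficulty is the cancellation in the leading order of $\langle f_\lambda, u\rangle$: without Corollary \ref{supercor}, the $-P/\lambda^2$ singularity of the resolvent would force $E_\lambda(\Phi)$ to behave like $\lambda^{\frac{d-5}{2}}$ rather than $\lambda^{\frac{d-1}{2}}$, and the rescaled limit defining $G(\Phi)$ would fail to exist at the stated rate.
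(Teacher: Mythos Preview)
Your proof is correct and follows essentially the same route as the paper. The paper's argument is much terser because the bound $E_\lambda(\Phi)=O(\lambda^{\frac{d-1}{2}})$ for $\ell=0$ has already been established in Section~\ref{expansionsection} (immediately after \eqref{expd5th1}--\eqref{expd4th1}), so it simply says ``$E_\lambda(\Phi)$ is Hahn holomorphic and of order $\lambda^{\frac{d-1}{2}}$'' and moves on; you instead re-derive that bound from scratch via the resolvent expansion and the cancellation in Lemma~\ref{superlemma} forced by Corollary~\ref{supercor2}. For the asymptotics at infinity the paper applies Lemma~\ref{superlemma2} directly to $\tilde h^{(1)}_\lambda(A_\lambda\Phi)=E_\lambda(\Phi)-\tilde j_\lambda(\Phi)=O(\lambda^{\frac{d-1}{2}})$, whereas you take the slightly more circuitous path of first invoking Theorem~\ref{scatterexp} (with $P^{(0)}=0$) to bound $\langle A_\lambda\Phi,\Phi_\nu\rangle$; both arrive at the same conclusion.
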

\begin{proof}
 The function $G(\Phi)$ exists because $E_\lambda(\Phi)$ is Hahn holomorphic and of order $\lambda^{\frac{d-1}{2}}$.
 By construction this function is harmonic and satisfies relative boundary conditions. The asymptotic behaviour
  follows from Lemma \ref{superlemma2}, as for large $\theta$ we have $E_\lambda(\Phi)(r,\theta) = \tilde j_\lambda(\Phi)(r,\theta) + \tilde h^{(1)}_\lambda(A_\lambda \Phi)$. We have 
 $$
  \lim_{\lambda \to 0}  \lambda^{-\frac{d-1}{2}} \tilde h^{(1)}_\lambda(\langle A_\lambda \Phi, \Phi_\nu \rangle   \Phi_{\nu}) = C_\nu\frac{1}{r^{\ell_\nu+d-2}} \Phi_\nu.
 $$

 Thus,  $ \lim\limits_{\lambda \to 0}   \lambda^{-\frac{d-1}{2}} \tilde j_\lambda(\Phi) = C_{d,0} \Phi$
 implies the asymptotic form.
\end{proof}

 \begin{proposition} \label{prop5.5}
   If $d \geq 3$, we have that $\mathcal{K}^{1,1}_{rel}(M)$ is canonically isomorphic to the kernel of the map $H_0^1(M,\partial \calO) \to H^1(M,\partial \calO)$, i.e.
   $\mathcal{K}^{1,1}_{rel}(M) =\{0 \}$ if $\mathcal{O}= \emptyset$ and
   $\mathcal{K}^{1,1}_{rel}(M) \cong \R$ if $\mathcal{O} \not= \emptyset$. In the latter case $\mathcal{K}^{1,1}_{rel}(M)$ is generated
   by $\der G(1)$.
 \end{proposition}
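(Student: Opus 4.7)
The plan is to compute the image of the map $F_1$ from Corollary \ref{Aldensfavourite} explicitly and identify it with the connecting homomorphism in the long exact sequence \eqref{sequ1}.

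First, since $\mathcal{K}^{1,0}_{rel}(M) = \{0\}$ by Corollary \ref{supercor2}, Corollary \ref{Aldensfavourite} gives that $\mathcal{K}^{1,1}_{rel}(M)$ is the image of $F_1: \mathcal{H}^1_1(\sphere) \to \mathcal{H}^1_{rel}(M)$ modulo $\mathcal{H}^{1,2}_{rel}(M)$. Following the proof of Proposition \ref{prop5.3}, I would decompose $\Phi = \der r \wedge \Phi_- + \iota_{\der r}\Phi_+$ with $\Phi_\pm = \Phi_{\pm,0} + \Phi_{\pm,2}$, and use Propositions \ref{difftheo} and \ref{superprop} to compute
\[
F_1(\der r \wedge \Phi_{-,0}) = \rmi\, \der G(\Phi_{-,0}), \qquad F_1(\iota_{\der r} \Phi_{+,0}) = -\rmi\, \delta G_2(\Phi_{+,0}),
\]
where $G_2$ denotes the analogue of $G$ for constant 2-forms. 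The degree-2 contributions should vanish by the same closed/co-closed argument used in Proposition \ref{prop5.3}: the leading term $\sum a_j u_j$ in the expansion of Theorem \ref{maintheorem1} dies when differentiated or co-differentiated, forcing $\der E_\lambda(\Phi_{-,2}) = O(\lambda^{(d+3)/2})$ and $\delta E_\lambda(\Phi_{+,2}) = O(\lambda^{(d+3)/2})$, so that $F_1$ of the degree-2 pieces is zero.

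Next I would show the $\iota_{\der r}\Phi_{+,0}$ contribution vanishes identically, by the $\delta v_+$ argument from Proposition \ref{prop5.3}: $\delta G_2(\Phi_{+,0})$ is an $L^2$-harmonic one-form with relative boundary conditions, so $*\delta G_2(\Phi_{+,0}) \in \mathcal{H}^{d-1}_{abs}(M)$ and satisfies $*\delta G_2(\Phi_{+,0}) = \pm \der(*G_2(\Phi_{+,0}))$; since $\mathcal{H}^{d-1}_{abs}(M) \cong H^{d-1}(M)$ for $d \geq 3$ via the Hodge isomorphism and the right hand side is exact, this forces $\delta G_2(\Phi_{+,0}) = 0$. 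After this reduction, $\mathcal{K}^{1,1}_{rel}(M)$ is generated by the single class represented by $\der G(1)$.

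For the cohomological identification, the key observation is that $G(1)/C_{d,0}$ extends smoothly to $\overline M$ with boundary values $0$ on $\partial\calO$ and $1$ on $\partial \overline X \cong \sphere$, which exhibits $[\der G(1)]/C_{d,0} \in H^1_0(M, \partial \calO)$ as the image of $1 \in H^0(\sphere)$ under the connecting homomorphism $\partial$ of sequence \eqref{sequ1} (coming from the triple $(\overline M, \partial \overline X \cup \partial\calO, \partial\calO)$). This class is automatically in $\ker(H^1_0 \to H^1)$ since $G(1)$ itself is a valid potential in $H^1(M, \partial\calO)$, and by exactness together with $H^1(\sphere) = 0$ for $d \geq 3$ this kernel coincides with the image of $\partial$. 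For $\calO = \emptyset$ one has $G(1) = C_{d,0}$ and $\der G(1) = 0$, giving $\mathcal{K}^{1,1}_{rel}(M) = \{0\}$; the corresponding vanishing of $\partial$ follows from $H^0(M) \to H^0(\sphere)$ being surjective. For $\calO \not= \emptyset$, the maximum principle makes $G(1)$ non-constant; the main residual issue is then to verify that the leading degree-$1$ multipole of $\der G(1)$ is non-zero so that $\der G(1) \notin \mathcal{H}^{1,2}_{rel}(M)$. I expect this to be the main technical point, but a Green's identity calculation on the annulus bounded by $\partial\calO$ and a large sphere, coupled with the Hopf lemma which guarantees $\int_{\partial\calO} \partial_\nu G(1)\,dS \neq 0$, forces the $1/r^{d-2}$ coefficient in the expansion of $G(1)$ at infinity, and hence the leading $1/r^{d-1}$ term of $\der G(1)$, to be non-zero.
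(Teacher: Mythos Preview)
Your approach is essentially the same as the paper's: both invoke Corollary \ref{Aldensfavourite} to identify $\mathcal{K}^{1,1}_{rel}(M)$ with the range of $F_1$, decompose $\Phi \in \mathcal{H}^1_1(\sphere)$ as $\der r \wedge \Phi_- + \iota_{\der r}\Phi_+$, reduce via Propositions \ref{difftheo} and \ref{superprop} to the degree-zero parts $\Phi_{\pm,0}$, and eliminate the $\delta v_+$ contribution by noting that $*\delta v_+$ is an exact $L^2$-harmonic $(d-1)$-form with absolute boundary conditions.

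The one place where you do more work than needed is the final Hopf lemma step. Corollary \ref{Aldensfavourite} asserts that the range of $F_1$ \emph{equals} $\mathcal{K}^{1,1}_{rel}(M)$, realised as the orthogonal complement of $\mathcal{H}^{1,2}_{rel}(M)$ inside $\mathcal{H}^{1,1}_{rel}(M)$ --- not merely that it surjects onto the quotient. Since $\der G(1) = -\rmi\, F_1(\der r)$ lies in this range, it is automatically orthogonal to $\mathcal{H}^{1,2}_{rel}(M)$; hence $\der G(1)\neq 0$ (immediate from the maximum principle when $\partial\calO\neq\emptyset$, as $G(1)$ vanishes on $\partial\calO$ and tends to $C_{d,0}\neq 0$ at infinity) already forces $\der G(1)\notin \mathcal{H}^{1,2}_{rel}(M)$. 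There is no need to locate the non-vanishing degree-one multipole directly. Conversely, your explicit identification of $[\der G(1)]$ with the image of $1\in H^0(\sphere)$ under the connecting homomorphism of \eqref{sequ1} makes the ``canonical isomorphism'' in the statement more transparent than the paper's own proof, which in effect only matches dimensions.
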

 \begin{proof}
  The map 
  $$\mathcal{H}_1^1(\sphere) \to \mathcal{K}^{1,1}_{rel}(M), \quad \Phi \mapsto  \rmi \lim_{\lambda\to 0} \lambda^{\frac{3-d}{2}} E_\lambda(\Phi_\pm)$$
  is onto. Let $v$ be the image of some $\Phi \in \mathcal{H}_1^1(\sphere)$.
  As in the proof of the previous proposition write $\Phi = \der r \wedge \Phi_- + \iota_{\der r} \Phi_+$ for $\Phi \in \mathcal{H}_1(\sphere)$.
  Then, $v_\pm=\rmi \lim_{\lambda\to 0} \lambda^{\frac{3-d}{2}} E_\lambda(\Phi_\pm)$ and as above $v = d v_- - \delta v_+$, where $v_\pm = G(\Psi_\pm)$ and $\Psi_\pm$ is the degree $0$
  part of $\Phi_\pm$. If $\partial \mathcal{O} = \emptyset$ then $v_-$ is proportional to the constant function and therefore $\der v_-=0$. If $\mathcal{O} \not= \emptyset$ then $d v_- \not= 0$. By construction $\der v_-$ satisfies harmonic boundary conditions and is $L^2$-harmonic. We will now show that
  $\delta v_+$ vanishes, thus completing the proof. By construction $* \delta v_+$ is $L^2$-harmonic, satisfies absolute boundary conditions and is exact. 
It represents therefore a class in the kernel of the map 
$H^{d-1}_0(M) \to H^{d-1}(M)$. It follows that $\delta v_+=0$.
 \end{proof}
 
\section{The Birman-Krein formula and expansions of the spectral shift function} \label{BMformsec}

The spectral shift function usually describes the trace of the difference of functions of perturbed and unperturbed operators in scattering theory.
In our setting these operators act on different Hilbert spaces so a suitable domain decomposition is needed. Let $K$ be a compact subset of $M$
such that $M \setminus K$ is isometric to $\R^d \setminus \tilde K$, with $\tilde K$  a compact subset of $\R^d$. Hence, $L^2(M \setminus K)$ is identified
with $L^2(\R^d \setminus \tilde K)$. Let $p$ be the orthogonal projection $L^2(M) \to L^2(M \setminus K)$, and let  $p_0$ be the orthogonal projection
$L^2(\R^d) \to L^2(\R^d \setminus \tilde K)$. The Birman-Krein formula then reads:

\begin{theorem} \label{Beerman}
 Let $f \in C^\infty_0(\R)$ be an even compactly supported smooth function. Then,
 \begin{gather*}
  \Tr \left( (1-p) f(\Delta^{1/2}) (1-p) \right) - \Tr \left( (1-p_0) f(\Delta_0^{1/2}) (1-p_0) \right) \\+ \Tr \left( p f(\Delta^{1/2})p - p_0 f(\Delta_0^{1/2}) p_0 \right) = (\beta_p + \beta_{\mathrm{res}}) f(0) \\
  +\frac{1}{2 \pi \rmi }\int_{0}^\infty f(\lambda) \Tr_{L^2(\sphere)}\left ( S^*(\lambda) S^{\;\prime}(\lambda) \right) \der \lambda.
 \end{gather*}
 Here $\beta_p = \dim \mathcal{H}^p_{rel}(M)$ is the dimension of the $L^2$-kernel of $\Delta_{p,rel}$, i.e. the relative $L^2$-Betti number of $M$. The number 
 $\beta_{\mathrm{res}}$ is zero unless $d=2, p=1, \partial \calO \not= \emptyset$, in which case we have $\beta_{\mathrm{res}}=1$.
 \end{theorem}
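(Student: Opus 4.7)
The plan is to combine the spectral representation of Theorem~\ref{StoneF} on $M$ with the analogous representation on $\R^d$ (where the generalised eigenfunctions are the undistorted waves $\tilde j_\lambda(\Phi_\nu)$ and $\Delta_0$ has no zero eigenvalue), and then use Proposition~\ref{EoutofHankel}, namely $E_\lambda(\Phi_\nu)|_{M\setminus K} = \tilde j_\lambda(\Phi_\nu) + \tilde h^{(1)}_\lambda(A_\lambda \Phi_\nu)$, to isolate the contribution of $S_\lambda = \mathrm{id} + A_\lambda$. First I would verify that each of the three traces on the left is well defined: the two compact traces are trace class because $1-p$ (resp.\ $1-p_0$) is the indicator of a compact set and Theorem~\ref{kernelconv} provides a smooth integral kernel for $f(\Delta^{1/2})$; the difference $pf(\Delta^{1/2})p - p_0 f(\Delta_0^{1/2})p_0$ is trace class (though neither summand is individually) by a standard gluing/resolvent comparison showing that $f(\Delta^{1/2})$ and $f(\Delta_0^{1/2})$ agree modulo trace class on the Euclidean end for even $f \in C_0^\infty(\R)$.

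Next, I would expand each trace using the kernel formula of Theorem~\ref{kernelconv}. The discrete part on $M$ contributes $f(0) \beta_p$ (no corresponding discrete part exists on $\R^d$); the continuous contribution to the sum of the three traces takes the schematic form
$$
 \frac{1}{2\pi}\sum_\nu \int_0^\infty f(\lambda)\left(\int_M|E_\lambda(\Phi_\nu)(x)|^2\,\der x - \int_{\R^d}|\tilde j_\lambda(\Phi_\nu)(x)|^2\,\der x\right)\der\lambda,
$$
the integrands being understood via the decomposition $1 = p + (1-p)$. Substituting $E_\lambda = \chi \tilde j_\lambda + \tilde h^{(1)}_\lambda(A_\lambda \cdot)$ on the exterior and $\tilde j_\lambda = \tfrac12(\tilde h^{(1)}_\lambda + \tilde h^{(2)}_\lambda)$, the bulk terms cancel against the free-space contribution and the remainder reduces to a surface integral over a sphere of radius $R$ which, as $R \to \infty$, picks up only the leading Hankel asymptotics. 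These yield matrix elements of $S_\lambda$; an integration by parts in $\lambda$ against $f$ together with the unitarity $S_{\overline\lambda}^* S_\lambda = \mathrm{id}$ then assembles the remainder into $\frac{1}{2\pi\rmi}\int_0^\infty f(\lambda)\Tr(S_\lambda^* S_\lambda^{\;\prime})\,\der\lambda$.

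For the resonant correction in the case $d=2,\ p=1,\ \partial \calO \not= \emptyset$, the resolvent expansion of Theorem~\ref{resexpd2p1} carries the extra singular term $-\lambda^{-2}(-\log\lambda + \tfrac{\rmi\pi}{2}+\beta-\gamma)^{-1} Q$, absent on $\R^d$. Via Stone's formula and Remark~\ref{awesomeremark} this produces an additional atomic contribution at $\lambda=0$ of total mass equal to $\dim \mathrm{range}\, Q = 1$, contributing an extra $f(0)$ in the final formula, i.e.\ $\beta_\mathrm{res}=1$. In all other cases the resolvent expansions of Section~\ref{expansionsection} show no such zero-energy mass arises and $\beta_\mathrm{res}=0$.

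The main obstacle is the rigorous kernel manipulation in the second step: exchanging the trace with the $\lambda$-integral and the $\nu$-sum, and controlling the $r \to \infty$ surface contributions. This requires uniform-in-$\nu$ smoothness estimates on $E_\lambda(\Phi_\nu)$ (Lemmas~\ref{expansionlemma} and~\ref{Abound}) to obtain absolute convergence in a suitable Schatten class, together with a Riemann--Lebesgue argument for the oscillatory Hankel tails. A secondary difficulty, in even dimensions, is that the relevant expansions live on a logarithmic cover of $\C$; this is handled by working in a fixed sector $|\arg\lambda| \leq \Theta$, checking that the trace-class bounds are uniform in $\Theta$, and passing to the limit at the end of the computation.
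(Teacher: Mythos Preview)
Your overall architecture matches the paper's: spectral representation via Theorem~\ref{kernelconv}, Mercer for the compact pieces, Corollary~\ref{tracecor} for the trace-class property of the exterior difference, and a reduction to boundary data on the sphere of radius $R$. Two points, however, are not as you describe them.

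First, the appearance of $S^{\,\prime}_\lambda$ does not come from ``an integration by parts in $\lambda$ against $f$''. The paper uses the Maass--Selberg identity: differentiating $(\Delta-\lambda^2)E_\lambda=0$ gives $(\Delta-\lambda^2)E^{\,\prime}_\lambda = 2\lambda E_\lambda$, so that $\int_{M_R}|E_\lambda|^2 = \tfrac{1}{2\lambda}\,b_R(E^{\,\prime}_\lambda,E_\lambda)$ with $b_R$ the boundary pairing on $\partial M_R$. This is what converts the divergent bulk integrals into finite boundary terms \emph{before} any $R\to\infty$ limit, and it is what makes $A^{\,\prime}_\lambda$ (hence $S^{\,\prime}_\lambda$) appear, via $\frac{d}{d\lambda}\tilde h^{(1)}_\lambda(A_\lambda\Phi_\nu)$ inside $b_R$. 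Your ``substitute Hankel decomposition, bulk cancels, surface term remains'' step is not well-posed as stated, because the individual bulk integrals of $|\tilde j_\lambda|^2$ and of the Hankel pieces do not exist separately; the Maass--Selberg device is exactly what avoids this.

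Second, your account of $\beta_{\mathrm{res}}$ is not correct. The singular term $-\lambda^{-2}(-\log\lambda+\cdots)^{-1}Q$ in the resolvent does \emph{not} produce an atom in the spectral measure; the continuous spectrum remains absolutely continuous at $0$. In the paper, after the Maass--Selberg reduction one is left with cross terms of the form $\langle A_\lambda\Phi_\nu,\Phi_\nu\rangle\,b_R(\tilde h^{(1)\,\prime}_\lambda,\tilde h^{(2)}_{\overline\lambda})$ plus complex conjugate. These are handled by a contour-deformation lemma (Lemma~\ref{superhankel}, using an almost analytic extension of $f$ and uniform Hankel bounds) which shows they vanish as $R\to\infty$ \emph{except} when $d=2,\ \ell_\nu=1$, where $\langle A_\lambda\Phi_\nu,\Phi_\nu\rangle\sim -\rmi\pi/(-\log\lambda)$ and the lemma produces exactly an extra $f(0)$. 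Thus $\beta_{\mathrm{res}}$ arises as a residual boundary contribution tied to the low-energy behaviour of $A_\lambda$, not from any point mass.
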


As similar statement involving domain decomposition was proved by Christiansen (\cite{christiansen1998spectral}) in the special case of functions and also in the more general setting of scattering manifolds in \cite{christiansen1999weyl}. The case of obstacle scattering in $\R^3$ for functions is discussed in great detail in Taylor's book (\cite[Chapter 9]{MR2743652}). 
We are considering differential forms and also allow the test function $f$ to be non-zero in any dimension. For the sake of completeness we provide a detailed proof in our setting\begin{proof}
Since both sides are distributions in $\mathcal{D}'(\R)$ it suffices to check this for a dense class of functions. We will thus assume here that $f$ is real analytic in some neighborhood of zero, depending on $f$, and real valued on the real line.
By Theorem \ref{kernelconv}, the operators $f(\Delta^{1/2})$ and $f(\Delta^{1/2}_0)$ have smooth integral kernels $k(x,y)$ and $k_0(x,y)$ respectively.
We define the family $(k_\nu)_\nu$ of smooth kernels $k_\nu \in C^\infty(M \times M; \Lambda^p T^*M \boxtimes (\Lambda^p T^*M)^*)$ by
\begin{gather*}
 k_\nu(x,y) =  \frac{1}{4\pi}   \int_{-\infty}^\infty f(\lambda) E_{\lambda}(\Phi_\nu)(x) \otimes E_{\lambda}(y)(\Phi_\nu)^*\,\der\lambda. 
\end{gather*}
In the same way we construct $k_{0,\nu} \in C^\infty(\R^d \times \R^d; \Lambda^p T^*\R^d \boxtimes (\Lambda^p T^* \R^d)^*)$ for $f(\Delta^{1/2}_0)$.
 By Theorem  \ref{kernelconv}   and Mercer's Theorem $(1-p) f(\Delta^{1/2}) (1-p)$ and $(1-p_0) f(\Delta_0^{1/2}) (1-p_0)$ are trace-class and their trace is given by  \begin{gather*}
  \Tr \left( (1-p) f(\Delta^{1/2}) (1-p) \right) = \int_K \tr\, k(x,x) \der x=\beta_p f(0)+\int_K \sum_\nu  \tr\, k_\nu(x,x) \der x,\\
  \Tr \left( (1-p_0) f(\Delta_0^{1/2}) (1-p_0) \right) = \int_{\tilde K}  \tr \, k_{0}(x,x) \der x=\sum_{\nu} \int_{\tilde K}\sum_\nu   \tr \, k_{0,\nu}(x,x) \der x,
 \end{gather*}
 where $\tr$ denotes the pointwise trace on the fibre $\mathrm{End}(\Lambda^p T^*_x M)$ of $\Lambda^p T^*M \boxtimes (\Lambda^p T^*M)^*$ at the point $(x,x)$.
 We have used that fact that $f$ is even here and Remark \ref{awesomeremark}.
  Now let $p_R$ be the indicator function of a large ball $B_R$ such that $\tilde K \subset B_R$. Then, again by Mercer's theorem:
 $$
  \Tr \left( p_R \left (  p f(\Delta^{1/2})p - p_0 f(\Delta_0^{1/2}) p_0  \right)p_R \right) = \int_{B_R \setminus \tilde K}  \tr\, (k(x,x)  - k_0(x,x)) \der x.
 $$ 
 By Corollary \ref{tracecor} the operator  $p f(\Delta^{1/2})p - p_0 f(\Delta_0^{1/2}) p_0$ is trace-class and by dominated convergence theorem applied to the trace we obtain
 $$
  \Tr \left( p f(\Delta^{1/2})p - p_0 f(\Delta_0^{1/2}) p_0 \right) = \lim_{R \to \infty}  \int_{B_R \setminus \tilde K}  \tr\, (k(x,x)  - k_0(x,x)) \der x.
 $$
Collecting everything we have
 \begin{gather*}
   -\beta_p f(0) +\Tr \left( (1-p) f(\Delta^{1/2}) (1-p) \right) - \Tr \left( (1-p_0) f(\Delta_0^{1/2}) (1-p_0) \right) \\+ \Tr \left( p f(\Delta^{1/2})p - p_0 f(\Delta_0^{1/2}) p_0 \right)=  
     \lim_{R \to \infty}  \sum_{\nu}\left(  \int_{M_R}  \tr\; k_\nu(x,x)  \der x -  \int_{B_R}  \tr\;  k_{0,\nu}(x,x)  \der x \right),
 \end{gather*}
 where $M_R$ is obtained from $M$ by removing the subset identified with $\R^d \setminus B_R$. 
  It is common to use the following trick to compute these integrals. Since $(\Delta - \lambda^2) E_\lambda(\Phi_\nu) =0$, differentiation in $\lambda$ yields $(\Delta - \lambda^2) E^{\;\prime}_\lambda(\Phi_\nu)  = 2 \lambda E_\lambda(\Phi_\nu)$, where $E^{\;\prime}_\lambda(\Phi_\nu) = \frac{d}{d\lambda} E_\lambda(\Phi_\nu)$. 
  Hence, integration by parts and the general bounds on $E_\lambda$ give
 \begin{gather*}
  \int_{M_R} \tr \;k_\nu(x,x) \der x= \lim_{\epsilon \to 0_+}\frac{1}{2 \pi} \int_{M_R}  \int_{\epsilon}^\infty f(\lambda)  | E_\lambda(\Phi_\nu) |^2 \der \lambda \der x \\ =  \lim_{\epsilon \to 0_+}\frac{1}{4 \pi }  \int_{M_R} \int_{\epsilon}^\infty \frac{1}{\lambda} f(\lambda)  \langle  (\Delta - \lambda^2)  E^{\;\prime}_\lambda(\Phi_\nu),E_\lambda(\Phi_\nu) \rangle  \der \lambda \der x \\= \lim_{\epsilon \to 0_+}
   \frac{1}{4 \pi}  \int_{\epsilon}^\infty f(\lambda) \frac{1}{\lambda} b_R(E^{\;\prime}_\lambda(\Phi_\nu),  E_\lambda(\Phi_\nu) ) \der \lambda.
 \end{gather*}
 Here $b_R(F,G)$ is the boundary pairing of forms $F$ and $G$ and defined by
 $$
  b_R(F,G) = \int_{\partial M_R} \langle F(x) ,\partial_n G(x) \rangle  - \langle \partial_n F(x),G(x) \rangle d\sigma(x),
 $$
 where $d\sigma$ is the surface measure of the sphere $\partial M_R$.
 We conclude that
 \begin{gather*}
 \int_{M_R} \left(
  \tr \;k_\nu(x,x) -\tr \;k_{0,\nu}(x,x) \right) \der x =  \lim_{\epsilon \to 0_+} \frac{1}{4 \pi} \int_{\epsilon}^\infty \frac{1}{\lambda} f(\lambda) \eta_{\nu,R}(\lambda) \der \lambda, \\
  \eta_{\nu,R}(\lambda) = b_R\left(\frac{d}{d \lambda}\left( \tilde j_\lambda(\Phi_\nu) + \tilde h^{(1)}_\lambda(A_\lambda \Phi_\nu) \right), \tilde j_{\overline \lambda}(\Phi_\nu) + \tilde h^{(1)}_{\overline \lambda}(A_\lambda \Phi_\nu) \right) \\-  b_R\left( \frac{d}{d \lambda}\left( \tilde j_\lambda(\Phi_\nu) \right) , \tilde j_{\overline \lambda}(\Phi_\nu)\right)=
   b_R\left(\frac{d}{d \lambda}\left( \tilde j_\lambda(\Phi_\nu) + \tilde h^{(1)}_\lambda(A_\lambda \Phi_\nu) \right), \tilde h^{(1)}_{\overline \lambda}(A_\lambda \Phi_\nu) \right)\\
   +b_R\left(\frac{d}{d \lambda}\left(\tilde h^{(1)}_\lambda(A_\lambda \Phi_\nu) \right), \tilde j_{\overline \lambda}(\Phi_\nu)  \right).
 \end{gather*}
 We have $\frac{d}{d\lambda} \left(\tilde h^{(1)}_\lambda(A_\lambda \Phi_\nu) \right) =  \tilde h^{(1)}_\lambda(A^{\prime}_\lambda \Phi_\nu) +\tilde h^{(1)\prime}_\lambda(A_\lambda \Phi_\nu)$.
 Unitarity of $S(\lambda)$ implies the identity $A(\lambda) + A^*(\overline{\lambda}) + A^*(\overline{\lambda}) A(\lambda) =0$, and therefore we have
 \begin{gather*}
   b_R\left(\tilde h^{(1)\prime}_\lambda(A_\lambda \Phi_\nu),  \tilde h^{(1)}_{\overline \lambda}(A_{\overline \lambda} \Phi_\nu)\right) + b_R\left(\tilde h^{(1)\prime}_\lambda( \Phi_\nu),  \tilde h^{(1)}_{\overline \lambda}(A_{\overline \lambda} \Phi_\nu)\right)  \\+b_R\left(\tilde h^{(1)\prime}_\lambda(A_\lambda \Phi_\nu),  \tilde h^{(1)}_{\overline \lambda}(\Phi_\nu)\right) =0.
 \end{gather*}
  Using $b_{R}( \tilde h^{(1)}_\lambda(\Phi_\nu), \tilde h^{(2)}_{\overline{\lambda}}(\Phi_\nu))=0$ and $\tilde j_\lambda(\Phi_\nu)=  \tilde h^{(1)}_\lambda(\Phi_\nu)+ \tilde h^{(2)}_\lambda(\Phi_\nu)$ one obtains
  \begin{gather*}
  \eta_{\nu,R}(\lambda)=\left( \langle A_\lambda^{\prime} \Phi_\nu, A_{\overline \lambda} \Phi_\nu \rangle + \langle A_\lambda^{\prime} \Phi_\nu,\Phi_\nu\rangle \right) b_{R}( \tilde h^{(1)}_\lambda(\Phi_\nu), \tilde h^{(1)}_{\overline \lambda}(\Phi_\nu))\\+
  \langle \Phi_\nu, A_{\overline \lambda} \Phi_\nu\rangle b_{R}( \tilde h^{(2) \prime}_\lambda(\Phi_\nu), \tilde h^{(1)}_{\overline \lambda}(\Phi_\nu))+
  \langle A_\lambda  \Phi_\nu, \Phi_\nu\rangle b_{R}(\tilde h^{(1) \prime}_\lambda(\Phi_\nu), \tilde h^{(2)}_{\overline \lambda}(\Phi_\nu))
   \end{gather*}
 The term $b_{R}( \tilde h^{(1)}_\lambda(\Phi_\nu), \tilde h^{(1)}_{\overline{\lambda}}(\Phi_\nu))$ is independent of $R$ and is actually given in terms of a Wronskian between Hankel functions. One obtains
 $$
 b_{R}( \tilde h^{(1)}_\lambda(\Phi_\nu), \tilde h^{(1)}_{\overline{\lambda}}(\Phi_\nu)) = -2 \rmi \lambda.
 $$
The terms 
 $ \langle  A_\lambda \Phi_\nu, \Phi_\nu\rangle b_{R}(\tilde h^{(1) \prime}_\lambda(\Phi_\nu), \tilde h^{(2)}_{\overline \lambda}(\Phi_\nu))$ and $ \langle \Phi_\nu, A_{\overline \lambda} \Phi_\nu\rangle b_{R}(\tilde h^{(2)}_\lambda(\Phi_\nu), \tilde h^{(1) \prime}_{\overline \lambda}(\Phi_\nu))$ are complex conjugates of each other for positive $\lambda$. As long as $\lambda>0$ their sum is therefore
 $$
 2 \Re\left(\langle  A_\lambda \Phi_\nu, \Phi_\nu\rangle b_{R}(\tilde h^{(1)}_\lambda(\Phi_\nu), \tilde h^{(2) \prime}_{\overline \lambda}(\Phi_\nu))\right) = 2 \Re(g(\lambda)).
 $$ Using the functional equation for $A_\lambda$ and 
 one shows that the function
 $$
   g(\lambda)=\langle A_\lambda  \Phi_\nu, \Phi_\nu\rangle b_{R}(\tilde h^{(1) \prime}_\lambda(\Phi_\nu), \tilde h^{(2)}_{\overline \lambda}(\Phi_\nu))
 $$
 satisfies $\overline{g(-\lambda)} = - g(\lambda)$. Thus, $\Re(g(\lambda))$ is odd.
  Using Lemma \ref{superhankel} below and the bound $| \langle A_{{\lambda}} \Phi_\nu , \Phi_\nu \rangle | = O(\frac{\lambda^{2\ell_\nu+d-4}}{(\ell_\nu !)^2})$ (Lemma \ref{Abound}),  one obtains
 \begin{gather*}
    \lim_{\epsilon \to 0_+} \frac{1}{4 \pi} \int_{\epsilon}^\infty \frac{1}{\lambda} f(\lambda) 2 \Re\left( \langle A_\lambda  \Phi_\nu, \Phi_\nu\rangle b_{R}(\tilde h^{(1) \prime}_\lambda(\Phi_\nu), \tilde h^{(2)}_{\overline \lambda}(\Phi_\nu))\right)  \der \lambda \\
   =\Re \lim_{\epsilon \to 0_+} \frac{1}{4 \pi} \int_{\R_\epsilon} \frac{1}{\lambda} f(\lambda) \langle A_\lambda  \Phi_\nu, \Phi_\nu\rangle b_{R}(\tilde h^{(1) \prime}_\lambda(\Phi_\nu), \tilde h^{(2)}_{\overline \lambda}(\Phi_\nu))  \der \lambda \\
   = \Re(b_\nu)  f(0) + \Re(c_\nu) f(0) R^{-2\ell-d+4} + O((1+\ell_\nu^2)R^{-1} e^{c \ell_\nu}. \frac{\lambda^{2\ell_\nu+d-4}}{(\ell_\nu !)^2}),
 \end{gather*}
  where $R_\epsilon = \R \setminus [-\epsilon, \epsilon]$.
 Here $b_\nu$ are non-zero only in finitely many cases (in fact only when $d=2,\ell=1$), and 
 $c_\nu$ computes to $\frac{2}{2 \ell_\nu + d -4} \sum_{j=1}^N |a_j(\Phi_\nu)|^2$ in case $2 \ell_\nu + d -4>0$ using the expansion of the scattering amplitude from Theorem \ref{scatterexp}. Since the $u_j$ are square integrable one obtains the bound $\sum_\nu  \sum_{j=1}^N R^{-2\ell} |a_j(\Phi_\nu)|^2 < \infty$.
 All together we get the estimate
 $$
   \sum_\nu \lim_{\epsilon \to 0_+} \frac{1}{4 \pi} \int_{\R_\epsilon} \frac{1}{\lambda} f(\lambda) \langle A_{\lambda}  \Phi_\nu, \Phi_\nu\rangle b_{R}(\tilde h^{(1) \prime}_\lambda(\Phi_\nu), \tilde h^{(2)}_{\overline{\lambda}}(\Phi_\nu))  \der \lambda = f(0) \sum_\nu b_\nu + O(R^{-1}).
 $$
Unless $d=2, \ell=1,p=1, \partial \calO \not= \emptyset$ the bounds on the scattering amplitude imply that $b_\nu=0$ and this implies the theorem in these cases. It remains to compute the contribution from $b_\nu$ when $d=2, \ell=1,p=1, \partial \calO \not= \emptyset$. By the bounds on $\langle A_\lambda \Phi_\nu, \Phi_\nu \rangle$ we obtain a contribution only when $\ell_\nu=1$, and in this case $\langle A_\lambda \Phi_\nu, \Phi_\nu \rangle= -\rmi \, \pi (-\log \lambda)^{-1} + o( (-\log \lambda)^{-1})$. Lemma \ref{superhankel} then gives a contribution of  $\sum_\nu b_\nu = 1$. This concludes the proof.
\end{proof}

It is easy to see that the function $b_{R}(\tilde h^{(1) \prime}_\lambda(\Phi_\nu), \tilde h^{(2)}_{\overline{\lambda}}(\Phi_\nu))$ depends only on $\ell_\nu$ and $\lambda R$.
We can therefore define $H_\ell$ by  $H_\ell(\lambda R)= b_{R}(\tilde h^{(1) \prime}_\lambda(\Phi_\nu), \tilde h^{(2)}_{\overline{\lambda}}(\Phi_\nu))$. 

\begin{lemma} \label{superhankel}
 Let as before $H_\ell(\lambda R):= b_{R}(\tilde h^{(1) \prime}_\lambda(\Phi_\nu), \tilde h^{(2)}_{\overline{\lambda}}(\Phi_\nu))$. 
 Suppose that $f \in C^\infty_0(\R)$ is supported in $(-T,T)$ and extends holomorphically near zero to a function analytic in a neighborhood of the closed ball $\overline{B_\delta(0)}$.
 Let $K:=[-T,T] \times [0, \delta_1]$ be any rectangle with $\delta_1>0$.
 Then for every $k \in \mathbb{N}$ there exists a constant $C_k>0$, independent of $\nu$ such that for any $R>\delta^{-1}$ and any $g$ that is holomorphic in the interior of $K$ and continuous on $K$ we have the following estimates for $R>1$;
   \begin{itemize}
  \item if $d=2$ and $\ell_\nu=0$  then 
  $$
 | \lim_{\epsilon \to 0_+} \int_{\R_\epsilon} \frac{1}{\lambda} f(\lambda) g(\lambda) H_\ell(\lambda R) \der \lambda - (-2 \, \rmi \,g(0))  f(0) | \leq  \frac{C_k}{R^k} \sup\limits_{x \in K} |g(x)| $$
  \item if $d=2$ and $\ell_\nu=1$ and $g(\lambda) = \frac{a}{-\log \lambda}+ o(\frac{1}{-\log \lambda})$ for $|\lambda|<1/2$ then 
  $$
  | \lim_{\epsilon \to 0_+} \int_{\R_\epsilon} \frac{1}{\lambda} f(\lambda) g(\lambda) H_\ell(\lambda R) \der \lambda -(4 \, \rmi \, a) f(0) | \leq \frac{C_k}{R^k} \sup\limits_{x \in K} |g(x)|  $$
  \item  if $d=3$ and $\ell_\nu=0$ then 
   $$
  | \lim_{\epsilon \to 0_+} \int_{\R_\epsilon}\frac{1}{\lambda} f(\lambda) g(\lambda) H_\ell(\lambda R) \der \lambda - (-\pi g(0)) f(0)| \leq\frac{C_k}{R^k} \sup\limits_{x \in K} |g(x)|  .
 $$
 \item if $2\ell +(d-4)>0$ and $g(\lambda) = a \lambda^{2 \ell +  d - 4}  +o(\lambda^{-2 \ell -  d + 4}) $  for $|\lambda|<1$ then
  \begin{gather*}
|\lim_{\epsilon \to 0_+} \int_{\R_\epsilon} \frac{1}{\lambda} f(\lambda) g(\lambda) H_\ell(\lambda R) \der \lambda - a \gamma_{d,\ell} R^{-2\ell- d+4}| \\ \leq \frac{C_k (1+\ell)^2}{R^k} \sup\limits_{x \in K} |g(x)|  e^{2(1+\frac{d}{2})^2 R^{-1} \delta^{-1}  (1+\ell)^2},
 \end{gather*}
 where $\gamma_{d,\ell} =  \rmi\; 2^{2\ell+d-3} \Gamma(\ell +\frac{d-2}{2}) \Gamma(\ell +\frac{d-5}{2})$.  
 \end{itemize}
\end{lemma}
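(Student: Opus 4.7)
The plan is to obtain an explicit closed form for $H_\ell(z)$ as a meromorphic function of $z=\lambda R$ alone, then evaluate $\lim_{\epsilon\to 0^+}\int_{\R_\epsilon}\tfrac{1}{\lambda}f(\lambda)g(\lambda)H_\ell(\lambda R)\,\der\lambda$ by deforming the real contour into the upper half-plane, extracting the residue at $\lambda=0$ and showing that the shifted contour integral is $O(R^{-k})$ from the exponential decay of $H_\ell$ for $\Im z>0$.

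First I would use orthonormality of the $\Phi_\nu$ on $\sphere$ together with the change of variable $z=\lambda R$ and the identity $\partial_\lambda\bigl(\lambda^{(d-1)/2}h^{(1)}_\ell(\lambda r)\bigr)=\tfrac{1}{\lambda}\bigl(\tfrac{d-1}{2}+z\partial_z\bigr)\bigl(\lambda^{(d-1)/2}h^{(1)}_\ell(z)\bigr)$ to write $H_\ell(z)$ as a Wronskian-type combination of $h^{(1)}_\ell(z)$, $h^{(2)}_\ell(z)$ and their $z$-derivatives; the surface factor $R^{d-1}$ cancels the $R$'s introduced by the derivatives so that $H_\ell$ depends only on $z$. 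Using the small-$z$ expansions \eqref{sphankelexpzero} and \eqref{sphankelexpzerod2}, one reads off the Laurent expansion of $H_\ell$ at $z=0$: a logarithmic (or constant) leading term in the three borderline cases $(d,\ell_\nu)\in\{(2,0),(2,1),(3,0)\}$, and otherwise a pole whose order matches the $R^{-(2\ell+d-4)}$ scaling in the statement. The large-$|z|$ Hankel asymptotics give the crucial fact that $H_\ell(z)$ decays exponentially as $\Im z\to+\infty$, because the potentially growing $h^{(2)}_\ell(z)\sim e^{-iz}$ contribution is cancelled by the Wronskian structure, leaving only the decaying $h^{(1)}$-part.

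Second, by the holomorphy hypotheses on $f$ and $g$, the integrand $\tfrac{1}{\lambda}f(\lambda)g(\lambda)H_\ell(\lambda R)$ extends meromorphically into the closed slab $[-T,T]+i[0,\delta_1]$ with its only pole there at $\lambda=0$. Deforming the real contour (infinitesimally indented above $0$) to the horizontal segment $[-T,T]+i\delta_1$ gives, by Cauchy's theorem,
\[\lim_{\epsilon\to 0^+}\int_{\R_\epsilon}\tfrac{1}{\lambda}f(\lambda)g(\lambda)H_\ell(\lambda R)\,\der\lambda = i\pi\,\mathrm{Res}_{\lambda=0}\bigl(\tfrac{f(\lambda)g(\lambda)H_\ell(\lambda R)}{\lambda}\bigr)+\int_{[-T,T]+i\delta_1}\tfrac{f(\lambda)g(\lambda)H_\ell(\lambda R)}{\lambda}\,\der\lambda,\]
the vertical sides contributing nothing since $f$ vanishes near $\pm T$. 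The residue is computed case by case by pairing the Laurent expansion of $H_\ell$ at $z=0$ against the prescribed leading behaviour of $g$: in the first three cases the leading coefficient of $g$ (either $g(0)$, or its $(-\log\lambda)^{-1}$-coefficient $a$) survives, multiplied by $f(0)$ and the explicit constant from $H_\ell(0)$, producing the stated values $-2ig(0)f(0)$, $4iaf(0)$ and $-\pi g(0)f(0)$; in the general case the pole of $H_\ell(\lambda R)$ at $z=0$ carries an explicit factor $R^{-(2\ell+d-4)}$ and is paired with the vanishing $g(\lambda)=a\lambda^{2\ell+d-4}+o(\lambda^{2\ell+d-4})$, yielding $a\gamma_{d,\ell}R^{-(2\ell+d-4)}$ with $\gamma_{d,\ell}$ as computed from the leading coefficient of $h^{(1)}_\ell$ at $0$.

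The shifted integral is bounded by $T\cdot\sup|f|\cdot\sup_K|g|\cdot\sup_{\Im\lambda=\delta_1}|\lambda^{-1}H_\ell(\lambda R)|$, and the exponential decay of $H_\ell(\lambda R)$ in $R$ on the line $\Im\lambda=\delta_1$ yields the $O(R^{-k})$ error. The hard part will be obtaining \emph{uniformity in $\ell$} for the fourth case, where the error constant carries the factor $(1+\ell)^2 e^{c(1+\ell)^2 R^{-1}\delta^{-1}}$: the exponential-decay regime for $h^{(1)}_\ell(z)$ only sets in once $|z|\gtrsim\ell$, so for large $\ell$ relative to $R\delta_1$ one must pass through the transition zone $|z|\sim\ell$ using the uniform Debye asymptotic expansion of Bessel functions of large order (see, e.g., Sections 10.19--10.20 of \cite{olver2010nist}). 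That expansion produces bounds on $|h^{(1)}_\ell(z)|$ and $|h^{(2)}_\ell(z)|$ whose deterioration as $\ell\to\infty$ on the shifted contour is exactly the stated exponential factor, and a careful choice of $\delta_1$ (adapted to the order $\ell$ if necessary) absorbs the transition-zone growth into that factor.
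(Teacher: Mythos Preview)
Your contour-deformation idea captures the right geometry but has a genuine gap: the hypothesis on $f$ is only that it is $C^\infty_0(\R)$ and analytic in a neighbourhood of $\overline{B_\delta(0)}$, not that it extends holomorphically to the whole slab $[-T,T]\times[0,\delta_1]$. So you cannot push the entire real contour up to $\Im\lambda=\delta_1$ by Cauchy's theorem. This is precisely why the paper instead chooses a compactly supported \emph{almost analytic} extension $\tilde f$ with $\overline\partial\tilde f=O(|\Im\lambda|^m)$, arranged to be genuinely analytic in $B_\delta$, and applies Stokes' formula: the difference between $\int_{\R_\epsilon}$ and the semi-circle integral $\int_{C(\epsilon)}$ becomes an area integral of $\overline\partial\tilde f\cdot g\cdot\lambda^{-1}H_\ell(\lambda R)$ over $K\setminus B_\delta$, and the $O(y^k)$ vanishing of $\overline\partial\tilde f$ is what produces the $R^{-k}$ error after pairing against the exponential decay of $H_\ell$ in the upper half-plane.

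There is a second issue in the borderline cases. When $d=2,\ \ell_\nu=1$ one has $H_\ell(x)=-\tfrac{4}{\pi}\log x+O(1)$ and $g(\lambda)=a(-\log\lambda)^{-1}+o((-\log\lambda)^{-1})$, so the integrand has a genuinely logarithmic (branched) singularity at $\lambda=0$, not a pole; there is no classical residue to extract, and your formula $i\pi\,\mathrm{Res}_{\lambda=0}(\cdots)$ does not apply. The paper instead computes the contribution as the limit of the half–circle integral $\int_{C(\epsilon)}$ as $\epsilon\to 0$, using the explicit small-$x$ behaviour of $H_\ell$ in each case. Finally, for the $\ell$-uniformity you propose Debye asymptotics, whereas the paper uses the explicit remainder bound $|R_1^+(k,\lambda)|\le|k^2-\tfrac14||\lambda|^{-1}\exp(|k^2-\tfrac14||\lambda|^{-1})$ from \cite[10.17.13--15]{olver2010nist}; this directly yields $|H_\ell(R\lambda)|\le C(1+\ell)^2 e^{-2R\Im\lambda}e^{2(1+d/2)^2 R^{-1}\delta^{-1}(1+\ell)^2}$ on $K\setminus B_\delta$ and is both simpler and sharper than going through the transition-zone analysis.
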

\begin{proof}
We choose a compactly supported almost analytic extension 
$\tilde f \in C^\infty_0(\R^2)$ of $f$, i.e. $\delbar f = O (\mathrm{Im}(\lambda)^N)$ for any $N>0$. Since $f$ was assumed to be analytic near zero we can arrange this almost analytic extension to be analytic in $B_\delta$ and supported in the interior of $K$.
Then, by Stokes' formula
 \begin{gather*}
  \lim_{\epsilon \to 0_+} \int_{\R_\epsilon} \frac{1}{\lambda}f(\lambda) g(\lambda) H_\ell(\lambda R) \der \lambda -  \lim_{\epsilon \to 0_+} \int_{C(\epsilon)} \frac{1}{\lambda}f(\lambda) g(\lambda) H_\ell(\lambda R) \der \lambda \\= 2 \rmi \int_{K \setminus B_\delta} (\delbar \tilde f)(x,y) g(x+ \rmi y) \frac{1}{x+ \rmi y} H_\ell((x+\rmi y) R) \der x \der y,
 \end{gather*}
 where $C(\epsilon)$ is the semi-circle in the upper half plane centered at zero of radius $\epsilon$. 
 Note that $\delbar f$ vanishes on $B_\delta(0)$ and we therefore can integrate over the complement of $B_\delta(0)$.
 The function $H_\ell$ can be expressed explicitly as
 \begin{gather*}
  H_\ell(\lambda)=
  \frac{1}{8} \pi  \lambda  \left( \right.\lambda 
   H_{\frac{d}{2}+\ell-2}^{(1)}(\lambda ){}^2-2
   (H_{\frac{d}{2}+\ell-1}^{(1)}(\lambda )+\lambda 
   H_{\frac{d}{2}+\ell}^{(1)}(\lambda ))
   H_{\frac{d}{2}+\ell-2}^{(1)}(\lambda )\\+2 \lambda 
   H_{\frac{d}{2}+\ell-1}^{(1)}(\lambda ){}^2+\lambda 
   H_{\frac{d}{2}+\ell}^{(1)}(\lambda ){}^2-\lambda 
   H_{\frac{d}{2}+\ell-3}^{(1)}(\lambda )
   H_{\frac{d}{2}+\ell-1}^{(1)}(\lambda
   )\\+H_{\frac{d}{2}+\ell-1}^{(1)}(\lambda ) (2
   H_{\frac{d}{2}+\ell}^{(1)}(\lambda )-\lambda 
   H_{\frac{d}{2}+\ell+1}^{(1)}(\lambda )) \left. \right)
 \end{gather*}
 For $\Im{\lambda}\geq0$ and $|\lambda|>0$ we have the following asymptotics for the Hankel function (\cite[10.17.13-10.17.15]{olver2010nist})
 $$
  H^{(1)}_{k}(\lambda) = \left(\frac{2}{\pi \lambda}\right)^{\frac{1}{2}}e^{
\rmi(\lambda -\frac{k}{2} \pi- \frac{\pi}{4})}\left(1+R_{1}^{+}(k,\lambda)\right),
 $$
 with
 $$
  | R_{1}^{+}(k,\lambda) | \leq  | k^2-\frac{1}{4} | |\lambda|^{-1} e^{(| k^2-\frac{1}{4} | |\lambda|^{-1})}.
 $$
 This gives the uniform bound for $| R \lambda | >1$ and $|\lambda|>\delta$
 \begin{gather*}
  | H_{\ell}(R \lambda) | \leq C (1+\ell)^2 e^{-2 R \Im{\lambda}} e^{2(1+\frac{d}{2})^2R^{-1} | \lambda| ^{-1}  (1+\ell)^2} \\ \leq C (1+\ell)^2  e^{- 2 R \Im{\lambda}} e^{2(1+\frac{d}{2})^2 R^{-1} \delta^{-1}  (1+\ell)^2},
  \end{gather*}
  where $C$ depends on $d$ only.
  Now there is a constant $\tilde C$ such that $\delbar \tilde f (x+\rmi y) \leq \tilde C \, y^{k}$. Integrating this gives for $R>1$ the bound
   \begin{gather*}
  | \lim_{\epsilon \to 0_+} \int_{\R_\epsilon} \frac{1}{\lambda}f(\lambda) g(\lambda) H_\ell(\lambda R) \der \lambda -  \lim_{\epsilon \to 0_+} \int_{C_\epsilon} \frac{1}{\lambda}f(\lambda) g(\lambda) H_\ell(\lambda R) \der \lambda | \\ \leq C_k R^{-k} (1+\ell)^2 \sup\limits_{x \in K} |g(x)|  e^{2(1+\frac{d}{2})^2 R^{-1} \delta^{-1}  (1+\ell)^2}.
 \end{gather*}
 Using the asymptotics of the Hankel function in case $2\ell +(d-4)>0$ as $x \to 0$ in the upper half plane
 $$
  H_\ell(x) = O( x^{-2 \ell -  d + 4} ),
 $$
 and in this case the integral over the circle converges to zero as $\epsilon \to 0$.
 In case $d=3, \ell =0$ one can compute $H_\ell(x)=\rmi e^{2 \rmi x}$ and therefore $H_\ell(x)= \rmi + O(x)$. If $d=2, \ell=0$ one has
 $H_\ell(x)=-\frac{2}{\pi} + O(x^2)$, and if $d=2, \ell=1$ we have $H_\ell(x)= -\frac{4}{\pi} \log{x} +O(1)$. This gives the claimed values.
\end{proof}

The Birman-Krein formula can also be stated, using integration by parts, as
  \begin{gather}
  \Tr \left( (1-p) f(\Delta^{1/2}) (1-p) \right) - \Tr \left( (1-p_0) f(\Delta_0^{1/2}) (1-p_0) \right) \nonumber \\+ \Tr \left( p f(\Delta^{1/2})p - p_0 f(\Delta_0^{1/2}) p_0 \right) = 
  -\int f'(\lambda) \xi(\lambda^2) \der \lambda, \label{Birmanv2}
 \end{gather}
where $\xi \in L^1_\loc(\R)$ is the spectral shift function defined by
$$
 \xi(\mu) = \left \{ \begin{matrix} 0 & \mu < 0,\\ \left(\beta_p + \beta_{res} \right) +  \frac{1}{2 \pi \rmi }  \int_0^{\sqrt{\mu}}\tr\left( S^*(\lambda) S'(\lambda)\right) \der \lambda& \mu \geq 0. \end{matrix} \right.
$$

\begin{rem}
 Our proof is along the same lines as similar computations involving the Maass-Selberg relations, with the additional complication of interchanging the 
 summation over $\nu$ and the limit $R \to \infty$ that poses a problem when the test function has zero contained in its support. 
 In case of potential scattering in dimension three one can also use this method and the Lemma above to compute the contribution $\frac{1}{2}$ of a possible zero 
 resonance state. We note here that the Maass-Selberg trick was also used by Parnovski in \cite{parnovski2000scattering} in the context of manifolds with conical ends to compute the asymptotics of the spectral function and hence the spectral shift function. His method also applies to our situation with obvious changes and one therefore has a Weyl law for the spectral shift function. Therefore version \eqref{Birmanv2} of the Birman-Krein formula also holds for even Schwartz functions. The Weyl law for the scattering phase in case $p=0$ was first proved for obstacle scattering by Majda and Ralston \cite{Majda_1978} for convex domains and finally for smooth domains by Melrose \cite{melrose1988weyl}.
 \end{rem}

\section{Proofs of the main theorems} \label{sevenandfinal}
For the purposes of presentation we have stated our main theorems into Section \ref{theostate}. In this section we summarise how they follow from the statements in the body of the text.
\vspace{0.2cm}\\
\noindent
{\sl Proof of Theorem \ref{maintheorem1}:}
The expansions were shown in \eqref{expd5th1}, \eqref{expd3th1}, \eqref{expd6th1}, and \eqref{expd4th1}. That $P^{(1)}=0$ unless $p=1$ and $\partial \calO \not=\emptyset$ follows immediately from Prop \ref{prop5.3} and Prop \ref{prop5.5}. 
\vspace{0.2cm}\\
{\sl Proof of Theorem \ref{maintheorem2}:} The resolvent expansion was shown in Section \ref{doddanal}. In particular $B_{-1}$ was computed to be $0$ in dimension greater $5$, as a result of the analysis of equation \eqref{belowfive}. Propositions \ref{prop37} and \ref{prop38} show the result in dimension $3$ and $5$.
\vspace{0.2cm}\\
{\sl Proof of Theorem \ref{maintheorem3}:} The resolvent expansion was shown in Section \ref{devenanal}. The theorem follow directly from \ref{resolventexpdg4}.
\vspace{0.2cm}\\
{\sl Proof of Theorem \ref{maintheorem22}:} 
The theorem follows from the discussion in Section \ref{d2anal}. The particular form of the resolvent is contained in Lemma \ref{abovelemma3} and the coefficients were computed in Propositions \ref{plprop}, \ref{prop04}, and \ref{connectionprop}.
\vspace{0.2cm}\\
{\sl Proof of Theorem \ref{maind2ge}:} This theorem the result of a combination of Corollary \ref{cor327} and Corollary \ref{cor329}.
 \vspace{0.2cm}\\
{\sl Proof of Theorem \ref{maintheorem5}:} This theorem is the result of combination of Theorem \ref{resexpd2p1} and Proposition \ref{resexpd2p1no}.
  \vspace{0.2cm}\\
{\sl Proof of Theorem \ref{scatterexp}:} This was proved in Section \ref{ScatterSectionExp} and follows directly by applying Theorem \ref{AEexpansion} to 
the expansions of the generalised eigenfunctions as stated in Theorem \ref{maintheorem1}.
 \vspace{0.2cm}\\
{\sl Proof of Theorem \ref{maintheorem6}:} Was proved in Section \ref{ScatterSectionExp} and follows directly by applying Theorem \ref{AEexpansion} to 
the expansions of the generalised eigenfunctions as stated in Theorem \ref{maind2ge}.
 \vspace{0.2cm}\\
 {\sl Proof of Theorem \ref{maincohomo}:} This is a combination of Propositions  \ref{prop5.3} and \ref{prop5.5}.
  \vspace{0.2cm}\\
 {\sl Proof of Theorem \ref{maintheorem7}:} The Birman-Krein formula shows that the relation between the spectral shift function and $\eta$ is as claimed. Moreover, we have 
 $$\eta(\lambda^2)=\frac{1}{2 \pi \rmi}\log \det S(\lambda) = \frac{1}{2 \pi \rmi}\log \det \left( 1 + A(\lambda) \right),$$
 where the branch of the logarithm is chosen continuous on the positive real line.
Suppose that $p \geq 1$. Then $P^{(1)} =0$. Recall from Prop. \ref{superprop} that for $|\lambda|<1$ we have $E_\lambda(\Phi_\nu) = G(\Phi_\nu) \lambda^{\frac{d-1}{2}}+ o(\lambda^{\frac{d-1}{2}})$ where $dG(\Phi_\nu)  \in L^2$ and $G(\Phi_\nu) = C_{d,0} \Phi_\nu + O(\frac{1}{r^{d-2}})$ as $r \to \infty$. In particular, 
$dG(\Phi_\nu)$ is a trivial class in cohomology and from the discussion in  Section \ref{maincohomo} we conclude that $dG(\Phi_\nu)=0$. In particular, by Corollary \ref{supercor}, this implies that
there is no $\ell=0$ term in the multipole expansion of $G(\Phi_\nu)$ and since a term of order $\lambda^{d-2}$ in the expansion of 
$\langle A_\lambda \Phi_\nu, \Phi_\nu \rangle$ would give rise to such a term, we must have $\langle A_\lambda \Phi_\nu, \Phi_\nu \rangle= o(\lambda^{d-2})$.
This means in odd dimensions $\langle A_\lambda \Phi_\nu, \Phi_\nu \rangle = O(\lambda^{d-1})$ and in even dimensions $\langle A_\lambda \Phi_\nu, \Phi_\nu \rangle = O(\frac{\lambda^{d-2}}{-\log \lambda})$. Now simply note that, using $\| A_\lambda \|_{1} = O(\lambda^{d-2})$, we have
$$
 \log \det (1 +A_\lambda) = \tr(A_\lambda) + O(\lambda^{2d-4}).
$$
Then the leading order terms consist solely of the $\ell_\nu=1$ contributions.
The expansions \ref{scatterexp} and \ref{maintheorem6} then imply the claimed formulae when $p \geq 1$. The case $p=0$ follows from the fact that $A_\lambda$ commutes with $\der r \wedge$ and $\iota_{\der r}$. Therefore the expansion for $p=1$ can be derived from the expansion for $p=2$ and for $p=0$.
\appendix
 
\section{Hahn holomorphic and Hahn meromorphic functions} \label{hahnapp}
The theory of Hahn analytic functions was developed in \cite{muller2014theory} in a very general setting.
For the purposes of this paper we restrict our considerations to so-called $z$-$\log(z)$-Hahn
holomorphic functions and refer to these as Hahn holomorphic.
To be more precise, suppose that $\Gamma \subset \mathbb{R}^2$ is a subgroup of $\mathbb{R}^2$. We endow
$\Gamma$ and $\mathbb{R}^2$ with the lexicographical order. Recall that a subset $A \subset \Gamma$
is called well-ordered if any subset of $A$ has a smallest element.
A formal series
$$
  \sum_{(\alpha,\beta) \in \Gamma} a_{\alpha,\beta} \: z^\alpha (-\log{z})^{-\beta}
$$
will be called a Hahn-series if the set of all $(\alpha,\beta) \in \Gamma$ with $a_{\alpha,\beta} \not=0$
is a well ordered subset of $\Gamma$.

In the following let $Z$ be the logarithmic covering surface of the complex plane without the origin.
We will use polar coordinates $(r,\varphi)$ as global coordinates to identify $Z$ as a set with $\Reell_+ \times \Reell$.
Adding a single point $\{0\}$ to $Z$ we obtain a set $Z_0$ and a projection map $\pi: Z_0 \to \Complex$ by extending the covering map $Z \to \Complex\backslash \{0\}$ by sending $0 \in Z_0$ to $0 \in \Complex$.
We endow $Z$ with the covering topology and $Z_0$ with the topology generated by the open sets in $Z$
together with the open discs $D_\epsilon:=\{0\} \cup \{(r,\varphi)\mid 0\le r<\epsilon \}$.
This means a sequence $((r_n,\varphi_n))_n$ converges to zero if and only if $r_n \to 0$. The covering map is continuous
with respect to this topology.
For a point $z \in Z_0$
we denote by $| z |$ its $r$-coordinate and by $\arg(z)$ its $\varphi$ coordinate. We will think of the positive
real axis embedded in $Z$ as the subset $\{ z \mid \arg(z)=0\}$. 
Define the following sectors
$D_\delta^{[\sigma]}=\{z \in Z_0 \mid 0\le  |z| < \delta,\; |\varphi| < \sigma\}$.

In the following fix $\sigma>0$ and a complex Banach space $V$. We say a function $f: D_\delta^{[\sigma]} \to V$ 
is Hahn holomorphic near $0$ in $D_\delta^{[\sigma]}$
if there exists a Hahn series with coefficients in $V$ that converges normally to $f$, i.e. such that
$$
 f(z) = \sum_{(\alpha,\beta) \in \Gamma} a_{\alpha,\beta} \: z^\alpha (-\log{z})^{-\beta},
$$
$$
 \sum_{(\alpha,\beta) \in \Gamma} \| a_{\alpha,\beta} \| \| z^\alpha (-\log{z})^{-\beta}\|_{L^\infty(D_\delta^{[\sigma]})} < \infty,
$$
and there exists a constant $C>0$ such that $a_{\alpha,\beta}=0$ if $-\beta > C \alpha$.
This implies also that $a_{\alpha,\beta}=0$ in case $(\alpha,\beta)<(0,0)$.
As shown in \cite{muller2014theory}, in case $V$ is a Banach algebra the set of Hahn holomorphic functions with values
in $V$ is an algebra. A meromorphic function on $D_\delta^{[\sigma]} \backslash \{0\}$ is called Hahn meromorphic
with values in a Banach space $V$ if near zero it can be written as a quotient of a Hahn holomorphic function with values in $V$
and a Hahn holomorphic function with values $\mathbb{C}$. Note that the algebra of Hahn holomorphic functions with values in
$\mathbb{C}$ is an integral domain and Hahn meromorphic functions with values in $\mathbb{C}$ form a field.
There exists a well defined injective ring homomorphism from the field of Hahn meromorphic functions
into the field of Hahn series. This ring homomorphism associates to each Hahn meromorphic function its Hahn series.
The theory is in large parts very similar to the theory of meromorphic functions. In particular the following very useful
statement holds: if $V$ is a Banach space and $f: D_\delta^{[\sigma]} \to V$ Hahn meromorphic and bounded, then
$f$ is Hahn holomorphic. The main result of \cite{muller2014theory} states that the analytic Fredholm theorem holds for this class of functions.

\section{Resolvent gluing}

In this section all Laplace operators will be operators acting on $p$-forms. As in Section \ref{sec:1.1} we fix a compact subset $K \subset X$
such that $X \setminus K$ is isometric to $\R^d \setminus \tilde K$. We assume here that $K$ is chosen large enough so that $\overline{\calO} \subset \mathrm{int}(K)$. We can also assume without loss of generality that $\tilde K$ is a ball of radius $R>0$ so that $\partial K$ is smooth. 
Then $K \setminus \calO$ is a smooth manifold with boundary $\partial \calO \cup \partial K$. We can also choose a closed set $K_1 \subset X$
that does not intersect $\overline{\calO}$ that is isometric to $\R^d \setminus B_{R-\delta}$ for a suitably small $\delta>0$. Thus, $K_1$ and $K$ cover 
$X$ with a slight overlap.
The resolvent kernel of $\Delta_{rel}$ can be constructed by gluing the free resolvent $R_{0,\lambda}$ of the Laplace operator $\Delta_{0}$ on $\R^d$ 
and the resolvent $R_{D,\lambda}$ of the self-adjoint operator $\Delta_{D}$ constructed by imposing Dirichlet boundary conditions at the additional boundary 
$\partial K$ and relative boundary conditions at $\partial \calO$ on $L^2(K \backslash \calO)$. We consider the free resolvent as a function with values in $\mathcal{B}(H^s_\compp(\R^d),H^{s+2}_\loc(\R^d))$. As such it is defined as a meromorphic function in the complex plane in case $d$ is odd, and it is a Hahn-meromorphic function defined in any sector of the logarithmic cover of the complex plane in case $d$ is even. In this sense we have
\begin{equation} \label{gluing}
 R_\lambda = \left( \chi_1 R_{D,\lambda} \eta_1 +  \chi_2 R_{0,\lambda} \eta_2 \right) (1 + Q_\lambda),
\end{equation}
where, for any $s \in \R$, the family $Q_\lambda$ is a (Hahn)-meromorphic family of operators mapping $H^s_\compp(M;\Lambda^p T^*M)$ to smooth functions
with compact support in a neighborhood of $K$ defined on the same set as the free resolvent. The usual proof of this using meromorphic Fredholm theory carries over to the Hahn-meromorphic setting without change (see \cite{muller2014theory}).
Here $\chi_1,\eta_1,\chi_2,\eta_2$ are suitably chosen cutoff functions such that
\begin{gather*}
 \eta_1 \chi_1 = \eta_1, \quad \eta_2 \chi_2 = \eta_2, \quad  \eta_1 + \eta_2 = 1,\\
\supp \chi_1 \subset K, \quad \supp \chi_2 \subset K_1,\quad
\mathrm{dist}(\supp \chi_1',\eta_1)>0, \quad \mathrm{dist}(\supp \chi_2',\eta_2)>0.
\end{gather*}
It follows that the resolvent $R_\lambda$ admits a (Hahn-)meromorphic extension as a map 
$$H^s_\compp (M;\Lambda^p T^*M) \to H^{s+2}_\loc(M;\Lambda^p T^*M)$$ whenever $R_{0,\lambda}$ does.

The technique of gluing resolvents can be slightly modified to show trace-class properties of differences of operator functions. Let $N \in \mathbb{N}$
and consider the operator $T_\lambda = (\Delta_{rel} +1)^{-N} (\Delta_{rel} - \lambda^2)^{-1}$. Similarly, let
\begin{gather*}
 T_{0,\lambda} = (\Delta_{0} +1)^{-N} (\Delta_{0} - \lambda^2)^{-1},\\
 T_{D,\lambda} = (\Delta_{D} +1)^{-N} (\Delta_{D} - \lambda^2)^{-1},
\end{gather*}
and define
$$
 \tilde T_\lambda =  \chi_1 T_{D,\lambda} \eta_1 +  \chi_2 T_{0,\lambda} \eta_2.
$$
One then computes $(\Delta +1)^N (\Delta - \lambda^2)  \tilde T_\lambda = 1 + Q_1 + (\Delta +1)^N Q_2(\lambda)$, where
\begin{gather*}
 Q_1 = [ (\Delta +1)^N, \chi_1 ] (\Delta_D +1)^{-N} \eta_1 +  [ (\Delta +1)^N, \chi_2 ] (\Delta_0 +1)^{-N} \eta_2, \\
 Q_2(\lambda)=  [\Delta , \chi_1 ] T_{D,\lambda} \eta_1 +   [\Delta , \chi_2 ] T_{0,\lambda} \eta_2.
\end{gather*}
Therefore, one has
$$
  \tilde T_\lambda = T_\lambda  + T_\lambda Q_1 + R_\lambda  Q_2(\lambda).
$$

By the support properties of the cutoff functions $Q_1$ is a smoothing operator mapping to a space of functions with support in a fixed compact set. Hence, $Q_1$ is a trace-class operator. For $Q_2(\lambda)$ we have
\begin{gather*}
 Q_2(\lambda)=  Q_3(\lambda) + Q_4(\lambda),\\
  Q_3(\lambda)=[\Delta , \chi_1 ] (\Delta_D +1)^{-N}  (\Delta_D -\lambda^2)^{-1}\eta_1,\\
  Q_4(\lambda) = [\Delta , \chi_2 ] (\Delta_0 +1)^{-N}  (\Delta_0 -\lambda^2)^{-1} \eta_2.
\end{gather*}
If $2N-1> d$ the operators  $[\Delta , \chi_1 ](\Delta_D +1)^{-N}$ and $[\Delta , \chi_2 ] (\Delta_0 +1)^{-N}$ continuously map $L^2$ into the space $H^{s}_\compp(\Omega)$
for $s = 2N -1$, where $\Omega$ is a bounded subset of $\R^d$. It follows that these operators are trace-class. We now conclude that $Q_3$ and $Q_4$ are trace class for any $\lambda$ in the upper half plane and that
$\|Q_3(\lambda)\|_1 \leq C_3 \frac{1}{\mathrm{Im}(\lambda^2)}, \|Q_4(\lambda)\|_1 \leq C_4 \frac{1}{\mathrm{Im}(\lambda^2)}$. Since $ \| R_\lambda \|_1 \leq \frac{1}{\mathrm{Im}(\lambda^2)}$ we finally have
$$
 \| T_\lambda Q_1 + R_\lambda  Q_2(\lambda) \|_1 \leq C  \frac{1}{|\mathrm{Im}(\lambda^2)|^2}
$$
for some constant $C>0$. We have proved:
\begin{lemma}
 If $N > \frac{d+1}{2}$ then $T_\lambda - \tilde T_\lambda$ is trace-class and there exists $C>0$ such that for the trace norm we have
 $$
   \| T_\lambda - \tilde T_\lambda \|_1  \leq C  \frac{1}{|\mathrm{Im}(\lambda^2)|^2}.
 $$
\end{lemma}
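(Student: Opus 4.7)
The plan is to exploit the gluing identity for $\tilde T_\lambda$ developed in the preceding paragraphs: direct computation yields
\[
(\Delta+1)^N(\Delta-\lambda^2)\tilde T_\lambda = 1 + Q_1 + (\Delta+1)^N Q_2(\lambda),
\]
with $Q_1$ built from commutators of $(\Delta+1)^N$ with the cutoffs $\chi_i$ composed against the smoothing factors $(\Delta_\cdot+1)^{-N}\eta_i$, and $Q_2(\lambda) = Q_3(\lambda) + Q_4(\lambda)$ built from $[\Delta,\chi_i](\Delta_\cdot+1)^{-N}(\Delta_\cdot-\lambda^2)^{-1}\eta_i$. Inverting gives the identity
\[
\tilde T_\lambda - T_\lambda = T_\lambda Q_1 + R_\lambda Q_2(\lambda),
\]
so the task reduces to trace-norm estimates on $T_\lambda Q_1$ and $R_\lambda Q_j(\lambda)$, $j=3,4$.

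First I would verify that $Q_1$ is trace class and $\lambda$-independent. The commutator $[(\Delta+1)^N,\chi_i]$ is a differential operator of order $2N-1$ whose coefficients are supported in the compact overlap region $\supp\chi_i' \cap \supp\eta_i$; composing with the smoothing operator $(\Delta_\cdot+1)^{-N}\eta_i$ produces an operator whose range lies in a space of sections with support in a fixed compact set $\Omega \subset M$ and in $H^{2N-1}(\Omega)$. The inclusion $H^{2N-1}(\Omega) \hookrightarrow L^2(\Omega)$ is trace class precisely when $2N-1 > d$, which is the standing hypothesis $N > (d+1)/2$. Exactly the same reasoning applies to $Q_3(\lambda)$ and $Q_4(\lambda)$, with the additional factor $(\Delta_\cdot-\lambda^2)^{-1}$ only contributing an $L^2$-operator-norm term.

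To close the estimate, I would combine these trace-class memberships with the elementary $L^2$ resolvent bound $\|(\Delta_\cdot-\lambda^2)^{-1}\| \leq 1/|\operatorname{Im}(\lambda^2)|$, valid for both the Dirichlet and the free resolvent by self-adjointness. This yields $\|Q_2(\lambda)\|_1 \leq C'/|\operatorname{Im}(\lambda^2)|$, and together with $\|R_\lambda\|\leq 1/|\operatorname{Im}(\lambda^2)|$ and the Schatten ideal property $\|AB\|_1 \leq \|A\|\,\|B\|_1$, gives $\|R_\lambda Q_2(\lambda)\|_1 \leq C/|\operatorname{Im}(\lambda^2)|^2$. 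The term $\|T_\lambda Q_1\|_1$ is controlled analogously by $\|T_\lambda\|\cdot\|Q_1\|_1 \leq C''/|\operatorname{Im}(\lambda^2)|$, which is even better than needed. Summing the contributions delivers the claimed bound.

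The main technical subtlety to keep an eye on is the support-propagation argument: although $(\Delta_\cdot+1)^{-N}$ is nonlocal, the outer factor $[\Delta,\chi_i]$ has coefficients of compact support, so each composition produces an operator whose range lives in sections with $\lambda$-independent compact support in a fixed $\Omega$. Without this localisation the Sobolev inclusion argument underpinning trace-class membership would break down immediately, since on the noncompact manifold $M$ the global inclusion $H^s_\loc \hookrightarrow L^2$ fails to even be bounded. Once this localisation is in place, the Schatten estimates are routine.
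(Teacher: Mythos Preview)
Your approach is exactly the paper's: derive $\tilde T_\lambda - T_\lambda = T_\lambda Q_1 + R_\lambda Q_2(\lambda)$ and bound each piece via trace-class membership of $Q_1$, $Q_3$, $Q_4$ combined with the self-adjoint resolvent bound $\|(\Delta_\cdot - \lambda^2)^{-1}\| \leq |\mathrm{Im}(\lambda^2)|^{-1}$.

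One correction to your treatment of $Q_1$: you write that the coefficients of $[(\Delta+1)^N,\chi_i]$ are supported in ``the compact overlap region $\supp\chi_i' \cap \supp\eta_i$'' and that the composition lands in $H^{2N-1}(\Omega)$. In fact the cutoffs were chosen with $\mathrm{dist}(\supp\chi_i',\supp\eta_i)>0$, so this intersection is empty; and naive order counting only gives range in $H^1$ (the commutator has order $2N-1$, eating all but one derivative gained by $(\Delta_\cdot+1)^{-N}$), which is not enough for the Sobolev-embedding trace argument when $d\geq 2$. The correct reason $Q_1$ is trace class is precisely the disjoint-support property: since $(\Delta_\cdot+1)^{-N}$ has smooth kernel off the diagonal, the composite $[(\Delta+1)^N,\chi_i](\Delta_\cdot+1)^{-N}\eta_i$ has smooth compactly supported integral kernel and is therefore smoothing, hence trace class. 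Your Sobolev argument is, however, exactly right for $Q_3$ and $Q_4$, where the commutator $[\Delta,\chi_i]$ has order $1$ and the composition genuinely lands in $H^{2N-1}_{\mathrm{comp}}$.
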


Now let $Z \subset M$ be such that $\eta_1(x) = \chi_1(x)=0$ and $\eta_2(x) = \chi_2(x)=1$ for all $x \in Z$ and let $p$ be the operator of multiplication by the indicator function $\chi_Z$ of $Z$. Then, by the above we have for all $N > \frac{d+1}{2}$ the bound
$$
 \| p (\Delta +1)^{-N} (\Delta - \lambda^2)^{-1} p - p (\Delta_{0} +1)^{-N} (\Delta_{0} - \lambda^2)^{-1} p \|_1 \leq C_N  \frac{1}{|\mathrm{Im}(\lambda^2)|^2}.
$$

\begin{corollary} \label{tracecor}
 For any even function $f \in \mathcal{S}(\R)$ we have that
 $
  p f(\Delta^{1/2}) p - p f(\Delta_0^{1/2}) p 
 $ is trace-class and the mapping
 $
  f \mapsto \Tr \left( p f(\Delta^{1/2}) p - p f(\Delta_0^{1/2}) p \right)
 $
 is a tempered distribution.
\end{corollary}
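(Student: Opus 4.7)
The plan is to invoke the Helffer--Sjöstrand functional calculus to express $F(\Delta) - F(\Delta_0)$ as a contour integral whose integrand is exactly the trace-class difference controlled by the preceding lemma. Since $f \in \mathcal{S}(\R)$ is even, the function $F(t) := f(\sqrt{t})$ defined on $[0,\infty)$ extends smoothly to all of $\R$ with rapid decay (the vanishing of the odd Taylor coefficients of $f$ at $0$ guarantees matching of all derivatives at $t=0$), so by the functional calculus $f(\Delta^{1/2}) = F(\Delta)$ and likewise for $\Delta_0$. Fix $N > (d+1)/2$ as in the preceding lemma and set $G(t) := (t+1)^N F(t) \in \mathcal{S}(\R)$. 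I then choose an almost analytic extension $\tilde G \in C_c^\infty(\C)$ satisfying $|\delbar \tilde G(z)| \leq C_M |\Im z|^M$ for every $M \in \mathbb{N}$, with the constants $C_M$ controlled by finitely many Schwartz seminorms of $G$.

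Applying the Helffer--Sjöstrand formula
\[
 F(A) = -\frac{1}{\pi} \int_\C \delbar \tilde G(z) \, (A+1)^{-N}(A - z)^{-1} \, dL(z)
\]
to $A = \Delta$ and $A = \Delta_0$, where $dL$ is Lebesgue measure on $\C$, and sandwiching between $p$'s yields
\[
 pF(\Delta)p - pF(\Delta_0)p = -\frac{1}{\pi} \int_\C \delbar \tilde G(z) \, T(z)\, dL(z),
\]
where $T(z) = p(\Delta+1)^{-N}(\Delta - z)^{-1}p - p(\Delta_0+1)^{-N}(\Delta_0-z)^{-1}p$ is precisely the operator estimated in the inequality displayed immediately before the corollary, giving $\|T(z)\|_1 \leq C_N / |\Im z|^2$. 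Combined with the almost-analyticity estimate and the compactness of $\mathrm{supp}(\tilde G)$, the integrand is dominated in trace norm by $C_N C_M |\Im z|^{M-2}$ on $\mathrm{supp}(\tilde G)$; taking $M \geq 3$ the integral converges absolutely in the Banach ideal of trace-class operators. This proves that $pf(\Delta^{1/2})p - pf(\Delta_0^{1/2})p$ is trace class, with the trace bounded by a finite sum of Schwartz seminorms of $G$.

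For the tempered-distribution statement, the maps $f \mapsto F = f \circ \sqrt{\cdot}$ and $F \mapsto G = (1+t)^N F$ are continuous between the appropriate Fréchet spaces, so the bound above gives continuity of $f \mapsto \Tr(pf(\Delta^{1/2})p - pf(\Delta_0^{1/2})p)$ on the closed subspace of even functions in $\mathcal{S}(\R)$, identifying this functional as a tempered distribution in the required sense. The only substantive point beyond the preceding lemma is the continuity of $f \mapsto F$ from even Schwartz functions to $\mathcal{S}(\R)$: smoothness of $F$ on $[0,\infty)$ is immediate from evenness of $f$, but control of Schwartz seminorms of $F$ by those of $f$ requires Fa\`a di Bruno, using the vanishing of the odd derivatives of $f$ at the origin to tame the behaviour of $F$ near $0$ and an elementary estimate for the decay at infinity. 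This is the main technical obstacle; once it is in place, the remainder of the argument is purely formal.
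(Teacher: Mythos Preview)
Your proof is correct and follows essentially the same strategy as the paper: both reduce to the Helffer--Sj\"ostrand formula and invoke the trace-class bound on the resolvent difference established just before the corollary. The only real difference is one of parametrisation. You pass from $f$ to $F(t)=f(\sqrt{t})$ and apply Helffer--Sj\"ostrand to $F(\Delta)$ with integration variable $z$ ranging over the spectral plane of $\Delta$; the paper instead keeps $\lambda$ as the integration variable (the spectral parameter of $\Delta^{1/2}$), sets $g(\lambda)=(1+\lambda^2)^N f(\lambda)$, and uses the formula
\[
 f(\Delta^{1/2}) = \frac{2}{\pi} \int\limits_{\Im z>0} z\,\frac{\partial \tilde g}{\partial \overline z}\, (\Delta+1)^{-N}(\Delta-z^2)^{-1}\,\der m(z).
\]
The paper's route is slightly more economical: since $g$ is automatically Schwartz, one avoids entirely the Fa\`a di Bruno verification that $f\circ\sqrt{\cdot}$ extends smoothly and with Schwartz control to all of $\R$, which you correctly identify as the main technical wrinkle in your argument. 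One small imprecision in your write-up: the almost analytic extension $\tilde G$ of a genuinely non-compactly-supported Schwartz function cannot be taken in $C_c^\infty(\C)$; the standard construction is compactly supported only in the imaginary direction and of Schwartz type in the real direction, which is still sufficient for absolute convergence of the integral.
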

\begin{proof}
Define $g \in \mathcal{S}(\R)$ by  $g (\lambda) = (1 + \lambda^2)^N f(\lambda)$.
Let $\tilde g$ be an almost analytic extension of $g$ such that $\frac{\partial \tilde g}{\partial \overline z} = O(|\mathrm{Im}(z)|^m)$ for some fixed $m \geq 5$.
Such an almost analytic extension can always be constructed as
$$
 \tilde g(x + \rmi y) = \sum_{k=0}^m \frac{1}{k!} g^{(k)}(x) (\rmi y)^k \chi(y),
$$
where $\chi \in C^\infty_\comp(\R)$ is chosen such that it equals one near $0$.
By the Helffer-Sj\"ostrand formula we have
 $$
  f(\Delta^{1/2}) = \frac{2}{\pi} \int\limits_{\mathrm{Im}(z)>0}z \frac{\partial \tilde g}{\partial \overline z} T_z \der m(z),
 $$
 and the analogous formula holds for $\Delta_0$. Here $\der m$ denotes the Lebesgue measure on $\C$. Hence,
 \begin{gather*}
   \left( p f(\Delta^{1/2}) p - p f(\Delta_0^{1/2}) p \right) \\= \frac{2}{\pi} \int\limits_{\mathrm{Im}(z)>0}\frac{\partial \tilde g}{\partial \overline z}  \left( p (\Delta +1)^{-N} (\Delta - z^2)^{-1} p - p (\Delta_{0} +1)^{-N} (\Delta_{0} - z^2)^{-1} p \right) z \der m(z),
 \end{gather*}
 which implies the statement as the trace norm is finite and can be estimated as
 $$
  \Tr \left( p f(\Delta^{1/2}) p - p f(\Delta_0^{1/2}) p \right) \leq C \frac{2}{\pi} \int\limits_{\mathrm{Im}(z)>0} | \frac{\partial \tilde g}{\partial \overline z}|  \frac{| z |}{|\mathrm{Im}(z^2)|^2} \der m(z).
 $$
\end{proof}

\section{Incoming and outgoing sections} \label{outgoingapp}

Recall that $f \in C^\infty(M; \Lambda^p T^*M)$ is called outgoing if $f = R_\lambda h$ for some compactly supported $h \in \CnI(M; \Lambda^p T^*M)$. 
Obviously the space of outgoing functions for $\lambda \in \R \setminus \{0\}$
is a vector space. In the following we fix a real $\lambda \not=0$.

\begin{lemma}
 Suppose that $f \in \CnI(M; \Lambda^p T^*M)$ and $g = (\Delta_p  - \lambda^2) f$, then $f = R_\lambda g$.
\end{lemma}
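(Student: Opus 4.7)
The plan is to push the identity back to the upper half-plane, where $R_\mu$ coincides with the bona fide resolvent of the self-adjoint operator $\Delta_{p,\mathrm{rel}}$, and then propagate it to the given real $\lambda$ by (Hahn-)meromorphic continuation. I treat $\mu$ as the running spectral parameter and keep $\lambda \in \R\setminus\{0\}$ fixed throughout.

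First I would verify the identity on the resolvent set. Under the standing convention that elements of $\CnI(M;\Lambda^pT^*M)$ satisfy relative boundary conditions (automatic for $f$ supported in $M_{\mathrm{int}}$, or imposed explicitly on forms meeting $\partial\calO$), $f$ lies in $\mathrm{dom}(\Delta_{p,\mathrm{rel}})$. Hence for $\mu$ with $\Im\mu>0$ the spectral theorem gives
\begin{equation*}
R_\mu (\Delta_{p,\mathrm{rel}}-\mu^2)f = f,
\end{equation*}
which, upon writing $g_\mu:=(\Delta_p-\mu^2)f$ and noting that the differential operator $\Delta_p$ agrees with $\Delta_{p,\mathrm{rel}}$ on the domain, is exactly $R_\mu g_\mu=f$.

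Second I would introduce
\begin{equation*}
F(\mu):= R_\mu (\Delta_p-\mu^2)f - f,
\end{equation*}
viewed as a map from $\mathcal{R}$ minus the poles of $R_\mu$ into $H^2_\loc(M;\Lambda^pT^*M)$. The map $\mu\mapsto (\Delta_p-\mu^2)f$ is polynomial in $\mu$ with values in $L^2_\compp$, and the gluing construction recalled in Section~\ref{basicscattersection} shows $R_\mu:L^2_\compp\to H^2_\loc$ is meromorphic on $\mathcal{R}$ (Hahn-meromorphic in the even-dimensional case). Therefore $F$ is (Hahn-)meromorphic on all of $\mathcal{R}$.

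Third I would invoke the identity principle: by Step one, $F\equiv 0$ on the open set $\{\Im\mu>0\}$, and a (Hahn-)meromorphic function vanishing on an open subset of its domain vanishes wherever it is holomorphic. Since $R_\mu$ is holomorphic in a neighbourhood of $\R\setminus\{0\}$ (absence of embedded eigenvalues, as noted in Section~\ref{basicscattersection}), $F$ is regular at the given real $\lambda$, and specialising yields $f=R_\lambda g$. The only delicate point is the boundary-conditions hypothesis used in Step one: the identity $R_\mu(\Delta_{p,\mathrm{rel}}-\mu^2)f=f$ requires $f\in\mathrm{dom}(\Delta_{p,\mathrm{rel}})$, so the lemma is to be read for the natural class of smooth compactly supported forms satisfying relative boundary conditions; this is the class to which it is applied throughout the text, and the meromorphic continuation step itself is completely routine.
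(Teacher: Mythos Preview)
Your proof is correct and follows essentially the same route as the paper: define $g_\mu=(\Delta_p-\mu^2)f$, use $f\in\mathrm{dom}(\Delta_{p,\mathrm{rel}})$ to get $R_\mu g_\mu=f$ for $\Im\mu>0$, and then extend to the real $\lambda$ by (Hahn-)holomorphy of $\mu\mapsto R_\mu g_\mu$ near $\R\setminus\{0\}$. The paper phrases the last step simply as ``take the limit $\mu\to\lambda$'' rather than invoking the identity principle, but these amount to the same thing since $R_\mu$ is holomorphic near the real axis away from zero; your explicit flagging of the domain hypothesis (relative boundary conditions for $f$) is a welcome clarification that the paper glosses over with the phrase ``since $f$ is in the domain of the operator.''
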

\begin{proof}
 Let $g_\mu = (\Delta_p - \mu^2 ) f$, then $g_\mu$ is a holomorphic family of compactly supported functions
such that $g_\lambda=g$. Since $f$ is in the domain of the operator, we have $f = R_\mu g_\mu$ for all $\mu$ in the upper half plane.
 Now simply take the limit $\mu \to \lambda$.  
\end{proof}

This implies immediately the following corollary.

\begin{corollary}
 Any $f \in \CnI(M; \Lambda^p T^*M)$ is outgoing  for $\lambda$.
\end{corollary}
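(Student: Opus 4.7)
The corollary is a one-line consequence of the lemma just proved, and I would present it as such. Given an arbitrary $f \in \CnI(M;\Lambda^p T^*M)$, I would set $h := (\Delta_p - \lambda^2) f$ and observe that $h \in \CnI(M;\Lambda^p T^*M)$ since $\Delta_p$ is a local second order differential operator applied to a compactly supported smooth section, so it cannot enlarge the support. The preceding lemma, applied with $g$ replaced by $h$, then yields $f = R_\lambda h$ with $h$ compactly supported, which is precisely the definition of outgoing for $\lambda$ recalled earlier in Section \ref{basicscattersection}.

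The only auxiliary point worth checking is that $R_\lambda$ is actually defined at the chosen real $\lambda \neq 0$. This was recorded at the beginning of Section \ref{basicscattersection}: the resolvent has a meromorphic extension from the upper half plane to the Riemann surface $\mathcal{R}$ as an operator $H^s_\compp(M) \to H^{s+2}_\loc(M)$, and it is holomorphic near $\R \setminus \{0\}$ because there are no embedded eigenvalues. With that one cited fact in hand, there is genuinely no obstacle; indeed the author signals this by writing ``This implies immediately the following corollary.'' I would therefore present the proof as a single sentence rather than a separate argument, using it mostly to justify the terminology ``outgoing'' so that the expansion results later in the appendix (on asymptotic behaviour of outgoing sections via Hankel expansions) can be applied directly to sections of the form $\chi f$ with $f$ smooth and compactly supported.
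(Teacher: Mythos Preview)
Your proposal is correct and matches the paper's approach exactly: the paper does not even write out a proof, noting only that the corollary follows immediately from the preceding lemma. Your argument---set $h=(\Delta_p-\lambda^2)f$, observe $h\in\CnI(M;\Lambda^p T^*M)$, and apply the lemma---is precisely the intended one-line deduction.
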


\begin{corollary}
 Let $\chi \in C^\infty(M)$ be supported in $M \setminus K$ such that $1-\chi \in \CnI(M)$. Then
 $f \in \CnI(M)$ is outgoing if and only if $\chi f$ is outgoing.
\end{corollary}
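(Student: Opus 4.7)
The plan is to exploit the decomposition $f = \chi f + (1-\chi)f$, where by hypothesis $(1-\chi)f \in C^\infty_0(M;\Lambda^p T^*M)$, so that the statement reduces to the fact that the outgoing sections form a vector space containing $C^\infty_0$.

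First I would verify that the hypothesis in the definition of ``outgoing'' (namely that $(\Delta_p-\lambda^2)$ applied to the section in question is compactly supported) is stable under multiplication by $\chi$. Writing
\[
(\Delta_p-\lambda^2)(\chi f)=\chi(\Delta_p-\lambda^2)f+[\Delta_p,\chi]f,
\]
the commutator $[\Delta_p,\chi]$ is a first order differential operator supported in the compact set $\mathrm{supp}(\der\chi)$, hence $[\Delta_p,\chi]f$ is compactly supported. Since $1-\chi$ is compactly supported, $\chi(\Delta_p-\lambda^2)f$ and $(\Delta_p-\lambda^2)f$ differ by a compactly supported section. Thus $(\Delta_p-\lambda^2)(\chi f)$ is compactly supported if and only if $(\Delta_p-\lambda^2)f$ is.

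Next, since $(1-\chi)f\in\CnI(M;\Lambda^p T^*M)$ (it vanishes near $\partial\calO$ where $\chi=1$ is excluded, so no boundary issue arises; $f$ itself satisfies relative boundary conditions and $\chi$ is a smooth scalar), the previous corollary implies that $(1-\chi)f$ is outgoing: explicitly, $(1-\chi)f=R_\lambda h'$ where $h'=(\Delta_p-\lambda^2)((1-\chi)f)$ is compactly supported. If $f$ is outgoing with $f=R_\lambda h$ for some compactly supported $h$, then
\[
\chi f = f - (1-\chi)f = R_\lambda(h-h'),
\]
and $h-h'$ is compactly supported, so $\chi f$ is outgoing. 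Conversely, if $\chi f=R_\lambda h''$ with $h''$ compactly supported, then $f=\chi f+(1-\chi)f=R_\lambda(h''+h')$, so $f$ is outgoing.

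There is no real obstacle here; the only thing to keep track of is that the relative boundary conditions are preserved when multiplying by the scalar cutoff $\chi$, which is automatic because $\chi$ is supported in $M\setminus K$ and hence away from $\partial\calO$. The argument is just bookkeeping around the decomposition $f=\chi f+(1-\chi)f$ together with linearity of the map $h\mapsto R_\lambda h$ on compactly supported sources.
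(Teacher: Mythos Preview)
Your proof is correct and follows exactly the same approach as the paper, which simply states ``This follows immediately from the fact that $(1-\chi)f$ is outgoing.'' You have merely spelled out the linearity argument in more detail; the only (harmless) extra step is your verification that the precondition $(\Delta_p-\lambda^2)(\chi f)\in C^\infty_0$ is equivalent to $(\Delta_p-\lambda^2)f\in C^\infty_0$, which is automatic once one notes that an outgoing section already satisfies this. One small remark: your parenthetical about boundary conditions is slightly garbled --- since $\chi$ is supported in $M\setminus K$ and $\partial\calO\subset K$, it is $\chi f$ that vanishes near $\partial\calO$, while $(1-\chi)f=f$ there and inherits the relative boundary conditions from $f$; either way the conclusion stands.
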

\begin{proof}
 This follows immediately from the fact that $(1-\chi) f$ is outgoing.
\end{proof}

Note that if $\chi$ is supported in $M \setminus K$ then we can understand it as a function on $\R^d$
as $M \setminus K$ is identified with $\R^d \setminus \tilde K$.

\begin{proposition} \label{outgoingnice}
 Let $\chi \in C^\infty(M)$ be supported in $M \setminus K$ such that $1-\chi \in \CnI(M)$, then 
 $f \in \CnI(M)$ is outgoing if and only if $\chi f$ is outgoing for the Laplace operator on $\R^d$.
\end{proposition}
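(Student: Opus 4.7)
\textbf{Proof proposal for Proposition \ref{outgoingnice}.}

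The plan is to prove both implications separately. The preceding corollary gives that $(1-\chi) f \in \CnI(M)$ is outgoing on $M$ for any $f \in C^\infty(M;\Lambda^p T^*M)$, so $f$ is outgoing on $M$ iff $\chi f$ is. Since the support of $\chi$ is a closed subset of the open set $M\setminus K$, the function $\chi$ vanishes in a neighbourhood of $K$, and hence $\chi f$ extends smoothly by zero across $\tilde K$ to all of $\R^d$. The substantive task is therefore to show this extension is outgoing on $\R^d$ iff $\chi f$ is outgoing on $M$.

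For the forward direction, write $f_\mu = R_\mu h$ for a fixed $h \in \CnI(M)$, and set
$$
 g_\mu := (\Delta_0 - \mu^2)(\chi f_\mu) = [\Delta_0, \chi] f_\mu + \chi h,
$$
where I used that $\Delta_0 = \Delta_p$ on $\supp \chi \subset M \setminus K \cong \R^d \setminus \tilde K$. Because $[\Delta_0, \chi]$ has compactly supported coefficients, $g_\mu$ is a meromorphic family of smooth functions on $\R^d$ supported in a fixed compact set. For $\Im \mu > 0$, $f_\mu \in L^2(M)$ so $\chi f_\mu \in L^2(\R^d)$, and uniqueness of $L^2$ solutions of $(\Delta_0 - \mu^2)u = g_\mu$ gives $\chi f_\mu = R_{0,\mu} g_\mu$. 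Meromorphic continuation in $\mu$ (both sides being meromorphic families on the appropriate Riemann surface) extends this identity to $\mu \in \R \setminus \{0\}$; evaluating at $\mu = \lambda$ gives $\chi f = R_{0,\lambda} g_\lambda$, so $\chi f$ is outgoing on $\R^d$.

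For the reverse direction, suppose $\chi f = R_{0,\lambda} g$ on $\R^d$ with $g \in C^\infty_0(\R^d)$, and set $h := (\Delta_p - \lambda^2)f \in C^\infty(M;\Lambda^p T^*M)$. On $M \setminus K$ the operators $\Delta_p$ and $\Delta_0$ agree, so the identity
$$
 (\Delta_p - \lambda^2)(\chi f) = [\Delta_p, \chi] f + \chi h
$$
combined with $(\Delta_p - \lambda^2)(\chi f)|_{M\setminus K} = g|_{\R^d \setminus \tilde K}$ shows that $\chi h$ has compact support; hence $h = \chi h + (1-\chi) h \in \CnI(M)$. Set $f' := R_\lambda h$, which is outgoing on $M$ by definition; by the forward direction $\chi f'$ is then outgoing on $\R^d$. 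The difference $v := f - f'$ satisfies $(\Delta_p - \lambda^2) v = 0$ on $M$, and $\chi v = \chi f - \chi f'$ is a difference of outgoing functions on $\R^d$, hence itself outgoing. Since $\chi \equiv 1$ outside a compact set, $v$ coincides with $\chi v$ at infinity, so $v$ satisfies the outgoing Sommerfeld radiation condition componentwise in the standard Euclidean frame. Componentwise Rellich uniqueness then forces $v \equiv 0$ outside a compact set, and elliptic unique continuation on the connected manifold $M$ propagates this to $v \equiv 0$ everywhere. Thus $f = R_\lambda h$ is outgoing on $M$.

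The main technical obstacle is the uniqueness step in each direction: in the forward direction, pairing $L^2$-uniqueness for $\Im \mu > 0$ with meromorphic continuation in $\mu$ to reach the real axis; in the reverse direction, verifying Rellich's theorem at the fixed real $\lambda$ in the vector-valued setting and then using unique continuation across the obstacle into the full connected manifold $M$. The support bookkeeping---ensuring $[\Delta, \chi] f$ and $\chi h$ are compactly supported and that extensions by zero remain smooth---is routine given that $\chi$ vanishes in a neighbourhood of $K$.
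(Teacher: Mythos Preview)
Your forward direction is correct and essentially identical to the paper's: you deform $\mu$ into the upper half plane, use $L^2$-uniqueness there to identify $\chi f_\mu$ with $R_{0,\mu}$ applied to a compactly supported source, and then continue back to $\lambda$.

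Your reverse direction has a genuine gap at the Rellich step. You assert that because $v$ satisfies $(\Delta_p-\lambda^2)v=0$ on $M$ and is outgoing at infinity, ``componentwise Rellich uniqueness forces $v\equiv 0$ outside a compact set.'' This is false as stated: on an exterior domain $\R^d\setminus B_R$ alone, outgoing solutions of the homogeneous Helmholtz equation need not vanish---for instance $e^{\rmi\lambda r}/r$ in $\R^3$. Concretely, take $M=\R^3\setminus \overline{B_1}$ with Dirichlet conditions and $f(x)=e^{\rmi\lambda|x|}/|x|$; then $(\Delta-\lambda^2)f=0$, so $h=0$ and $f'=R_\lambda 0=0$, giving $v=f\neq 0$, yet $\chi f$ is outgoing on $\R^3$. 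Your argument never uses the relative boundary condition at $\partial\mathcal{O}$, so it cannot succeed; what is actually needed is that an outgoing solution of $(\Delta_p-\lambda^2)v=0$ on all of $M$ \emph{satisfying the boundary condition} vanishes, which requires a Green's-identity computation picking up the boundary term at $\partial\mathcal{O}$.

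The paper sidesteps this entirely: for the converse it simply reruns the forward argument with the roles of $M$ and $\R^d$ interchanged. Starting from $\chi f=R_{0,\lambda}g$, one forms $\tilde\chi R_{0,\mu}g$ (supported in $M\setminus K$, hence trivially in the domain of $\Delta_{p,\mathrm{rel}}$), identifies it with $R_\mu(\cdots)$ for $\Im\mu>0$ by $L^2$-uniqueness, and continues to $\lambda$. This directly exhibits $\chi f$ as outgoing on $M$, with no appeal to Rellich or unique continuation.
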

\begin{proof}
 Let $R_{0,\lambda}$ be the free resolvent of $\Delta_p$ on $\R^d$. Then both $R_{0,\mu}$ and $R_\mu$
 are holomorphic in $\mu$ near $\R \setminus \{0\}$ and map to $L^2$ in the lower half plane.
 Now $f$ is outgoing if and only if $\chi f$ is outgoing, by the above corollary. Let us therefore assume w.l.o.g. that
 $f$ is supported in $M \setminus K$. Since $f = R_\lambda h$ for some compactly supported $h$ we have
 $(\Delta_p -\lambda^2) f = h$. Therefore, $h$
 is compactly supported in $M \setminus K$. 
 Now define $f_\mu:= \chi R_\mu h$. The section $f_\mu$ is in $L^2(M; \Lambda^p T^*M)$ for $\Im(\mu)>0$.
 We can now think of $f_\mu$ as a $p$-form on $\R^d$, and then we have
 $$
  (\Delta_p -\mu^2) f_\mu = [\Delta_p, \chi] f_\mu + h \in \CnI(\R^d,\Lambda^p \R^d) .
 $$
 For $\mu$ in the upper half-plane we therefore have
 $$
  f_\mu = R_{0,\mu} \left( [\Delta_p, \chi] f_\mu + h) \right).
 $$
 Now taking the limit $\mu \to \lambda$ we obtain
 $$
  f_\lambda = R_{0,\lambda} \left( [\Delta_p, \chi] f_\lambda + h) \right), 
 $$
 so $f_\lambda=\chi f$ is outgoing for the free Laplacian on $\R^d$. 
 This proves that if $f$ is outgoing, then so is $\chi f$ for the free Laplacian. To show the converse, exactly the same argument with $M$ and $\R^d$ interchanged applies, and one can conclude that $f$ is outgoing whenever $\chi f$ is for the free Laplacian.
 \end{proof}

\begin{lemma} \label{repoutgoing}
 Suppose that $f \in C^\infty(M; \Lambda^p T^*M)$ is outgoing  for $\lambda$. Then there exists a function 
 $\Phi \in C^\infty(\sphere; \Lambda^p\mathbb{R}^d)$ such that for $r$ sufficiently large
 $$
  f(r \theta) = \tilde h^{(1)}_\lambda(\Phi)(r \theta) = \lambda^{\frac{d-1}{2}} \sum\limits_{\nu} a_{\nu}(\Phi) \phi_{\nu}(\theta)h^{(1)}_{\ell_{\nu}}(\lambda r)(-\rmi)^{\ell_{\nu}},
 $$
 where the notation is as in Definition \ref{hankelsums}.
\end{lemma}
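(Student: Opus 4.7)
The plan is to reduce to the free-space setting and then invoke the classical partial-wave expansion of the Helmholtz Green's function.

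First I would use Proposition \ref{outgoingnice} to replace $f$ by $\chi f$, which is outgoing for the free Laplacian on $\R^d$. Using the trivialisation of $T^*(X\setminus K)$ via the Cartesian frame $(\der x^1,\ldots,\der x^d)$, each component becomes a scalar outgoing solution of the Helmholtz equation on $\R^d$ produced by a compactly supported source, so it suffices to prove the scalar statement: if $u = R_{0,\lambda} h$ on $\R^d$ with $h \in \CnI(\R^d)$ supported in a ball $B_{R_0}(0)$, then $u$ admits the required Hankel expansion for $r > R_0$.

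Next I would apply the partial-wave (Gegenbauer addition) expansion of the free Green's function
$$r_\lambda(x,y)=\frac{\rmi}{4}\left(\frac{\lambda}{2\pi|x-y|}\right)^{\frac{d-2}{2}} H^{(1)}_{\frac{d-2}{2}}(\lambda|x-y|),$$
which for $|x|>|y|$ takes the form
$$r_\lambda(x,y)=\lambda^{\frac{d-1}{2}}\sum_{\nu}(-\rmi)^{\ell_\nu}\, h^{(1)}_{\ell_\nu}(\lambda|x|)\,\phi_\nu\!\left(\tfrac{x}{|x|}\right)\,\overline{G_\nu(y)},$$
where $G_\nu(y) = c_{d,\ell_\nu}\,\lambda^{\frac{d-1}{2}}\,j_{d,\ell_\nu}(\lambda|y|)\,\phi_\nu(y/|y|)$ for an appropriate dimensional normalisation absorbing the factor coming from writing the Gegenbauer polynomial via the sphere addition theorem. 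Integrating against $h$ gives, for $|x|>R_0$,
$$u(x)=\lambda^{\frac{d-1}{2}}\sum_\nu (-\rmi)^{\ell_\nu}\, a_\nu\, h^{(1)}_{\ell_\nu}(\lambda|x|)\,\phi_\nu(\hat x),\qquad a_\nu:=\int_{\R^d}\overline{G_\nu(y)}h(y)\,\der y,$$
which is exactly $\tilde h^{(1)}_\lambda(\Phi)$ with $\Phi=\sum_\nu a_\nu \phi_\nu$.

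It then remains to show $\Phi\in C^\infty(\sphere)$ and that the series converges in $C^\infty$ on each annulus $\{r>R_1\}$ with $R_1>R_0$. The coefficients $a_\nu$ decay faster than any polynomial in $\ell_\nu$: integrating $h$ against the smooth function $\phi_\nu(\hat y)$ by parts with respect to the spherical Laplacian produces factors $(\ell_\nu(\ell_\nu+d-2))^{-k}$ for any $k$, while the uniform estimate $|J_\mu(z)|\le |z/2|^\mu e^{|\Im z|}/\Gamma(\mu+1)$ from \cite[10.14.4]{olver2010nist} guarantees $|j_{d,\ell_\nu}(\lambda|y|)|/\Gamma(\ell_\nu+d/2)$ is bounded on $\supp h$ uniformly in $\nu$. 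Combined with the polynomial-in-$\ell_\nu$ growth of $h^{(1)}_{\ell_\nu}(\lambda r)$ on $r\in[R_1,R_2]$ (from \eqref{sphankelexpzero}), this rapid decay of $a_\nu$ yields absolute convergence of the series together with all derivatives on compact subsets of $\{r>R_0\}$, proving simultaneously that $\Phi$ is smooth on $\sphere$ and that the claimed identity holds in $C^\infty$.

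The main obstacle is the convergence bookkeeping in the last paragraph: one must control the ratios $h^{(1)}_{\ell_\nu}(\lambda r)/\Gamma(\ell_\nu+d/2)$ and the derivatives of $\phi_\nu$ simultaneously with the rapid decay of $a_\nu$, uniformly on compact annuli. Everything else is a direct application of the classical addition theorem and the trivial bundle reduction.
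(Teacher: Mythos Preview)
Your approach is correct in outline and differs genuinely from the paper's argument. The paper does not use the Gegenbauer addition theorem; instead it extracts the far-field pattern $\Phi$ directly from the large-$r$ asymptotics of the free resolvent kernel (which yields $\Phi\in C^\infty(\sphere)$ immediately), then projects $u$ onto each spherical harmonic $\phi_\nu$, observes that the resulting radial function $u_\nu(r)=\int_{\sphere}u(r\theta)\overline{\phi_\nu(\theta)}\,\der\theta$ solves the spherical Bessel ODE of order $\ell_\nu$, and identifies it with the Hankel solution by matching the leading asymptotics \eqref{sphankelexp} as $r\to\infty$. This sidesteps all coefficient bookkeeping: smoothness of $\Phi$ and $C^\infty$-convergence are inherited from the integral representation of $R_{0,\lambda}h$, and only the one-term large-argument asymptotics of Hankel functions are used. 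Your route, by contrast, produces an explicit integral formula for the $a_\nu$, at the price of having to verify convergence by hand.

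One correction to your convergence argument: the growth of $h^{(1)}_{\ell_\nu}(\lambda r)$ in $\ell_\nu$ at fixed $r$ is \emph{not} polynomial but factorial, of order $\Gamma(\ell_\nu+\tfrac{d-2}{2})\,(2/\lambda r)^{\ell_\nu}$ by \eqref{sphankelexpnu}. Polynomial decay of $a_\nu$ alone would not suffice. What actually saves the estimate is that your Bessel bound on $j_{d,\ell_\nu}(\lambda|y|)$ for $|y|\le R_0$ already contains a compensating factor $(\lambda R_0/2)^{\ell_\nu}/\Gamma(\ell_\nu+\tfrac{d}{2})$; the $\Gamma$-factors cancel and the product $j\cdot h^{(1)}$ contributes $(R_0/r)^{\ell_\nu}$, which decays geometrically for $r>R_0$. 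The angular integration-by-parts is then needed only to control the growth of derivatives of $\phi_\nu$ in the $C^\infty$ topology, not for convergence of the series itself. With this adjustment your argument goes through.
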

\begin{proof}
 By Prop. \ref{outgoingnice} it is sufficient to prove this for $\R^d$. In this case the bundle $\Lambda^p \R^d$ is trivial and the operators do not mix components. It is therefore sufficient to consider the case $p=0$. We have $u =  R_{0,\lambda} f$, where $f$ is compactly supported.
 It is easy to see from the formula of the outgoing resolvent kernel
 $$
  R_{0,\lambda}(x,y) = \frac{\rmi}{4}\left (\frac{\lambda}{2 \pi |x-y|} \right)^{\frac{d-2}{2}}H^{(1)}_{\frac{d-2}{2}}(\lambda |x-y|)
 $$
 that 
 \begin{gather} \label{expans}
 u(r \theta) = \frac{\ee^{\rmi r \lambda} \ee^{-\frac{\rmi \pi(d-1)}{4}} }{r^{\frac{d-1}{2}}} \Phi(\theta) +  O\left(\frac{1}{r^{\frac{d+1}{2}}}\right).
 \end{gather} 
 for a smooth function $\Phi \in C^\infty(\sphere)$. In fact $\Phi$ is
 analytic, but we will not need this at this stage. Let $(\phi_\nu)$ be an orthonormal basis in $L^2(\sphere)$ consisting of spherical harmonics of degree $\ell_\nu$. The function $\Phi$ can be expanded as $$\Phi = \sum\limits_\nu a_\nu \phi_\nu,$$ with convergence in $C^\infty(\sphere)$.
The functions 
$$
 u_\nu(r) = \int_\sphere u(r \theta) \phi_\nu(\theta) \der \theta
$$ 
are defined for $r>0$, and solve the spherical Bessel equation of order $\ell_\nu$. Moreover,
$$
 u(r \theta) = \sum\limits_\nu u_\nu(r) \phi_\nu(\theta)
$$
converges in $C^\infty(\R^d \setminus \{ 0 \})$.

By \eqref{expans} they have an asymptotic expansion
\begin{gather*}
 u_\nu(r) = \frac{\ee^{\rmi r \lambda} \ee^{-\frac{i\pi(d-1)}{4}} }{r^{\frac{d-1}{2}}} \langle \Phi, \phi_\nu \rangle_{L^2(\sphere)} + O\left(\frac{1}{{r^{\frac{d+1}{2}}}} \right) \\ = \frac{\ee^{\rmi r \lambda} \ee^{-\frac{\rmi \pi(d-1)}{4}} }{r^{\frac{d-1}{2}}} a_\nu + O\left(\frac{1}{{r^{\frac{d+1}{2}}}} \right)
\end{gather*}
for sufficiently large $r$. Comparing the expansions using \eqref{sphankelexp} one obtains
$$
 u_\nu(r) = \lambda^{\frac{d-1}{2}} (-\rmi)^{\ell_\nu}a_\nu h^{(1)}_{d,\ell_\nu}(\lambda r),
$$
and therefore, $u = \tilde h^{(1)}_\lambda(\Phi)(r \theta)$.
\end{proof}

\section{Multipole expansions}\label{Amulti}

Let $u \in C^\infty(\R^d \setminus B_R)$ and suppose that $\Delta u =0$, where $\Delta$ is the Laplace operator on functions.
We denote by $(\phi_\nu)_\nu$ be an orthonormal basis in $L^2(\sphere)$ consisting of spherical harmonics of degree $\ell_\nu$.
Then one can use separation of variables to expand $u$ into spherical harmonics.
\begin{lemma} \label{multilemma}
 In case $d>2$, we have that
 \begin{gather} \label{multipole}
 u(r \theta) = \sum_\nu \left( a_\nu \frac{1}{r^{d-2+\ell_\nu}} \phi_\nu(\theta) + b_\nu r^{\ell_\nu} \phi_\nu(\theta) \right),
\end{gather}
which converges in $C^\infty(\R^d \setminus B_R)$. If $b_\nu=0$ the above series converges uniformly on $\R^d \setminus B_R$ together with its derivatives.
\end{lemma}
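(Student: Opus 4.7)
The plan is standard separation of variables with careful attention to convergence. For each fixed $r > R$ expand $u(r,\cdot) \in C^\infty(\sphere)$ in the orthonormal basis of spherical harmonics,
$$u(r\theta) = \sum_\nu u_\nu(r) \phi_\nu(\theta), \qquad u_\nu(r) := \int_\sphere u(r\theta)\overline{\phi_\nu(\theta)}\,\der\theta.$$
Since $u$ is smooth, each $u_\nu$ is smooth in $r$ on $(R,\infty)$. Writing the Laplacian in spherical coordinates as $\Delta = -r^{1-d}\partial_r(r^{d-1}\partial_r) + r^{-2}\Delta_\sphere$ and using $\Delta_\sphere \phi_\nu = \ell_\nu(\ell_\nu+d-2)\phi_\nu$, one sees that $\Delta u = 0$ forces each $u_\nu$ to satisfy the Euler-type ODE
$$-r^{1-d}(r^{d-1} u_\nu')' + \frac{\ell_\nu(\ell_\nu+d-2)}{r^2} u_\nu = 0,$$
whose general solution (for $d>2$) is $u_\nu(r) = a_\nu r^{-(d-2+\ell_\nu)} + b_\nu r^{\ell_\nu}$, the two characteristic exponents being distinct since $d>2$ rules out the degenerate case $\ell_\nu=0$, $d=2$.

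Next I would establish convergence in $C^\infty(\R^d\setminus B_R)$ by showing uniform convergence of all derivatives on any compact annulus $A_{r_1,r_2} = \{r_1 \leq |x| \leq r_2\}$ with $R<r_1<r_2$. The key estimate comes from integrating by parts with $\Delta_\sphere$: for any $N$,
$$u_\nu(r) = \frac{1}{[\ell_\nu(\ell_\nu+d-2)]^N} \int_\sphere u(r\theta)\overline{\Delta_\sphere^N \phi_\nu(\theta)}\,\der\theta = \frac{1}{[\ell_\nu(\ell_\nu+d-2)]^N}\int_\sphere (\Delta_\sphere^N u)(r\theta)\overline{\phi_\nu(\theta)}\,\der\theta,$$
so $|u_\nu(r)| = O_N(\ell_\nu^{-2N})$ uniformly for $r\in[r_1,r_2]$. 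Applying the same argument after first differentiating $u$ in $r$ (which preserves harmonicity in a weak sense on the annulus) yields identical super-polynomial decay for $\partial_r^k u_\nu(r)$. Since $\phi_\nu$ and its tangential derivatives grow at most polynomially in $\ell_\nu$ (standard bounds from $\ell_\nu(\ell_\nu+d-2)$ being the eigenvalue), the series $\sum_\nu u_\nu(r)\phi_\nu(\theta)$ and all of its derivatives in $(r,\theta)$ converge absolutely and uniformly on $A_{r_1,r_2}$. Since $r_1,r_2$ were arbitrary, this gives convergence in $C^\infty(\R^d\setminus B_R)$, and by uniqueness of Fourier coefficients the coefficients $a_\nu,b_\nu$ are determined by $u$.

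For the second statement, assume $b_\nu=0$ for every $\nu$. Then each term $a_\nu r^{-(d-2+\ell_\nu)}\phi_\nu(\theta)$ is monotone decreasing in $r$, so for any $k$-th order derivative $D^k$ in $(r,\theta)$ the size of $D^k\bigl(a_\nu r^{-(d-2+\ell_\nu)}\phi_\nu(\theta)\bigr)$ on $\R^d\setminus B_R$ is bounded, up to a polynomial factor in $\ell_\nu$, by its value at $r=R$ (derivatives in $r$ only bring down extra factors $(d-2+\ell_\nu+j)/R$ that are also polynomial in $\ell_\nu$). Combined with the rapid decay of $|a_\nu|$ inherited from the first part (evaluated at any $r_1>R$ and then trivially extended by monotonicity in $r$), this yields uniform convergence of the series together with all derivatives on the entire complement $\R^d\setminus B_R$.

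The main obstacle is the quantitative control of the spherical harmonic coefficients: one must upgrade the qualitative smoothness of $u$ to uniform super-polynomial decay of $u_\nu$ and its $r$-derivatives on each compact annulus, and match it against the polynomial growth of $\phi_\nu$ and the explicit $\ell_\nu$-dependence of the two characteristic exponents. Everything else is routine once this is in place.
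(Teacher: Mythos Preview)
Your proof is correct and follows essentially the same route as the paper: expand in spherical harmonics, observe that each coefficient $u_\nu(r)$ satisfies the Euler ODE with fundamental system $r^{-(d-2+\ell_\nu)},\,r^{\ell_\nu}$, and read off the form of the expansion. The paper's proof simply asserts the $C^\infty$-convergence without justification, so your quantitative argument via iterated integration by parts against $\Delta_\sphere$ is a genuine and welcome addition rather than a different method. One small remark: your parenthetical ``which preserves harmonicity in a weak sense'' is not needed and is slightly misleading---the rapid decay of $\partial_r^k u_\nu(r)$ follows directly from differentiating under the integral and applying the same $\Delta_\sphere^N$ trick to $\partial_r^k u$, with no appeal to harmonicity of the derivative. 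For the second part, your monotonicity argument is right; just note that to get uniform control down to $r=R$ (rather than only on $\{r\ge r_1\}$ for each $r_1>R$) you are using smoothness of $u$ up to the boundary sphere, which is implicit in the hypothesis $u\in C^\infty(\R^d\setminus B_R)$ with $B_R$ the open ball.
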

\begin{proof}
 The functions 
$$
 u_\nu(r) = \int_\sphere u(r \theta) \phi_\nu(\theta) \der \theta
$$ 
are defined for $r>R$. Moreover,
$$
 u(r \theta) = \sum\limits_\nu u_\nu(r) \phi_\nu(\theta)
$$
converges in $C^\infty(\R^d \setminus B_R)$.
Then, the $u_\nu(r)$ satisfy an ordinary differential equation that has $\frac{1}{r^{d-2+\ell_\nu}}$ and $r^{\ell_\nu}$ as a system of fundamental solutions. From this one obtains the claimed expansion.
\end{proof}

The same result holds in case $d=2$ with the only modification that when $\ell=0$ the two fundamental solutions of the resulting ODE are $1$ and $\log r$. Therefore one has
 \begin{gather} \label{multipole2d}
 u(r \theta) = a_0 \log(r) +b_0 + \sum_{\nu,\ell_\nu>0} \left( a_\nu \frac{1}{r^{d-2+\ell_\nu}} \phi_\nu(\theta) + b_\nu r^{\ell_\nu} \phi_\nu(\theta) \right).
\end{gather}

\section{Spherical Bessel functions}\label{Abf}

The spherical Bessel functions $j_\ell$ are usually defined as $j_\ell(x)=\sqrt{\frac{\pi}{2x}} J_{\ell + \frac{1}{2}}(x)$. These functions appear when separating variables for the three dimensional Helmholtz equation. Here we will need the higher dimensional analog, which we define as
$$
 j_{d,\ell}(x)=\sqrt{\frac{\pi}{2}}x^{\frac{2-d}{2}}J_{\ell + \frac{d-2}{2}}(x),
$$
and refer to as the $d$-dimensional spherical Bessel function. Similarly, the corresponding $d$-dimensional spherical Hankel functions are defined as
\begin{gather*}
  h^{(1)}_{d,\ell}(x)=\sqrt{\frac{\pi}{2}}x^{\frac{2-d}{2}}H^{(1)}_{\ell + \frac{d-2}{2}}(x),\\
  h^{(2)}_{d,\ell}(x)=\sqrt{\frac{\pi}{2}}x^{\frac{2-d}{2}} H^{(2)}_{\ell + \frac{d-2}{2}}(x).
\end{gather*}
The properties of the Hankel functions (\cite[10.11.1-10.11.9]{olver2010nist}) then imply that
 \begin{gather} \label{rotationhankel1}
  h^{(1)}_{d,\ell}(x e^{\rmi \pi}) = - (-1)^{\ell+d} h^{(2)}_{d,\ell}(x),\\   \label{rotationhankel2}
  h^{(2)}_{d,\ell}(x e^{\rmi \pi})=  (-1)^{\ell} \left( h^{(1)}_{d,\ell}(x) + (1+(-1)^d) h^{(2)}_{d,\ell}(x) \right).
\end{gather}

For real $\lambda \not=0$ the asymptotic behaviour as $x \to \infty$ of these functions is as follows
 \begin{gather} \label {sphankelexp}
  h^{(1)}_{d,\ell}(x) = \frac{1}{x^{\frac{d-1}{2}}} \ee^{\rmi (x -\frac{\pi}{2} \ell -\frac{\pi}{4}(d-1))} + O\left(\frac{1}{x^{\frac{d+1}{2}}}\right)\\
  h^{(2)}_{d,\ell}(x) = \frac{1}{x^{\frac{d-1}{2}}} \ee^{-\rmi (x -\frac{\pi}{2} \ell -\frac{\pi}{4}(d-1))} + O\left(\frac{1}{x^{\frac{d+1}{2}}}\right).
\end{gather}

The asymptotic behavior as $x \to 0_+$ is
 \begin{gather} 
  h^{(1)}_{d,\ell}(x) = - \rmi \frac{1}{\sqrt{\pi}} 2^{\ell+\frac{d-3}{2}} \Gamma(\ell +\frac{d-2}{2}) x^{-\ell - d + 2} + O(x^{-\ell - d + 4}),\\ \label {sphankelexpzero}
  h^{(2)}_{d,\ell}(x) =   \rmi \frac{1}{\sqrt{\pi}} 2^{\ell+\frac{d-3}{2}} \Gamma(\ell +\frac{d-2}{2}) x^{-\ell - d + 2}+ O(x^{-\ell - d + 4}),
\end{gather}
if $\ell + \frac{d -2}{2} >0$ and 
 \begin{gather} \label {sphankelexpzerod2}
  h^{(1)}_{d,\ell}(x) =  -\rmi \sqrt{\frac{2}{\pi}} (-\log x) + O(1),\\
  h^{(2)}_{d,\ell}(x) =  \rmi \sqrt{\frac{2}{\pi}} (-\log x) + O(1),
\end{gather}
if $d=2$ and $\ell=0$.
In case $\ell + \frac{d -2}{2} >0$ one also has, uniformly in $x$ on compact sets,
 \begin{gather} 
   x^{\ell + d - 2} h^{(1)}_{d,\ell}(x) \left( - \rmi \frac{1}{\sqrt{\pi}} 2^{\ell+\frac{d-3}{2}} \Gamma(\ell +\frac{d-2}{2})\right)^{-1} \to 1,\\ \label {sphankelexpnu}
  x^{\ell + d - 2} h^{(2)}_{d,\ell}(x) \left( \rmi \frac{1}{\sqrt{\pi}} 2^{\ell+\frac{d-3}{2}} \Gamma(\ell +\frac{d-2}{2}) \right)^{-1} \to 1, 
\end{gather}
as $\ell \to \infty$. This can be inferred from the series expansions for the Hankel functions \cite[10.8.1, 10.53.2]{olver2010nist}.

In this section we give a simple proof of the formula
$$
\frac{1}{(2\pi)^{\frac{d-1}{2}}}\int\limits_{\mathbb{S}^{d-1}}\exp(-i\lambda x\cdot\omega)g(\omega)\der \omega=2\sum\limits_{\nu} a_{\nu}\phi_{\nu}\left(\frac{x}{ r}\right)  j_{d,l_{\nu}}(\lambda r) (-\rmi)^{l_{\nu}},
$$
where $a_\nu = \langle \phi_\nu, g \rangle_{L^2(\sphere)}$.
This formula follows for $d=3$ from integral formulae for spherical Bessel functions and Legendre polynomials. In higher dimensions it is equivalent to the following identity for Gegenbauer polynomials $C^{\frac{d-2}{2}}_k$ 
$$
\frac{1}{(2\pi)^{\frac{d-1}{2}}}\int\limits_{\mathbb{S}^{d-1}}\exp(-i\lambda x\cdot\omega) C^{\frac{d-2}{2}}_k(\theta \cdot\omega)\der \omega= 2j_{d,l_{\nu}}(\lambda r) (-\rmi)^{l_{\nu}} C^{\frac{d-2}{2}}_k(\theta \cdot\frac{x}{r}).
$$
Since we could not find this identity in the literature
we give here a very short proof which is based on Rellich's uniqueness theorem. First we note that it is sufficient to prove the identity for $g = \phi_\nu$.
Next we observe that both sides of the equation satisfy the eigenfunction equation $(\Delta-\lambda^2) f =0$. By Rellich's uniqueness theorem such solutions for real $\lambda>0$
are uniquely determined by their asymptotic expansion
$$
 f(r \theta) = \frac{\ee^{-\rmi \lambda r}}{r^{\frac{d-1}{2}}} h_-(\theta)+\frac{\ee^{\rmi \lambda r}}{r^{\frac{d-1}{2}}} h_+(\theta) + O\left(\frac{1}{r^{\frac{d+1}{2}}}\right)
$$
for sufficiently large $r$. It is therefore sufficient to show that the expansions on both sides are the same. An application of the stationary phase Lemma to the left hand side gives
\begin{align} \label{appendix_bexp}
\int\limits_{\mathbb{S}^{d-1}} \ee^{-\rmi \lambda x\cdot\omega} g(\omega)\der \omega= \left(\frac{2\pi}{\lambda r}\right)^{\frac{d-1}{2}}
\left(\ee^{-\rmi \lambda r} \ee^{\rmi \frac{(d-1)\pi}{4}} g(\theta)+\ee^{\rmi \lambda r} \ee^{-\rmi \frac{(d-1)\pi}{4}} g(-\theta)\right)+O((\lambda r)^{-(\frac{d+1}{2})})
\end{align} 
as $\lambda r \to \infty$.
 Here the remainder term depends on $\| g(\omega)\|_{H^2(\mathbb{S}^{d-1})}$ by stationary phase, provided $|\lambda r| \gg 1$. This expansion can be differentiated term by term in $x$, and expanded again using stationary phase to obtain asymptotics of the differentiated terms. 
A comparison to the asymptotics of the spherical Bessel (\cite{olver2010nist}) function shows the result.

\section*{Acknowledgements}

We would like to thank Gilles Carron, Colin Guillarmou, Werner M\"uller, and Andras Vasy for useful comments and sharing their insights. We also are grateful to the anonymous referees for their helpful suggestions.

\bibliographystyle{plain}

\end{document}